\numberwithin{equation}{section}
\tikzset{
	>=stealth',
	punkt/.style={
		rectangle,
		rounded corners,
		draw=black, very thick,
		text width=12em,
		minimum height=2em,
		text centered}, 
	pil/.style={
		->,
		thick,
		shorten <=2pt,
		shorten >=2pt,}
}
\theoremstyle{plain}
\newtheorem{theorem}{\textbf{Theorem}}[section]
\newtheorem{definition}[theorem]{\textbf{Definition}}
\newtheorem*{definition*}{\textbf{Definition}}
\newtheorem{proposition}[theorem]{\textbf{Proposition}}
\newtheorem{lemma}[theorem]{\textbf{Lemma}}
\newtheorem{definition/lemma}[theorem]{\textbf{Definition/Lemma}}
\newtheorem{definition/proposition}[theorem]{\textbf{Definition/Proposition}}
\newtheorem*{theorem*}{\textbf{Theorem}}
\newtheorem{claim}[theorem]{\textbf{Claim}}
\newtheorem*{claim*}{\textbf{Claim}}
\newtheorem{corollary}[theorem]{\textbf{Corollary}}
\newtheorem{remark}[theorem]{\textbf{Remark}}
\newtheorem{example}[theorem]{\textbf{Example}}
\def\N{{\mathbb N}}
\def\R{\mathbb{R}}
\def\Z{{\mathbb Z}}
\def\C{{\mathbb C}}
\newcommand{\CP}{\mathbb{C}\mathbb{P}}
\def\a{\alpha}
\def\b{\beta}
\def\i{\iota}
\def\cB{{\mathcal B}}
\def\cC{{\mathcal C}}
\def\cD{{\mathcal D}}
\def\cE{{\mathcal E}}
\def\cF{{\mathcal F}}
\def\cH{{\mathcal H}}
\def\cI{{\mathcal I}}
\def\cK{{\mathcal K}}
\def\cM{{\mathcal M}}
\def\cP{{\mathcal P}}
\def\cT{{\mathcal T}}
\def\cY{{\mathcal Y}}
\def\cZ{{\mathcal Z}}
\def\fF{{\mathfrak F}}
\def\fG{{\mathfrak G}}
\def\fH{{\mathfrak H}}
\def\fI{{\mathfrak I}}
\def\fP{{\mathfrak P}}
\def\fT{{\mathfrak T}}
\def\sY{{\mathscr Y}}
\def\rD{{\rm D}}
\def\rd{{\rm d}}
\def\sc{{\rm sc}}
\def\la{\langle\,}
\def\ra{\,\rangle}
\def\AB{{\rm AB}}
\def\Fuk{{\rm F}}
\def\BC{{\rm BC}}
\DeclareMathOperator{\Ima}{im}
\DeclareMathOperator{\supp}{supp}
\DeclareMathOperator{\ind}{ind}
\DeclareMathOperator{\id}{id}
\DeclareMathOperator{\Mor}{Mor}
\DeclareMathOperator{\Obj}{Obj}
\DeclareMathOperator{\graph}{graph}
\DeclareMathOperator{\sign}{sign}
\DeclareMathOperator{\rank}{rank}
\DeclareMathOperator{\vol}{vol}
\DeclareMathOperator{\tr}{Tr}
\DeclareMathOperator{\Crit}{Crit}
\def\holim{\mathrm {holim} }
\def\@tocline#1#2#3#4#5#6#7{\relax
	\ifnum #1>\c@tocdepth 
	\else
	\par \addpenalty\@secpenalty\addvspace{#2}%
	\begingroup \hyphenpenalty\@M
	\@ifempty{#4}{%
		\@tempdima\csname r@tocindent\number#1\endcsname\relax
	}{%
	\@tempdima#4\relax
}%
\parindent\z@ \leftskip#3\relax \advance\leftskip\@tempdima\relax
\rightskip\@pnumwidth plus4em \parfillskip-\@pnumwidth
#5\leavevmode\hskip-\@tempdima
\ifcase #1
\or\or \hskip 1em \or \hskip 2em \else \hskip 3em \fi%
#6\nobreak\relax
\hfill\hbox to\@pnumwidth{\@tocpagenum{#7}}\par
\nobreak
\endgroup
\fi}
\title{Morse-Bott cohomology from homological perturbation theory}
\author{Zhengyi Zhou}
\begin{document}
\maketitle
\begin{abstract}
	In this paper, we construct cochain complexes generated by the cohomology of critical manifolds in the abstract setup of flow categories for Morse-Bott theories under minimum transversality assumptions. We discuss the relations between different constructions of Morse-Bott theories. In particular, we explain how homological perturbation theory is used in Morse-Bott theories, and both our construction and the cascades construction can be interpreted as applications of homological perturbations. In the presence of group actions,  we construct cochain complexes for the equivariant theory. Expected properties like the independence of approximations of classifying spaces and the existence of the action spectral sequence are proven. We carry out our construction for Morse-Bott functions on closed manifolds and prove it recovers the regular cohomology. We outline the project of combining our construction with the polyfold theory.
\end{abstract}
\setcounter{tocdepth}{1}
\tableofcontents

\section{Introduction}
Morse theory \cite{morse1934calculus} enables one to analyze the topology of a manifold by studying Morse functions on that manifold, or more explicitly by studying critical points and gradient flow lines. Although Morse functions are generic among all differentiable functions, sometimes it is more convenient to work with more special functions. Morse-Bott functions were introduced by Bott in \cite{bott1954nondegenerate} as generalizations of Morse functions and have proven to be extremely useful for studying spaces in the presence of symmetries \cite{bott1956application,bott1958applications}. Inspired by ideas of Witten \cite{witten1982supersymmetry} and Gromov \cite{gromov1985pseudo}, Floer generalized Morse theory to various infinite dimensional settings \cite{floer1988instanton,floer1988morse,floer1989symplectic,floer1989witten}. 
Now there are many invariants in symplectic and contact geometry and low dimensional topology based on Floer's construction. Many of them have a ``Morse theoretical" background, e.g.\cite{dostoglou1994self, kronheimer2007monopoles, ozsvath2004holomorphic, seidel2008biased}. Many other invariants \cite{eliashberg2000introduction,fukaya2010lagrangian,seidel2008fukaya} are closely related to Morse theory. Usually, invariants are defined in the ``Morse" case, i.e.\ critical points are isolated, and invariants or structural maps are defined by counting zero dimensional moduli spaces. However, in many cases, it is more convenient to study the Morse-Bott case, where we need to ``count" higher dimensional moduli spaces, since there are several benefits of working with the Morse-Bott case: (1) Morse-Bott functions usually reflect some extra symmetries of the problem, and computations in Morse-Bott theory are usually simpler because of the extra symmetries \cite{bourgeois2002morse,diogo2018morse}; (2) Morse-Bott theories appear in equivariant theories \cite{austin1995morse, bourgeois2009exact,lin2014morse}.

There are two aspects of Morse-Bott theories in applications. First, we need to construct compactified moduli spaces of gradient flow lines/Floer trajectories from one critical manifold to another critical manifold. Moreover, we need the moduli spaces to be equipped with smooth structures so that the moduli spaces are manifolds or orbifolds. To achieve that, there are mainly three methods. (1) Geometric perturbations \cite{mcduff2012j}, where one perturbs geometric data like almost compact structures or metrics. Such methods were used in many classical treatments of Floer theories. (2) Kurunishi method \cite{fukaya2010lagrangian,joyce2014new,mcduff2017smooth}. (3) Polyfold method \cite{hofer2017polyfold}. There are many other methods for specific geometric settings \cite{cieliebak2007symplectic,ionel2013natural,li1996virtual,ruan1995mathematical} and algebraic treatments \cite{pardon2016algebraic}. Second, from critical manifolds and compactified moduli spaces of gradient flow lines/Floer trajectories we need to construct cochain complexes. This paper focuses on the second part, in particular, we explain how to count when the dimension of moduli spaces is positive assuming the moduli spaces are reasonably nice. However, we will discuss the transversality problem for the finite dimensional Morse-Bott theory in \S \ref{s7} using geometric perturbations and outline the polyfold method for general case in \S \ref{poly}.

\subsection{Cohomology of flow categories}
It turns out that all critical manifolds and \emph{compactified} moduli spaces from a Morse-Bott setting determine a category, namely a flow category, which was first introduced by Cohen, Jones and Segal in \cite{cohen1995floer} to organize all the moduli spaces of flow lines in Morse/Floer theories. Roughly speaking, the objects of a flow category come from critical points, and the morphisms are (broken) flow lines.

In the Morse case, the cochain complex is constructed by counting points in the zero-dimensional moduli spaces (the morphism space). However in a general Morse-Bott case, higher dimensional moduli spaces should contribute non-trivially to the construction. Given a general abstract Morse-Bott flow category, there are several methods to get a chain or cochain complex, namely
\begin{enumerate}
	\item Austin-Braam's model \cite{austin1995morse}. The cochain complex is generated by differential forms of the critical manifolds, and the differential is defined by pullback and pushforward of differential forms through the compactified moduli spaces. 
	\item Fukaya's model \cite{fukaya1996floer}. The chain complex is generated by certain subcomplex of the singular chain complex of the critical manifolds, and the differential is defined by pushforward and pullback of singular chains through the compactified moduli spaces. 
	\item Cascades model by Bourgeois \cite{bourgeois2002morse} and Frauenfelder \cite{frauenfelder2004arnold}. The cochain complex is generated by Morse cochain complexes of critical manifolds after we assign suitable Morse functions to each critical manifold. The differential is defined by counting ``cascades".\footnote{Strictly speaking, the original cascades model \cite{bourgeois2002morse,frauenfelder2004arnold} was phrased using homological conventions, the above mentioned cochain complex is the linear dual of the homological cascades model.}
\end{enumerate}
All of the methods above have to make some assumptions on the compactified moduli spaces of Morse/Floer trajectories. In the Morse-Bott setting, Morse/Floer trajectories can break into pieces with ends matched. Hence the boundary of a compactified moduli spaces consists of fiber products over critical manifolds. The minimal transversality requirement is that these fiber products are cut out transversely. Such a requirement is natural using any reasonable virtual technique. In this paper, we work in the context of flow categories, under such fiber products transversality assumptions. 

The first goal of this paper is to unify the three methods and provide a simple and clean construction, called the \emph{minimal Morse-Bott construction}, to every Morse-Bott flow category. Moreover, we will explain the following guiding principle in Morse-Bott constructions. 
\begin{claim*}\label{claim:phi}
	Formal applications of the homological perturbation lemma tend to give well-defined constructions. 
\end{claim*}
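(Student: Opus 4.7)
The plan is to establish the claim not as a single theorem but as a meta-principle demonstrated case by case: I would reinterpret each of Austin-Braam, Fukaya, cascade, and the new minimal construction as an instance of the homological perturbation lemma (HPL), and check that in each instance the algebraic output is automatically well-defined. Recall HPL: from a deformation retract between cochain complexes $(C,d_C)$ and $(H,d_H)$ with chain homotopy $K$, inclusion $i$, projection $p$ satisfying $\id_C - ip = d_CK + Kd_C$, together with a ``small'' perturbation $\delta$ of $d_C$, one produces a canonical differential on $H$ via the series $d_H + p\delta i + p\delta K\delta i + p\delta K\delta K\delta i + \cdots$, which squares to zero purely formally. The guiding principle is that once the big complex $C$ is taken to be forms (or chains) on the total critical manifold, $H$ its cohomology, and $\delta$ the pullback--pushforward along evaluation maps of flow moduli spaces, then the HPL output is exactly the Morse-Bott differential.

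The first step is to write down the data $(C,d_C,H,d_H,i,p,K,\delta)$ explicitly in each model and to verify the deformation-retract identity. For Austin-Braam, $K$ is the Hodge homotopy; for Fukaya, $K$ comes from a simplicial retraction onto a minimal model; for cascades, $K$ is the Morse chain homotopy of an auxiliary Morse function picked on each critical manifold. In each case the fibre-product transversality hypothesis on the flow category should translate into the algebraic identity needed so that the perturbed HPL differential indeed squares to zero. The second step is to match each term $p(\delta K)^n \delta i$ of the HPL series with a combinatorial count: the $\delta$'s correspond to concatenated moduli spaces of unbroken trajectories, while the interleaved $K$'s correspond to propagating along the chosen auxiliary data on the intermediate critical manifolds (harmonic projection, simplicial retraction, or a Morse cascade segment, respectively). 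The third step is to deduce $d^2 = 0$ and independence of choices directly from the abstract HPL, with the geometric identification of codimension-one boundary strata as fibre products serving only to match notation.

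The main obstacle will be convergence and well-definedness of the operator $\delta$, since HPL is naively an algebraic statement on a fixed complex whereas here $\delta$ is assembled from a tower of moduli spaces of unbounded dimension. I expect this to be controlled by an action filtration on the flow category, so that below any fixed action level only finitely many terms of the HPL series contribute; the fibre-product transversality assumption is what guarantees that each finite piece is genuinely defined. A related subtlety is that pullback--pushforward requires smoothness and compatibility of the evaluation maps, which must be verified (or built in, as in polyfold theory) before the formal HPL manipulation is legitimate. The final piece, independence of auxiliary choices, I would handle by applying a parametrized version of HPL to a family of retraction data $(i_t,p_t,K_t)$, exploiting the naturality of the construction and the persistence of smallness of $\delta$ along the family; this is also where the interpretations of the various classical models as equivalent HPL outputs will merge into a single statement.
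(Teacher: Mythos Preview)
This statement is a guiding principle rather than a theorem, and the paper does not give it an abstract proof; it is substantiated by the concrete constructions in \S\ref{perb}--\S\ref{morsebott}. Your overall plan---interpret each model as an instance of HPL and match the perturbed-differential series with geometric counts---is exactly the philosophy the paper advocates. But there is a real gap in how you intend to execute step three.

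You want to ``deduce $d^2=0$ and independence of choices directly from the abstract HPL, with the geometric identification of codimension-one boundary strata as fibre products serving only to match notation.'' This does not work, and the paper is explicit about why. The abstract HPL requires an honest cochain complex $(C,d_C+\delta)$ on which the perturbation $\delta$ is actually defined. Here the candidate big complex is Austin--Braam's $\bigoplus_i\Omega^*(C_i)$ with $\delta$ given by $t_*s^*$ through the moduli spaces. The problem is not convergence or action filtration: it is that pushforward $t_*$ lands in currents $\cD^*(C_j)$, not in $\Omega^*(C_j)$, unless you impose the fibration hypothesis the paper is trying to avoid. So there is no complex on which to run HPL, and hence no abstract $d^2=0$ to invoke. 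Your diagnosis of the obstacle as ``convergence of $\delta$'' controlled by finitely many moduli spaces below an action cutoff misses this.

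The paper's route is the opposite of yours: use HPL only \emph{formally} to guess the formula \eqref{hpl}, then abandon HPL and verify everything by hand. Well-definedness of each summand is Lemma~\ref{conv1}; the relation $d_{BC}^2=0$ is Proposition~\ref{dsquare}, proved by an inductive Stokes-theorem computation (the ``replacing trick'' of Remark~\ref{rmk:replace}) that never appeals to HPL. The reason the formula survives even though the big complex does not is that after projecting to the finite-dimensional $H^*(C_i)$, pairings make sense because finite-dimensional spaces are self-dual; this is the point made at the start of \S 2.4. Your plan to get independence of choices from a parametrized HPL has the same defect and is replaced in the paper by constructing explicit flow morphisms and homotopies (\S\ref{flowmor}--\S\ref{canonical}). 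Also, a small correction: Austin--Braam is not itself an HPL instance with ``$K$ the Hodge homotopy''; it is the (ill-defined) big complex from which the minimal construction perturbs down, using a homotopy built from Thom forms and primitives $f_i^n$ (\S\ref{pertdata}).
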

It turns out that both cascades and the minimal construction fit into this principle, and the relations are described in the following diagram.
\begin{center}
	\begin{tikzpicture}[node distance=1cm, auto,]  
	\node(mark2){};
	\node[punkt,left=of mark2](c){Cascades construction};
	\node[punkt, right=of mark2](m){Minimal construction};
	\node[ above=of mark2](mark){Homological pertubation lemma}
	edge[pil] (c.north)
	edge[pil] (m.north){};
	\node[punkt, above=of mark](infty){Austin-Braam's model/Fukaya's model}
	edge[pil] (mark.north);
	\end{tikzpicture}
\end{center}

In applications of the homological perturbation theory, one needs to choose some perturbation data (projections and  homotopies). For the cascades model, the projections and homotopies are provided by Harvey and Lawson's work \cite{harvey2001finite} on Morse theory. The minimal construction is based on a more direct construction of the projections and homotopies, e.g.\ one can choose the projection to harmonic forms and the associated Green operator (as the homotopy) as the perturbation data. The principle above also works for structures more general than a ``linear structure" like flow categories, as long as the all the relevant moduli spaces satisfy the fiber products transversality assumption, e.g.\ \cite{cieliebak}. However, this has gone beyond the scope of the current paper.  

Our main theorem is that, with suitable orientations, one can associate a well-defined cochain complex generated by the cohomology of the object space (i.e.\ critical manifolds) to a flow category.
\begin{theorem*}
	To every oriented flow category, we can assign a minimal Morse-Bott cochain complex $(\BC,d_{\BC})$ over $\R$ generated by the cohomology of the object space (with a suitable completion) in a functorial way. 
\end{theorem*}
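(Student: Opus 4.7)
The plan is to realize $(BC, d_{BC})$ as the output of applying the homological perturbation lemma to an Austin--Braam type chain-level model. First, to each critical manifold $C_i$ I assign the de Rham complex $(\Omega^*(C_i), d)$ and set $\Omega^* := \bigoplus_i \Omega^*(C_i)$. For each pair $(C_i, C_j)$ the compactified moduli space $\cM_{ij}$ carries evaluation maps $s_{ij}, t_{ij}$ to the source and target critical manifolds, and using integration along the fiber I define operations $m_{ij}(\omega) := (t_{ij})_* s_{ij}^* \omega$. Summed over indices these assemble into an endomorphism $m$ of $\Omega^*$.

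Second, the identity $(d+m)^2 = 0$ is the heart of the chain-level construction, and it follows from Stokes' theorem on each moduli space combined with the codimension-one boundary decomposition $\partial \cM_{ij} \simeq \bigsqcup_k \cM_{ik} \times_{C_k} \cM_{kj}$ provided by the flow category axioms. The fiber product transversality hypothesis is exactly what makes these fiber products smooth and the pushforwards well-behaved, while coherent orientations produce the correct signs over $\R$. This yields the (generally infinite-dimensional) Austin--Braam complex.

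Third, for each critical manifold I pick a Riemannian metric and let $\pi_i : \Omega^*(C_i) \to \mathcal{H}^*(C_i) \cong H^*(C_i)$ be the harmonic projection, with $G_i$ the associated Green operator; the Hodge identities produce a deformation retract of $(\Omega^*(C_i), d)$ onto its cohomology. Assembled, these yield a deformation retract of $(\Omega^*, d)$ onto $BC := \bigoplus_i H^*(C_i)$ with chain homotopy $G = \bigoplus_i G_i$ and inclusion $\iota$ of harmonic representatives. Applying the homological perturbation lemma to the perturbation $m$ then produces the desired differential $d_{BC}$ on $BC$, schematically
\[
 d_{BC} \;=\; \pi \circ m \circ \sum_{n \geq 0} (G \circ m)^n \circ \iota ,
\]
and the relation $d_{BC}^2 = 0$ is a formal algebraic consequence of the HPL together with the $(d+m)^2 = 0$ identity established in the previous step.

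The main obstacle is analytic and combinatorial: each term in the expansion of $d_{BC}$ corresponds to a rooted tree whose edges are moduli spaces $\cM_{ij}$ and whose interior vertices carry Green operators $G_k$, and I must verify that these iterated pull-push-Green compositions are well-defined smooth operators. This requires transversality of iterated fiber products and sufficient regularity of $G_k$ acting on forms pulled back from manifolds with corners, both of which must be extracted from the fiber product transversality assumption with care. Well-definedness of the infinite sum itself is handled via the action filtration on the flow category: between any two fixed critical manifolds only finitely many trees contribute modulo any fixed action window, so the sum is locally finite. Signs and orientations are tracked through the oriented flow category structure, ensuring the construction works over $\R$ rather than only $\Z/2$.
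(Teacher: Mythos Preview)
Your proposal has a genuine gap at its foundation. You assert that $m_{ij}(\omega) = (t_{ij})_* s_{ij}^*\omega$ defines an operator on $\Omega^*$ and that ``the fiber product transversality hypothesis is exactly what makes these fiber products smooth and the pushforwards well-behaved.'' This is false: fiber-product transversality says nothing about $t_{ij}$ being a submersion, and without that the pushforward $(t_{ij})_*$ lands only in currents $\cD^*(C_j)$, not in $\Omega^*(C_j)$. So the Austin--Braam complex $(\Omega^*,d+m)$ is not a well-defined cochain complex in the generality claimed, and there is nothing for the homological perturbation lemma to act on. The paper emphasizes exactly this point (Section~2.3.1), and notes that the fibration condition can be obstructed for some Morse--Bott functions. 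Your acknowledged ``main obstacle'' --- making sense of iterated pull-push-Green compositions --- is not a technicality to be dispatched with care; it is the whole problem, and your outline gives no mechanism for it.

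The paper's route is to treat the HPL formula \eqref{hpl} only as motivation and to define $d_{BC}$ \emph{directly} via numerical pairings, never passing through an intermediate infinite-dimensional complex. Concretely: rather than a Green operator, the homotopy is encoded by a sequence of smooth primitives $f_i^n\in\Omega^*(C_i\times C_i)$ satisfying $\rd f_i^n = \delta_i^n - \sum_a \pi_1^*\theta_{i,a}\wedge\pi_2^*\theta_{i,a}^*$, where $\delta_i^n$ are explicit Thom-class approximations of the diagonal. The matrix entries $\langle d_k[\alpha],[\gamma]\rangle$ are then defined as limits of integrals over products $\cM_{s,s+i_1}\times\cdots\times\cM_{s+i_r,s+k}$ of pullbacks of $\alpha$, the $f^n$'s, and $\gamma$ (formula~\eqref{def}). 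Because both $\alpha,\gamma$ live in finite-dimensional cohomology and all integrands are honest differential forms, no pushforward into forms is ever needed. The substantive work is then (i) proving these limits exist (Lemma~\ref{conv1}, via blow-ups along the fiber-product loci to get uniform bounds) and (ii) proving $d_{BC}^2=0$ by a direct Stokes-theorem computation (Proposition~\ref{dsquare}), using Lemma~\ref{conv2} to trade Dirac currents for fiber products. None of this is a formal consequence of HPL applied to an existing complex.
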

Of course, the theorem here bears no meaning yet. We point out here that (1) when the flow category arises from a Morse-Bott function on a closed manifold, the cohomology of the minimal Morse-Bott cochain complex is the cohomology of the manifold. (2) When the flow category arises from a Morse case, i.e.\ critical points are non-degenerate, hence isolated, the cochain complex is the usual cochain complex with differential defined by counting rigid points in the morphism space. (3) There are analogous constructions for continuation maps and homotopies, which, in applications, will yield invariance w.r.t.\ various auxiliary geometric data (Hamiltonians, almost complex structures, metrics etc.).

The construction provides explicit formulae for how higher dimensional moduli spaces contribute in the construction, in particular, there are error correcting terms from moduli spaces related to the boundaries and corners. Like the cascades construction, to write down an explicit cochain complex, we need to make some choices on each critical manifold. One of the advantages of the minimal construction is that the choices do not require any compatibility condition with the morphism space (moduli spaces). The cohomology theory on the level of flow categories in this paper simplifies many geometric constructions including products (\S\ref{ss:product}), quotients (\S \ref{ss:quotient}) and fibrations (\S \ref{ss:Gysin}), as such constructions are natural on the level of flow categories.

The theorem above is the simplest version. We also discuss several generalizations in this paper: (1) The critical manifold $C_i$ can be non-compact; (2)  The critical manifold $C_i$ can be equipped with local systems and does not have to be orientable; (3) It is not necessary that the cochain complex is generated by the cohomology, any finite dimensional subspace of differential forms satisfying a cohomological relation is sufficient. Such flexibility allows us to prove a Gysin exact sequence for sphere bundles over flow categories. In \cite{ring}, we use the Gysin exact sequence to show that any exact filling of a simply connected flexibly fillable contact manifold has the same cohomology ring structure on even degrees. 

\subsection{Equivariant theories}
The second goal of this paper is developing an equivariant theory on the level of flow categories, which would serve as a model for defining equivariant Floer theory. When there is a group $G$ symmetry on the Morse-Bott theory, the cohomology theory should be enriched to a $G$-equivariant theory. One typical method is approximating the homotopy quotient. Bourgeois and Oancea  \cite{bourgeois2013gysin} used a construction inspired by the cascades method to define the $S^1$-equivariant symplectic homology in this spirit. In our case, the homotopy quotient construction is very natural on the level of flow categories.  Hence we can combine the Borel's construction and our minimal construction and realize the equivariant cochain complex as a homotopy limit.
\begin{theorem*}
	Assume a compact Lie group $G$ acts on an oriented flow category $\cC$ and preserves the orientations. Then there is a cochain complex $(\BC^G,d^G_{\BC})$, whose homotopy type is unique, i.e.\ independent of all the choices in the construction, in particular, the choice of finite dimensional approximations of the classifying space $EG\to BG$.  
\end{theorem*}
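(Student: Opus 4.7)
The plan is to construct $(BC_G,d^G_{BC})$ via a Borel-type construction at the level of flow categories, and then invoke the invariance properties of the minimal Morse-Bott construction already established in the paper. First I would fix an exhausting sequence of finite-dimensional smooth approximations $EG_1 \hookrightarrow EG_2 \hookrightarrow \cdots \to EG$ by closed manifolds with free $G$-action (e.g.\ Stiefel manifolds or iterated joins), each equipped with a $G$-invariant Morse-Bott function whose associated flow category $\cE_N$ has trivial differential data in a range increasing with $N$. For each $N$ I form the product flow category $\cC \times \cE_N$ using the product construction from \S\ref{ss:product}, and then pass to the quotient by the (now free, diagonal) $G$-action using the quotient construction from \S\ref{ss:quotient}. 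Because $G$ is compact, preserves orientations of $\cC$, and acts freely on $EG_N$, the resulting quotient flow category $(\cC\times\cE_N)/G$ is an oriented flow category in the sense of the main theorem, so the minimal Morse-Bott construction assigns to it a cochain complex $(BC_N,d_N)$.

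Second, I would assemble these into a single object. The inclusions $EG_N\hookrightarrow EG_{N+1}$ induce morphisms of quotient flow categories $(\cC\times\cE_N)/G \to (\cC\times\cE_{N+1})/G$, which via the continuation-map analogue of the minimal construction (stated in item (3) after the first main theorem) yield cochain maps $BC_{N+1}\to BC_N$, well-defined up to cochain homotopy. I would then define
\[
(BC_G, d^G_{BC}) := \holim_N (BC_N, d_N),
\]
a homotopy inverse limit in the category of cochain complexes, realized concretely as a telescope/mapping-cocylinder built from the continuation maps. This packages the Borel approximation $EG\times_G\cC$ at the chain level.

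Third, I would prove independence of choices in two stages. Within a fixed approximation sequence, the independence of auxiliary data (the harmonic/Hodge-type projections and homotopies chosen on each critical manifold, Morse data on each $EG_N$, and orientation representatives) follows from the homotopy-invariance of the minimal construction already proved for a single oriented flow category, applied levelwise. For independence of the approximation sequence itself, given two sequences $\{EG_N\}$ and $\{EG'_M\}$, I would consider the diagonal approximation $\{EG_N\times EG'_N\}$, which admits $G$-equivariant projections to each factor; these induce morphisms of quotient flow categories and hence cochain maps between the corresponding telescopes, which are quasi-isomorphisms because the relevant mapping-cone flow categories have contractible $EG$-factor and therefore, after applying the minimal construction, contribute acyclic error terms in the increasing range where the approximations are cohomologically trivial.

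The main obstacle is the last step: controlling the passage to the limit while keeping the construction functorial up to coherent homotopy. Concretely, the continuation maps between the $BC_N$ are only defined up to chain homotopy, so one must verify that the telescope model for $\holim$ is insensitive to these choices, and that the comparison maps between two different approximation sequences can be upgraded to a genuine chain map of telescopes inducing a quasi-isomorphism. This is where the fiber-product transversality hypothesis on the product and quotient flow categories is essential: it guarantees that all the intermediate minimal Morse-Bott cochain complexes, their continuation maps, and the homotopies between compositions of continuation maps are simultaneously well-defined from the same construction, so the coherence needed to identify different choices of $\holim$ reduces to the already-established invariance of the minimal construction on a single oriented flow category.
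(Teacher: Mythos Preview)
Your overall architecture matches the paper's: Borel-type approximation by finite-dimensional principal $G$-bundles, followed by a homotopy inverse limit of the resulting minimal Morse-Bott cochain complexes. However, your implementation diverges from the paper's in two significant ways, and one of them introduces an unnecessary complication.

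First, you equip each $EG_N$ with a $G$-invariant Morse-Bott function and form a product of flow categories $\cC\times\cE_N$ before quotienting. The paper does not do this, and \S\ref{ss:product} does not support it: that section constructs $\cC\times B$ for a \emph{manifold} $B$, not a product of two flow categories. The paper's route is much simpler: given an oriented principal bundle $E\to B$, it directly defines the flow category $\cC\times_G E$ with objects $C_i\times_G E$ and morphisms $\cM_{i,j}\times_G E$ (Proposition immediately after Definition~\ref{Gaction}). No auxiliary Morse-Bott function on $E$ is needed; the index set of the flow category does not change. This avoids entirely the question of transversality for a product of flow categories and keeps the construction within what has already been proven.

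Second, for independence of the approximating sequence, you propose comparing $\{EG_N\}$ and $\{EG'_M\}$ via the diagonal approximation $\{EG_N\times EG'_N\}$ and projections. The paper instead uses a cofinality/interleaving argument: by obstruction theory (Proposition~\ref{principle}), one builds a zig-zag $E_1\to E'_{n_1}\to E_{m_1}\to E'_{n_2}\to\cdots$ compatible up to equivariant homotopy with both original inclusions, applies the functor $\cC\times_G$ of Theorem~\ref{s6-2}, and then observes that both $\{BC^{\cC\times_G E_{m_i}}\}$ and $\{BC^{\cC\times_G E'_{n_i}}\}$ are final in the resulting inverse system, so their homotopy limits agree. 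Your product argument could in principle be made to work, but the final step (``contractible $EG$-factor\ldots acyclic error terms in the increasing range'') is where the content lies and is left vague; the paper's cofinality argument sidesteps this by reducing to a purely formal property of $\holim$.
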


\subsection{Constructions of flow categories}
The remaining problem of applying the minimal construction in applications is  constructing a flow category. In \S \ref{s7}, we construct flow categories for the finite dimensional Morse-Bott theory using geometric methods. In general, geometric perturbations, i.e.\ perturbing metrics in Morse theory and perturbing almost complex structures in Floer theory,  may not be enough to guarantee the transversality assumption, hence one needs to apply some abstract perturbations. In fact, our minimal construction is applicable to the polyfold theory. We can enrich a flow category, i.e.\ a system of manifolds to a system of polyfolds with  sc-Fredholm sections, and the boundaries/corners of the polyfolds come from transverse fiber products of polyfolds. We will refer this system as a polyflow category. Then we can find a coherent perturbation scheme and apply the abstract perturbation theorem for polyfolds from \cite{hofer2017polyfold} to get a flow category. In the presence of a group action, the theorem  above on equivariant cohomology requires $G$-equivariant transversality. But we know that $G$-equivariant transversality is typically obstructed. In general, we need to apply the Borel construction using quotient theorems from \cite{quotient} to the whole polyflow category instead of the flow category.

\subsection{Organization of the paper}
This paper is organized as follows. \S \ref{perb} discusses the motivation of the minimal construction from homological perturbation theory and interprets the cascades construction as an example of an application of the homological perturbation theory. \S \ref{morsebott} defines the minimal cochain complex as well as continuation maps and homotopies explicitly, and proves that they satisfy the desired properties.  \S \ref{specseq} discusses the action spectral sequence. \S \ref{s5} explains how the orientations used in \S \ref{morsebott} arise in Morse/Floer theories. \S \ref{s5} also generalizes the construction to the case with local systems and non-orientable manifolds. \S \ref{s6} generalizes the construction to flow categories with non-compact critical manifolds and also provides a more general setup which allows us to prove statements like the Gysin exact sequence. \S \ref{equi} discusses the equivariant theory. \S \ref{s7} is devoted to the Morse-Bott theory on finite dimensional manifolds (both open and closed) and proves the minimal construction recovers the cohomology of the underlying manifold. \S \ref{poly} outlines the project of combining our construction with the polyfold theory. 

\subsection*{Acknowledgements}
The results presented here are part of my Ph.D. thesis; I would like to express my deep gratitude to my thesis advisor Katrin Wehrheim for guidance, encouragement and enlightening discussions. I would like to thank Kai Cieliebak and Michael Hutchings for helpful conversations. The author is in debt to the anonymous referee for a thorough checking of this very long and technical paper and providing many helpful suggestions which improve the manuscript. Part of the writing was completed  during  my  stay  at  the  Institute  for  Advanced  Study  supported  by  the National Science Foundation under Grant No. DMS-1638352.  It is a great pleasure to acknowledge the institute for its warm hospitality. This paper is dedicated to the memory of Chenxue.

\section{Motivation From Homological Perturbation Theory}\label{perb}
\subsection{Differential topology notation}
We first set up some notation for manifolds with boundaries and corners, transversality theory of them and orientation conventions.
\subsubsection{Manifolds and submanifolds with boundaries and corners}
Unless stated otherwise, all manifolds considered in this paper are manifolds possibly with boundaries and corners \cite[Definition 1.6.1]{analysis}, i.e.\ for every point in the manifold, there is an open neighborhood diffeomorphic to an open subset of $\R^n_+$, where $\R_+:=[0,\infty)$. A \emph{closed} manifold is a \emph{compact} manifold \emph{without boundary}.
\begin{definition}\label{def:degenercy}
	Let $M$ be a manifold and $x\in M$ a point, by choosing a chart $\phi: \R_+^n\supset U\to M$ near $x\in M$,  the degeneracy index $d(x)$ of the point $x$ is defined to be $\#\{v_i| v_i=0 \}$, where $(v_1,\ldots,v_n)\in \R_+^n$ and $\phi(v_1,\ldots, v_n)=x\in M$.    
\end{definition}
The degeneracy index $d$ does not depend on the local chart $\phi$ \cite[Corollary 1.5.1]{analysis}.  For $i\ge 0$, we define the \textbf{depth-$\bm{i}$ boundary $\bm{\partial_i M}$} to be 
\begin{equation}\label{eqn:depthi}
\partial_i M:=\left\{x\in M\left|d(x)=i \right.\right\}.
\end{equation}
Then $\partial_0M$ is the set of interior points of $M$. Note that all $\partial_iM$ are manifolds without boundary, and in most cases they are noncompact. Submanifolds of manifolds should be compatible with structures defined in \eqref{eqn:depthi}, i.e. we have the following.
\begin{definition}\label{def:submanifold}
	A closed subset $N\subset M$ is a submanifold of $M$ iff $N$ is a manifold, such that the inclusion $N\to M$ is a smooth embedding and for all $i\ge 0$, $\partial_iN=N\cap \partial_iM$. In other words, for every $x\in N$, $(M,N)$ near $x$ is locally modelled on $(\R_+^{k}\times \R^{n-k},\R_+^k\times \R^{n-m}\times \{0\}^{m-k})$ near $0$.
\end{definition}
An instant corollary is that if $N$ is submanifold of $M$ and $M$ is submanifold of $K$, then $N$ is also a submanifold of $K$. In this paper, unless stated otherwise, we will only consider submanifolds defined as above. In particular, when $M$ has no boundary, a submanifold does not have boundary either. Note that $\partial_iM$ is not a submanifold of $M$ in the sense of Definition \ref{def:submanifold} unless $\dim M = 0$. 

\begin{remark}\label{rmk:submanifold}
A few remarks regarding the notion of manifolds with boundaries and corners and their submanifolds are in order.
\begin{enumerate}
    \item Some authors require, in the definition of manifolds with boundaries and corners, the additional property that faces (the closure of connected components of  $\partial_1M$) are submainfiolds (not in the sense of Definition \ref{def:submanifold} but a weaker sense, e.g.\ $t$-submanifolds in \cite[Definition 1.7.3]{analysis}), e.g.\ \cite[Definition 1.8.5]{analysis}. Such a definition will rule out the ``tear drop" shape. Although we do not use this definition, we note here that in Floer/Morse cohomology theories,  which are the main applications of the abstract construction in this paper, the compactified moduli spaces of Floer/Morse trajectories are manifolds with boundaries and corners in this stronger sense. However, if we were to consider more general algebraic structures (i.e.\ more complicated than a cochain complex) arising from the compactified moduli spaces of pseudo-holomorphic curves, a ``tear drop” moduli space may appear, e.g.\ \cite[Figure 8]{VFC}.
    \item There are different notions of submanifolds in a manifold with boundaries and corners depending on the purpose. For example, there are notions of t-, d-, and p-submanifolds \cite[\S 1.7]{analysis} depending on the compatibility of tangent spaces at the boundary.  However, our notion of submanifolds is stronger than any of that, as we require that $l=k$ in the definition of p-submanifolds \cite[Definition 1.7.4]{analysis} (which is equivalent to requiring that $(M,N)$ near $x$ is locally modelled on $(\R_+^{k}\times \R^{n-k},\R_+^k\times \R^{n-m}\times \{0\}^{m-k})$ near $0$ for $x\in N$.).
    \item Submanifolds in the sense of Definition \ref{def:submanifold} arise naturally as zero sets of sections $s:M\to E$ of a vector bundle $E$ over a manifold $M$ with boundaries and corners, if $s|_{\partial_iM}$ is transverse to $0$ for all $i$. This can be viewed as a prototype of how compactified moduli spaces of Floer cylinders/holomorphic curves can be equipped with a structure of manifold with boundaries and corners in the polyfold perspective. The transversality requirements above are equivalent to that $s$ is in a general position \cite[Definition 5.3.9]{hofer2017polyfold}.
\end{enumerate}
\end{remark}

\begin{definition}\label{def:trans}
	Transversality is defined as follows to accommodate the boundary and corner structures.
	\begin{enumerate}	
		\item\label{trans:1} Let $C$ be a manifold \emph{without boundary}, $B$ a submanifold of $C$ and $M$ a manifold possibly with boundaries and corners. A smooth map $f:M\to C$ is \textbf{transverse to $\bm{B}$}, iff $f|_{\partial_iM}\pitchfork B$ for all $i$ in the classical sense, i.e.\ $\rD f_x(T\partial_iM)+T_{f(x)}B=T_{f(x)} C$ for all $x\in \partial_i M$ such that $f(x)\in B$. 
		\item\label{trans:2} Let $M$ be a manifold and $N_1,N_2$ two submanifolds, then we say \textbf{$\bm{N_1}$ is transverse to $\bm{N_2}$} iff for all $i\ge 0$ and every $x\in \partial_iN_1\cap \partial_iN_2$, we have $\partial_iN_1$ is transverse to $\partial_iN_2$ in $\partial_iM$ in the classical sense, i.e. $T_x\partial_iN_1+T_x\partial_iN_2=T_x\partial_iM$.
	\end{enumerate}	
\end{definition}

\begin{proposition}\label{prop:trans}
    We have the following implicit function theorems.
    \begin{enumerate}
        \item  Let $C$ be a manifold \emph{without boundary} and $B$ be a submanifold. Given a manifold  $M$ along with a smooth map $f$, assume that $f:M\to C$ is transverse to $B$ in the sense of Definition \ref{def:trans} \eqref{trans:1}, then $f^{-1}(B)$ is submanifold of $M$ (in the sense of Definition \ref{def:submanifold}).
        \item Let $N_1,N_2$ be two submanifolds of a manifold $M$ such that $N_1$ is transverse to $N_2$ in the sense of Definition \ref{def:trans} \eqref{trans:2}, then $N_1\cap N_2$ is a submanifold of $M$. The codimension of $N_1\cap N_2$ is the sum of codimensions of $N_1$ and $N_2$.
    \end{enumerate}
\end{proposition}
\begin{proof}
The first claim is standard. We give a sketch of the second claim using the first claim (but not the ``obvious" one, as we can not assume $C=M,B=N_2$ in the first claim since $M,N_2$ have nonempty boundaries). Let $x\in N_2$ with $d(x)=k$, we may assume the pair $(M,N_2,x)\cap U$ for an open set $U\subset M$ is modelled on $(\R_+^k\times \R^{n-k}, \R_+^k\times \R^{m-k}\times \{0\}^{n-m},0)$ following Remark \ref{rmk:submanifold}. We consider $f:N_1\cap U\to \R^{n-m}$ the projection to the last $n-m$ coordinates. It is straightforward to check that transversality in Definition \ref{def:trans} \eqref{trans:2} implies (is actually equivalent to) that $0$ is a regular value of $f$. Since $f^{-1}(0)=N_1\cap N_2\cap U$, we endow $N_1\cap N_2$ a structure of submanifold with boundaries and corners in $N_1$ by the first claim, hence a structure of submanifold with boundaries and corners in $M$.
\end{proof}

Since measure-zero sets on differentiable manifolds are well-defined and our construction is based on integration, errors over a measure-zero set can be tolerated. In particular, we have the following useful notion.
\begin{definition}\label{def:measure}
	Let $M,N$ be two manifolds. A smooth map $f:M\to N$ is a \textbf{diffeomorphism up to zero-measure} iff there exist measure zero closed sets $M_1\subset M, N_1\subset N$, such that $f|_{M\backslash M_1}: M\backslash M_1\rightarrow N\backslash N_1$ is a diffeomorphism. 
\end{definition}

\subsubsection{Orientations}\label{notori}
This paragraph fixes our orientation conventions. Given an \textit{oriented} vector bundle $E$ over a manifold $M$,  the determinant bundle $\det E$ is a trivial line bundle.  $\det E$ can be reduced further to a  trivial $\Z/2$ bundle $\sign E$. Moreover, we can assign  $\sign E $ with a $\Z/2$ grading $|\sign E|=\rank E$. The fiber of $\sign E$ over $x\in M$ is the set of equivalence classes of ordered bases $[(e_1,\ldots,e_n)]$ of the fiber $E_x$, where $(e_1,\ldots,e_n)$ is equivalent to $(e'_1,\ldots,e'_n)$ iff the transformation matrix between them has positive determinant. Then the orientation of $E$ induces a continuous section  of $\sign E$, and we use $[E] \in \Gamma(\sign E)$ to denote the section induced by the orientation.

Given two vector bundles $E,F$ over $M$, we fix a bundle isomorphism: 
$$m_{E,F}: \begin{array}{rcl} \sign(E)\otimes_{\Z/2}\sign(F) & \to & \sign(E\oplus F), \\
 {}[(e_1,\ldots, e_n)] \otimes [(f_1,\ldots, f_m)] & \mapsto & [(e_1,\ldots, e_n, f_1,\ldots, f_m)].        \end{array} $$
Therefore orientations  $[E]$ and $[F]$ determine an orientation of $E\oplus F$ through $m_{E,F}$, hence we denote the induced orientation by
\begin{equation}\label{eqn:oriproduct}
[E][F]:=m_{E,F}([E],[F]).
\end{equation}
Since $[(e_1,\ldots, e_n, f_1,\ldots, f_m)]=(-1)^{nm}[(f_1,\ldots, f_m, e_1,\ldots, e_n)]$, we have:
$$[E][F]=(-1)^{|F||E|}[F][E].$$ 

\begin{definition}\label{def:orinot}
	For simplicity of notation, we introduce the following.
	\begin{itemize}
		\item A manifold $M$ is oriented iff the tangent bundle $TM$ is oriented, and we use $[M]$ to denote the orientation.
		\item $\partial[M]$ denotes the induced orientation\footnote{In the usual sense such that Stokes' theorem holds without extra sign.} on the depth-$1$ boundary $\partial_1 M$ for an oriented manifold $M$. 
		\item Let $E\to M$ and $F\to N$ be two oriented vector bundles, we use $[E]+[F]$ to denote the induced orientation on $E\cup F\to M\cup N$.  And we use $-[E]$ to denote the opposite orientation.
		\item Unless stated otherwise, the product $M\times N$ is oriented by the product orientation of $M$ and $N$ and we use $[M\times N]$ to denote the product orientation. Then we have
		\begin{equation}
			\partial[M\times N]=\partial[M]\times [N]+(-1)^{\dim M}[M]\times\partial [N].
		\end{equation}
		\item If $f:M\to N$ is a diffeomorphism, we use $f_*[M]$ as the orientation on $N$ induced by $\rD f:TM\to TN$ and $[M]$. 
		\item Let $E\to N$ be an oriented vector bundle and $f:M\to N$ a smooth map, then the bundle map $f^*E\to E$ induces a bundle map $\sign(f^*E)\to \sign(E)$. Through this map, the orientation $[E]$ induces an orientation on $f^*E$ over $M$, the induced orientation is denoted by $f^*[E]$.
	\end{itemize} 
\end{definition}

\begin{example}\label{ex:ori}
	Let $C$ be a closed oriented manifold. We now explain our orientation convention for the normal bundle $N$ of the diagonal $\Delta \subset C\times C=C_1\times C_2$ using the notation introduced in Definition \ref{def:orinot}. 
	$\Delta$ is oriented by the condition ${\pi_1}_*[\Delta]=[C_1]$\footnote{It is  equivalent to ${\pi_2}_*[\Delta]=[C_2]$.}, where $\pi_1:C_1\times C_2 \to C_1$ is the projection. Then there exists a unique orientation of $N$, such that when restricted to $\Delta$ we have
	$$[\Delta][N]=[TC_1][TC_2]|_{\Delta}.$$ 
	For simplicity, we suppress the restrictions and the subscripts\footnote{We will never switch the order of the two copies of $C$ throughout this paper.}, and the equation becomes
	\begin{equation}\label{oricri}
	[\Delta][N]=[C][C] \text{ or equivalently } [N][\Delta]=(-1)^{(\dim C)^2}[C][C].
	\end{equation}
	\eqref{oricri} determines our orientation convention for the normal bundle $N$ in this paper.
\end{example}

\subsection{Flow categories.}
Flow categories was introduced by Cohen-Jones-Segal \cite{cohen1995floer} to organize the moduli spaces in Floer (co)homology and were used to construct a stable homotopy type for Floer theories. Our construction will be based on the concept of flow categories, hence we recall the definition first.
\begin{definition}\label{def:flow}
	A \textbf{flow category} is a small category $\cC$ with the following properties.
	\begin{enumerate}
		\item\label{F1} The objects space $\Obj(\cC)=\sqcup_{i\in \Z} C_i$ is a disjoint union of \emph{closed}  manifolds $C_i$, i.e.\ $C_i$ is a compact manifold without boundary. The morphism space $\Mor(\cC)=\cM$ is a manifold. The source and target maps $s,t:\cM\to C$ are smooth. 
		\item\label{F2} Let $\cM_{i,j}$ denote $(s\times t)^{-1}(C_i\times C_j)$. Then $\cM_{i,i}=C_i$, corresponding to the identity morphisms and $s,t$ restricted to $\cM_{i,i}$ are identities. $\cM_{i,j}=\emptyset$ for $j<i$, and $\cM_{i,j}$ is a \emph{compact} manifold for $j>i$.
		\item\label{F3}  Let $s_{i,j},t_{i,j}$ denote $s|_{\cM_{i,j}}, t|_{\cM_{i,j}}$. For every strictly increasing sequence $i_0<i_1<\ldots<i_k$, $t_{i_0,i_1}\times s_{i_1,i_{2}}\times t_{i_1,i_2}\times \ldots \times s_{i_{k-1},i_k}:{\cM_{i_0,i_1}}\times {\cM_{i_1,i_2}}\times\ldots \times\cM_{i_{k-1},i_k}\to C_{i_1}\times C_{i_1}\times C_{i_2}\times C_{i_2}\times \ldots \times C_{i_{k-1}}\times C_{i_{k-1}}$ is transverse to the submanifold $\Delta_{i_1}\times \ldots \times \Delta_{i_{k-1}}$ in the sense of Definition \ref{def:trans}.  Therefore the fiber product ${\cM_{i_0,i_1}}\times_{i_1} {\cM_{i_1,i_2}}\times_{i_2}\ldots \times_{i_{k-1}}\cM_{i_{k-1},i_k}:=(t_{i_0,i_1}\times s_{i_1,i_{2}}\times t_{i_1,i_2}\times \ldots \times s_{i_{k-1},i_k})^{-1}(\Delta_{i_1}\times \Delta_{i_2}\times \ldots \times \Delta_{i_{k-1}})\subset {\cM_{i_0,i_1}}\times {\cM_{i_1,i_2}}\times\ldots \times\cM_{i_{k-1},i_k} $ is a submanifold by Proposition \ref{prop:trans}.
		\item\label{F4} The composition $m:{\cM_{i,j}}\times_j {\cM_{j,k}}\to \cM_{i,k}$ is a smooth  map, such that $$m:  \bigsqcup_{i < j < k} \cM_{i,j}\times_j \cM_{j,k}\to \partial \cM_{i,k} \text{ is a diffeomorphism up to zero-measure.} $$
	\end{enumerate}
\end{definition}

\begin{example}\label{ex:ex}
Fixing a Morse-Bott function $f$ on a closed manifold $M$, then there are finitely many critical values $v_1<\ldots<v_n$. Let $C_i$ denote the critical manifold corresponding to the critical value $v_i$ and $\cM_{i,j}$ the \emph{compactified} moduli spaces of \emph{unparametrized gradient flow lines} from $C_i$ to $C_j$. Since the function value increases along a gradient flow line, we have $\cM_{i,j}=\emptyset$ when $i>j$. The source map $s:\cM_{i,j}\to C_i$ and target map $t:\cM_{i,j}\to C_j$ are  defined to be the evaluation maps at the negative/positive end of the flow line in $\cM_{i,j}$. The composition map $m$ is the concatenation of flow lines. It's a folklore theorem that $\cM_{i,j}$ are smooth manifolds with boundaries and corners if one chooses a suitable metric, c.f.\ \cite{austin1995morse,fukaya1996floer} and \S \ref{s7}. Therefore $\{C_i,\cM_{i,j}\}$ form a flow category.  We emphasize here the subscript $i$ in $C_i$ has nothing to do with Morse-Bott indices. Similar constructions also exist in Floer theories, as long as there is a background ``Morse-Bott" functional and all the transversality conditions are met. For example, \cite{cohen1995floer} gave an explicit construction of the flow category for the Hamiltonian Floer cohomology theory on $\CP^n$, where the background Morse-Bott functional is the symplectic action functional with the Hamiltonian $H=0$.\footnote{\cite{cohen1995floer} used homological convention, which gave the opposite category of a flow category in the sense of Definition \ref{def:flow}.}  There are also flow categories without obvious background Morse-Bott functionals, for example, the flow category for Khovanov homology  \cite{lipshitz2014khovanov}. 
\end{example}

This paper associates a natural cochain complex to each (oriented) flow category in a functorial way. The main application would be defining Hamiltonian-Floer cohomology or Morse cohomology under Morse-Bott non-degenerate conditions. Although we will be discussing the \emph{abstract notion of flow categories}, it would be helpful to keep Example \ref{ex:ex} in mind. In view of this, with a bit abuse of notation, we will refer to elements of $\cM_{i,j}$ as Morse (or Floer) trajectories from $C_i$ to $C_j$. Inspired from Example \ref{ex:ex}, Condition \eqref{F2} in Definition \ref{def:flow} is usually the consequence of the existence of some background functional and the morphism space $\cM_{i,j}$ is the \emph{compactified} moduli space of ``gradient flow lines"\footnote{It could be Floer flow lines, which, strictly speaking, are not gradient flow lines.}, i.e.\ the space of possibly broken ``gradient flow lines". Condition \eqref{F3} is necessary for the smoothness of the composition map $m$. Roughly speaking, Condition \eqref{F4} is that the boundary of the morphism space is the space of nontrivial compositions of morphisms, although Condition \eqref{F4} is only about an essential portion of the correspondence. In applications, we can stratify $\cM_{i,j}$ in a cell-like manner by a poset similar to the construction in \cite{VFC} such that $m$ respects the structure, but we will not need that level of precision in this paper.


\begin{remark}\label{rmk:flow}
A few remarks on Definition \ref{def:flow} in applications are in order.
\begin{enumerate}
	\item A flow category is called \textbf{Morse}, if $C$ is a discrete set. Then the fiber product transversality becomes tautological, and it recovers the definition of a flow category in \cite{cohen1995floer} up to taking the opposite category.
	\item 	In the context of Floer theories, the moduli spaces may not be manifolds in general, but some weighted objects with local symmetries, e.g. weighted branched orbifolds in \cite{HWZ3}. Every argument in this paper holds for weighted branched orbifolds, since there is a well-behaved integration theory with Stokes' theorem \cite{hofer2010integration}.
	\item  	When the flow category comes from a Morse-Bott functional $f$, but $f$ is not single valued\footnote{For example, Hamiltonian-Floer cohomology on $(M,\omega)$ with  $\omega|_{\pi_2(M)}\ne 0$ has such property.}, we need to lift $f$ to $\tilde{f}$ over the cyclic cover \cite{cohen1995floer} to guarantee Condition \eqref{F2} in Definition \ref{def:flow}. Such modification was already reflected in the usual construction by introducing the Novikov coefficient.
	\item \label{compact_remark} In Definition \ref{def:flow}, we require $C_i$ to be compact and without boundary. However the compactness assumption can be dropped, i.e.\ $C_i$ could be a disjoint union of infinitely many closed manifolds or $C_i$ could have noncompact components\footnote{But those noncompact manifolds should have finite topology, see \S \ref{ss:prop} for details.}. In such generalizations,  compactness of $\mathcal{M}_{i,j}$ can be weakened to that the target maps $t:M_{i,j} \to C_j$ are proper\footnote{One can replace it by asking $s:M_{i,j}\to C_i$ to be proper, but it will result in a theory analogous to the compactly supported cohomology.}, see \S \ref{ss:prop} for details.
	\item For a background Morse-Bott function $f$, sometimes it is impossible to partition the critical manifolds by $\Z$ and in the order of increasing critical values, i.e.\ critical values may accumulate.  For example, Hamiltonian-Floer cohomology with Novikov coefficient will have this problem, if the symplectic form is irrational. However, Gromov compactness for the Hamiltonian-Floer equation implies that there is an action gap $\hbar$, such that there are no non-constant flow lines when the action difference (energy) is smaller than $\hbar$. Therefore we can still divide all the critical manifolds into groups indexed by $\Z$, such that there are no non-constant flow lines inside each group. Then the flow category can still be defined using the generalization in \eqref{compact_remark}.
	\item 	In most of this paper, we will work with oriented $C_i$, see Definition \ref{oridef}. This assumption can be dropped with the price of working with local systems. We discuss this generalization in \S \ref{s5}. 
	\item We point out here that the requirement of the partition of $\Obj(\cC)$ by $\Z$ is not necessary. We can certainly work with $\Obj(\cC)$ indexed by any set $I$, as long as we require that $\cM_{i,j}$ has only finitely many degenerations for any $i,j\in I$ and the finite set of degeneration configurations is equipped with a partial order, whose minimum elements are built from $\cM_{i,j}$ without boundary. This is precisely the setup in \cite[\S 7]{pardon2016algebraic} and satisfied by more general constructions in \cite{VFC}. When  $\Obj(\cC)$ is indexed by $\Z$ with the properties in Definition \ref{def:flow}, the set of degeneration configurations of $\cM_{i,j}$ is precisely the set of strictly increasing sequences $S:=\{i<\ldots<j\}$,  where the partial order is given by $S_1\le S_2$ iff $S_2\subset S_1$. Then the minimum element is $\{i<i+1<\ldots<j-1<j\}$, which corresponds to the fiber product of manifolds $\cM_{*,*+1}$ without boundary. However, this level of generalization does not add much to applications  we have in mind, hence we choose to work with the more down-to-earth version (Definition \ref{def:flow}) to avoid more complication in notation.
\end{enumerate}
\end{remark}	

Flow categories can be equipped with extra structures. For our construction, the most relevant structures are gradings and orientations. Given a flow category $\cC = \{C_i,\cM_{i,j}\}$, for the simplicity of notation, \emph{we assume through out this paper that $\dim \cM_{i,j}$  and $\dim C_i$ are well-defined}. This requirement usually holds when each $C_i$ has one component. 

\begin{remark}
	When $\dim C_i$ and $\dim \cM_{i,j}$ are not well-defined, then we need to work in a component-wise way. For example, if a function $f$ in Example \ref{ex:ex} is Morse and $C_i$ contains critical points of different Morse indices, then $\cM_{i,i+1}$ has multiple connected components of different dimension. This generalization only results in complexity of notation, it is straightforward to see that all proofs in this paper hold for such generalizations. The proofs presented in this paper can be viewed as formulae on one component.
\end{remark}
Let $m_{i,j}:= \dim \cM_{i,j}$ for $i<j$ and $c_i:=\dim C_i$. We formally define $m_{i,i} := c_i -1$. By  \eqref{F3}, \eqref{F4} of Definition \ref{def:flow} and Proposition \ref{prop:trans}, $t_{i,j}\times s_{j,k}:\cM_{i,j}\times \cM_{j,k}\to C_j\times C_j$ is transverse to $\Delta_j$ and an open dense part of $\cM_{i,j}\times_j\cM_{j,k}$ can be identified with part of the boundary of $\cM_{i,k}$. Then we have:
\begin{equation}
\label{dim} m_{i,j}+m_{j,k}-c_j+1=m_{i,k}, \quad  \forall i\le j\le k.
\end{equation}
\begin{definition}\label{def:grade}
 A flow category is \textbf{graded} if for each $i\in \Z$, there is an integer $d_i$, such that $d_i=d_j+c_j-m_{i,j}-1$ for all $i<j$. \footnote{When $\dim M_{i,j}$ or $\dim C_i$ are not well-defined, a grading is an assignment of integers to each component of $C_i$ satisfying similar relations.} We will refer to $\{d_i\}$ as the grading structure. Similarly we define a $\Z/k$ grading structure if $d_i\in \Z/k$ and the relation holds in $\Z/k$.
\end{definition}

\begin{remark}\label{rmk:grading}
The $\Z/k$ grading structure on a flow category is used to equip the Morse-Bott cochain complex with a $\Z/k$ grading. In the finite dimensional Morse-Bott theory, a $\Z$ grading structure exists, i.e.\ $d_i$ can be the dimension of the negative eigenspace of $\mathrm{Hess}(f)$ on $C_i$. For Hamiltonian-Floer cohomology, a $\Z/2$ grading structure always exists and a $\Z$ grading structure exists if the first Chern class of the symplectic manifold vanishes, then $d_i$ is related to the generalized Conley-Zehnder index \cite{robbin1993maslov}.
\end{remark}

Next, we define orientations on a flow category. Since $t_{i,j}\times s_{j,k}:\cM_{i,j}\times \cM_{j,k}\to C_j\times C_j$ is transverse to the diagonal $\Delta_j$,  the pullback $(t_{i,j}\times s_{j,k})^*N_j$ of the normal bundle $N_j$ of $\Delta_j$  by  $t_{i,j}\times s_{j,k}$ is the normal bundle of $\cM_{i,j}\times_j\cM_{j,k}:=(t_{i,j}\times s_{j,k})^{-1}(\Delta_j)$ in $\cM_{i,j}\times \cM_{j,k}$. If $N_j$ is oriented, then we can pull back this orientation to orient the normal bundle of $\cM_{i,j}\times_j\cM_{j,k}$. We define a coherent orientation on a flow category as follows.
\begin{definition}\label{oridef} 
	A \textbf{coherent orientation} on a flow category is an assignment of orientations for each $C_i$, $\cM_{i,j}$ and $\cM_{i,j}\times_j \cM_{j,k}$, such that the following holds.
	\begin{enumerate}
		\item The normal bundle $N_i$ of $\Delta_i \subset C_i\times C_i$ is oriented by $[N_i][\Delta_i]=(-1)^{c_i^2}[C_i][C_i]$ as in Example \ref{ex:ori}. 
		\item\label{cond:ori2} $(t_{i,j}\times s_{j,k})^*[N_j][\cM_{i,j}\times_j \cM_{j,k}]=(-1)^{c_jm_{i,j}}[\cM_{i,j}][\cM_{j,k}].$
		\item \label{cond:ori3}	$\partial[\cM_{i,k}]=\sum_j (-1)^{m_{i,j}}m\left([\cM_{i,j}\times_j \mathcal{M}_{j,k}]\right).$\footnote{More precisely, the relation holds on where $m$ is a diffeomorphism. }
	\end{enumerate}
	Or one can combine \eqref{cond:ori2} and \eqref{cond:ori3} as 
		$$(t_{i,j}\times s_{j,k})^*[N_j] m^{-1}\left(\partial[\cM_{i,k}]|_{m(\cM_{i,j}\times_j \cM_{j,k})}\right)=(-1)^{(c_j+1)m_{i,j}}[\cM_{i,j}][\cM_{j,k}].$$
\end{definition}
\begin{remark}\label{rmk:orientation}
 Orientation conventions are by no means unique, however they typically differ by a global change. For example, in the context of Morse theory, Definition \ref{def:def}  differs from \cite{MR2748215} by an opposite sign on the orientation of every $\cM_{i,j}$. We point out here, although our orientation conventions for fiber products are different from \cite{joyce2009manifolds}, our conventions also enjoy the associativity \cite[Proposition 7.5(a)]{joyce2009manifolds} hence the uniqueness property in \cite[Remark 7.6 (iii)]{joyce2009manifolds} holds.
\end{remark}
We will discuss how coherent orientations arise in applications in \S \ref{orientation}.  When the flow category is oriented as in Definition \ref{oridef}, we have  the following form of Stokes' theorem:
$$\int_{\cM_{i,k}} d\alpha=\sum_{i<j<k} (-1)^{m_{i,j}}\int_{{\cM_{i,j}}\times_j \cM_{j,k}}m^*\alpha$$
Let $\alpha \in \Omega^*(C_i),\beta \in \Omega^*(C_k)$ and $i<j<k$. Because $s_{i,k}\circ m|_{\cM_{i,j}\times_j\cM_{j,k}}=s_{i,j}\circ \pi_1$ and $t_{i,k}\circ m|_{\cM_{i,j}\times_j\cM_{j,k}}=t_{j,k}\circ \pi_2$, where $\pi_1,\pi_2$ are natural projections, we have \begin{equation}\label{identification}
\int_{m({\cM_{i,j}}\times_j \cM_{j,k})} s_{i,k}^* \alpha \wedge t_{i,k}^* \beta=\int_{{\cM_{i,j}}\times_j \cM_{j,k}} m^*s_{i,k}^* \alpha \wedge m^*t_{i,k}^*\beta=\int_{{\cM_{i,j}}\times_j \cM_{j,k}}\pi_1^*s_{i,j}^*\alpha \wedge \pi_2^*t_{j,k}^*\beta\end{equation} 
Since we will only consider pullbacks of forms by source and target maps, it is convenient to think that $\cM_{i,j}\times_j \cM_{j,k}$ is contained in $\partial \cM_{i,k}$, and suppress the composition map $m$.

\subsubsection{Conventions for cochain complexes} In a typical homological algebra textbook e.g.\ \cite{weibel1995introduction}, a cochain complex is $\Z$ graded or $\Z/k$ graded for $k\ge 2$. As mentioned in Remark \ref{rmk:grading}, the grading of the Morse-Bott cochain complex is a consequence of the grading structure in Definition \ref{def:grade}, which is an extra piece of data on flow categories. Although the applications in our mind always have at least a $\Z/2$ grading structure, we will not assume this and only work with Definition \ref{def:flow}. As a result, our cochain complex is simply a vector space $C$ with an operator $d:C\to C$ such that  $d^2=0$. Then the cohomology $H(C,d)$ is defined as $\ker d/\Ima d$. The definitions of cochain maps and homotopies are similar and have the usual properties. It is clear that by forgetting the grading on a $\Z/k$ graded cochain complex, we get a cochain complex in the above sense.  Many basic properties in homological algebra survive for ungraded cochain complexes, e.g.\ the spectral sequence from a filtration, the exact triangle\footnote{When we have a $\Z$ grading, the exact triangle is a long exact sequence.} from a short exact sequence, the mapping cone and mapping cylinder constructions.

\subsection{Review of existing constructions}
Throughout this subsection, we fix a flow category $\cC:=\{C_i,\cM_{i,j}\}$, such that there are finitely many nonempty $C_i$ for simplicity (for example one can take the flow category from Example \ref{ex:ex}). Before giving our construction of the minimal Morse-Bott cochain complex in \S \ref{mbcochain}, we review the three constructions in the existing literature: Austin-Braam's pull-push construction, Fukaya's push-pull construction and the cascades construction. For simplicity, we completely neglect the issue of signs\footnote{For curious readers who would like to verify those constructions, we point out here that Austin-Braam \cite{austin1995morse} have got incorrect orientations and signs. Although our construction is motivated from theirs, we will not appeal to any of their specific formulae in our proofs.} and orientations. 

\subsubsection{Austin-Braam's Morse-Bott cochain complex  $(BC^{\AB}, d^{\AB})$}\label{AB}
Austin-Braam \cite{austin1995morse} defined the Morse-Bott cochain complex of a flow category to be 
$$(BC^{\AB}:=\oplus_i \Omega^*(C_i), d^{\AB}),$$
where $\Omega^*(C_i)=\oplus_{j=0}^{\dim C_i} \Omega^j(C_i)$ is the space of differential forms on $C_i$. The differential $d^{\AB}$ is defined as $\sum_{k\ge 0 }d_k$, where $d_k$ is defined by:
\begin{align}
d_0=\rd &:  \Omega^*(C_i)\to \Omega^*(C_i)\text{ is the usual exterior differential on differential forms.} \nonumber \\
d_k&:  \Omega^*(C_i)\to \cD^*(C_{i+k}),  \alpha \mapsto {t_{i,i+k}}_*\circ s^*_{i,i+k}(\alpha), \text{ for } k\ge 1 \label{eqn:AB}.
\end{align}
 Here $\cD^*(C)$ is the space of currents on $C$. The operator $d_k$ taking value in $\cD^*(C)$ instead of $\Omega^*(C)$ causes difficulties getting a well-defined \emph{ungraded} cochain complex $(BC^{\AB}, d^{\AB})$. Thus to make it well-defined, the target maps $t_{i,j}$ are assumed to be fibrations in Austin-Braam's model. Under such assumptions, ${t_{i,j}}_*$ is integration along the fiber, hence $d_k$ actually lands in $\Omega^*(C_{i+k})$. However it was noticed in \cite[Remark 2.4]{latschev2000gradient} that the fibration condition is obstructed for some Morse-Bott functions, i.e.\ there exists a Morse Bott function $f$, such that the fibration property fails for all metrics. 
 
 \begin{remark}\label{rmk:MBS}
 An equivalent form of the fibration condition was studied by Banyaga-Hurtubise under the name \textbf {Morse-Bott-Smale condition} \cite[Definition 3.4]{banyaga2010morse}. More precisely, let $\phi_t$ be the gradient flow of $f$,  the Morse-Bott-Smale condition holds iff the unstable manifold $U(C_i)=\{x|x\in M, \lim_{t\to -\infty}\phi_t(x)\in C_i\}$ and the stable manifold $S(p)=\{x|x\in M, \lim_{t\to \infty} \phi_t(x)=p\}$ for $p\in C_j$ intersect transversely for all $C_i,C_j$ and $p\in C_j$.\footnote{Note that we use (un)stable manifolds of the \emph{positive} gradient flow, this explains the discrepancy with \cite[Definition 3.4]{banyaga2010morse}.} Note that $(U(C_i)\cap S(p))/\R$ is the intersection of the preimage $t_{i,j}^{-1}(p)$ with the open stratum of $\cM_{i,j}$ (the space of unbroken flow lines), it is easy to check that $U(C_i)$ is transverse to $S(p)$ iff $p$ is a regular value of $t_{i,j}$ restricted to the open stratum. In particular, the fibration condition implies the Morse-Bott-Smale condition. On the other hand,  the Morse-Bott-Smale condition implies the fibration condition by \cite[Corollary 5.20]{banyaga2010morse} and Ehresmann’s Theorem. Latschev introduced another even stronger condition \cite[Definition 2.3]{latschev2000gradient} to make sure the generalization of Harvey-Lawson's method \cite{harvey2001finite} can work in the context of Morse-Bott functions. On the other hand, the existence of a flow category only requires that $U(C_i)$ and $S(C_j)$ (the stable manifold of $C_j$) intersect transversely, and the iterated source and target maps
from these transverse intersections are transverse for all $i, j$, see \S \ref{s7} (this holds automatically when the  Morse-Bott-Smale condition holds). We refer to such a pair $(f,g)$ of a function and a metric as a \textbf{Morse-Bott-Smale pair} in \S \ref{s7}. It is important to note that the  \textbf{Morse-Bott-Smale pair} condition is much weaker than the \textbf {Morse-Bott-Smale condition} (namely transversality v.s.\ point-wise transversality in a family). Moreover, Morse-Bott-Smale pairs always exist, in particular, there is a metric for Latschev's example to form a  Morse-Bott-Smale pair.
 \end{remark}
 
 \begin{remark}
 	One way to get the fibration property is to fatten up all moduli spaces systematically, a construction in this spirit was carried out in \cite{fukaya2015kuranishi} using CF-perturbations.
 \end{remark}
 
 \begin{remark}
 The Austin-Braam cochain complex  $(BC^{\AB}, d^{\AB})$ explained here is ungraded. However, we can grade $\alpha\in \Omega^j(C_i)$ by $j+d_i$, where $d_i$ the dimension of the negative eigenspace of $\mathrm{Hess}(f)$ on $C_i$, i.e.\ the grading structure in Remark \ref{rmk:grading}, then  $(BC^{\AB}, d^{\AB})$ is graded by $\Z$ and the degree of $d^{\AB}$ is $1$. It is clear that $BC^{\AB}$ is equipped with a (action) filtration $F_i:=\oplus_{j=i}^\infty \Omega^*(C_j)\subset F_{i-1}$ compatible with the differential, which induces a spectral sequence. This structure does not depend on the grading and always exists for all flow categories, we will discuss the induced spectral sequence in \S \ref{specseq}. On the other hand, if there is a $\Z$ grading structure, then the cochain complex has the structure of a multicomplex studied in \cite{MR2718643}, which can decompose the spectral sequence further by the grading.
 \end{remark}

\subsubsection{Fukaya's Morse-Bott chain complex}
Fukaya \cite{fukaya1996floer} used ``singular" chains of critical manifolds to model the \emph{homology} of the manifold for the flow category in Example \ref{ex:ex}, and the Austin-Braam's model can be viewed as the dual of Fukaya's model. The chain complex is defined to be 
$$(BC^\Fuk:=\oplus_i C_*(C_i),\partial^\Fuk),$$
where $C_*(C_i)$ is the space of singular chains on $C_i$ and $\partial^\Fuk:=\sum_{k\ge 0} \partial_k$ with $\partial_k$ is defined by: 
$$\begin{aligned}
\partial_0=\partial & :C_*(C_i)\to C_{*}(C_i)\text{ is the usual boundary operator on singular chains.}\\
\partial_k& : C_*(C_{i+k})\to C_*(C_{i}), P \mapsto {s_{i,i+k}}_*\circ t^*_{i,i+k}(P), \text{ for } k\ge 1.
\end{aligned}$$
Now pushforward is well-defined. Pullback is defined as follows. Let $P:\Delta\to C_{i+k}$ be a singular chain, assume the fiber product $\Delta\times_{C_{i+k}} \mathcal{M}_{i,i+k}$ is cut out transversely in the sense of Definition \ref{def:trans}, hence a manifold with boundaries and corners. Then the projection to the second factor\footnote{To be more precise, we need to choose a triangulation of $\Delta\times_{C_{i+k}} \cM_{i,i+k}$},
$$\pi_{\cM_{i,i+k}}: \Delta\times_{C_{i+k}} \cM_{i,i+k}\to \cM_{i,i+k}$$ 
is defined to be  the pullback $t^*_{i,i+k}(P)$.

To guarantee this pullback is well-defined for all singular chains in $C_{i+k}$, one also needs to assume the target map $t_{i,i+k}$ is a fibration. To drop this constraint, Fukaya constructed a quasi-isomorphic subset $C_{\mathrm{geo}}(C_i)\subset C_*(C_i)$, such that the fiber products in the definition of pullbacks are defined over $C_{\mathrm{geo}}(C_i)$ and the operators $\partial_k$ are closed on $C_{\mathrm{geo}}(C_i)$.  Then $(\oplus_i C_{\mathrm{geo}}(C_i), \sum_{k\ge 0} \partial_k)$ defines a chain complex. It is important to note that the construction of $C_{\mathrm{geo}}(C_i)$ depends on $\cM_{i,j}$ and $s_{i,j}, t_{i,j}$.

\subsubsection{The cascades model}\label{cas}
The cascades construction was first introduced by Bourgeois \cite{bourgeois2002morse} and Frauenfelder \cite{frauenfelder2004arnold}.  In the following, we review their constructions but in the cohomology context to align with Austin-Braam's construction. For each $C_i$, we choose a Morse-Smale pair $(f_i,g_i)$.\footnote{That is stable manifolds and unstable manifolds of $\nabla_{g_i} f_i$ intersect transversely.} Then the cascade cochain complex is defined to be
$$(BC^\mathrm{C}:=\oplus_i  MC(f_i,g_i), d^\mathrm{C}),$$ 
where $MC(f_i,g_i)$ is the Morse \emph{cochain} complex of $C_i$ using the Morse-Smale pair $(f_i,g_i)$. The differential $d^\mathrm{C}$ is defined to be $\sum_{k\ge 0} d^\mathrm{C}_k$, where $d^\mathrm{C}_k$ is defined by:
$$
\begin{aligned}
d^\mathrm{C}_0=d_\mathrm{M} & :   MC(f_i,g_i)\to MC(f_i,g_i) \text{ is the usual Morse differential for $(f_i,g_i)$.}   \\
d^{\mathrm{C}}_k &: MC(f_i,g_i)\to MC(f_{i+k},g_{i+k}) \text{ is defined by the number of rigid cascades from $C_i$ to $C_{i+k}$, } \forall k \ge 1. 
\end{aligned}
$$
A $0$-cascade is an unparameterized gradient flow line for $(f_i,g_i)$. For $k\ge 1$, a $k$-cascade from $a\in \Crit(f_i)$ to $b\in \Crit(f_j)$ for $i<j$ is a tuple for $i<r_1<\ldots<r_{k}<j$,
$$(\gamma_i, m_{i,r_1}, \gamma_{r_1},t_{r_1}, \ldots, m_{r_{k-1},r_k}, \gamma_{r_k}, ,t_{r_k}, m_{r_k,j}, \gamma_j),$$ 
where $\gamma_*$ is a gradient flow line in $C_*$, and $m_{*,*}$ is a point in $\mathcal{M}_{*,*}$, $t_*$ are positive real numbers, such that  $\gamma_i(-\infty)=a ,\gamma_i(0)=s(m_{i,r_1}), \gamma_j(+\infty)=b$, $\gamma_j(0)=t(m_{r_k,j})$ and $\gamma_{r_s} (t_{r_s})=s(m_{r_s,r_{s+1}})$, $\gamma_{r_s} (0)=t(m_{r_{s-1},r_s})$.

\begin{figure}[H]
	\begin{center}
	  \scalebox{.5}{
		\begin{tikzpicture}
		\draw (0,0) to [out=90, in=180] (2,1) to [out=0,in=90] (4,0) to [out=270,in=0] (2,-1) to [out=180,in=270] (0,0);
		\draw (3,3) to [out=90, in=180] (5,4) to [out=0,in=90] (7,3) to [out=270,in=0] (5,2) to [out=180,in=270] (3,3);
		\draw (6,6) to [out=90, in=180] (8,7) to [out=0,in=90] (10,6) to [out=270,in=0] (8,5) to [out=180,in=270] (6,6);
		\draw[->] (0.5,0) to (2,0);
		\draw[->] (2,0) to (3.5,0) to (3.5,1.5);
		\draw[->] (3.5,1.5) to (3.5,3) to (5,3);
		\draw[->] (5,3) to (6.5,3) to (6.5,4.5);
		\draw[->] (6.5,4.5) to (6.5,6) to (8,6);
		\draw (8,6) to (9.5,6);
		
		\node at (0.5,0) [circle,fill,inner sep=1.5pt] {};
		\node at (9.5,6) [circle,fill,inner sep=1.5pt] {};
		
		\node at (2,-0.7) {$C_1$};
		\node at (5,2.3) {$C_2$};
		\node at (8,5.3) {$C_3$};
		
		\node at (0.5,0.3) {$a$};
		\node at (9.5,6.3) {$b$};
		
		\node at (2,0.3) {$\small{\nabla_{g_1}f_1}$};
		\node at (5,3.3) {$\small \nabla_{g_2}f_2$};
		\node at (8,6.3) {$\small \nabla_{g_3}f_3$};
		
		\node at (4.2,1.5) {$\in \cM_{1,2}$};
		\node at (7.2,4.5) {$\in \cM_{2,3}$};
		\end{tikzpicture}
		}
	\end{center}
	\caption{A $2$-cascade}
\end{figure}
When appropriate transversality assumptions are met, the moduli space of all cascades from $a$ to $b$ form a manifold.  Moreover, there is a natural compactification of the moduli space by including the ``broken" cascades. Then the differential $d^\mathrm{C}$ for the cascades cochain complex comes from counting the zero dimensional compactified moduli spaces of cascades.

\begin{remark}\label{rmk:choice}
The transversality for all compactified moduli spaces of cascades will become tautological if we assume $t_{i,j}$ is a fibration. In principle, we can obtain transversality for the cascades moduli spaces with generic choices of $(f_i,g_i)$. However the choice depends on  $\cM_{i,j}$ and $s_{i,j}, t_{i,j}$ just like Fukaya's model.
\end{remark}

\begin{remark}
The cascades construction is very popular and was deployed in many applications, see \cite{biran2008lagrangian,bourgeois2002morse,diogo2018symplectic,frauenfelder2004arnold,schmaschke2016floer}. One advantage of the cascades model, besides being locally finite dimensional, is the clear relation between the cascades model and the Morse model. More precisely, the additional Morse function $f_i$ can be used to perturb the Morse-Bott function into a Morse function whose gradient flow lines can be identified with cascades. This identification was carried out by Banyaga-Hurtubise \cite{banyaga2013cascades} in the context of finite dimensional Morse-Bott theory and Bourgeois-Oancea \cite{autonomous} in the context of symplectic homology with autonomous Hamiltonians.
\end{remark}

\subsection{Homological perturbation theory}
The fibration  condition in Austin-Braam's construction plays an important role in resolving the problem of the differential $d_k$ taking value in the space of currents.  Since fibration conditions are usually stronger than what one can get in any virtual techniques, we want to replace the fibration condition with a weaker transversality requirement, i.e.\ the fiber product transversality condition in Definition \ref{def:flow}, which is generic in every reasonable virtual technique. Note that the operator $d_k$ is defined using pushforward of differential forms. Since pushforward is defined as the dual operator of pullback, the problem is rooted in the fact that the dual space of differential forms $\Omega^*(C_i)$ is the space of currents $\cD^*(C_i)$ instead of itself. However, this problem never appears for finite dimensional vector spaces, i.e.\ whenever a finite dimensional space is equipped with a non-degenerate bilinear form, the dual space is identified with itself. To make use of this fact, we use the homological perturbation lemma, which is a method of constructing small cochain complexes from  larger ones. The strategy is to formally apply the homological perturbation lemma to the almost existing Austin-Braam's cochain complex, and then verify that the formula suggested by the perturbation lemma is well-defined directly and gives the desired algebraic relations.  The theme of this paper can be summarized as the following slogan: 

\begin{center}
	\begin{tikzpicture}[punkt/.style={
           rectangle,
           rounded corners,
           draw=black, thin,
           text width=43em,
           minimum height=1em}]
	\node[punkt] {Formal applications of the homological perturbation lemma can resolve the technical difficulty of infinite dimensional cochain models.};
	\end{tikzpicture}
\end{center}

\subsubsection{A homological perturbation theorem}
Roughly speaking, the homological perturbation lemma is a procedure that takes in a cochain complex and perturbation data (in most cases, projections and homotopies) and produces another cochain complex, which is quasi-isomorphic to the input cochain complex. For simplicity, we consider a cochain complex $A=\oplus_{i=1}^n A_i$, where $A_i$ are $\Z/2$ linear spaces (ungraded as usual, $i$ is \emph{not} the grading!). Assume the differential $d$ is in the form of $\sum_{k\ge 0}d_k$ with $d_k:A_i\to A_{i+k}$ for $k\ge 0$. Then $d^2=0$ implies that $(A_i,d_0)$ is also a cochain complex for all $i$. The perturbation data consists of for each $1\le i \le n$ projections $p_i:A_i\to A_i$ and homotopies $H_i:A_i\to A_i$ between the identity and $p_i$, i.e.
\begin{equation} \label{homotopyrel}\id-p_i=d_0\circ H_i+H_i\circ d_0.\end{equation}
With this perturbation data, we have the following homological perturbation lemma.
\begin{lemma}\label{hplthm}  
There is a differential on $\bigoplus_i p_i(A_i)$, such that  $\bigoplus_i p_i(A_i)$ is quasi-isomorphic to $A$.  
\end{lemma}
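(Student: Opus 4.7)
The plan is to invoke the classical homological perturbation formula, verifying that its hypotheses reduce to the data provided. Set $P := \bigoplus_i p_i$ and $H := \bigoplus_i H_i$, viewed as endomorphisms of $A$, and let $\iota : B := \bigoplus_i p_i(A_i) \hookrightarrow A$ denote the inclusion. Applying $d_0$ to the homotopy relation \eqref{homotopyrel} from either side yields $d_0 p_i = p_i d_0$, so $P$ commutes with $d_0$ and $d_0$ restricts to a differential $d_0|_B$ on $B$. Together with the standing idempotency $p_i^2 = p_i$ implicit in the word ``projection,'' the triple $(\iota, P, H)$ forms a strong deformation retract from $(A, d_0)$ onto $(B, d_0|_B)$.

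Next decompose $d = d_0 + \delta$ with $\delta := \sum_{k \geq 1} d_k$. Since each $d_k$ for $k \geq 1$ strictly raises the summand index and $A$ has only $n$ summands, $\delta^n = 0$. Consequently the operators
\[
(1 - H\delta)^{-1} := \sum_{j \geq 0} (H\delta)^j, \qquad (1 - \delta H)^{-1} := \sum_{j \geq 0} (\delta H)^j
\]
are well-defined as finite sums on every element of $A$. The transferred data is
\[
\tilde d := d_0|_B + P(1 - \delta H)^{-1}\delta\iota, \quad \tilde\iota := (1 - H\delta)^{-1}\iota, \quad \tilde P := P(1 - \delta H)^{-1}, \quad \tilde H := H(1 - \delta H)^{-1}.
\]

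The proof is then completed by verifying four algebraic identities by direct expansion, using $d^2 = 0$, $Pd_0 = d_0 P$, $P^2 = P$, the relation \eqref{homotopyrel}, and nilpotency of $\delta$: (i) $\tilde d^2 = 0$; (ii) $d\tilde\iota = \tilde\iota\,\tilde d$, so that $\tilde\iota$ is a chain map from $(B,\tilde d)$ to $(A,d)$; (iii) $\tilde P \tilde\iota = \id_B$; and (iv) $\id_A - \tilde\iota\tilde P = d\,\tilde H + \tilde H\,d$. Items (ii)--(iv) together imply that $\tilde\iota$ and $\tilde P$ are mutually inverse on cohomology, so $\tilde\iota$ is the required quasi-isomorphism and $(B,\tilde d)$ has the claimed property.

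I expect the only substantive step to be the bookkeeping in (i) and (iv); both are dispatched by telescoping rearrangements in which one systematically splits $d = d_0 + \delta$ and uses \eqref{homotopyrel} to collapse intermediate terms. The conceptual reason the argument is purely formal — and the point motivating its use throughout the paper — is that the index-raising nature of $\delta$ forces every geometric series to terminate, so no convergence or fibration hypothesis enters, in contrast to Austin--Braam's currents-valued construction.
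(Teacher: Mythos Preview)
The paper does not actually prove this lemma: immediately after stating it, the authors write that since Lemma~\ref{hplthm} is used only to motivate the formulas in \S\ref{morsebott}, they ``will not go into the details of signs nor the proof.'' They do record the transferred differential \eqref{hpl}, which coincides with your $\tilde d$ upon expanding the geometric series, but no argument for $D^2=0$ or for the quasi-isomorphism is given. So there is nothing in the paper to compare against; your proposal is filling a gap the paper deliberately leaves open.

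Your approach is the standard Basic Perturbation Lemma argument and is correct in outline. One technical point is worth flagging: the clean verifications of (iii) and (iv), and to a lesser extent (i), normally rely on the \emph{side conditions} $PH=0$, $H\iota=0$, $H^2=0$, which are not part of the hypotheses in the paper's setup. For instance, expanding $\tilde P\tilde\iota=(P+PAH)(\iota+HA\iota)$ with $A=(1-\delta H)^{-1}\delta$ gives $P\iota+PHA\iota+PAH\iota+PAHHA\iota$, and killing the last three terms uses exactly those three conditions. You should either (a) note that one can always replace $H$ by a modified homotopy satisfying the side conditions without changing the deformation-retract relation---e.g.\ first pass to $(\id-P)H(\id-P)$ and then apply the standard Lambe--Stasheff adjustment for $H^2=0$---or (b) restrict to proving only that $\tilde\iota$ is a quasi-isomorphism via a filtration/spectral-sequence argument, which needs less. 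As written, ``direct expansion'' and ``telescoping'' will not close (iii) and (iv) without this remark.
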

The lemma holds for general coefficient rings and graded complexes, once appropriate signs are assigned. Since we only use Lemma \ref{hplthm} to explain the motivation behind the formulae we give in \S \ref{morsebott}, we will not go into the details of the signs nor the proof. What is more relevant to our purpose is the pattern of the formula for the differential on  $\bigoplus p_i(A_i)$, which can be viewed as an analog of the perturbation theorem for $A_\infty$ structures proved in \cite{kontsevich2000homological}.  For a strictly increasing sequence of integers $T=\{i_0=0, i_1,\ldots,i_{r+1}=k\}$ for $r\ge 0$, we define the an operator $D_{k,T}: p_i(A_i)\to p_{i+k}(A_{i+k})$ for all integers $i$:
\begin{equation}\label{hpl}D_{k,T}=p_{i+k}\circ  d_{i_{r+1}-i_{r}} \circ H_{i+i_{r}}\circ\ldots\circ H_{i+i_2}\circ d_{i_2-i_1} \circ H_{i+i_1} \circ d_{i_1-i_0}\circ \iota_i \end{equation}
where  $\iota_i: p_i(A_i)\to A_i$ denotes the inclusion. $D_{k,T}$ can be schematically explained as follows:
\begin{center}
    \begin{tikzpicture}
    \draw[->] (0,-1) to (0,0) to (1,0);
    \draw[->] (1,0) to (3,0);
    \draw[->] (3,0) to (5,0);
    \draw (5,0) to (6,0);
    \draw [dotted] (6,0) to (7,0);
    \draw[->] (7,0) to (9,0);
    \draw[->] (9,0) to (10,0) to (10,-1);
    \draw (10,-2) to (10,-1);
    \draw[->] (0,-2) to (0,-1);
    
    \node at (0,0) [circle,fill,inner sep=1.5pt] {};
    \node at (2,0) [circle,fill,inner sep=1.5pt] {};
    \node at (4,0) [circle,fill,inner sep=1.5pt] {};
    \node at (8,0) [circle,fill,inner sep=1.5pt] {};
    \node at (10,0) [circle,fill,inner sep=1.5pt] {};
    \node at (10,-2) [circle,fill,inner sep=1.5pt] {};
    \node at (0,-2) [circle,fill,inner sep=1.5pt] {};
    
    \node at (-0.3,0) {$A_i$};
    \node at (2,-0.3) {$A_{i+i_1}$};
    \node at (4,-0.3) {$A_{i+i_2}$};
    \node at (8,-0.3) {$A_{i+i_{r}}$};
    \node at (10.5,0) {$A_{i+k}$};
    
    \node at (-0.3,-2.3) {$p_i(A_i)$};
    \node at (10.3,-2.3) {$p_{i+k}(A_{i+k})$};
    
    \node at (-0.3,-1) {$\iota_i$};
    \node at (10.5,-1) {$p_{i+k}$};
    
    \node at (1,0.3) {$d_{i_1}$};
    \node at (3,0.3) {$d_{i_2-i_1}$};
    \node at (9,0.3) {$d_{i_{r+1}-i_{r}}$};
    
    \draw[->] (2,0) to [out=180, in=180]  (2,0.8);
    \draw (2,0.8) to [out=0, in =0] (2,0);
    \node at (2,1.1) {$H_{i+i_1}$};
    
    \draw[->] (4,0) to [out=180, in=180]  (4,0.8);
    \draw (4,0.8) to [out=0, in =0] (4,0);
    \node at (4,1.1) {$H_{i+i_2}$};
    
    \draw[->] (8,0) to [out=180, in=180]  (8,0.8);
    \draw (8,0.8) to [out=0, in =0] (8,0);
    \node at (8,1.1) {$H_{i+i_{r}}$};
    \end{tikzpicture}
\end{center}
The new differential $D$  on $\bigoplus_i p_i(A_i)$ is defined as $$D=\sum_{k=0}^\infty D_k,$$
where $D_k=\sum_T D_{k,T}$ is the summation over all strictly increasing sequences $T$ from $0$ to $k$. 
  
\subsubsection{Cascades from homological perturbation}
In this part, we explain how to heuristically interpret the cascades \emph{cochain} complex as a homological perturbation on the Austin-Braam cochain complex. The feature that the cascades construction does not require the fibration condition also reflects the theme of the paper. 
 
We first explain the perturbation data used to get the cascades cochain complex, i.e.\ a pair of projection and homotopy $(p_i, H_i)$ on $\Omega^*(C_i)$ for every $i$.  We require that the image $\Ima p_i$ is a finite dimensional subspace of $\Omega^*(C_i)$. Given such perturbation data, we can formally write down operators $D_{k,T}$ from \eqref{hpl}.  Note that in the cascades construction, we choose a Morse-Smale pair $(f_i,g_i)$ on each critical manifold $C_i$. The perturbation data is then given by such a Morse-Smale pair using the construction in \cite{harvey2001finite}.  Before giving the construction, we first set up some notation. We will not be precise about signs and orientations.
\begin{definition}\label{def:current}
Let $C$ be an oriented closed manifold.
	\begin{enumerate}
	\item\label{sign_1} $\cD^*(C)$ denotes the space of currents\footnote{For basics of currents, we refer readers to \cite{griffiths2014principles}.} on $C$. There is a natural inclusion $\iota: \Omega^*(C)\to \cD^*(C)$ given by
	$$\iota(\alpha)(\beta)=\int_C \alpha \wedge \beta, \quad \forall \alpha \in \Omega^*(C).$$
	\item\label{sign_2} Let $\kappa \in \cD^*(C\times C)$ be a current, then the induced integral operator $I_\kappa:\Omega^*(C)\to \cD^*(C)$ is defined as:
	\begin{equation}\label{intop} I_\kappa(\alpha)(\beta):=(-1)^{\dim C} \kappa(\pi_1^*\alpha\wedge \pi_2^*\beta)  \qquad \forall \alpha, \beta \in \Omega^*(C)\footnote{We make the signs in \eqref{sign_1}, \eqref{sign_2}  precise for the sake of \S \ref{morsebott}.},
	\end{equation}
	where $\pi_1, \pi_2$ are  projections of $C\times C$ to the first and the second factor respectively.  
	\item\label{singular} Let $B$ be an oriented compact manifold  and  $i:B\to C$ a smooth inclusion. Then we can define a current $[B] \in \cD^*(C)$ by:
	$$[B](\alpha):=\pm \int_{B} i^*\alpha, \quad \forall \alpha \in \Omega^*(C).$$ 
	In general, one can define a current $[B]$ for any oriented singular chain $B$.
\end{enumerate}   
\end{definition} 
Let $\Crit(f_i)$ be the set of critical points of the Morse function $f_i$ on $C_i$.  We use $\phi^i_t:C_i\to C_i$ to denote the time $t$ flow of the gradient vector field $\nabla_{g_i} f_i$ on $C_i$. Then the pullback operator  ${\phi^i_{-t}}^*:\Omega^*(C_i)\to \Omega^*(C_i)$ can be understood as the integral operator $I_{[\graph \phi^i_t]}$ of the current of $\graph \phi^i_t:=\{(x,\phi^i_t(x))\}\subset C_i\times C_i$\footnote{$\phi_t^*$ is represented by $\{(\phi^i_t(x),x)\}\subset C_i\times C_i$, which was used in \cite{harvey2001finite}. }. The manifold $\cup_{0<t'< t}\graph \phi^i_{t'}\subset C_i\times C_i$  defines an integral operator $H^i_t:=I_{[\cup_{0< t'< t}\graph \phi^i_{t'}]}=I_{[\cup_{0\le  t'\le t}\graph \phi^i_{t'}]}$.  Since $\partial (\cup_{0\le t'\le t}\graph \phi^i_{t'}) = \Delta_i \cup \graph \phi^i_t$, Stokes' theorem implies that
\begin{equation}\label{hpleqn}
	\id-{\phi^i_{-t}}^*=\rd \circ H^i_t+H^i_t\circ \rd.
\end{equation}	
It was proven in \cite{harvey2001finite} that when $t\rightarrow \infty$, \eqref{hpleqn} converges to a projection-homotopy relation. To be more specific, let $U_x$, $S_x$ denote the unstable and stable manifolds of the critical point $x\in \Crit(f_i)$, i.e. 
\begin{eqnarray}
U_x &:= &\{y\in C_i|\lim_{t\to -\infty}\phi^i_t(y)=x\}; \nonumber 
\\S_x & := & \{y\in C_i|\lim_{t\to \infty}\phi^i_t(y)=x\}. \nonumber 
\end{eqnarray}
In the sense of currents, we have the following,
\begin{eqnarray}
\lim_{t\to \infty}\left[\graph \phi^i_t\right] &= &\sum_{x\in \Crit(f_i)}\left[S_x\times U_x\right]\label{limit1},\\
\lim_{t\to \infty}\left[\bigcup_{0<t'<t}\graph \phi^i_{t'}\right] &= &\left[\bigcup_{0<t'<\infty}\graph \phi^i_{t'}\right]\label{limit2},
\end{eqnarray}
see \cite[Theorem 2.3, 3.3]{harvey2001finite} for details.
\begin{remark}
    It is important to note that \cite{harvey2001finite} studied $\lim_{t\to \infty }\phi_t^*$ and \cite[Theorem 3.3]{harvey2001finite} stated that $\lim_{t\to \infty }\phi_t^*$ can be represented by $\sum_{x\in \Crit(f_i)}\left[U_x]\times [S_x\right]$. Then \eqref{hpleqn} projects $\Omega^*(C_i)$ to the Morse \emph{chain} complex \cite[Proposition 4.5]{harvey2001finite}, or equivalently the Morse \emph{cochain} complex of $-f_i$. Since we need a projection to the Morse \emph{cochain} complex of $f_i$ to explain the cascades model, we need to work with $\lim_{t\to \infty }\phi_{-t}^*$ instead. This explains the discrepancy with \cite{harvey2001finite}.
\end{remark}

Hence \eqref{limit1} and \eqref{limit2} define two integral operators ${\phi^i_{-\infty}}^*, H^i_\infty:\Omega^*(C_i)\to \cD^*(C_i)$, such that \begin{equation} \label{projhomo}\iota-{\phi^i_{-\infty}}^*=\rd \circ H^i_\infty+H^i_\infty \circ \rd, \text{\quad see \cite[Theorem  2.3,3.3]{harvey2001finite}}\end{equation}
where $\iota$ is the natural embedding $\Omega^*(C_i)\hookrightarrow \cD^*(C_i)$. 
Note that
\begin{equation}\label{eqn:projection}
{\phi^i_{-\infty}}^*(\alpha)=\sum_{x\in \Crit(f_i)} \left(\int_{C_i} \alpha\wedge[S_x]\right)\cdot[U_x]=\sum_{x\in \Crit(f_i)}  \left(\int_{S_x} \alpha|_{S_x}\right)\cdot [U_x], \text{\quad see \cite[Theorem  4.1]{harvey2001finite}}
\end{equation}
can be viewed as the projection from $\Omega^*(C_i)$ to the Morse cochain complex. By \eqref{projhomo}, $H^i_{\infty}$ defines a homotopy between $\iota$ and the projection ${\phi^i_{-\infty}}^*$.  
\begin{remark}\label{rmk:strict}
	Strictly speaking, \eqref{projhomo} is not a genuine  projection-homotopy relation, since ${\phi^i_{-\infty}}^*$ lands in space of currents instead of differential forms. To get an honest projection-homotopy relation, we need to enlarge $\Omega^*(C)$ by adding some currents of singular chains. Roughly speaking, the enlargement is the minimal extension which contains $[U_x],[S_x]$ for $x\in \mathrm{Crit}(f_i)$, such that it is closed under ${\phi^i_{-\infty}}^*$, $H_\infty^i$ and $\rd$. Such enlargement depends on $\cM_{i,j}$ and $s_{i,j}, t_{i,j}$, which leads to the choices in Remark \ref{rmk:choice}.
\end{remark}

From now on, we will neglect the issue in Remark \ref{rmk:strict} and show formally that the cascades construction can be understood as applying the construction in \eqref{hpl} to the Austin-Braam cochain complex using the perturbation data $({\phi^i_{-\infty}}^*, H^i_\infty)$. Before ``proving" the claim, we first ``define" the integration of pullbacks of currents from singular chains.
\begin{definition}\label{def:intcur}
	Let $\cM$ be a compact manifold with two smooth maps $s,t:\cM \to C_1,C_2$. Assume $B_1\subset C_1$ and $B_2\subset C_2$ are two submanifolds without boundary\footnote{The inclusion $B_*\subset C_*$ is not required to be proper, hence $B_*$ may not be closed. We only require $B_*$ is the interior of a compact manifold with boundaries and corners $\overline{B}_*$ so that the inclusion $B_*\hookrightarrow C_*$ is the restriction of a smooth map $\overline{B}_*\to C_*$. Therefore \eqref{singular} of Definition \ref{def:current} makes sense for $B_1$. In particular, the (un)stable manifolds satisfy the condition.}. If $s$ is transverse to $B_1$ and $t$ is transverse to $B_2$ and $s^{-1}(B_1)$ is transverse to $t^{-1}(B_2)$ with finite intersections, then we define
	$$\int_{\cM} s^*([B_1])\wedge t^*([B_2]) :=  \sum_{p \in s^{-1}(B_1)\cap t^{-1}(B_2)  }\pm 1.$$
\end{definition}
Definition \ref{def:intcur} is natural in the sense that if we approximate the current $[B_1]$ by differential forms supported in a tubular neighborhood \cite[Chapter 3 \S 1]{griffiths2014principles}, then the limit of the integration of the pullbacks of  the approximations is indeed the number of intersection points counted with sign.\footnote{The sign is determined by the orientations of $B_1,B_2,C_1,C_2$ and $\cM$.}

Now we apply \eqref{hpl}. For $x\in \mathrm{Crit}(f_i)$, the first term $D_0$ in $D=\sum_{k\ge 0} D_k$ is defined by 
\begin{eqnarray*}
D_0([U_x]) &:= & {\phi^i_{-\infty}}^*(d_0([U_x]))={\phi^i_{-\infty}}^*(\rd([U_x]))\\
& = &\sum_{y\in \mathrm{Crit}(f_i) }\left(\int_{C_i} \rd([U_x]) \wedge [S_y]\right)\cdot[U_y].
\end{eqnarray*}
It was proven in \cite[Proposition 4.5]{harvey2001finite} that when the Morse-Smale condition holds  we have $\int_{C_i} \rd([U_x]) \wedge [S_y]$ equals the signed counts of rigid gradient flow lines from $x$ to $y$.  Therefore $D_0$ recovers the Morse differential on $C_i$.  Next, we study the higher operators in $D$. Let $x\in \mathrm{Crit}(f_i)$, we have
$$\begin{array}{>{\displaystyle}r>{\displaystyle}c>{\displaystyle}l}D_1([U_x])={\phi^{i+1}_{-\infty}}^* d_1[U_x] &  \stackrel{\eqref{eqn:projection}}{=} &\displaystyle \sum_{y\in \Crit(f_{i+1})}\left(\int_{C_{i+1}} d_1[U_x]\wedge [S_y] \right) \cdot[U_y] \\
 & \stackrel{\eqref{eqn:AB}}{=} & \sum_{y\in \Crit(f_{i+1})}\left(\int_{\cM_{i,i+1}} s_{i,i+1}^*[U_x]\wedge t_{i,i+1}^*[S_y] \right) \cdot[U_y]  \\
 & \stackrel{\text{Def \ref{def:intcur}}}{=} & \sum_{y\in \Crit(f_{i+1})} \#(s_{i,i+1}^{-1} (U_x) \cap t_{i,i+1}^{-1}(S_y))\cdot[U_y].
 \end{array}$$
 Here the last equality requires that $s_{i,i+1}^{-1}(U_x) \pitchfork t_{i,i+1}^{-1}(S_y)$. Therefore $D_1$ counts points in $s_{i,i+1}^{-1} (U_x) \cap t_{i,i+1}^{-1}(S_y)$, which is exactly the 1-cascades in \cite{bourgeois2002morse, frauenfelder2004arnold}. By the same argument, $D_{2,\{0,2\}}$ counts rigid 1-cascades from $C_i$ to $C_{i+2}$. Next we consider the operator $D_{2,\{0,1,2\}}$,
\begin{eqnarray}
 D_{2,\{0,1,2\}}([U_x]) & =& {\phi^{i+2}_{-\infty}}^*\circ d_1\circ H^{i+1}_\infty\circ d_1([U_x]) \nonumber\\
& \stackrel{\eqref{eqn:projection}}{=} & \sum_{y\in \Crit(f_{i+2})}\left(\int_{C_{i+2}}(d_1\circ H^{i+1}_{\infty}\circ d_1 [U_x])\wedge[S_y]\right)\cdot [U_y] \nonumber \\
& \stackrel{\eqref{eqn:AB}}{=} & \sum_{y\in \Crit(f_{i+2})}\left(\int_{\cM_{i+1,i+2}} s_{i+1,i+2}^*(H^{i+1}_\infty\circ d_1[U_x]) \wedge t_{i+1,i+2}^*[S_y]\right)\cdot[U_y] \label{eqn:D_2} 
\end{eqnarray}
Let us treat currents just like differential forms for simplicity. By definition we have
\begin{eqnarray*}
\int_{C_{i+1}} H^{i+1}_{\infty}\circ d_1([U_x])\wedge \alpha & = & \int_{C_{i+1}\times C_{i+1}} \pi_1^*(d_1([U_x])) \wedge \left[\bigcup_{0<t'<\infty}\graph \phi^{i+1}_{t'}\right]\wedge \pi_2^*\alpha \\
& = & \int_{\cM_{i,i+1}\times C_{i+1}} s_{i,i+1}^*[U_x]\wedge (t_{i,i+1}\times \id_{C_{i+1}})^* \left[\bigcup_{0<t'<\infty}\graph \phi^{i+1}_{t'}\right]\wedge \pi_2^*\alpha.
\end{eqnarray*}
Then we have
$$H^{i+1}_{\infty}\circ d_1([U_x])= \int_{\cM_{i,i+1}} s_{i,i+1}^*[U_x]\wedge (t_{i,i+1}\times \id_{C_{i+1}})^* \left[\bigcup_{0<t'<\infty}\graph \phi^{i+1}_{t'}\right],$$
the right hand side is the integration along the fiber $\cM_{i,i+1}$ in the trivial fibration $\cM_{i,i+1}\times C_{i+1}$. Therefore we have
$$\eqref{eqn:D_2}= \sum_{y\in \Crit(f_{i+2})}\left(\int_{\cM_{i,i+1}\times \cM_{i+1,i+2}} s_{i,i+1}^*[U_x] \wedge (t_{i,i+1}\times s_{i+1,i+2})^*\left[\bigcup_{0<t'<\infty}\graph \phi^{i+1}_{t'}\right] \wedge t_{i+1,i+2}^*[S_y]\right)\cdot [U_y].$$
When transversality holds,  by Definition \ref{def:intcur}, the last line above equals to 
$$\sum_{y\in \Crit(f_{i+2})}\#\left( \left(s_{i,i+1}^{-1}(U_x) \times t_{i+1,i+2}^{-1}(S_y) \right)\pitchfork \left((t_{i,i+1}\times s_{i+1,i+2})^{-1}(\bigcup_{0<t'<\infty}\graph \phi^{i+1}_{t'})\right)\right)\cdot [U_y].$$ 
It can be interpreted as the counting of 2-cascades from $C_i$ to $C_{i+2}$ staying on $C_{i+1}$ for finite time. Therefore $D_2 = D_{2,\{0,2\}}+D_{2,\{0,1,2\}}$ counts all rigid cascades from $C_i$ to $C_{i+2}$.  In general, assuming the transversality for the cascade moduli spaces, we shall recover the whole cascades construction from \eqref{hpl}. Hence the cascades construction fits into the homological perturbation philosophy.

\section{The Minimal Morse-Bott Cochain Complexes}\label{morsebott}
In this section, we carry out the construction of the minimal Morse-Bott cochain complex for an abstract oriented flow category, which is applicable to both finite dimensional Morse-Bott theory and Floer theories.  The motivation of the construction is from Lemma \ref{hplthm} and formula \eqref{hpl} with different perturbation data. We still need to make some choices (Definition \ref{def:def}) in the construction of the perturbation data, however unlike the cascades construction, the choices in the minimal construction only depend on $C_i$, i.e.\ there is no compatibility requirement with the morphism spaces $\cM_{i,j}$. 

This section is organized as follows: \S \ref{pertdata} constructs the perturbation data for the minimal Morse-Bott cochain complex; \S \ref{mbcochain} constructs the Morse-Bott cochain complexes for every oriented flow category; \S \ref{flowmor} defines flow morphisms which can be viewed as the geometric analogue of the continuation maps and shows that flow morphisms induce morphisms between Morse-Bott cochain complexes; \S \ref{flowcomp} explains the compositions of flow morphisms; \S \ref{flowhomotopy} defines flow homotopies and proves that flow homotopies induce homotopies between morphisms; \S \ref{canonical} establishes that our construction is canonical on the cochain complex level, i.e. it is independent of all choices; \S \ref{sub:sub} introduces flow subcategories and quotient categories, which are the geometric analogues of subcomplexes and quotient complexes respectively. From now on, we will be very specific about the orientations and signs and provide rigorous arguments. Proofs in this section involve a lot of sign computations, we provide a detailed proof of $d^2_{\BC}=0$ for the coboundary map $d_{\BC}$ in \S \ref{mbcochain}. Proofs of other results in \S \ref{flowmor},  \ref{flowcomp}, and \ref{flowhomotopy} will only be sketched.
   
\subsection{Perturbation data for the minimal Morse-Bott cochain complex}\label{pertdata}
In this subsection, we construct the perturbation data $\{(p_i,H_i)\}$ for the minimal Morse-Bott cochain complex of an oriented flow category $\mathcal{C}:=\{C_i,\cM_{i,j} \}$. Then \eqref{hpl} will motivate the definition of $D_{k,T}$ for the differential. We will show in the next subsection that they indeed define a cochain complex.

\subsubsection{Projection $p_i$} We start by defining a projection on $p_i$ on $\Omega^*(C_i)=\oplus_{j=1}^{\dim C_i} \Omega^j(C_i)$. First note that we have bilinear form on $\Omega^*(C_i)$ given by
\begin{equation}\label{pair}
\langle  \alpha, \beta \rangle_{i}:=(-1)^{\dim C_i\cdot |\beta| }\int_{C_i} \alpha \wedge \beta, \quad \forall \alpha, \beta \in \Omega^*(C_i).\end{equation}
We can pick representatives $\{\theta_{i,a}\}_{1\le a \le \dim H^*(C_i)}\subset \Omega^*(C_i)$ of a basis of $H^*(C_i)$, i.e.\ $\theta_{i,a}$ are closed forms such that the corresponding cohomology classes form a basis of $H^*(C_i)$. Such choice gives us a quasi-isomorphic embedding $H^*(C_i)\to \Omega^*(C_i)$. Let $h(i)$ denote the image of the embedding above, i.e.\ $h(i) := \la \theta_{i,1},\ldots, \theta_{i,\dim H^*(C_i)}\ra\subset \Omega^*(C_i)$. Note that \eqref{pair} is non-degenerate on cohomology, let $\{\theta^*_{i,a} \}_{1\le a \le \dim H^*(C_i)}\subset h(i)$ be the dual basis to the basis $\{ \theta_{i,a}\}$ in the sense that
\begin{equation}\label{eqn:dual}
    \langle \theta_{i,a}^*, \theta_{i,b}   \rangle_i=\delta_{ab}.
\end{equation}
Then we can define a projection $p_i:\Omega^*(C_i)\rightarrow h(i)\subset \Omega^*(C_i)$ by
\begin{equation}
p_i(\alpha):=\sum_{a=1}^{\dim H^*(C_i)} \langle \alpha, \theta_{i,a}\rangle_i \cdot \theta_{i,a}^*. \label{eqn:pi}
\end{equation}
If we identify $H^*(C_i)$ with $h(i)$, then $p_i$ can be thought of as a projection from $\Omega^*(C_i)$ to $H^*(C_i)$. 
\subsubsection{Homotopy $H_i$} 
We now explain the related homotopy $H_i$. First note that the Poincar\'e dual of the diagonal $\Delta_i\subset C_i\times C_i$ can be represented by Thom classes. We can identify a tubular neighborhood of the diagonal $\Delta_i$ with the unit disk bundle of the normal bundle $N_i$ of $\Delta_i$. Then one way of writing Thom classes of the diagonal $\Delta_i$ is
\begin{equation}\label{eqn:delta}
\delta_i^n:=\rd(\rho_n \psi_i),
\end{equation}
where $\psi_i$ is the angular form of the sphere bundle $S(N_i)$ \cite[\S 6]{bott2013differential} using the orientation in Example \ref{ex:ori} and $\rho_n:\R^+\to \R$ are smooth functions, such that $\rho_n$ is increasing, supported in $[0,\frac{1}{n}]$ and is $-1$ near $0$. For details of this construction, we refer readers to \cite[\S 6]{bott2013differential}. We also include a brief discussion of this construction and properties of it in Appendix \ref{conv}. The most important property of $\delta_i^n$ is that it converges to the Dirac current of $\Delta_i$. 
\begin{figure}[H]
	\begin{center}
		\begin{tikzpicture}
		\draw[->] (-1,0) -- (7,0) node[right] {$r$};
		\draw[->] (0,-2.5) -- (0,1) node[right] {$\rho_n(r)$};
		\draw (0,-2) .. controls (2,-2) and (1,0) .. (3,0);
		\node[left] at (0,-2) {$-1$};
		\node[above] at (3,0) {$\frac{1}{n}$};
		\node[above] at (6,0) {$1$};
		\end{tikzpicture}
		\caption{Graph of $\rho_n$}
	\end{center}
\end{figure}
\begin{lemma}\label{conv4}
	The Thom classes $\delta^n_i$ converge to the Dirac current $\delta_i$ of the diagonal $\Delta_i$ in the sense of currents, i.e.\ $\forall \alpha \in \Omega^*(C_i\times C_i)$ we have
	$$\lim_{n\to \infty}\int_{C_i\times C_i} \alpha \wedge \delta^n_i= \int_{C_i\times C_i} \alpha \wedge \delta_i :=\int_{\Delta_i} \alpha|_{\Delta_i}.$$
\end{lemma}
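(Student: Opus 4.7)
The plan is to reduce the statement to the defining fiber-integration property of the Thom class and then apply a Fubini-type argument in a tubular neighborhood of $\Delta_i$.

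First I would identify, once and for all, a tubular neighborhood $U$ of $\Delta_i\subset C_i\times C_i$ with the unit disk bundle $D(N)\to \Delta_i$ of the normal bundle, using the orientation convention from Example~\ref{ex:ori}. Since $\delta^n_i = \rd(\rho_n\psi_i)$ is supported in the radius-$1/n$ disk bundle, for $n$ large it is supported inside $U$; in particular the statement is a purely local computation near $\Delta_i$, and we may identify $\delta^n_i$ with a compactly (fiber-)supported form on $N\to\Delta_i$. Next I would recall the key property of the angular form $\psi_i$ (see \cite[\S 6]{bott2013differential}): on each fiber $N_x$ of rank $c_i$, the form $\rd(\rho_n\psi_i)\big|_{N_x}$ integrates to $1$, because $\rho_n\psi_i$ agrees with $-\psi_i$ near the origin and vanishes on the sphere of radius $1/n$, while $\psi_i|_{\text{fiber}}$ integrates to $-1$ on the sphere at infinity (or, equivalently, integration by parts over the fiber annulus yields $\int_{N_x}\rd(\rho_n\psi_i)=1$ by the construction of the angular form).

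Given this fiber normalization, let $\alpha\in\Omega^*(C_i\times C_i)$ and write, using a fiberwise orientation preserving trivialization of $N$ over a neighborhood and a partition of unity on $\Delta_i$,
\begin{equation*}
\int_{C_i\times C_i}\alpha\wedge\delta^n_i \;=\; \int_{\Delta_i}\left(\int_{N_x,\;|v|\le 1/n} \alpha(x,v)\wedge \rd(\rho_n\psi_i)(x,v)\right).
\end{equation*}
Decompose $\alpha(x,v)=\alpha(x,0)+\big(\alpha(x,v)-\alpha(x,0)\big)$ in the tubular coordinates. For the first piece, only the part of $\alpha(x,0)$ that is purely horizontal (pulled back from $\Delta_i$) survives the fiber integration, and the fiber integral of $\rd(\rho_n\psi_i)$ equals $1$, so this contribution is exactly $\int_{\Delta_i}\alpha|_{\Delta_i}$ for every $n$. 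The remainder $\alpha(x,v)-\alpha(x,0)$ is $O(|v|)$ uniformly in $x$ by compactness of $\Delta_i$, while $\int_{N_x,\,|v|\le 1/n}|\rd(\rho_n\psi_i)|$ stays bounded uniformly in $n$ (the total fiber mass of $\delta^n_i$ is $1$ regardless of $n$), so this remainder contributes at most $O(1/n)\to 0$.

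The step I expect to be slightly subtle is the uniform $L^1$ control of $\rd(\rho_n\psi_i)$ on each fiber as $n\to\infty$: pointwise $|\psi_i|$ and $|\rd\psi_i|$ blow up near the zero section, so I would rewrite $\rd(\rho_n\psi_i)=\rho_n'(r)\,\rd r\wedge\psi_i+\rho_n(r)\,\rd\psi_i$ in polar coordinates along the fiber and observe that the angular integral of $\rd r\wedge \psi_i$ is precisely the volume form on $S^{c_i-1}$ times $\rd r$ (up to sign), so changing variables $r\mapsto nr$ converts the fiber integral to a fixed expression independent of $n$. Modulo this routine check (which is the content of why $\psi_i$ is called the angular form), everything collapses and we obtain $\int_{C_i\times C_i}\alpha\wedge\delta^n_i\to\int_{\Delta_i}\alpha|_{\Delta_i}$, which is the claimed convergence to the Dirac current $\delta_i$.
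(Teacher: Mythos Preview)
Your approach is essentially the same as the paper's: show the fiber integral of $\delta^n$ is $1$, split $\alpha$ into a piece that gives exactly $\int_{\Delta_i}\alpha|_{\Delta_i}$ and a remainder, then estimate the remainder away using the support shrinking to $r\le 1/n$ together with an $L^1$-type bound on $\delta^n$.

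There is, however, one imprecision in your decomposition. You split off $\alpha(x,0)$ (freezing the coefficients at $v=0$) and assert that its non-horizontal components ``die upon fiber integration'' so that the contribution equals $\int_{\Delta_i}\alpha|_{\Delta_i}$ \emph{for every $n$}. This is not generally true: the angular form $\psi_i$ has mixed horizontal/vertical components coming from the connection, so $\delta^n$ has pieces of fiber degree strictly less than $c_i$, and these can pair nontrivially with the $dy^J$-parts of $\alpha(x,0)$. Those contributions are not zero for fixed $n$; they only tend to zero as $n\to\infty$. Your remainder argument (coefficients $O(|v|)$ against a bounded $L^1$ mass) does not cover these terms, since their coefficients $f^{I,J}(x,0)$ are merely bounded, not $O(|v|)$.

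The paper avoids this by splitting off the genuinely horizontal form $\pi^*i^*\alpha$ instead of $\alpha(x,0)$. Then the first piece gives $\int_{\Delta_i}\alpha|_{\Delta_i}$ exactly, and the remainder $\alpha-\pi^*i^*\alpha$ is handled in two cases: for $|J|=0$ one uses $f^{I,\emptyset}\to 0$ as $r\to 0$ together with $\int_0^{1/n}|\rd\rho_n|=1$; for $|J|>0$ one writes $\rd y^J = C\,r^{|J|}\rd\theta^J + D\,r^{|J|-1}\rd r\wedge\rd\theta^{J-1}$ in polar coordinates, and the extra power of $r$ forces the integral to vanish in the limit even though $f^{I,J}$ is only bounded. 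Your argument is easily repaired by adopting this split, or equivalently by observing that the $dy^J$-parts of $\alpha(x,0)$ with $|J|>0$ fall under the same polar-coordinate estimate.
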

We will prove Lemma \ref{conv4} in Appendix \ref{conv}. By \eqref{intop},  for $\alpha,\beta \in \Omega^*(C_i)$, we have $\int_{C_i\times C_i} \pi_1^*\alpha \wedge \pi_2^*\beta \wedge \delta^n_i=(-1)^{(\dim C_i)^2} \int_{C_i\times C_i} \delta^n_i\wedge \pi_1^*\alpha\wedge \pi_2^*\beta=(-1)^{\dim C_i}\int_{C_i\times C_i}\delta^n_i\wedge \pi_1^*\alpha \wedge \pi_2^*\beta=I_{\delta^n_i}(\alpha)(\beta)$, then Lemma \ref{conv4} can be rewritten as 
$$\lim_{n\rightarrow \infty} I_{\delta^n_i}=I_{\delta_i}=\id:\Omega^*(C_i)\to \Omega^*(C_i)$$
in the weak topology. On the other hand, under the orientation convention \eqref{oricri}, we have another representative of the Poincar\'e dual of the diagonal by $\sum_a\pi_1^*\theta_{i,a}\wedge \pi_2^* \theta_{i,a}^*$, where $\pi_1,\pi_2$ are the projections to the first and second factor of $C_i\times C_i$ respectively.  
\begin{proposition}\label{prop:cohomologus}
$\sum_a\pi_1^*\theta_{i,a}\wedge \pi_2^* \theta_{i,a}^*$ is cohomologous to $\delta^n_i$ for all $n$. 	
\end{proposition}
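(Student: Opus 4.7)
The plan is to show that both forms are closed representatives of the same cohomology class, namely the Poincar\'e dual $[\Delta_i]^\vee \in H^{c_i}(C_i\times C_i)$ of the diagonal with the orientation of Example \ref{ex:ori}, where $c_i = \dim C_i$. Since $H^{c_i}(C_i\times C_i)$ is finite dimensional, two closed forms representing the same class differ by an exact form, so this yields the proposition. By Poincar\'e duality on the closed manifold $C_i\times C_i$, a cohomology class is determined by its pairing with all closed forms of complementary degree, so it suffices to check that for each closed $\alpha\in \Omega^{c_i}(C_i\times C_i)$,
\begin{equation*}
\int_{C_i\times C_i} \omega \wedge \alpha \;=\; \int_{\Delta_i}\alpha|_{\Delta_i}
\end{equation*}
when $\omega=\delta_i^n$ and when $\omega=\sum_a \pi_1^*\theta_{i,a}\wedge \pi_2^*\theta_{i,a}^*$.

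For $\omega=\delta_i^n$, I would first verify that $\delta_i^n$ is closed (and in fact globally defined, since $d\rho_n$ cuts off $\psi_i$ away from the singular locus while near the zero section $\rho_n\equiv -1$ so $\delta_i^n=\rho_n\, d\psi_i$ extends smoothly). Next I would show that its cohomology class is independent of $n$: for two choices $\rho_n,\rho_m$ the difference $(\rho_n-\rho_m)\psi_i$ is globally defined on $N$ because $\rho_n-\rho_m$ vanishes identically near the zero section where $\psi_i$ is singular, and $d((\rho_n-\rho_m)\psi_i)=\delta_i^n-\delta_i^m$. Consequently, by Stokes' theorem, the pairing $\int \delta_i^n\wedge \alpha$ for closed $\alpha$ is independent of $n$, and Lemma \ref{conv4} identifies its value with $\int_{\Delta_i}\alpha|_{\Delta_i}$ as $n\to\infty$.

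For $\omega=\eta:=\sum_a \pi_1^*\theta_{i,a}\wedge \pi_2^*\theta_{i,a}^*$, the form is closed as a finite sum of wedge products of closed forms. Using the K\"unneth isomorphism $H^*(C_i\times C_i)\cong H^*(C_i)\otimes H^*(C_i)$, the class $[\alpha]$ may be written as $\sum_{b,c} c_{bc}\,[\pi_1^*\theta_{i,b}^*\wedge \pi_2^*\theta_{i,c}]$, and both sides of the target identity depend only on $[\alpha]$. Then a direct computation, separating the wedge products via the projections and applying the duality relation $\langle \theta_{i,a}^*,\theta_{i,b}\rangle_i=\delta_{ab}$ from \eqref{pair}, reduces the left-hand side to a diagonal sum; the same sum emerges on the right after restricting the K\"unneth representative to $\Delta_i$, oriented so that $\pi_{1*}[\Delta_i]=[C_i]$.

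The main obstacle is the sign bookkeeping in the K\"unneth step: one must reconcile the twist $(-1)^{c_i|\beta|}$ built into the pairing \eqref{pair}, the Koszul signs from interchanging $\pi_1^*$ and $\pi_2^*$ factors, and the normal bundle orientation convention \eqref{oricri}. The verification that these signs conspire to match the naive integral over $\Delta_i$ is precisely the content of the convention \eqref{oricri} (this is how Example \ref{ex:ori} was set up), so the computation is mechanical once the factors are laid out in order.
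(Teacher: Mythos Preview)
Your proposal is correct and follows essentially the same route as the paper: reduce to Poincar\'e duality, use Lemma~\ref{conv4} together with the $n$-independence of $[\delta_i^n]$ to identify the Thom class with the diagonal current, and then verify the K\"unneth side by testing against the basis $\pi_1^*\theta_{i,c}^*\wedge \pi_2^*\theta_{i,d}$. The only difference is that the paper actually writes out the sign chase in the diagonal case $c=d$ (tracking the Koszul swaps and the twist in~\eqref{pair}), whereas you defer it as mechanical; since you correctly flag this as the one nontrivial bookkeeping step, there is no real gap.
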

\begin{proof} 
	Since the pairing \eqref{pair} is non-degenerate on $H^*(C_i\times C_i)$, it suffices to prove that
	$$\int_{C_i\times C_i}\alpha\wedge \delta_i^n=\int_{C_i\times C_i}\alpha \wedge \sum_a\pi_1^*\theta_{i,a}\wedge \pi_2^* \theta_{i,a}^*$$
	for any closed form $\alpha$. Since all $\delta^n_i$ are cohomologous to each other for different $n$, Lemma \ref{conv4} implies that if $\alpha \in \Omega^*(C_i\times C_i)$ is closed, then for all $n$, 
	$$\int _{C_i\times C_i} \alpha \wedge \delta^n_i=\int_{\Delta_i} \alpha|_{\Delta_i}.$$
	Therefore it suffices to show that for all closed form $\alpha\in \Omega^*(C_i\times C_i)$ we have 
	\begin{equation}\label{coh}
	\int_{C_i\times C_i} \alpha \wedge 	\left(\sum_a\pi_1^*\theta_{i,a}\wedge \pi_2^* \theta_{i,a}^*\right)=\int_{\Delta_i} \alpha|_{\Delta_i}
	\end{equation}
	Since the cohomology of $C_i\times C_i$ is spanned by $\{\pi_1^*\theta^*_{i,c}\wedge \pi_2^*\theta_{i,d} \}_{1\le c, d \le \dim H^*(C_i)}$, it is enough to verify \eqref{coh} for $\alpha=\pi_1^*\theta^*_{i,c}\wedge \pi_2^*\theta_{i,d}$. Note that by definition, we have $\left\langle\theta^*_{i,a},\theta_{i,b} \right\rangle_i=\delta_{ab}$. Then if $c\ne d$, we have 
	\begin{equation*}
	\int_{C_i\times C_i} \pi_1^*\theta^*_{i,c}\wedge \pi_2^*\theta_{i,d} \wedge \left(\sum_a\pi_1^*\theta_{i,a}\wedge \pi_2^* \theta_{i,a}^*\right)=\sum_{a} \pm \int_{C_i\times C_i} \pi_1^*\theta^*_{i,c}\wedge \pi_1^*\theta_{i,a}\wedge \pi_2^*\theta_{i,d}\wedge \pi_2^*\theta_{i,a}^*=\sum_{a} \pm \delta_{ca}\delta_{da}=0. 
	\end{equation*}
	Similarly, when $c=d$, we have 
	\begin{eqnarray*}
	\int_{C_i\times C_i} \pi_1^*\theta^*_{i,c}\wedge \pi_2^*\theta_{i,c} \wedge \left(\sum_a\pi_1^*\theta_{i,a}\wedge \pi_2^* \theta_{i,a}^*\right) &= & \int_{C_i\times C_i}  \pi_1^*\theta^*_{i,c}\wedge \pi_2^*\theta_{i,c} \wedge \pi_1^*\theta_{i,c}\wedge \pi_2^* \theta_{i,c}^*+\sum_{a\ne c}\pm \delta_{ca}\delta_{ca}\\
	&= & (-1)^{|\theta_{i,c}|^2+|\theta_{i,c}|\cdot |\theta^*_{i,c}|} \int_{C_i\times C_i}\pi_1^*\theta^*_{i,c}\wedge \pi_1^* \theta_{i,c} \wedge \pi_2^*\theta^*_{i,c} \wedge \pi_2^* \theta_{i,c}\\
	&= & (-1)^{|\theta_{i,c}|^2+|\theta_{i,c}|\cdot |\theta^*_{i,c}|+\dim C_i |\theta_{i,c}|} \left( \int_{C_i} \theta^*_{i,c}\wedge  \theta_{i,c} \right) \left \langle \theta_{i,c}^*, \theta_{i,c} \right\rangle_i\\
	&= & \int_{C_i}\theta^*_{i,c}\wedge  \theta_{i,c} = \int_{\Delta_i}  (\pi_1^*\theta^*_{i,c}\wedge \pi_2^*\theta_{i,c})|_{\Delta_i}.
	\end{eqnarray*}
	Thus \eqref{coh} is proven.
\end{proof}

As a consequence of Proposition \ref{prop:cohomologus}, there exist primitives $f_i^n\in \Omega^*(C_i\times C_i)$ such that
\begin{eqnarray}
\rd f_i^n & = & \delta_i^n-\sum_a\pi_1^*\theta_{i,a}\wedge \pi_2^* \theta_{i,a}^*;\label{minihomo}\\
f_i^n-f_i^m & = & (\rho_n-\rho_m)\psi_i \label{eqn:diff}. 
\end{eqnarray}
Note that the integral operator $I_{\delta_i}$ of the Dirac current $\delta_i$ is the identity map from $\Omega^*(C_i)$ to itself.  The integral operator $I_{\sum_a \pi_1^*\theta_{i,a}\wedge \pi_2^* \theta_{i,a}^*}$ is the projection $p_i$ in \eqref{eqn:pi}.  Therefore by  \eqref{minihomo}, the integral operator $I_{f_i^n}$ of the primitive $f_i^n$ satisfies the relation:
\begin{equation}\label{eqn:approhomo}
I_{\delta_i^n}- I_{\sum_a \pi_1^*\theta_{i,a}\wedge \pi_2^* \theta_{i,a}^*}=I_{\rd f_i^n}=\rd \circ I_{f_i^n}+I_{f_i^n}\circ \rd.\end{equation}
It is proven in the Appendix \ref{conv} that $f_i^n$ converges to a current $f_i\in \mathcal{D}^*(C_i\times C_i)$, and the corresponding integral operator $I_{f_i}$ satisfies the following relation:
\begin{equation}\label{eqn:ph}
\id -p_i=\rd \circ I_{f_i}+I_{f_i}\circ \rd,
\end{equation}
which is the limit of \eqref{eqn:approhomo}.  Therefore the integral operator $I_{f_i}=\lim I_{f_i^n}$ gives us the homotopy $H_i$ for the projection $p_i$. This explains the perturbation data, which shall motivate the differential on the minimal Morse-Bott cochain complex. However, we will not use \eqref{eqn:ph} to avoid working with currents ($f_i$ is only a current), and always work with the approximation \eqref{eqn:approhomo} and then take limits. More precisely, we will only use the ``classical relation" \eqref{minihomo}.

From the discussions above, we have the following definition.
\begin{definition}\label{def:def}
	\textbf{Defining data $\bm{\Theta}$} for an oriented flow category $\cC$ consists of the following:
\begin{itemize}
	\item Quasi-isomorphic embeddings $H^*(C_i)\to \Omega^*(C_i)$, the image is denoted by $h(\cC,i)$ and we fix a basis $\{\theta_{i,a}\}$ of $h(\cC,i)$ and a dual basis $\{\theta_{i,a}^*\}$ in the sense that $\langle \theta_{i,a}^*, \theta_{i,b}\rangle_{i}=\delta_{ab}$;
	\item A sequence of Thom classes in the form of $\delta_i^n=\rd (\rho_n \psi_i)$ of the diagonal $\Delta_i \subset C_i\times C_i$ for all $i$;
	\item Primitives $f_i^n$, such that $\rd f^n_i=\delta^n_i-\sum_a \pi_1^*\theta_{i,a}\wedge \pi_2^*\theta_{i,a}^*$ and $f^n_i-f^m_i=(\rho_n-\rho_m)\psi_i$ for all $i$. 
\end{itemize}  
\end{definition}	
\begin{remark}
	The form $\sum_a \pi_1^*\theta_{i,a}\wedge \pi_2^*\theta_{i,a}^*$ in Definition \ref{def:def} does not depend on the basis $\{\theta_{i,a}\}$ for a fixed quasi-isomorphic embedding $H^*(C_i)\to \Omega^*(C_i)$. 
\end{remark}

\subsubsection{The perturbed operator $D_{k,T,\Theta}$}
Given defining data $\Theta$, we are able to write down the operator $D_{k,T,\Theta}$ from \eqref{hpl} using the perturbation data introduced above. Those  $D_{k,T,\Theta}$ will then be assembled to the differential on the minimal Morse-Bott cochain complex. To simplify the presentation, we first introduce the following notation.

\begin{enumerate}
	\item We use $[\alpha]$ to denote the cohomology class of a closed form $\alpha \in h(\cC,i)$ and $|\alpha|$ to denote the degree of the differential form.
	\item We write $\cM^{v,k}_{i_1,\ldots, i_r}:=\cM_{v,v+i_1}\times \ldots \times \cM_{v+i_r,v+k}$ for $0=i_0<i_1<i_2<\ldots<i_r<i_{r+1}=k$ for $r\ge 0$ with the product orientation.
	\item For $\alpha \in \Omega^* (C_v), \gamma \in \Omega^*(C_{v+k})$ and $f_{v+i_j}\in \Omega^*(C_{v+i_j}\times C_{v+i_j}), 1\le j \le r$, we define the pairing $\cM^{v,k}_{i_1,\ldots,i_r}[\alpha, f_{v+i_1},\ldots, f_{v+i_r}, \gamma]$ to be
	\begin{equation}\label{pair1}
   \int_{\cM^{v,k}_{i_1,\ldots,i_r}}  s^*_{v,v+i_1}\alpha\wedge (t_{v,v+i_1}\times s_{v+i_1,v+i_2})^* f_{v+i_1}\wedge \ldots  \wedge (t_{v+i_{r-1},v+i_r}\times s_{v+i_r,v+k})^*f_{v+i_r}\wedge t_{v+i_r, v+k}^*\gamma.
	\end{equation}
	Strictly speaking, before taking the wedge product, we need to pullback $s_{v,v+i_1}^*\alpha$, $(t_{v+i_{j-1},v+i_j}\times s_{v+i_j,v+i_{j+1}})^*f_{v+i_j}$,  $t_{v+i_r, v+k}^*\gamma$ to $\cM^{v,k}_{i_1,\ldots,i_r}$ through the natural projections. This also applies to all other similar formulae in this paper.
	\item For $\alpha\in h(\cC,v)$ and $k\ge 1$, we define
	\begin{eqnarray} 
	\dagger(\cC,\alpha,k)&:= &(|\alpha|+m_{v,v+k})(c_{v+k}+1); \\ \ddagger(\cC,\alpha,k)&:= & (|\alpha|+m_{v,v+k}+1)(c_{v+k}+1),
	\end{eqnarray}
	where $c_i:=\dim C_i$, $m_{i,j}:=\dim\cM_{i,j}$ when $i<j$ and $m_{i,i}:=c_i-1$.
\end{enumerate}	

Then the perturbation data in \S \ref{pertdata} and \eqref{hpl} motivate the following definition.
\begin{definition}
	Given defining data $\Theta$ and an increasing sequence $T:=\{0=i_0<i_1<\ldots<i_r<i_{r+1}=k\}$, we define a linear map $D_{k,T,\Theta}:H^*(C_v)\simeq h(\cC,v)\to h(\cC,v+k)\simeq H^*(C_{v+k})$ such that the following holds for any $\gamma \in h(\cC,v+k)$
	\begin{equation}\label{eqn:dkt}
	\langle D_{k,T,\Theta}[\alpha],[\gamma] \rangle_{v+k}:=(-1)^\star \lim_{n \to \infty} \mathcal{M}^{v,k}_{i_1,\ldots, i_r}[\alpha, f^n_{v+i_1},\ldots, f^n_{v+i_r},\gamma],
	\end{equation} 
	where $\star:=\sum_{j=0}^r \ddagger(\cC,\alpha,i_j)$. In other words, by \eqref{eqn:dual}, we can write 
	\begin{equation}\label{eqn:dkt'}
	D_{k,T,\Theta}([\alpha])=\sum_{a} (-1)^\star \lim_{n \to \infty} \mathcal{M}^{v,k}_{i_1,\ldots, i_r}[\alpha, f^n_{v+i_1},\ldots, f^n_{v+i_r},\theta_{v+k,a}]\cdot  [\theta^*_{v+k,a}].
	\end{equation}
\end{definition}

\begin{remark}\label{rmk:sign}
	One way to understand the signs in \eqref{eqn:dkt} is to treat $D_{k,T,\Theta}$ as a composition of certain operators. Let $\alpha \in \Omega^*(C_i)$ and $f \in \Omega^*(C_j\times C_j)$, then $\cM_{i,j}$ defines an operator:
	$$\cM_{i,j}(\alpha, f):=(-1)^{\ddagger(\cC,\alpha,0)}\int_{\cM_{i,j}} s_{i,j}^*\alpha \wedge  (t_{i,j}\times \id_j)^* f\in \Omega^*(C_j),$$
	where $t_{i,j}\times \id_j:\cM_{i,j}\times C_j\to C_j\times C_j$. Here, by omitting the pullback of projections for simplicity, $s_{i,j}^*\alpha \wedge  (t_{i,j}\times \id_j)^* f$ is a differential form on $\cM_{i,j}\times C_j$, we then integration along the $\cM_{i,j}$ fiber in the trivial fibration $\cM_{i,j}\times C_j$ to obtain a form on $C_j$. If $|f|=c_j-1$, then $|\cM_{i,j}(\alpha, f)|=|\alpha|+c_j-1-m_{i,j}$ and hence
	\begin{eqnarray*}
	\ddagger(\cC,\cM_{i,j}(\alpha, f),0) & = & (|\alpha|+c_j-1-m_{i,j}+m_{j,j}+1)(c_j+1) \\
	& = &(|\alpha|+c_j-1-m_{i,j}+c_j)(c_j+1) \\
	&\equiv & \ddagger(\cC,\alpha,j) \mod 2.
	\end{eqnarray*}
	Then for $g\in \Omega^*(C_k\times C_k)$, we have
	$$
	\cM_{j,k}(\cM_{i,j}(\alpha, f),g) = (-1)^{\ddagger(\cC,\alpha,0)+\ddagger(\cC,\alpha,j)} \int_{\cM_{i,j}\times \cM_{j,k}} s_{i,j}^*\alpha \wedge (t_{i,j}\times s_{j,k})^* f\wedge (t_{j,k}\times \id_k )^* g.
	$$
	In general, $(-1)^{\star} \cM^{s,k}_{i_1,\ldots,i_r}[\alpha, f_{s+i_1}^n,\ldots ,f_{s+i_r}^n, \gamma]$ is the integral of the wedge product of compositions of such operators with $t_{s+i_r,s+k}^*\gamma$ on $\cM_{s+i_r,s+k}$. 	When $f$ is $f^n_j$ for $n\gg 0$, $\cM_{i,j}(\alpha,f)$ should be viewed as an approximation of $H_j\circ d_{j-i}\circ \iota_i(\alpha)$ in \eqref{hpl}. In general, \eqref{eqn:dkt'} can be viewed as \eqref{hpl} applied to the Austin-Braam complex using the perturbation data in this subsection.
\end{remark}

The following lemma asserts that \eqref{eqn:dkt} is well-defined and will be used in the proof of the main theorem, we prove it in Appendix \ref{conv}. 
\begin{lemma}\label{conv1}
	For every $\alpha \in \Omega^*(C_s),\gamma \in \Omega^*(C_{s+k})$ and any defining data, we have
	$\displaystyle \lim_{n\to \infty} \cM^{s,k}_{i_1,\ldots,i_r}[\alpha, f^{n}_{s+i_1},\ldots,\allowbreak f^{n}_{s+i_r}, \gamma]\in \R$ exists.
\end{lemma}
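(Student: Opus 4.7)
The plan is to prove the real-valued sequence
\begin{equation*}
a_n := \cM^{s,k}_{i_1,\ldots,i_r}[\alpha, f^{n}_{s+i_1},\ldots, f^{n}_{s+i_r}, \gamma]
\end{equation*}
is Cauchy, from which convergence in $\R$ follows. The key leverage is the relation $f^n_i - f^m_i = (\rho_n-\rho_m)\psi_i$ from Definition \ref{def:def}, whose support lies in a tubular neighborhood of $\Delta_i\subset C_i\times C_i$ of radius $\max(1/n,1/m)$, shrinking as $n,m\to\infty$. Writing $f^n_i = f^1_i + (\rho_n-\rho_1)\psi_i$, and using that $\rho_n$ takes values in $[-1,0]$, yields a uniform $L^\infty$ bound $\|f^n_i\|_{L^\infty}\le K_i$ independent of $n$; here the choice $\rho_n\equiv -1$ near $0$ in the Bott--Tu construction is exactly what ensures $\rho_n\psi_i$ extends to a genuinely smooth form on $C_i\times C_i$ across the singularity of the angular form at the zero section of $N$.

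By $r$-fold multilinearity of the pairing \eqref{pair1} in the $f$-slots, together with the telescoping identity
\begin{equation*}
\bigotimes_{j=1}^r f^n_{s+i_j} - \bigotimes_{j=1}^r f^m_{s+i_j} \;=\; \sum_{j=1}^r f^n_{s+i_1}\otimes\cdots\otimes f^n_{s+i_{j-1}}\otimes(f^n-f^m)_{s+i_j}\otimes f^m_{s+i_{j+1}}\otimes\cdots\otimes f^m_{s+i_r},
\end{equation*}
I reduce the estimate of $a_n-a_m$ to a sum of $r$ integrals of the form
\begin{equation*}
E_j(n,m) := \int_{\cM^{s,k}_{i_1,\ldots,i_r}} \omega_{n,m,j}\wedge (t_{s+i_{j-1},s+i_j}\times s_{s+i_j,s+i_{j+1}})^*\bigl[(\rho_n-\rho_m)\psi_{s+i_j}\bigr],
\end{equation*}
where each $\omega_{n,m,j}$ is smooth on the compact manifold-with-corners $\cM^{s,k}_{i_1,\ldots,i_r}$ and bounded in $L^\infty$ uniformly in $n,m$ by the previous step. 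It therefore suffices to prove $|E_j(n,m)|\to 0$.

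To do so I invoke Condition \eqref{F3} of Definition \ref{flow}: the evaluation $t_{s+i_{j-1},s+i_j}\times s_{s+i_j,s+i_{j+1}}$ is transverse to $\Delta_{s+i_j}$ in the sense of Definition \ref{def:trans}, so its preimage near the diagonal admits a tubular neighborhood structure over the codimension-$c_{s+i_j}$ fiber-product submanifold. In adapted normal coordinates $v$, the pullback of $(\rho_n-\rho_m)\psi_{s+i_j}$ takes the form $(\rho_n-\rho_m)(|v|)\cdot\widetilde\psi$, with $\widetilde\psi$ the pullback angular form. Fiber-integrating along the normal disks of radius $\max(1/n,1/m)$ yields a factor $\int_0^{1/\min(n,m)}(\rho_n-\rho_m)(r)\,dr = O(1/\min(n,m))$, while the tangential integrand is bounded on the compact fiber-product submanifold. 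Hence $|E_j(n,m)| = O(1/\min(n,m)) \to 0$.

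The main obstacle I anticipate is executing the tubular neighborhood and fiber-integration argument in the presence of boundary and corner strata of $\cM^{s,k}_{i_1,\ldots,i_r}$. This is accommodated by the fact that transversality in Definition \ref{def:trans} is declared depth-by-depth, so the tubular neighborhood of the fiber-product submanifold respects the stratification, and the estimate can be carried out on each stratum and patched via a partition of unity adapted to the corner structure.
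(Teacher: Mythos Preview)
Your overall strategy---telescoping in the $f$-slots and using that $f^n-f^m=(\rho_n-\rho_m)\psi$ has shrinking support---matches the paper's. The gap is the claimed uniform bound $\|f^n_i\|_{L^\infty(C_i\times C_i)}\le K_i$. This is false whenever $c_i\ge 2$. The angular form $\psi_i$, regarded as a form on the tubular neighborhood $N_i\setminus\Delta_i$, has Cartesian coefficients of order $r^{1-c_i}$ (think of $d\theta$ on $\R^2\setminus\{0\}$). The difference $(\rho_n-\rho_1)\psi_i$ is indeed smooth, because it vanishes on $[0,\epsilon_n]$ where both $\rho$'s equal $-1$; but on the annulus $\{1/n<r<\epsilon_1\}$ it essentially equals $\psi_i$ and hence has coefficients of size $\sim n^{\,c_i-1}$. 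The paper says this explicitly: ``The uniform boundedness is not necessarily true in $C_i\times C_i$.'' Your aside that ``$\rho_n\equiv-1$ near $0$ \ldots ensures $\rho_n\psi_i$ extends to a genuinely smooth form'' is also incorrect: near $r=0$ one has $\rho_n\psi_i=-\psi_i$, which is singular; it is $\rd(\rho_n\psi_i)$ and $(\rho_n-\rho_m)\psi_i$ that extend smoothly.

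This matters as soon as $r\ge 2$: your $\omega_{n,m,j}$ contains the remaining factors $f^n_{s+i_l}$ (for $l<j$) and $f^m_{s+i_l}$ (for $l>j$), each blowing up near its own fiber-product locus, and these loci meet the support of $(\rho_n-\rho_m)\psi_{s+i_j}$ in the deeper strata. So $\omega_{n,m,j}$ is not uniformly bounded on $\cM^{s,k}_{i_1,\ldots,i_r}$, and the $O(1/\min(n,m))$ estimate for $E_j(n,m)$ does not follow. The paper's remedy is to pass to the iterated real blow-up $Bl_r\cM^{s,k}_{i_1,\ldots,i_r}$ along all the fiber-product submanifolds $\cM^{s,k}_{i_1,\ldots,\overline{i_l},\ldots,i_r}$ (well-defined precisely because of the transversality in Definition~\ref{flow}). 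On the blow-up the lifts of all $f^n_{s+i_l}$ are uniformly bounded in $n$---one is now working in polar coordinates simultaneously around every diagonal---while the support of each $f^n-f^m$ still shrinks to measure zero. Your telescoping estimate then goes through verbatim on the blow-up, which differs from the original integral by a measure-zero set.
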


\subsection{The minimal Morse-Bott cochain complex}\label{mbcochain}
The main theorem of this subsection is that we can get a well-defined cochain complex out of an oriented flow category with any defining data. The cochain complex is generated by the cohomology $H^*(C_i)$ of the flow category, hence it is called the minimal Morse-Bott cochain complex.
\begin{definition}\label{def:MMBCC}
	Given defining data $\Theta$, the minimal Morse-Bott complex of an oriented flow category $\cC:=\{C_i,\cM_{i,j}\}$ is defined by 
	$$\BC(\cC,\Theta):=\BC:=\varinjlim_{q\to -\infty}\prod_{j=q}^\infty H^*(C_j),$$ i.e.\ the direct sum near the negative end and direct product near the positive end.  To be more precise, every element in $\BC$ is a function $A:\Z \to \prod_{i=-\infty}^{\infty}H^*(C_i)$, such that $A(i)\in H^*(C_i)$, and there exists $N_A\in \Z$, such that $A(i)=0$ $\forall i<N_A$\footnote{Assume $\cC$ arises from a Morse-Bott function $f$ on a non-compact manifold (but $\cM_{i,j}$ is still compact, so it can not be any Morse-Bott function on any non-compact manifold), the differential in the cochain complex should increase the value of $f$, which forces the cochain complex to take direct limit in the positive direction. }. The differential $d_{\BC,\Theta}:\BC\to \BC$ is defined as $\prod_{k\ge 1} d_{k,\Theta}$,  where $d_{k,\Theta}: H^*(C_v)\to H^*(C_{v+k})$ is defined as
	$$d_{k,\Theta}:=\sum_{T} D_{k,T,\Theta},$$
	for all increasing sequence $T=\{0=i_0<i_1<\ldots,<i_r<i_{r+1}=k \}$ with $r\ge 0$. In other words, we have
	\begin{equation}\label{def}\langle d_{k,\Theta}[\alpha], [\gamma] \rangle_{v+k}  =  \lim_{n\to \infty}\sum_{T} (-1)^{\star}\cM^{v,k}_{i_1,\ldots,i_r}[\alpha, f_{v+i_1}^n,\ldots ,f_{v+i_r}^n, \gamma] \end{equation}
	for $\alpha\in h(\cC,v)$, $\gamma \in h(\cC,v+k)$ and $\star=\sum_{j=0}^r \ddagger(\cC,\alpha, i_j)$. If we define $d_{i,\Theta}=0$ for $i\le 0$, then for $A\in \BC$ we have
	$$(d_{\BC,\Theta}A)(i):=\sum_{j\in \Z} d_{{i-j},\Theta} A(j).$$
	Note that it is a finite sum. If moreover the flow category has a grading structure $\{d_i\}$, then $\BC$ is also graded. The grading of an element $\alpha\in H^*(C_i)$ is $|\alpha|+d_i$, which shall be viewed as in $\Z/k$ if $\{d_i\}$ is only a grading structure in $\Z/k$.
\end{definition}

\begin{remark}
    The degree of $d_{k,\Theta}[\alpha]$ in $H^*(C_{v+k})$ is $|\alpha|+c_{v+k}-m_{v,v+k}$ under the simplifying assumption after Remark \ref{rmk:flow} that $c_i,m_{i,j}$ are well-defined. If the assumption is not satisfied, then $d_{k,T,\Theta}$ can be decomposed w.r.t.\ connected components of $\cM^{v,k}_{i_1,\ldots,i_r}$, such that each component has a well-defined degree in $H^*(C_{v+k})$. Then we need to keep track of the connected component in the proofs, which only results in complication of notation.
\end{remark}
The main result of this section in the following. 

\begin{theorem}\label{thm:main}
	Given an oriented flow category $\cC$ and defining data $\Theta$, $(\BC,d_{\BC,\Theta})$ is a cochain complex. The cohomology $H(\BC,d_{\BC,\Theta})$ is independent of the defining data $\Theta$. If in addition the flow category is graded, then $\BC$ is also graded and the degree of $d_{\BC,\Theta}$ is $1$. 
\end{theorem}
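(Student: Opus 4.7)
The plan is to reduce the theorem to a cancellation identity proved via Stokes' theorem together with the defining-data relation $\rd f_i^n = \delta_i^n - \sum_a \pi_1^*\theta_{i,a}\wedge\pi_2^*\theta_{i,a}^*$, and orientation bookkeeping from \S\ref{notori} and Definition \ref{oridef}. This is essentially the same cancellation that makes homological perturbation work in finite dimensions, but with moduli-space boundaries playing the role of one side of the identity, so the main obstacle will be tracking signs through the many product/fiber-product constructions, not the combinatorics.

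To prove $d_{BC,\Theta}^2=0$, I fix $[\alpha]\in H^*(C_s)$, $[\gamma]\in H^*(C_{s+K})$ and unfold $\langle d_{BC}^2[\alpha],[\gamma]\rangle_{s+K} = \sum_{k+m=K,\,k,m\ge 1}\langle d_k d_m[\alpha],[\gamma]\rangle$ as a sum indexed by strictly increasing sequences $\bm{j}=(0=j_0<\cdots<j_R=K)$ together with a ``cut'' position $l\in\{1,\ldots,R-1\}$: the $(\bm{j},l)$-summand is an integral over $\cM^{s,K}_{j_1,\ldots,j_{R-1}}$ whose integrand carries $s^*\alpha$ and $t^*\gamma$ at the two ends, $f^n_{s+j_{l'}}$ at every intermediate $l'\ne l$, and the projection form $\sum_a\pi_1^*\theta_{s+j_l,a}\wedge\pi_2^*\theta_{s+j_l,a}^*$ at the cut. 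In parallel, applying Stokes' theorem to the integral $\int_{\cM^{s,K}_{\bm{j}}}\omega_{\bm{j}}$, where $\omega_{\bm{j}}$ carries $f^n$ at \emph{every} intermediate position, and using $\rd\alpha=\rd\gamma=0$ together with $\rd f^n_i=\delta^n_i-\sum_a\pi_1^*\theta_{i,a}\wedge\pi_2^*\theta_{i,a}^*$, gives
\begin{equation*}
\int_{\partial\cM^{s,K}_{\bm{j}}}\omega_{\bm{j}} \;=\; \sum_l(\pm)\int_{\cM^{s,K}_{\bm{j}}}\bigl[\omega_{\bm{j}}\text{ with }\delta^n_{s+j_l}\text{ at }l\bigr] \;-\; \sum_l(\pm)\int_{\cM^{s,K}_{\bm{j}}}\bigl[\omega_{\bm{j}}\text{ with projection at }l\bigr].
\end{equation*}
Summing over $\bm{j}$, the second right-hand sum equals, up to signs, the $d^2$-expansion above, so it suffices to show that the first right-hand sum and the boundary sum cancel in the limit $n\to\infty$.

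The cancellation is a bijection. By transversality \ref{F3} and Lemma \ref{conv4}, the $\delta^n$-insertion at $l$ converges as $n\to\infty$ to a Dirac current on $\Delta_{s+j_l}$ and localizes the integral to a fiber product gluing $\cM_{s+j_{l-1},s+j_l}$ to $\cM_{s+j_l,s+j_{l+1}}$. By \ref{F4} and Leibniz for the boundary of a product, $\partial\cM^{s,K}_{\bm{j}}$ decomposes up to measure zero into disjoint fiber products $\cM_{s+j_{l'},s+j'}\times_{s+j'}\cM_{s+j',s+j_{l'+1}}$ indexed by pairs $(l',j')$ with $j'$ a new intermediate index; each such summand is an integral over the extended sequence $\bm{j}\cup\{j'\}$. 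The matching $(\bm{j},l)\leftrightarrow (\bm{j}\setminus\{j_l\},\,l{-}1,\,j_l)$ pairs the two sides termwise, with identical fiber-product integrands. For the signs I plan to use the operator viewpoint of Remark \ref{rmk:sign}: express each $D_{k,T,\Theta}$ as a composition of ``$\cM_{i,j}(\cdot,\cdot)$''-type operators and reduce the sign matching to one Leibniz identity per factor, controlled by Definition \ref{oridef}\eqref{cond:ori2}--\eqref{cond:ori3} and the product boundary rule of \S\ref{notori}. This sign verification is the main technical burden.

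The remaining claims are short. Well-definedness of $d_{BC,\Theta}$ on $BC$ is automatic: since $d_k=0$ for $k\le 0$, the sum $(d_{BC}A)(i)=\sum_{j\le i}d_{i-j}A(j)$ is finite and vanishes for $i<N_A$, and each pairing converges by Lemma \ref{conv1}. The degree assertion is a dimension count: non-vanishing of \eqref{def} forces $|\alpha|+|\gamma|=m_{s,s+k}$ (by equating form-degree with moduli-space dimension on each stratum, using the iterated \eqref{dim}), so $d_k[\alpha]$ lives in cohomological degree $c_{s+k}-m_{s,s+k}+|\alpha|$ and the total $BC$-shift is $c_{s+k}-m_{s,s+k}+d_{s+k}-d_s=1$ by Definition \ref{def:grade}. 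Independence of $\Theta$ at the cohomology level I would prove by linearly interpolating between two defining-data and invoking the flow-homotopy machinery of \S\ref{flowhomotopy}: the interpolation produces an explicit chain homotopy whose verification rests on the same Stokes-plus-substitution pattern as $d^2=0$.
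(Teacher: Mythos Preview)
Your $d_{BC}^2=0$ argument is sound and uses the same ingredients as the paper---Stokes' theorem, the defining relation $\rd f^n_i=\delta^n_i-\sum_a\pi_1^*\theta_{i,a}\wedge\pi_2^*\theta_{i,a}^*$, and Lemmas~\ref{conv1}--\ref{conv2}---but is organized differently. The paper (Proposition~\ref{dsquare}) splits $d_k=D_k+E_k$ into leading and error parts, computes $\sum_i D_{k-i}\circ D_i$ first, and then runs an induction on sequence length: at each step a trace term is rewritten as $\delta^n-\rd f^n$, the $\delta^n$ part localizes to a fiber product which is a boundary component, and Stokes applied to the exact remainder produces the next-longer sequence; the trace terms accumulated along the way are finally matched against the $D\circ E+E\circ D+E\circ E$ contributions. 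Your global version---Stokes on every $\int_{\cM_{\bm j}}\omega_{\bm j}$ simultaneously, followed by a single bijection between boundary strata and $\delta^n$-localizations---is the same cancellation repackaged. Your scheme is combinatorially cleaner; the paper's inductive scheme localizes the sign verification to a single step and so is easier to execute rigorously. Your degree computation is correct.

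There is, however, a genuine gap in your independence argument. Section~\ref{flowhomotopy} produces a chain homotopy only from a \emph{geometric} flow homotopy $\sK=\{\cK_{i,j}\}$ between flow premorphisms; nothing there applies to a linear interpolation of defining-data, and in fact such interpolation does not preserve the defining-data axioms (interpolating the $\theta_{i,a}$ destroys $\langle\theta^*_{i,a},\theta_{i,b}\rangle_i=\delta_{ab}$, and the primitives $f^n_i$ are tied to the chosen embedding). The paper's route is different and shorter: it constructs the identity flow morphism $\fI=\{\cI_{i,j}=\cM_{i,j}\times[0,j-i]\}$ (Definition/Lemma~\ref{identitymor}), so that Theorem~\ref{Mor} yields a cochain map $\phi^I_{\Theta_1,\Theta_2}:(BC,d_{BC,\Theta_1})\to(BC,d_{BC,\Theta_2})$. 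From \eqref{mordef} one reads off $\phi^I_0=\id$, while $\phi^I_k$ for $k\ge 1$ lands in $\prod_{t>s}H^*(C_t)$; hence $\phi^I_{\Theta_1,\Theta_2}=\id+N$ with $N$ strictly upper triangular, and $\sum_{m\ge 0}(-N)^m$ converges on $BC$ to give an explicit inverse. This produces an isomorphism of complexes, not merely a quasi-isomorphism, and is what you should substitute for the interpolation sketch.
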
  
\begin{remark} 
	A few remarks of the non-triviality of Theorem \ref{thm:main} are in order.
	\begin{enumerate}
		\item  We prove in \S \ref{s7} that when the flow category comes from a Morse-Bott function $f$ on a closed manifold $M$, the cohomology of the minimal Morse-Bott cochain complex is the regular cohomology  $H^*(M,\R)$. Note that this follows from the definition if $f$ is constant: since the flow category is $\{C_0=M\}$ with only identities in the morphism space, as a result, $\BC=H^*(C_0,\R)=H^*(M;\R)$ with $d_{\BC}=0$. Therefore it suffices to show that the cohomology of the minimal Morse-Bott cochain complex is independent of the Morse-Bott function $f$.
		
		\item If all the critical manifolds $C_i$ are discrete, then the defining data $\Theta$ is unique. Assume for simplicity, each $C_i$ consists of one point, the minimal Morse-Bott cochain complex $\BC$ is generated by the critical points and equals to the usual Morse cochain complex, 
		$$\BC=\varinjlim_{q\to -\infty}\prod_{j=q}^\infty H^*(C_j)=\varinjlim_{q\to -\infty}\prod_{j=q}^\infty \R.$$
		Since $|f^n_i|=-1$, $d_{k,\Theta}:H^*(C_v)\to H^*(C_{v+k})$ only has the leading term
		$$\langle d_{k,\Theta}[1],[1]\rangle_{v+k}=\cM^{v,k}[1,1]=\int_{\cM_{v,v+k}}1.
		$$
		Therefore the differential $d_{\BC,\Theta}:=\sum_{k\ge 1} d_{k,\Theta_0}$ is just the signed counting of all zero-dimensional moduli spaces $\cM_{v,v+k}$, which is the usual cochain differential in a non-degenerate Morse/Floer theory. 
	\end{enumerate} 
\end{remark}
\begin{remark}
	Theorem \ref{thm:main} is the most simple version. We generalize Theorem \ref{thm:main} in \S \ref{s5} and \S \ref{s6} to following cases: (1) $C_i$ is not oriented; (2) $C_i$ is not compact; (3) the defining data is not minimal, i.e.\ the rank of the projection in the perturbation data is larger than $\dim H^*(C_i)$. 
\end{remark}
\begin{corollary}\label{cor:dim}
	If the oriented flow category $\cC$ has the property that $\dim C_i \le k$ for all $i$, then the minimal Morse-Bott cochain complex $\BC(\cC)$ only depends on $\cM_{i,j}$ with $\dim \cM_{i,j}\le 2k$.
\end{corollary}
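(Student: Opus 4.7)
The plan is to show that each summand in formula \eqref{def} for $d_{k,\Theta}$ vanishes whenever some factor $\cM_{s+i_{j-1},s+i_j}$ of the product $\cM^{s,k}_{i_1,\ldots,i_r}$ has dimension greater than $2k$. Since the cochain module $BC=\varinjlim\prod H^*(C_j)$ depends only on the critical manifolds and the moduli spaces enter exclusively through the differential, this reduction will yield the claim.

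To analyze the integrand I would decompose each $f^n_{s+i_\ell}\in\Omega^{c_{s+i_\ell}-1}(C_{s+i_\ell}\times C_{s+i_\ell})$ by K\"unneth bidegree into a finite sum of decomposable forms $\pi_1^*\eta_\ell\wedge\pi_2^*\zeta_\ell$ with $|\eta_\ell|,|\zeta_\ell|\le c_{s+i_\ell}\le k$. Substituting into \eqref{pair1} and expanding the wedge product, the pairing $\cM^{s,k}_{i_1,\ldots,i_r}[\alpha,f^n_{s+i_1},\ldots,f^n_{s+i_r},\gamma]$ becomes a finite sum of integrals over the product, each integrand being a wedge of forms pulled back from the individual factors via source and target maps.

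On the factor $\cM_{s+i_{j-1},s+i_j}$, the restricted integrand consists of exactly two terms: the source-map pullback of $\alpha$ if $j=1$, or of $\zeta_{j-1}$ if $j\ge 2$; and the target-map pullback of $\eta_j$ if $j\le r$, or of $\gamma$ if $j=r+1$. Each such piece has degree at most $k$, because $|\alpha|,|\gamma|\le k$ by assumption and the K\"unneth bidegrees of $f^n_{s+i_\ell}$ are bounded by $c_{s+i_\ell}\le k$. Fubini's theorem then says the product integral is nonzero only if the partial degree on every factor matches its dimension, which forces $m_{s+i_{j-1},s+i_j}\le 2k$ for every $j$ in any nonvanishing term. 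This vanishing holds for every $n$ and, since the limit $n\to\infty$ exists by Lemma \ref{conv1}, it persists in the limit. The leading term $\cM^{s,k}[\alpha,\gamma]$ (corresponding to $r=0$) is handled identically: its integrand has total degree $|\alpha|+|\gamma|\le 2k$, so a nonzero contribution requires $m_{s,s+k}\le 2k$. The only delicate bookkeeping step is tracking the two K\"unneth pieces of each $f^n$ through the source and target maps, which is routine.
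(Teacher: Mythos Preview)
Your approach is the same as the paper's: the differential only sees moduli spaces of dimension $\le 2k$ for degree reasons. The paper's proof is a single sentence observing $\deg f^n_i=c_i-1\le k-1$ and invoking ``degree reason''; you are unpacking exactly that.

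One technical wrinkle: the step where you decompose $f^n_{s+i_\ell}$ into a \emph{finite} sum of decomposable forms $\pi_1^*\eta\wedge\pi_2^*\zeta$ is not generally valid, since $\Omega^*(C\times C)$ is the completed tensor product, not the algebraic one (e.g.\ a bump function concentrated near the diagonal of $S^1\times S^1$ is not a finite sum of products). Fortunately you do not need this decomposition. Apply Fubini directly: fixing points in all factors except $X_j=\cM_{s+i_{j-1},s+i_j}$, the slice inclusion $\iota:X_j\hookrightarrow\cM^{s,k}_{i_1,\ldots,i_r}$ followed by the product of source/target maps factors through $(s_j,t_j):X_j\to C_{s+i_{j-1}}\times C_{s+i_j}$, because the other coordinates of the image are frozen. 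Hence the restricted integrand $\iota^*\omega$ is a pullback from a manifold of dimension $c_{s+i_{j-1}}+c_{s+i_j}\le 2k$, so it has degree $\le 2k$ and integrates to zero over $X_j$ whenever $\dim X_j>2k$. This is precisely your per-factor degree estimate, obtained without the global K\"unneth splitting.
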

\begin{proof}
	Since $|f^n_i| = \dim C_i - 1 \le k -1$ and $|\alpha|,|\gamma|\le k$, if $\cM_{i,j}$ appears in an integration in the definition of the differential with $\dim \cM_{i,j} > 2k$, there is no way the pullbacks of those forms can contain a nontrivial component in $\wedge^{\dim \cM_{i,j}}\cM_{i,j}$. Therefore the integration must be zero.  Note that when $k=0$, this amounts to say the fact that the cochain complex only depends on zero dimensional moduli spaces (although the existence of $1$ dimensional moduli spaces is needed to show that $d^2=0$).
\end{proof}

We first show that $(\BC,d_{\BC,\Theta})$ is a cochain complex, the invariance is deferred to the next subsection. For simpicity, we first introduce the following notation.
\begin{enumerate}
	 \item For $ 0<i_1<i_2\ldots<i_r<k$, we define  
	 \begin{equation}\label{fiber}
	 \cM^{v,k}_{i_1,\ldots,\overline{i_p},\ldots,i_r}:=\cM_{v,v+i_1}\times \ldots \times(\cM_{v+i_{p-1},v+i_p}\times_{v+i_p}\cM_{v+i_p,v+i_{p+1}})\times \ldots \times \cM_{v+i_r,v+k}
	 \end{equation}
	 with the product orientation.
	 \item For $\alpha \in \Omega^* (C_v), \gamma\in \Omega^*(C_{v+k})$ and $f_{v+i_j}\in \Omega^*(C_{v+i_j}\times C_{v+i_j})$, we define $\cM^{v,k}_{i_1,\ldots,i_r}[\rd (\alpha, f_{v+i_1},\ldots, f_{v+i_r}, \gamma)]$ to be
	 \begin{equation}\label{paird}
	 \int_{\cM^{v,k}_{i_1,\ldots,i_r}} \rd \left(s^*_{v,v+i_1}\alpha\wedge (t_{v,v+i_1}\times s_{v+i_1,v+i_2})^*f_{v+i_1}\wedge \ldots \wedge (t_{v+i_{r-1},v+i_r}\times s_{v+i_r,v+k})^*f_{v+i_r}\wedge t_{v+i_r, v+k}^*\gamma\right).
	 \end{equation}
	 \item We define the pairing $\cM^{v,k}_{i_1,\ldots,\overline{i_p},\ldots, i_r}[\alpha, f_{v+i_1},\ldots, f_{v+i_{p-1}},  f_{v+i_{p+1}},\ldots, f_{v+i_r}, \gamma]$ over  $\cM^{v,k}_{i_1,\ldots,\overline{i_p},\ldots,i_r}$ to be
      \begin{equation}\label{pair2}
      \begin{aligned}
       \int_{\cM^{v,k}_{i_1,\ldots,\overline{i_p},\ldots,i_r}}  s^*_{v,v+i_1}\alpha\wedge (t_{v,v+i_1}\times s_{v+i_1,v+i_2})^* f_{v+i_1}\wedge \ldots \wedge (t_{v+i_{p-2},v+i_{p-1}}\times s_{v+i_{p-1},v+i_{p+1}})^*f_{v+i_{p-1}} \\
        \qquad \wedge(t_{v+i_{p-1},v+i_{p+1}}\times s_{v+i_{p+1},v+i_{p+2}})^*f_{v+i_{p+1}}\wedge \ldots \wedge (t_{v+i_{r-1},v+i_r}\times s_{v+i_r,v+k})^*f_{v+i_r}\wedge t_{v+i_r, v+k}^*\gamma.
     \end{aligned}
     \end{equation}
     \item When we compose two operators,  a trace term  will appear. Therefore we introduce $\tr^{v+i_p}\cM^{v,k}_{i_1,\ldots,i_r}[\alpha, \allowbreak f_{v+i_1}, \ldots, f_{v+i_{p-1}}, \theta\theta^*_{v+i_p}, f_{v+i_{p+1}}, \ldots, f_{v+i_r},\gamma]$ to denote the following,
      \begin{equation}\label{pair3}
     \begin{aligned} 
      \int_{\cM^{v,k}_{i_1,\ldots,i_r}} & s^*_{v,v+i_1} \alpha\wedge (t_{v,v+i_1}\times s_{v+i_1,v+i_2})^*f_{v+i_1}\wedge \ldots \\
      & \wedge (t_{v+i_{p-1},v+i_p}\times s_{v+i_p,v+i_{p+1}})^*(\sum_a\pi_1^*\theta_{v+i_p,a}\wedge \pi_2^*\theta_{v+i_p,a}^*) \wedge \ldots \wedge (t_{v+i_r}\times s_{v+i_r})^*f_{v+i_r}\wedge t_{v+k}^*\gamma,
     \end{aligned}
     \end{equation}
     where $\pi_1,\pi_2$ are the projections of $C_{v+i_p}\times C_{v+i_p}$ to the first and second factor respectively.
\end{enumerate}

Heuristically speaking, the ``Thom class" of $\cM^{v,k}_{i_1,\ldots,i_{p-1},\overline{i_p}, i_{p+1},\ldots, i_r}\subset \cM^{v,k}_{i_1,\ldots,i_r}$ is given by the pullback of $(t_{v+i_{p-1},v+i_p}\times s_{v+i_p,v+i_{p+1}})^*\delta^n_{v+i_p}\in \Omega^*(\cM_{v+i_{p-1},v+i_p}\times \cM_{v+i_p,v+i_{p+1}})$ to $ \cM^{v,k}_{i_1,\ldots,i_r}$ by the natural projection. Hence we will have the following lemma, which is crucial to the proof of $d_{\BC,\Theta}^2=0$ and will be proven in Appendix \ref{conv}. 
\begin{lemma}\label{conv2}
	For an oriented flow category $\cC$ and any defining data, we have
	\begin{equation*}
	\lim_{n\to \infty}  \cM^{v,k}_{i_1,\ldots,i_r}[\alpha, f_{v+i_1}^{n},\ldots ,\delta^{n}_{v+i_p},\ldots, f_{v+i_r}^{n}, \gamma]
	= (-1)^* \lim_{n\to \infty}\cM^{v,k}_{i_1,\ldots,i_{p-1},\overline{i_p}, i_{p+1},\ldots, i_r}[\alpha, f_{v+i_1}^{n},\ldots, f_{v+i_r}^{n},\gamma],\end{equation*}
	where $*=(|\alpha|+m_{v,v+i_p})c_{v+i_p}$.
\end{lemma}
 
\begin{proposition}{\label{dsquare}}
	$(\BC,d_{\BC,\Theta})$ is a cochain complex, i.e.\ $d_{\BC,\Theta}^2=0$.
\end{proposition}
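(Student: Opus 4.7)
The plan is to pair $d^2_{BC,\Theta}[\alpha]$ against an arbitrary closed representative $\gamma\in h(\cC,s+k)$ and show that all terms cancel. Expanding $d^2$, one sees that $\langle d_{k-j,\Theta}\circ d_{j,\Theta}[\alpha],[\gamma]\rangle_{s+k}$ is a signed sum, indexed by pairs of increasing sequences, of integrals of the form \eqref{pair3}, i.e.\ integrals over some $\cM^{s,k}_{i_1,\ldots,i_r}$ with $f^n$-insertions at most intermediate indices and a single trace insertion $\sum_a \pi_1^*\theta_{s+i_p,a}\wedge \pi_2^*\theta^*_{s+i_p,a}$ at the ``composition" index $i_p$. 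Therefore the goal reduces to showing that, after summing over all sequences $T$ and all choices of insertion point $p$, the total trace contribution vanishes.

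To produce such a cancellation, I will, for each $T=\{0,i_1,\ldots,i_r,k\}$, apply Stokes' theorem to $\cM^{s,k}_T[\rd(\alpha,f^n_{s+i_1},\ldots,f^n_{s+i_r},\gamma)]$. Since $\alpha$ and $\gamma$ are closed, differentiating the integrand distributes as a signed sum over $p$ of insertions of $\rd f^n_{s+i_p}$. Using \eqref{minihomo}, each such insertion splits as $\delta^n_{s+i_p}-\sum_a\pi_1^*\theta_{s+i_p,a}\wedge\pi_2^*\theta^*_{s+i_p,a}$. The second summand is precisely the trace term, while the first, by Lemma~\ref{conv2} in the limit $n\to\infty$, equals (up to the explicit sign $(-1)^*$) the integral over the fiber-product moduli space $\cM^{s,k}_{i_1,\ldots,\overline{i_p},\ldots,i_r}$ appearing in \eqref{fiber}. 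On the other side of Stokes' theorem, the boundary of $\cM^{s,k}_T$ decomposes by Condition \eqref{F4} of Definition~\ref{flow} into fiber products obtained by breaking one factor $\cM_{s+i_{p-1},s+i_p}$ at an intermediate $\cM_{s+i_{p-1},l}\times_l \cM_{l,s+i_p}$, producing exactly the same type of fiber-product integrals as in the $\delta^n$-contributions, but now indexed by an enlarged sequence $T\cup\{l\}$ with a fiber product at $l$.

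Summing Stokes' identity over all sequences $T$ yields an equality in which both sides are sums of fiber-product integrals indexed by the same set of decorated sequences (one ``barred" index in a sequence of length $r+1$). Provided the signs match, these terms cancel pairwise, and the only surviving contribution is the sum of trace terms, which equals $\langle d^2_{BC,\Theta}[\alpha],[\gamma]\rangle_{s+k}$. This forces $d^2_{BC,\Theta}=0$. The main obstacle is the sign bookkeeping: one must show that the sign $(-1)^{*}$ produced by Lemma~\ref{conv2} when replacing $\delta^n_{s+i_p}$ by the fiber product, combined with the sign $(-1)^{m_{i,j}}$ from Condition \eqref{cond:ori3} of Definition~\ref{oridef} controlling the induced boundary orientation, and the cumulative signs $\star,\ddagger(\cC,\alpha,i_j)$ built into \eqref{eqn:dkt}, all conspire so that a given fiber-product term appearing from the $\delta^n$-side of some sequence $T$ exactly cancels its twin appearing from the Stokes boundary of a sequence $T\setminus\{i_p\}$. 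I expect the non-sign parts to follow cleanly from Stokes, Lemma~\ref{conv2} and the compatibility \eqref{identification}; the sign matching is the step that requires a detailed case analysis and will form the technical core of the proof.
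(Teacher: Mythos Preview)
Your proposal is correct and follows essentially the same strategy as the paper's proof. Both arguments rest on the identical trio of ingredients: Stokes' theorem on each $\cM^{s,k}_{i_1,\ldots,i_r}$, the relation $\rd f^n_{s+i_p}=\delta^n_{s+i_p}-\sum_a\pi_1^*\theta_{s+i_p,a}\wedge\pi_2^*\theta^*_{s+i_p,a}$, and Lemma~\ref{conv2} converting the $\delta^n$-insertion into a fiber-product integral.

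The only organizational difference is that the paper proceeds inductively: it first isolates the leading term $S=\langle\sum_i D_{k-i}\circ D_i[\alpha],[\gamma]\rangle$, writes it as a single trace, and then iteratively applies the ``replace trace by $\delta^n-\rd f^n$, apply Stokes to the exact part, replace the resulting fiber product by a $\delta^n$-insertion, repeat'' procedure (this is exactly Remark~\ref{rmk:replace}). Your proposal instead sums the Stokes identities over all sequences $T$ simultaneously and observes the pairwise cancellation of fiber-product terms via the bijection $(T,i_p)\leftrightarrow (T\setminus\{i_p\},i_p)$. These are two packagings of the same computation; the paper's inductive version has the advantage that the sign verification is localized to a single step of the induction (the passage from \eqref{induction6} to the $r+1$ case), whereas in your global version you must check that the sign carried by $(T,i_p)$ on the $\delta^n$-side agrees with the sign carried by $(T\setminus\{i_p\},i_p)$ on the boundary side across all lengths at once. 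Either way, as you correctly anticipate, the sign bookkeeping using $\dagger,\ddagger$ and the orientation convention of Definition~\ref{oridef} is the substantive work.
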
 
\begin{proof}
For simplicity, we will suppress the subscript $\Theta$ in the proof. It suffices to show that for all $\alpha \in h(\cC,v)$ and $\gamma\in h(\cC,v+k)$, we have
\begin{equation}\label{goal1}\left\langle \sum_{i=1}^{k-1} d_{k-i}\circ d_i[\alpha],[\gamma]\right\rangle_{v+k}=0.\end{equation}
We first prove the following lemma.

\begin{lemma}\label{lemma:dsquare}
For $r\ge 1$, we have 
\begin{equation}
\begin{aligned}
0=(-1)^{|\alpha|c_v} & \int_{\partial \cM_{v,v+k}}  s_{v,v+k}^*\alpha \wedge t_{v,v+k}^* \gamma  = \lim_{n\to \infty}\sum_{0<i_1<\ldots<i_r<k}  (-1)^{\star_1} \cM^{v,k}_{i_1,\ldots,i_r}[\rd (\alpha, f^n_{v+i_1},\ldots,f^n_{v+i_r},\gamma)] +  \label{exact} \\
& \lim_{n\to \infty}\sum_{\substack{1\le p \le q\le r\\ 0<i_1<\ldots<i_q<k}} (-1)^{\star_2} \tr^{v+i_p}\cM^{v,k}_{i_1,\ldots,i_q} [\alpha, f^n_{v+i_1},\ldots, f^n_{v+i_{p-1}},\theta\theta_{v+i_p}^*, f^n_{v+i_{p+1}}, \ldots, f^n_{v+i_q}, \gamma],
\end{aligned}    
\end{equation}
where $$
\begin{aligned}\star_1 & =|\alpha|c_v+\sum_{j=1}^r \dagger(\cC,\alpha,i_j),\\
\star_2 & =|\alpha|(c_v+1)+\sum_{j=1}^{p-1} \ddagger(\cC,\alpha,i_j)+\sum_{j=p}^q \dagger(\cC,\alpha,i_j).
\end{aligned}
$$
\end{lemma}
\begin{proof}
\textbf{Step 1: the $r=1$ case.} In this case, since $p=q=r=1$ for the second term, we write $i=i_1$. Then we have $\star_2=|\alpha|(c_v+1)+\dagger(\cC,\alpha,i)$. Using the equation $\delta_*^n-\sum_a \pi_1^*\theta_{*,a} \wedge \pi_2^*\theta_{*,a}^*=\rd f^n_{*}$ for any $n\in \N$, we have
\begin{eqnarray}
(-1)^{\star_2} \tr^{v+i}\cM^{v,k}_{i}[\alpha,\theta\theta^*_{v+i},\gamma] & = &\sum_i (-1)^{\star_2}  \cM^{v,k}_{i}[\alpha, \delta^n_{v+i}-\rd f^n_{v+i},\gamma] \nonumber\\
& = &\lim_{n\to \infty}\sum_i (-1)^{\star_2}  \cM^{v,k}_{i}[\alpha, \delta^n_{v+i}-\rd f^n_{v+i},\gamma] \nonumber\\
& = & \lim_{n\to \infty}\sum_i (-1)^{\star_2}  \cM^{v,k}_{i}[\alpha, \delta^n_{v+i},\gamma] \label{relation3}\\
& & +\lim_{n\to \infty} \sum_i (-1)^{\star_2+1}  \cM^{v,k}_{i}[\alpha,\rd f^n_{v+i},\gamma]. \label{formula2}
\end{eqnarray}
By Lemma \ref{conv2}, we have
\begin{equation}\label{relation4}
	\lim_{n\to \infty}\sum_i (-1)^{\star_2}  \cM^{v,k}_{i}[\alpha, \delta^n_{v+i},\gamma] =\sum_i (-1)^{\star_2+(|\alpha|+m_{v,v+i})c_{v+i}}  \mathcal{M}^{v,k}_{\overline{i}}[\alpha,\gamma].
\end{equation}	
Since $(-1)^{\star_2+(|\alpha|+m_{v,v+i})c_{v+i}}=(-1)^{|\alpha|c_v+m_{v,v+i}}$, and  $\partial[\cM_{ik}]=\sum(-1)^{m_{i,j}}[\cM_{ij}]\times_j[\cM_{jk}]$, by Stokes' theorem  
$$\begin{array}{>{\displaystyle}r>{\displaystyle}c>{\displaystyle}l} \eqref{relation4}  &= &\sum_i(-1)^{|\alpha|c_v+m_{v,v+i}} \int_{\cM_{v,v+i}\times_{v+i}\cM_{v+i,v+k}} s_{v,v+i}^*\alpha \wedge t_{v+i,v+k}^* \gamma\\
&= &(-1)^{|\alpha|c_v} \int_{\partial \cM_{v,v+k}} s_{v,v+k}^*\alpha \wedge t_{v,v+k}^* \gamma\\ 
&= &(-1)^{|\alpha|c_v} \int_{ \cM_{v,v+k}} \rd\left(s_{v,v+k}^*\alpha \wedge t_{v,v+k}^* \gamma\right)\\
&=& 0.\end{array}$$    
Now, we have 
$$\eqref{formula2} = \displaystyle\lim_{n\to \infty}\sum_i(-1)^{\star_2+1+|\alpha|}\cM^{v,k}_i[\rd(\alpha,f_{v+i}^n,\gamma)].$$
Note that the difference between $\star_1$ and $\star_2$ in the $r=1$ case is indeed $|\alpha|$. This proves the $r=1$ case.

\textbf{Step 2: independence of $r$.}
We need to prove the value of the RHS does not change from $r$ to $r+1$.  We apply Stokes' theorem to the exact term in\eqref{exact} in the $r$ case. The boundary $\partial (\cM_{v,v+i_1}\times \ldots \times \cM_{v+i_r,v+k})$ comes from fiber product at $v+w$ for all $t,w$ such that $0<i_1<\ldots<i_t<w<i_{t+1}<\ldots i_r<k$. Consider the boundary coming from the fiber product at $v+w$, after applying Stokes' theorem to the exact term in \eqref{exact}, we have the contribution from  integration over the  $\cM^{v,k}_{i_1,\ldots,i_t,\overline{w},\ldots, i_r}\subset \cM^{v,k}_{i_1,\ldots, i_r}$ is
\begin{equation}\label{dsquare3}
(-1)^{\star_3}\lim_{n\to \infty}\cM^{v,k}_{i_1,\ldots,i_t,\overline{w},\ldots, i_r} [\alpha, f^n_{v+i_1}, \ldots, f^n_{v+i_r},\gamma],
\end{equation}
where $\star_3=|\alpha|c_v+\sum_{j=1}^r \dagger(\cC,\alpha,i_j) +m_{v,v+i_1}+\ldots+m_{v+i_t,v+w}$. By replacing the fiber product in  $\cM^{v,k}_{i_1,\ldots,i_t,\overline{w},\ldots, i_r}$ with the Cartesian product $\cM^{v,k}_{i_1,\ldots,i_t,w,\ldots, i_r}$, we have by Lemma \ref{conv2},
\begin{equation}\label{induction3}
\eqref{dsquare3}=(-1)^{\star_3+(|\alpha|+m_{v,v+w})c_{v+w}}\lim_{n\to \infty} \cM^{v,k}_{i_1,\ldots,i_t,w,\ldots, i_r}[\alpha, f_{v+i_1}^n,\ldots, \delta^n_{v+w},\ldots, f_{v+i_r}^n,\gamma].
\end{equation}
We replace the Thom class $\delta_*^n$ by $\sum_a\pi_1^*\theta_{*,a} \wedge \pi_2^*\theta_{*,a}^*+\rd f^n_*$ to get
\begin{eqnarray}
\eqref{induction3} & = &(-1)^{\star_3+(|\alpha|+m_{v,v+w})c_{v+w}}\lim_{n\to\infty}\tr^{v+w}  \cM^{v,k}_{i_1,\ldots,i_t,w, \ldots, i_r}[\alpha, f_{v+i_1}^n,\ldots, \theta\theta^*_{v+w},\ldots, f_{v+i_r}^n,\gamma] \\ & & +(-1)^{\star_3+(|\alpha|+m_{v,v+w})c_{v+w}}\lim_{n\to \infty}\cM^{v,k}_{i_1,\ldots,i_t,w,\ldots, i_r}[\alpha, f_{v+i_1}^n,\ldots, \rd f^n_{v+w},\ldots, f_{v+i_r}^n,\gamma]. \label{end1}
\end{eqnarray}
Let $\star_4$ denote $\star_3+(|\alpha|+m_{v,v+w})c_{v+w}$.
By \eqref{dim} we have
$$\star_4=|\alpha|(c_v+1)+\sum_{j=1}^t \ddagger(\cC,\alpha,i_j)+\dagger(\cC,\alpha,w)+\sum_{j=t+1}^r\dagger(\cC,\alpha,i_j) \mod 2.$$
Because $\star_5:=\star_4+|\alpha|+\sum_{j=1}^t(c_{v+i_j}+1)\equiv |\alpha|c_v+\sum_{j=1}^r\dagger(\mathcal{C},\alpha,i_j)+\dagger(\mathcal{C},\alpha,w) \mod 2$ and $|f_{v+i_j}^n|\equiv c_{v+i_j}+1\mod 2$, we have
\begin{equation}
\eqref{end1}= \lim_{n\to \infty}\sum_{0<i_1<\ldots<i_t<w<i_{t+1}<i_r<k}(-1)^{\star_5}\cM^{v,k}_{i_1,\ldots,i_t,w,i_{t+1},\ldots,i_r}[\rd(\alpha, f^n_{v+i_1},\ldots, f^n_{v+w},\ldots, f_{v+i_r}^n, \gamma)].
\end{equation}
Therefore we arrive at
\begin{eqnarray}
RHS & = & \lim_{n\to \infty}\sum_{\substack{1\le p \le q\le r\\ 0<i_1<\ldots<i_q<k}} (-1)^{\star_2} \tr^{v+i_p}\cM^{s,k}_{i_1,\ldots,i_q} [\alpha, f^n_{v+i_1},\ldots, f^n_{v+i_{p-1}},\theta\theta_{v+i_p}^*, f^n_{v+i_{p+1}}, \ldots, f^n_{v+i_q}, \gamma] \nonumber \\
& &+ \lim_{n\to \infty}\sum_{0<i_1<\ldots<i_t<w<i_{t+1}<i_r<k} (-1)^{\star_4} \tr^{v+w}\cM^{s,k}_{i_1,\ldots,i_t,w,i_{t+1},\ldots,i_r}[\alpha, f^n_{v+i_1},\ldots, \theta\theta_{v+w}^*,\ldots, f_{v+i_r}^n, \gamma] \nonumber\\
& &+\lim_{n\to \infty} \sum_{0<i_1<\ldots<i_t<w<i_{t+1}<i_r<k} (-1)^{\star_5} \cM^{v,k}_{i_1,\ldots,i_t,w,i_{t+1},\ldots,i_r}[\rd(\alpha, f^n_{v+i_1},\ldots, f^n_{v+w},\ldots, f_{v+i_r}^n, \gamma)]. \nonumber
\end{eqnarray}
This is the $r+1$ case, so we have proved the claim.
\end{proof}

Going back to the proof of Proposition \ref{dsquare}, in the case of $r=k-1$ in Lemma \ref{lemma:dsquare},  the following two terms sum to zero
\begin{equation} \label{closed}	
	 \lim_{n\to \infty}  (-1)^{\star_1} \cM^{v,k}_{1,\ldots,k-1}[\rd (\alpha, f^n_{v+1},\ldots,f^n_{v+k-1},\gamma)],
\end{equation}	 
\begin{equation}\label{trace}\lim_{n\to \infty}\sum_{\substack{1\le p \le q\le k-1\\ 0<i_1<\ldots<i_q<k}} (-1)^{\star_2} \tr^{v+i_p}\cM^{v,k}_{i_1,\ldots,i_q} [\alpha, f^n_{v+i_1},\ldots, f^n_{v+i_{p-1}},\theta\theta_{v+i_p}^*, f^n_{v+i_{p+1}}, \ldots, f^n_{v+i_q}, \gamma],
\end{equation}
where 
$$\begin{aligned}\star_1 & =|\alpha|c_v+\sum_{j=1}^{k-1} \dagger(\cC,\alpha,j),\\
 \star_2 & =|\alpha|(c_v+1)+\sum_{j=1}^{p-1} \ddagger(\cC,\alpha,i_j)+\sum_{j=p}^q \dagger(\cC,\alpha,i_j).
\end{aligned}
$$
Since $\cM^{v,k}_{1,\ldots,k-1}$ is a closed manifold, \eqref{closed} is $0$ by Stokes' theorem. For the remaining term, we claim that 
\begin{equation}\label{eqn:claimtrace}
\eqref{trace}=\left\langle \sum_{i=1}^{k-1} d_{k-i}\circ d_i[\alpha],[\gamma]\right\rangle_{v+k}.\end{equation}
Since $|d_i\alpha|=|\alpha|+m_{v,v+i}+c_{v+i}\mod 2$,  we have 
$$\ddagger(\cC,d_i\alpha,j)=\dagger(\cC,\alpha,i+j), \mod 2.$$
Then the claim simply follows from the definition of $d_i$.
\end{proof}
                         
\begin{remark}
	From the proof of Proposition \ref{dsquare}, we see that there is no harm in suppressing the index $n$ and $\displaystyle \lim_{n\to\infty}$ by Lemma \ref{conv1} and \ref{conv2}. If we write $f_i$ as the limit of $f_i^n$ in the space of currents, such that 
	\begin{equation}\label{rel}\delta_i=\pi_1^*\theta_{i,a}\wedge \pi_2^*\theta_{i,a}^*+\rd f_{i}\end{equation} 
	where $\delta_i$ is the  Dirac current, then we can use \eqref{rel} to do formal computations.  
\end{remark}

\subsection{Flow morphisms induce cochain morphisms}\label{flowmor}
\S \ref{mbcochain} shows that a flow category carries enough geometric structure to define a cochain complex. In the following subsections, we study the analogous geometric data for cochain complex morphisms and homotopies. In this subsection, we introduce flow morphisms between flow categories, which is the underlying geometric data to define continuation maps \cite[Chapter 11]{audin2014morse}. We show that every flow category has an identity flow morphism from the flow category to itself.  Using the identity flow morphism, we show that $H(\BC,d_{\BC,\Theta})$ is independent of the defining-data $\Theta$, thus we finish the proof of Theorem \ref{thm:main}.    

\subsubsection{Flow morphisms}
\begin{definition}\label{morphism}
	An \textbf{oriented flow morphism} $\fH$ from an oriented flow category $\cC:=\{C_i,\cM^{C}_{i,j}\}$ to another oriented flow category $\cD:=\{D_i,\cM^{D}_{i,j}\}$ is a family of compact oriented manifolds  $\{\cH_{i,j}\}_{i,j\in \Z}$, such that the following holds. 
	\begin{enumerate}
		\item\label{m0} There are two smooth maps $s: \cH_{i,j}\to C_i, t:\cH_{i,j}\to D_j$.
		\item\label{m00}  $\exists N\in \Z$,  such that when $i-j>N$, $\cH_{i,j}=\emptyset$.
		\item\label{m2} For every $i_0<i_1<\ldots<i_k$, $ j_0<\ldots<j_{m-1}<j_m$, the fiber product $$\cM^C_{i_0,i_1}\times_{i_1}\ldots \times_{i_k} \cH_{i_k,j_0}\times_{j_0}\ldots\times_{j_{m-1}}\cM^D_{j_{m-1},j_m}$$  is cut out transversely.
		\item\label{m1} There are smooth maps $m_L:\cM^C_{i,j}\times_j \cH_{j,k}\to \cH_{i,k}$ and $m_R: \cH_{i,j}\times_j \cM^D_{j,k}\to \cH_{i,k}$, such that 
		$$\begin{aligned}s\circ m_L(a,b) & =s^C(a), &
		 t\circ m_L(a,b) & =t(b),\\
		s\circ m_R(a,b) & =s(a), &
		 t\circ m_R(a,b) & =t^D(b),
		\end{aligned}$$
		where the map $s^C$ is the source map for the flow category $\cC$ and the map $t^D$ is the target map for the flow category $\cD$.
		\item\label{m4}The map $m_L\cup m_R: \left(\cup_j\cM^C_{i,j}\times_j \cH_{j,k}\right)\cup \left( \cup_j \cH_{i,j}\times_j \cM^\cD_{j,k}\right)\to \partial \cH_{i,k}$ is a diffeomorphism up to zero-measure (Definition \ref{def:measure}).
		\item\label{m3} The orientation $[\cH_{i,j}]$ has the following properties,
		$$\partial [\cH_{i,j}]=\sum_{p>0} (-1)^{m^C_{i,i+p}} m_L\left([\cM^C_{i,i+p}\times_{i+p} \cH_{i+p,j}]\right)+\sum_{p>0} (-1)^{h_{i,j}}m_R\left([\cH_{i,j-p}\times_{j-p} \cM^D_{j-p,j}]\right),$$
		$$(t^C\times s)^*[N_j][\cM^C_{i,j}\times_j \cH_{j,k}]=(-1)^{c_j m^C_{i,j}}[\cM^C_{i,j}][\cH_{j,k}],$$ 
		$$(t\times s^{D})^*[N_j][\cH_{i,j}\times_j \cM^D_{j,k}]=(-1)^{d_jh_{i,j}}[\cH_{i,j}][\cM^D_{j,k}].$$
		Here $c_i:=\dim C_i, m_{i,j}^C:=\dim \cM^C_{i,j}, d_j:=\dim D_j$  and $h_{i,j}=\dim \cH_{i,j}$. 
	\end{enumerate}
\end{definition}
By condition \eqref{m1}, we have a formula similar to \eqref{identification}. Thus it is convenient to use $m_L,m_R$ to identify $\cM^C_{i,j}\times_j \cH_{j,k}, \cH_{i,j}\times_j \cM^D_{j,k}$ with the corresponding parts of $\partial \cH_{i,k}$. Hence in the following, we will suppress $m_L,m_R$, and treat $\cM^C_{i,j}\times_j \cH_{j,k}, \cH_{i,j}\times_j \cM^D_{j,k}$ as though they are contained in $\partial \cH_{i,k}$.


\begin{remark}
 Condition \eqref{m00} is important for us  to obtain a finite sum in the definition of the induced cochain morphism. In the context of Morse/Floer theories, the existence of  $N$  usually comes from some energy estimates. More precisely, $\cH_{i,j}$ is typically the compactification of the space of solutions to parameterized Floer equations/gradient flow equations interpolating the  geometric data for $\cC$ and $\cD$. Then there is usually some notion of energy $E(u)$ for a Floer cylinder/gradient flow $u$ in the moduli space $\cH_{i,j}$, such that $E(u)\ge 0$. Now we assume that the energy $E(u)$ satisfies inequality $E(u)\le g(D_j)-f(C_i)+C$, where $f$ and $g$ are the background Morse-Bott functionals for $\cC$ and $\cD$, and $C$ is a universal constant depending on the interpolating data we use to define the moduli space $\cH_{i,j}$. Assuming the critical values do not accumulate for simplicity\footnote{When critical values accumulate, see Remark \ref{rmk:flow}}, then if $j\ll i$, we have $E(u)<0$, i.e.\ there are no curves in $\cH_{i,j}$.
\end{remark}

\begin{remark}
    Similar to Definition \ref{def:grade}, we say $\fH$ is compatible with the grading structures on $\cC,\cD$ iff $d(C_i)=d(D_j)+d_j-h_{i,j}$, where $\{d(C_i)\},\{d(D_j)\}$ are grading structures on $\cC,\cD$ respectively. When this holds, the cochain morphism $\phi^H$ below will have degree $0$.
\end{remark}
   
The main result of this subsection is that oriented flow morphisms induce cochain morphisms between the minimal Morse-Bott cochain complexes. Let $\cC:=\{C_i,\cM^{\cC}_{i,j}\}$ and $\cD:=\{D_i,\cM^{\cD}_{i,j}\}$ be two oriented flow categories. Assume $\fH=\{\cH_{i,j}\}$ is an oriented flow morphism from $\cC$ to $\cD$, then we introduce the following.
	\begin{enumerate}
		\item We write $c_i:=\dim C_i, d_i:=\dim D_i, m^{C}_{i,j}:=\dim \cM^C_{i,j}$, $m^D_{i,j}:=\dim \cM^D_{i,j}$ and $h_{i,j}:=\dim \cH_{i,j}$. We formally define $m^C_{i,i}=c_i-1$ and $m^D_{i,i}=d_i-1$ as before. We assume as before that those numbers are well-defined. Then we have that 
		$$h_{i,j}+m^D_{j,k}-d_{j}+1=h_{i,k},\quad j\le k$$
		and 
		$$m^C_{i,j}+h_{j,k}-c_i+1=h_{i,k}, \quad i\le j$$
		by Definition \ref{morphism}.
		\item For $v, k\in \Z$ and $0<i_1<\ldots<i_p$ and $j_1<\ldots<j_q<k$, We define
		$$\cH^{v,k}_{i_1,\ldots,i_p|j_1\ldots,j_q}:=\cM^C_{v,v+i_1}\times \ldots \times\cM^C_{v+i_{p-1},v+i_p}\times \cH_{v+i_p,v+j_1}\times \cM^D_{v+j_1,v+j_2}\times \ldots \times \cM^D_{v+j_q,v+k}$$ with the product orientation.
		\item $\cH^{*,*}_{\ldots|\ldots }[\alpha,  f_*, \ldots, f_*, \ldots, \gamma]$ is defined similarly to $\cM^{*,*}_{\ldots}[\alpha, f_*,\ldots, \gamma]$ in \eqref{pair1}.
		\item  For $\alpha \in \Omega^*(C_v)$, we define $\dagger(\fH,\alpha,k)=(|\alpha|+h_{v,v+k})(d_{v+k}+1)$ and $\ddagger(\fH,\alpha, k):=(|\alpha|+h_{v,v+k}+1)(d_{v+k}+1)$
	\end{enumerate}
   
Let $\Theta_1:=\{h(\cC,i), f_i^{C,n}\}$ and $\Theta_2:=\{h(\cD,i), f_i^{D,n}\}$ be defining data for flow categories $\cC$ and $\cD$ respectively. Let $\fH := \{\cH_{i,j}\}$ be an oriented flow morphism from $\cC$ to $\cD$. The counterparts of Lemma \ref{conv1} and \ref{conv2} hold for $\cH$ by the same argument. Then we define an linear operator $\phi^{H}_{k,\Theta_1,\Theta_2}: H^*(C_v) \to H^*(D_{v+k})$ for every $v,k\in \Z$ as follows. 
\begin{equation}\label{mordef}
\begin{aligned}
\left\langle\phi^H_{k,\Theta_1,\Theta_2}[\alpha],[\gamma]\right\rangle_{v+k}&:= \sum_{\substack{p,q\ge 0\\0=i_0<i_1<\ldots<i_p\\j_1<\ldots<j_q<j_{q+1}=k}} (-1)^{\ast} \cH^{v,k}_{i_1,\ldots,i_p|j_1,\ldots,j_q}[\alpha, f^C_{v+i_1},\ldots, f^C_{v+i_p},f^D_{v+j_1},\ldots, f^D_{v+j_q},\gamma]\\
&:= \lim_{n\to \infty}\sum_{\substack{p,q\ge 0\\0=i_0<i_1<\ldots<i_p\\j_1<\ldots<j_q<j_{q+1}=k}} (-1)^{\ast} \cH^{v,k}_{i_1,\ldots,i_p|j_1,\ldots,j_q}[\alpha, f^{C,n}_{v+i_1},\ldots, f^{C,n}_{v+i_p},f^{D,n}_{v+j_1},\ldots, f^{D,n}_{v+j_q},\gamma],\\
\end{aligned}
\end{equation}
where $$\ast:=|\alpha|c_{v}+h_{v,v+j_1}+\sum_{w=1}^p\ddagger(\cC,\alpha,i_w)+\sum_{w=1}^q \ddagger(\fH,\alpha,j_w).$$ 
The existence of $N$ in the condition \eqref{m00} of Definition \ref{morphism} implies that \eqref{mordef} is a finite sum and $\phi^H_{k,\Theta_1,\Theta_2}=0$ for $k<-N$.

\begin{theorem}\label{Mor}
	Let $\fH:\cC \Rightarrow \cD$ be an oriented flow morphism. If we fix defining data $\Theta_1:=\{h(\cC,i), f_i^{C,n}\}$ and $\Theta_2:=\{h(\cD,i), f_i^{D,n}\}$  for $\cC$ and $\cD$ respectively, then there is a linear map $\phi_{\Theta_1,\Theta_2}^H = \prod_{k\in \Z} \phi^H_{k,\Theta_1,\Theta_2}: \BC(\cC,\Theta_1) \to \BC(\cD,\Theta_2)$ given by \eqref{mordef}, such that 
     $$\phi^H_{\Theta_1,\Theta_2}\circ d^C_{\BC,\Theta_1}- d^D_{\BC,\Theta_2}\circ\phi^H_{\Theta_1,\Theta_2}=0.$$
    In particular, $\phi^H_{\Theta_1,\Theta_2}$ induces a map $H(\BC(\cC), d^C_{\BC,\Theta_1})\to H(\BC(\cD),d^D_{\BC,\Theta_2})$ on cohomology.
\end{theorem}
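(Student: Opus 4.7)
The plan is to mimic the proof of Proposition \ref{dsquare} following the recipe in Remark \ref{rmk:replace}, with the boundary structure of $\cH_{i,k}$ from Definition \ref{morphism}\eqref{m3} playing the role that $\partial\cM_{i,k}$ played there. Decompose $\phi^H_{k,\Theta_1,\Theta_2} = \Phi_k + F_k$, $d^C_{k,\Theta_1} = D^C_k + E^C_k$, and $d^D_{k,\Theta_2} = D^D_k + E^D_k$, where in each case the leading operator $\Phi_k$, $D^C_k$, $D^D_k$ is the summand with no $f^n$'s inserted (i.e.\ only the single manifold $\cH_{s,s+k}$ or $\cM_{s,s+k}$). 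Fix $\alpha\in h(\cC,s)$ and $\gamma\in h(\cD,s+k)$; the goal is to show $\langle(\phi^H\circ d^C - d^D\circ \phi^H)[\alpha],[\gamma]\rangle_{s+k}=0$.

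First I would compute the leading-leading piece $\sum_i(\Phi_{k-i}\circ D^C_i - D^D_{k-i}\circ \Phi_i)[\alpha]$. As in \eqref{formula1}, evaluating the pairing with $[\gamma]$ and inserting the completeness relation $\sum_a \theta_a\otimes \theta_a^*$ turns each composition into a trace expression of the form $\tr^{s+i}\cH^{s,k}_{|}[\alpha,\theta\theta^*_{s+i},\gamma]$ (and analogously with a $\theta\theta^*_{s+j}$ on a $D$-level). Substituting $\sum_a \pi_1^*\theta_{*,a}\wedge \pi_2^*\theta_{*,a}^* = \delta^n_* - \rd f^n_*$ and invoking the analogue of Lemma \ref{conv2} for $\cH$, the $\delta^n$ contributions converge to integrals over the fiber products $\cM^C_{i,j}\times_j \cH_{j,k}$ and $\cH_{i,j}\times_j \cM^D_{j,k}$. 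By condition \eqref{m3} of Definition \ref{morphism}, these are precisely the pieces that, with the stated signs, assemble into $\partial[\cH_{s,s+k}]$, so Stokes' theorem applied to $\int_{\cH_{s,s+k}}\rd(s^*\alpha\wedge t^*\gamma)=0$ eliminates the $\delta^n$ contributions en bloc.

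Next I would handle the residual $\rd f^n$ contributions: these are integrals of exact forms over products $\cH^{s,k}_{\ldots|\ldots}$ carrying $f^n$-insertions, so Stokes' theorem converts each into a boundary integral. The boundary decomposes into fiber-product strata from breaking an $\cM^C$-factor, an $\cM^D$-factor, or the single $\cH$-factor (whose boundary is itself a union of left and right breakings). I then replace each fiber product by a Cartesian product carrying $\delta^n = \sum\pi_1^*\theta\wedge\pi_2^*\theta^* + \rd f^n$, splitting the contribution into a trace term and a new $\rd f^n$ term. Iterating the loop \emph{exact form} $\to$ \emph{Stokes} $\to$ \emph{fiber products} $\to$ \emph{traces plus new exact forms} eventually terminates because $k$ is finite and each iteration strictly increases the number of $f^n$-insertions. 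What survives is a sum of trace terms, and the claim, to be verified term-by-term as in \eqref{eqn:claimtrace}, is that these match the cross contributions
\[
\Phi\circ E^C + F\circ D^C + F\circ E^C - E^D\circ \Phi - D^D\circ F - E^D\circ F
\]
evaluated against $[\gamma]$.

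The main obstacle is sign bookkeeping. Unlike the $d^2=0$ case, $\partial \cH_{i,k}$ has two structurally different types of strata (left breakings against $\cM^C$ versus right breakings against $\cM^D$), each with its own sign prefactor $(-1)^{m^C_{i,i+p}}$ or $(-1)^{h_{i,j-p}}$ from Definition \ref{morphism}\eqref{m3}, and each trace at $C_{s+i}$ or $D_{s+j}$ must line up with the correct summand on the algebraic side. The definition $\ddagger(\fH,\alpha,j):=(|\alpha|+h_{s,s+j}+1)(d_{s+j}+1)$ is designed so that it plays for $\cH$ exactly the role $\ddagger(\cC,\alpha,i)$ plays for $\cM^C$ in Proposition \ref{dsquare}; combined with the dimension identities $h_{s,s+k}=m^C_{s,s+i}+h_{s+i,s+k}-c_{s+i}+1=h_{s,s+j}+m^D_{s+j,s+k}-d_{s+j}+1$ and the orientation rules in Definition \ref{morphism}\eqref{m3}, all signs should cancel, but checking this uniformly across all $(i_1,\ldots,i_p\,|\,j_1,\ldots,j_q)$ patterns is the tedious computation that justifies deferring the full argument to Appendix \ref{proof}.
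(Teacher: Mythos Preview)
Your proposal is correct and follows essentially the same approach as the paper's proof in Appendix~\ref{proof}: decompose into leading and error terms, compute the leading--leading composition as trace terms, apply the replacing trick of Remark~\ref{rmk:replace} iteratively using the boundary decomposition of $\cH_{i,k}$ from Definition~\ref{morphism}\eqref{m3}, and match the surviving trace terms against the cross contributions. One small sharpening: the iteration terminates not merely because $k$ is finite but because the index strings $0<i_1<\ldots<i_p$, $j_1<\ldots<j_q<k$ with $\cH_{s+i_p,s+j_1}\ne\emptyset$ are bounded in length by $k+N$, where $N$ is the constant from Definition~\ref{morphism}\eqref{m00}.
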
    
\begin{proof}
Similar to the proof of Proposition \ref{dsquare}, this theorem follows from the claim below:

For $\alpha\in h(\cC,v),\gamma\in h(\cC,v+k)$ with $k\in \Z$, and any $r\ge 1$, we have
\begin{eqnarray*}
	0&= &(-1)^{1+|\alpha|c_v+h_{v,v+k}}\int_{\partial H_{v,v+k}}s^*\alpha \wedge t^*\gamma  =\\
	&  & \qquad \sum_{\substack{0\le p\le r\\ 0<i_1<\ldots<i_p\\j_1<\ldots<j_{r-p}<k}} (-1)^{\ast_1} \cH^{v,k}_{i_1,\ldots,i_p|j_1,\ldots,j_{r-p}}[ \rd (\alpha, f^C_{v+i_1},\ldots, f^C_{v+i_p},f^D_{v+j_1},\ldots, f^D_{v+j_{r-p}}, \gamma)] \\
	&  & \qquad +\sum_{\substack{0\le p\le q\le r, 1\le t\le p\\ 0<i_1\ldots<i_p\\j_1<\ldots<j_{q-p}<k}} (-1)^{\ast_2} \tr^{v+i_t} \cH^{v,k}_{i_1,\ldots,i_p|j_1,\ldots,j_{q-p}} [\alpha, f^C_{v+i_1},  \ldots, \theta^C{\theta^C_{v+i_t}}^*,\ldots, f^D_{v+i_{q-p}},\gamma] \\
	& &\qquad  +\sum_{\substack{0\le p\le q\le r, 1\le t\le q-p\\ 0<i_1\ldots<i_p\\j_1<\ldots<j_{q-p}<k}} (-1)^{\ast_3} \tr^{v+j_t}\cH^{v,k}_{i_1,\ldots,i_p|j_1,\ldots,j_{q-p}}[\alpha, f^C_{v+i_1}, \ldots, \theta^D {\theta^D_{v+j_{t}}}^*, \ldots, f^D_{v+i_{q-p}}, \gamma] .
\end{eqnarray*}
	Here 
\begin{eqnarray*}
		\ast_1 &= & 1+|\alpha|(c_v+1)+h_{v,v+j_1}+\sum_{w=1}^p\dagger(\cC,\alpha,i_w)+\sum_{w=1}^{r-p}\dagger(\fH,\alpha,j_w),\\
		\ast_2 &= & 1+|\alpha|c_v+h_{v,v+j_1}+ \sum_{w=1}^{t-1}\ddagger(\cC,\alpha,i_w)+\sum_{w=t}^{p}\dagger(\cC,\alpha,i_w) + \sum_{w=1}^{q-p} \dagger(\fH,\alpha,j_w),\\
		\ast_3 &= & 1+|\alpha|c_v+h_{v,v+j_1}+ \sum_{w=1}^{p}\ddagger(\cC,\alpha,i_w)+\sum_{w=1}^{t-1}\ddagger(\fH,\alpha,j_w)+\sum_{w=t}^{q-p} \dagger(\fH,\alpha,j_w).
\end{eqnarray*}
The proof is again by induction, which we omit. Then for $r>k+N$, the first exact term is zero, as $\cH^{v,k}_{i_1,\ldots,i_p|j_1,\ldots,j_{r-p}}$ is necessarily empty by \eqref{m00} of Definition \ref{morphism}. The remaining terms are exactly $\la (\phi^H\circ d^C-d^D\circ \phi^H)\alpha,\gamma \ra_{v+k}$ by a direct check, hence the theorem holds.
\end{proof}

Similar to Corollary \ref{cor:dim}, we have the following.
\begin{corollary}\label{cor:mordim}
	Assume that oriented flow categories $\cC,\cD$ have the property that $\dim C_i,\dim D_i \le k$ for all $i$. If $\fH:\cC \Rightarrow \cD$ is an oriented flow morphism, then $\phi^H:\BC(\cC,\Theta_1)\to \BC(\cD,\Theta_2)$ only depends on those $\cM^C_{i,j},\cH_{i,j},\cM^D_{i,j}$ of dimension $\le 2k$.
\end{corollary}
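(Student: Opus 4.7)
The plan is to mirror the degree-counting argument that proves Corollary \ref{cor:dim}. Under the hypothesis $\dim C_i, \dim D_i \le k$, every form appearing in the defining integral \eqref{mordef} has bounded degree: $|\alpha| \le \dim C_s \le k$, $|\gamma| \le \dim D_{s+k} \le k$, $|f^{C,n}_i| = \dim C_i - 1 \le k-1$, and $|f^{D,n}_j| = \dim D_j - 1 \le k-1$.

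First I would fix an arbitrary summand in \eqref{mordef}, namely an integral over the product $\cH^{s,k}_{i_1,\ldots,i_p|j_1,\ldots,j_q}$, and decompose the integrand along the product structure. Because each $f^{C,n}$ and $f^{D,n}$ is pulled back via a $t\times s$ map, it is supported on exactly the two adjacent factors of the product, while $\alpha$ is supported only on the leftmost factor (via the source map) and $\gamma$ only on the rightmost factor (via the target map). Hence, for any single factor of the product, the total degree contributed in any decomposition along the product structure is bounded above by the sum of degrees of the at-most-two forms incident to it.

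Next I would run a short case analysis verifying that this degree bound is strictly less than the dimension of the factor whenever the latter exceeds $2k$. Interior $\cM^C$ and $\cM^D$ factors see only two $f$'s (bound $\le 2(k-1) < 2k$); boundary $\cM^C$ and $\cM^D$ factors see one $f$ plus $\alpha$ or $\gamma$ (bound $\le 2k-1 < 2k$); the $\cH$-factor $\cH_{s+i_p,s+j_1}$ sees $f^C_{s+i_p}$ and $f^D_{s+j_1}$ in general (bound $\le 2k-2$), or $\alpha$/$\gamma$ in place of one of them if $p=0$ or $q=0$ (bound $\le 2k-1$), or both $\alpha$ and $\gamma$ in the leading-term case $p=q=0$ where the product degenerates to $\cH_{s,s+k}$ (bound $\le 2k$). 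In every case the degree is at most $2k$, so a factor of dimension $> 2k$ always forces the restricted degree to be strictly less than the dimension on that factor.

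Consequently, if any $\cM^C_{i,j}$, $\cH_{i,j}$, or $\cM^D_{i,j}$ of dimension $> 2k$ appears as a factor in the product, then every pure-tensor summand of the decomposed integrand carries a form of insufficient degree on that factor, so its integral over the factor vanishes, and the whole summand in \eqref{mordef} vanishes. It follows that $\phi^H$ depends only on moduli spaces of dimension $\le 2k$. I do not expect any genuine obstacle, since the argument is entirely parallel to Corollary \ref{cor:dim}; the only care needed is the bookkeeping in the case split between interior and boundary positions of a factor, and in the three configurations ($p=0$, $q=0$, or both positive) for the $\cH$-factor.
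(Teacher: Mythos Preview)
Your proposal is correct and follows essentially the same approach as the paper. The paper does not give a separate proof of Corollary~\ref{cor:mordim} at all; it merely states it as analogous to Corollary~\ref{cor:dim}, whose proof is the one-line degree count you are reproducing: since each $f^{C,n}_i$, $f^{D,n}_j$ has degree $\le k-1$ and $\alpha,\gamma$ have degree $\le k$, any factor of dimension $>2k$ forces the integrand to have insufficient degree on that factor, so the corresponding summand in \eqref{mordef} vanishes. Your case split across interior/boundary $\cM$-factors and the three configurations of the $\cH$-factor is more explicit than anything the paper writes, but it is exactly the bookkeeping that underlies the paper's ``degree reason'' remark.
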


\subsubsection{The identity flow morphism}
Next we show that for every oriented flow category $\cC$, there is an oriented flow morphism $\fI:\cC \Rightarrow \cC$, which is referred to as the identity flow morphism. Roughly speaking, when the flow category has a background Morse-Bott function,  the identity flow morphism comes from the compactified moduli space of parameterized gradient flow lines, i.e.\ flow lines without modulo by the $\R$ translation action. Using the identity flow
morphism, we show the Morse-Bott cohomology is independent of the defining data.
\begin{definition/lemma}\label{identitymor}
		For an oriented flow category $\cC$, there is a canonical oriented flow morphism $\fI:\cC\Rightarrow \cC$. Given by $\cI_{i,j}=\cM_{i,j}\times[0,j-i]$ with the product orientation, for $i\le j$, and $\cI_{i,j}=\emptyset$ for $i>j$. 	The source and target maps $s,t:\cI_{i,j}\to C_i,C_j$ are defined as
		$$s=s^C\circ \pi_1,\quad t=t^C\circ \pi_1,$$
		where $\pi_1$ is the projection to the $\cM$ component. The compositions $m_L,m_R$ are defined as follows,
		$$
		\begin{aligned}
		m_L&: \cM_{i,k}\times_k \cI_{k,j}\to \cI_{i,j}, \quad (a,b,t)\mapsto (m(a,b),t+k-i),\\
		m_R&: \cI_{i,k}\times_k \cM_{k,j}\to \cI_{i,j}, \quad (a,t,b)\mapsto(m(a,b),t),
		\end{aligned}
		$$
		where $m$ is the composition in $\cC$.
\end{definition/lemma}

Before giving the proof, we will first use Lemma \ref{identitymor} to finish the proof of Theorem \ref{thm:main}. 
\begin{proof}[Proof of Theorem \ref{thm:main}]
	Let $\Theta_1,\Theta_2$ be defining data for the oriented flow category $\cC$. We have shown in Proposition \ref{dsquare} that $(\BC,d_{\BC,\Theta_1})$ and $(\BC,d_{\BC,\Theta_2})$ are cochain complexes. 	By \eqref{mordef}, the cochain morphism $\phi^I_{\Theta_1,\Theta_2}:(\BC,d_{\BC,\Theta_1}) \to (\BC,d_{\BC,\Theta_2})$ induced by the identity flow morphism $\fI$ can be written as $\id+N$,  where $N$ is strictly upper triangular, i.e.\ $N$ sends $H^*(C_s)$ to $\prod_{t={s+1}}^\infty H^*(C_t)$. Note that $\sum_{n=0}^\infty (-N)^n$ is well-defined on the cochain complex $\BC$, and $\sum_{n=0}^\infty (-N)^n$ is the inverse to $\id+N$. Thus $\phi^I_{\Theta_1,\Theta_2}$ is an isomorphism and hence induces an isomorphism on cohomology. 
\end{proof}
\begin{remark}
	When $\Theta_1 = \Theta_2$, we show in \S \ref{canonical} that $\phi^I_{\Theta_1,\Theta_2}$ is homotopic to the identity map. In particular, we will show that the construction up to homotopy is functorial w.r.t.\ the choice of defining data. 
\end{remark}

\begin{proof}[Proof of Lemma \ref{identitymor}]
	Condition \eqref{m00} of Definition \ref{morphism} follows from $\cI_{i,j}=\emptyset$, for $i>j$. Condition \eqref{m2} holds for $\fI$ due to the transversality property of the flow category $\cC$. Since $m_L(\cM_{i,k}\times_k \cI_{k,j})=\cM_{i,k}\times_k \cM_{k,j}\times [k-i,j-i]$ and $m_R(\cI_{i,k}\times_k \cM_{k.j})=\cM_{i,k}\times_k \cM_{k,j}\times [0,k-i]$, therefore condition \eqref{m1}, \eqref{m4} of flow morphism are satisfied by $\fI$. Therefore only orientation condition \eqref{m3} remains to check. 
	
	Unless stated otherwise, products of manifolds are always equipped with the product orientation. For $i<j$, we have
	\begin{eqnarray}
	\partial [\cI_{i,j}] & = &\partial [\cM_{i,j}\times [0,j-i]] \nonumber \\
	&= &(-1)^{m_{i,j}+1}[\cM_{i,j}\times \{0\}]+(-1)^{m_{i,j}}[\cM_{i,j}\times \{j-i\}]\ \nonumber \\
	&&+ \sum_{i<k<j}(-1)^{m_{i,k}}[\cM_{i,k}\times_k \cM_{k,j}\times [0,j-i]] \nonumber \\
	&= & (-1)^{m_{i,j}+1}[\cM_{i,j}\times \{0\}]+(-1)^{m_{i,j}}[\cM_{i,j}\times \{j-i\}] \label{idends}\\
	&& + \sum_{i<k<j}(-1)^{m_{i,k}}[\cM_{i,k}\times_k \cM_{k,j}\times [0,k-i]] \label{idbound1}\\
	&&+ \sum_{i<k<j}(-1)^{m_{i,k}}[\cM_{i,k}\times_k \cM_{k,j}\times [k-i,j-i]]. \label{idbound2}
	\end{eqnarray}
	Since the flow category $\cC$ is oriented, for $i<k<j$  we have
	\begin{equation}\label{idmor2}
	(t^C\times s^C)^*[N_k][\cM_{i,k}\times_k\cM_{k,j}]=(-1)^{c_km_{i,k}}[\cM_{i,k}][\cM_{k,j}].
	\end{equation}
	Let $\pi$ be the projection $\cI_{i,j}\rightarrow \cM_{i,j}$ for $i<j$, then we have 
	\begin{eqnarray*}(t\times s^C)^* N_k &= &\pi^*(t^C\times s^C)^*N_k|_{\cM_{i,k}\times_k \cM_{k,j}\times [0,k-i]};\\
	(t^C\times s)^* N_k & = & \pi^*(t^C\times s^C)^*N_k|_{\cM_{i,k}\times_k \cM_{k,j}\times [k-i,j-i]}.
	\end{eqnarray*}
	Therefore \eqref{idmor2} implies the following
	\begin{eqnarray}
	(t\times s^C)^*[N_k][\cM_{i,k}\times_k\cM_{k,j}\times[0,k-i]] &=& (-1)^{c_{i,k}m_{i,k}+m_{k,j}}[\cM_{i,k}\times[0,k-i]][\cM_{k,j}] \nonumber \\
	 &=&(-1)^{c_{i,k}m_{i,k}+m_{k,j}}[\cI_{i,k}][\cM_{j,k}]\label{idfiber1};\\
	(t^C\times s)^*[N_k][\cM_{i,k}\times_k\cM_{k,j}\times[k-i,j-i]] &=&(-1)^{c_km_{i,j}}[\cM_{i,k}][\cM_{k,j}\times [k-i,j-i]] \nonumber \\
	&=&(-1)^{c_km_{i,j}}[\cM_{i,k}][\cI_{k,j}]\label{idfiber2}.
	\end{eqnarray}
	If we orient $\cI_{i,k}\times_k \cM_{k,j}$ by $(-1)^{m_{k,j}+c_k}[\cM_{i,k}\times_k\cM_{k,j}][[0,k-i]]$ and  $[\cM_{i,k}\times_k \cI_{k,j}]$ by $[\cM_{i,k}\times_k\cM_{k,j}][[k-i,j-i]]$, then \eqref{idfiber1} implies that
	\begin{eqnarray}\label{idmor4}
	\eqref{idbound1}=(-1)^{m_{i,k}}[\cM_{i,k}\times_k \cM_{k,j}\times [0,k-i]]& = &(-1)^{m_{i,j}+1}[\cI_{i,k}\times_k \cM_{k,j}] \label{idrel1}\\
	(t\times s^C)^*[N_k][\cM_{i,k}\times_j \cI_{k,j}] & = & (-1)^{c_k(m_{i,k}+1)}[\cI_{i,k}][\cM_{k,j}].\label{idrel2}
	\end{eqnarray}
	And \eqref{idfiber2} implies that
	\begin{eqnarray}
	\eqref{idbound2}=(-1)^{m_{i,k}}[\cM_{i,k}\times_k \cM_{k,j}\times [k-i,j-i]]& = &(-1)^{m_{i,k}}[\cM_{i,k}\times_k \cI_{k,j}]\label{idrel3}\\
	(t^C\times s)^*[N_k][\cM_{i,k}\times_k \cI_{k,j}] & = & (-1)^{c_km_{i,k}}[\cM_{i,k}][\cI_{k,j}].\label{idrel4}
	\end{eqnarray}
	We still have to consider the first two copies of $\cM_{i,j}$ in \eqref{idends}. Since $m_L:  \cI_{i,i}\times_i \cM_{i,j}\rightarrow \cM_{i,j}$ and $m_R:\cM_{i,j}\times_j\cI_{j,j}\rightarrow \cM_{i,j}$ are diffeomorphisms. Therefore we can orient  $\cI_{i,i}\times_i \cM_{i,j}=C_i\times_i \cM_{i,j}$ and $\cM_{i,j}\times_j \cI_{j,j}=\cM_{i,j}\times_jC_j$ by $m_L^{-1}([\cM_{i,j}])$ and $m_R^{-1}([\cM_{i,j}])$. Then by Lemma \ref{linearori} below and the discussion after, we have
	\begin{eqnarray}
	(t\times s^C)^*[N_i][C_i\times \cM_{i,j}] &= &(-1)^{c_i^2}[C_i][\cM_{i,j}]; \label{a}\\
	(t^C\times s)^*[N_j][\cM_{i,j}\times_j C_j] &= &(-1)^{c_jm_{i,j}}[\cM_{i,j}][C_j] \label{b}.
	\end{eqnarray}
	Therefore we have 
	\begin{equation}\label{idmor5}
	\begin{array}{>{\displaystyle}rc>{\displaystyle}l}
	(-1)^{m_{i,j}+1}[\cM_{i,j}\times \{0\}] & = & (-1)^{m_{i,j}+1} m_R([\cI_{i,i}\times_i\cM_{i,j}]);\\
	
	[(t\times s^C)^*N_j][\cI_{i,i}\times_i\cM_{i,j}] & = &(-1)^{c_i^2}[\cI_{i,i}][\cM_{i,j}]; \\
	(-1)^{m_{i,j}}[\cM_{i,j}\times \{j-i\}] & = & (-1)^{m_{i,j}} m_L([\cM_{i,j}\times_j \cI_{j,j}]);\\
	
	[(t^C\times s)^*N_i][\cM_{i,j}\times_j\cI_{j,j}]&= &(-1)^{c_jm_{i,j}}[\cM_{i,j}][\cI_{j,j}].
	 \end{array}	 
	 \end{equation}	 
To sum up, \eqref{idrel1}, \eqref{idrel2}, \eqref{idrel3}, \eqref{idrel4} and \eqref{idmor5} prove the orientation condition \eqref{m3} of Definition \ref{morphism}. 
\end{proof}	

To state Lemma \ref{linearori}, we need to set up some notation. Let $E,F$ be two oriented finite dimensional vector spaces and $l:E\to F$ be a linear map. $\Delta_F$ denotes the diagonal subspace of $F\times F$. Suppose the ordered basis $(f_1,\ldots, f_n)$ represents the orientation $[F]$ of $F$ and the ordered basis $(e_1,\ldots, e_m)$ represents the orientation of $E$.  Then $((f_1,f_1),\ldots, (f_n,f_n))$ determines an orientation $[\Delta_F]$ of $\Delta_F$. Like \eqref{oricri}, we orient the quotient bundle (i.e. the normal bundle) $(F\times F)/\Delta_F$, such that $[\Delta_F][(F\times F)/\Delta_F]=[F][F]$. The fiber product $E\times_l F$ is the graph of $l$ in $E\times F$, then $\left((e_1,l(e_1)),\ldots, (e_m,l(e_m)\right)$ determines an orientation $[E\times_l F]$ on $E\times_l F=\graph l$. The projection $\pi: E\times_l F\to E$ is an isomorphism and the orientation we put on $E\times_l F$ has the property that $\pi([E\times_l F])=[E]$. Since $(l,\id): (E\times F)/(E\times_l F)\to (F\times F)/\Delta_F$ is an isomorphism, thus we can orient $(E\times F)/(E\times_lF)$ by $(l,\id)([(E\times F)/(E\times_l F)])=[(F\times  F)/\Delta_F]$.  What we describe here is the tangent picture of $\cM_{i,j}\times_j C_j$: let $(m,c)\in \cM_{i,j}\times_j C_j$, then the correspondences are $E=T_m \cM_{i,j}$, $F=T_c C_j$, $l=Ds|_m$, and the orientations match up.
\begin{lemma}\label{linearori} Following the notation above, we have
$$[(E\times F)/(E\times_l F)][E\times_l F ]=(-1)^{\dim E \dim F} [E][F]$$
\end{lemma}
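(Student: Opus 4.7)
The plan is to reduce everything to an explicit computation in ordered bases and track only the sign coming from the inevitable reordering. Fix ordered bases $(e_{1},\dots,e_{m})$ and $(f_{1},\dots,f_{n})$ representing $[E]$ and $[F]$ respectively, so that $[E\times F]=[E][F]$ is represented by
$$B_{E\times F}\;:=\;\bigl((e_{1},0),\dots,(e_{m},0),(0,f_{1}),\dots,(0,f_{n})\bigr).$$
By the conventions set up just before the lemma, $[E\times_{l}F]$ is represented by
$B_{E\times_{l}F}:=\bigl((e_{1},l(e_{1})),\dots,(e_{m},l(e_{m}))\bigr)$, and $[\Delta_{F}]$ by $\bigl((f_{1},f_{1}),\dots,(f_{n},f_{n})\bigr)$.

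Next I would identify a distinguished complement of the subspace in each quotient. For $(F\times F)/\Delta_{F}$, the vectors $\bigl((0,f_{1}),\dots,(0,f_{n})\bigr)$ form such a complement, and concatenating them after $[\Delta_{F}]$ produces a basis of $F\times F$ that differs from the standard $[F][F]$ basis by a block-triangular change of basis with identity blocks on the diagonal, hence determinant $+1$. Therefore these vectors represent $[(F\times F)/\Delta_{F}]$. Since $(l,\id)$ sends $(0,f_{i})\mapsto(0,f_{i})$, the pullback orientation on $(E\times F)/(E\times_{l}F)$ is likewise represented by the same vectors $\bigl((0,f_{1}),\dots,(0,f_{n})\bigr)$, viewed as a complement of $E\times_{l}F$ inside $E\times F$. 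The verification that this complement is transverse to $E\times_{l}F$ is immediate from the graph structure, and the change-of-basis matrix between $B_{E\times_{l}F}\cup\bigl((0,f_{1}),\dots,(0,f_{n})\bigr)$ and $B_{E\times F}$ is again block-triangular with identity diagonal (writing $(e_{i},l(e_{i}))=(e_{i},0)+(0,l(e_{i}))$), hence has determinant $+1$.

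Finally I would compute $[(E\times F)/(E\times_{l}F)][E\times_{l}F]$ by placing the quotient basis first and the subspace basis second, giving
$$\bigl((0,f_{1}),\dots,(0,f_{n}),(e_{1},l(e_{1})),\dots,(e_{m},l(e_{m}))\bigr).$$
Comparing with the previous step, this differs from $B_{E\times F}=[E][F]$ only by the reordering that swaps a block of $n$ vectors past a block of $m$ vectors, contributing the sign $(-1)^{mn}=(-1)^{\dim E\dim F}$. This yields the claimed identity.

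There is essentially no obstacle here: the only subtle point is confirming that the orientation put on $(E\times F)/(E\times_{l}F)$ via the isomorphism $(l,\id)$ really is represented by the vectors $(0,f_{i})$, and this is precisely what the two block-triangular computations above verify. All remaining content is the single sign $(-1)^{mn}$ from reordering the complementary block past the subspace block.
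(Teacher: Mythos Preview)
Your proof is correct and follows essentially the same route as the paper's: both identify $[(E\times F)/(E\times_{l}F)]$ with the images of the vectors $(0,f_{i})$ via the isomorphism $(l,\id)$, verify that concatenating the graph basis with these vectors represents $[E][F]$ (the block-triangular observation), and read off the sign $(-1)^{mn}$ from swapping the two blocks. The paper's argument is just a terser version of yours.
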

\begin{proof}
	The ordered basis $\left((0_F,f_1),\ldots, (0_F, f_n)\right)$ represents a basis for $(F\times F)/\Delta_{F}$ as well as the orientation $[(F\times F)/\Delta_F]$. Note that $\left((0_E,f_1),\ldots, (0_E,f_n)\right)$ represents a basis for $(E\times F)/(E\times_l F)$, and is mapped to $\left((0_F,f_1),\ldots, (0_F, f_n)\right)$ through the map $(l, \id)$, thus  $\left((0_E,f_1),\ldots, (0_E,f_n)\right)$ represents the orientation on $(E\times F)/E\times_l F$. Since $\left((e_1,l(e_1)),\ldots, (e_m,l(e_m)),(0_E,f_1),\ldots, (0_E,f_n)\right)$ represents the orientation $[E][F]$, so we have 
	$$[E\times_l F ][(E\times F)/(E\times_l F)]=[E][F] \text{ or } [(E\times F)/E\times_l F][(E\times_l F)]=(-1)^{\dim E\dim F} [E][F],$$
	which would yield \eqref{b}.
\end{proof}
Similarly, if we consider $F\times_l E$ and it is oriented by $\left((l(e_1),e_1),\ldots, (l(e_m),e_m)\right)$. If we orient $(F\times E)/(F\times_lE)$ by 
$(\id,l)([(F\times E)/(F\times_lE)])=[(F\times F)/\Delta_F]$, then we have 
$$[(F\times E)/(F\times_l E)][F\times_l E]=(-1)^{(\dim F)^2} [F][E],$$
which yields \eqref{a}.

\subsection{Compositions of flow morphisms}\label{flowcomp}
In this subsection,  we study the compositions of flow morphisms. Roughly speaking, the composition of flow morphisms is taking fiber products. Hence in the Morse-Bott case, not every flow morphism can be composed and we introduce the following concept.
\begin{definition}\label{CompMor1}
	Two flow morphisms $\fH:\cC \to  \cD$, $\fF:\cD \to \cE$ are \textbf{composable} iff the fiber products $\cM^C_{i_1,i_2}\times_{i_2}\ldots \times_{i_{p-1}}\cM^C_{i_{p-1},i_p}\times_{i_p}\cH_{i_p,j_1}\times_{j_1}\cM^D_{j_1,j_2}\times_{j_2}\ldots \times_{j_{q-1}}\cM^D_{j_{q-1},j_q}\times_{j_q} \cF_{j_q,k_1}\times_{k_1}\cM^E_{k_1,k_2}\times_{k_2} \ldots \times_{k_{r-1}}\cM^E_{k_{r-1},k_r}$ are cut out transversely.
\end{definition}
Heuristically, one can define the composition $\fF\circ \fH $ of two composable morphisms $\fF$ and $\fH$ to be $(\cF\circ \cH)_{i,k}=\cup_j \cH_{i,j}\times_j \cF_{j,k}$, where the orientation is determined by 
\begin{equation}\label{ori}(t^H\times s^F)^*[N_j][\cH_{i,j}\times_j \cF_{j,k}]=(-1)^{d_jh_{i,j}}[\cH_{i,j}][\cF_{j,k}].\end{equation}
By \eqref{m00} of Definition \ref{morphism}, $(\cF\circ \cH)_{i,k}$ is a compact manifold. However, this is no longer a flow morphism, since the boundary can also come from  fiber products in the middle in addition to fiber products at the two ends\footnote{Although, in this case, the breaking from fiber products in the middle should pair up and ``cancel" with each other, this is morally why we have Theorem \ref{CompMor2}.}, violating \eqref{m4} of Definition \ref{morphism}. Hence we introduce the following definition.

\begin{definition}
	An \textbf{oriented flow premorphism} $\fF:\cC\Rightarrow\cD$ is a family of compact oriented manifolds $\cF_{i,j}$ with smooth  maps $s:\cF_{i,j}\to C_i, t:\cF_{i,j}\to D_j$. Moreover, there exists $N$, such for $i-j>N$, $\cF_{i,j}=\emptyset$ and the fiber products $\cM^C_{i_0,i_1}\times_{i_1}\ldots \times_{i_{k}} \cF_{i_k,j_0}\times_{j_0} \ldots \times_{j_{l-1}}\cM^D_{j_{l-1},j_{l}}$ are cut out transversely for all $i_0<\ldots<i_k, j_0<\ldots<j_l$. 
\end{definition}
Given a flow premorphism $\fF$, one can still define $\phi^F$ by \eqref{mordef}, which may not be a cochain morphism. Let $\fH$ and $\fF$ be two composable flow morphisms, then $\fF\circ \fH$ is a flow premorphism by definition. We need to understand the relation between $\phi^{F\circ H}$ and $\phi^F\circ \phi^H$. The main result of this subsection is that they differ by a homotopy. Before stating the theorem, we first introducing some notation.
\begin{enumerate}
	\item $\cE:=\{E_i,\cM^E_{i,j} \}$ is an oriented flow category, $e_i:=\dim E_i$, $m^E_{i,j}:=\dim \cM^E_{i,j}$ and $f_{i,j}:=\dim \cF_{i,j}$, which are again assumed to be well-defined for simplicity. 
	\item For $k\in \Z$, $0<i_1<\ldots<i_p$, $j_1<\ldots<j_q$  and $k_1<\ldots<k_r<k$, we define $\cF\times \cH^{v,k}  _{i_1,\ldots,i_p|j_1\ldots,j_q|k_1,\ldots,k_r}$ to be:
	$$	\cM^{C}_{v,v+i_1}\times \ldots \times \cH_{v+i_p,v+j_1}\times \cM^D_{v+j_1,v+j_2}\times \ldots \times \cF_{v+j_q,v+k_1}\times \ldots \times \cM^E_{v+k_r,v+k}.$$
	Note that we must have $q\ge 1$ for this to be defined. 
	\item $(\cF\times \cH)^{v,k}  _{i_1,\ldots,i_p|j_1\ldots,j_q|k_1,\ldots,k_r}[\alpha, f^C_{v+i_1},\ldots, f^C_{v+i_p}, f^D_{v+j_1},\ldots, f^D_{v+j_q}, f^E_{v+k_1},\ldots, f^E_{v+k_r},\gamma]$ is defined similarly to \eqref{pair1}.
\end{enumerate}
To define the homotopy operator $P_{\Theta_1,\Theta_2,\Theta_3}$ or $P$ for simplicity, for $k\in \Z$, $\alpha \in h(\cC,v)$ and $ \gamma \in h(\cE,v+k)$,  we define $P$ by 
\begin{equation}\label{pdef}
\begin{aligned}\left\langle  P[\alpha],[\gamma]\right\rangle_{v+k} =\sum_{\substack{p,r\ge 0,q\ge 1\\ 0=i_0<i_1<\ldots<i_p,\\ j_1<\ldots<j_q \\ k_1\le \ldots<k_{r+1}=k}} (-1)^\bigstar F\times H^{v,k}_{i_1, \ldots,i_p|j_1,\ldots,j_q|k_1,\ldots,k_r}[\alpha,& f^C_{v+i_1},\ldots,f^C_{v+i_p},f^D_{v+j_1},\ldots\\
&\ldots ,f^D_{v+j_{q}},f^E_{v+k_1}, \ldots, f^E_{v+k_r},\gamma], \end{aligned}\end{equation}
where $$\begin{aligned}\bigstar:=1+|\alpha|(c_v+1)+\dim (\cF\circ \cH)_{v,v+k_1}+&\sum_{w=1}^p \ddagger(\cC,\alpha,i_w)+h_{v,v+j_1}+ \sum_{w=1}^q\ddagger(\fH,\alpha,j_w)\\& +\sum_{w=1}^r\dagger(\fF\circ \fH, \alpha, k_w)\end{aligned}.$$

\begin{theorem}\label{CompMor2} Let $\fH,\fF$ be composable oriented flow morphisms from $\cC$ to $\cD$ and from $\cD$ to $\cE$ respectively. If we fix defining data $\Theta_1,\Theta_2$ and $\Theta_3$ for $\cC,\cD$ and $\cE$, then there exists an operator $P_{\Theta_1,\Theta_2,\Theta_3}:\BC(\cC)\to \BC(\cE)$ defined by \eqref{pdef}, such that 
	$$\phi^{F\circ H}_{\Theta_1,\Theta_3}-\phi^F_{\Theta_2,\Theta_3}\circ \phi^H_{\Theta_1,\Theta_2}+P_{\Theta_1,\Theta_2,\Theta_3}\circ d^C_{\BC,\Theta_1}+d^E_{\BC,\Theta_3}\circ P_{\Theta_1,\Theta_2,\Theta_3}=0.$$ 
\end{theorem}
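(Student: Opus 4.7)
The plan is to mimic the strategy employed in Proposition~\ref{dsquare} and Theorem~\ref{Mor}, following the general scheme outlined in Remark~\ref{rmk:replace}. The key observation is that the defining relation $\rd f^{D,n}_{s+j} = \delta^n_{s+j} - \sum_a \pi_1^*\theta_{s+j,a}\wedge \pi_2^*\theta_{s+j,a}^*$ interpolates between the fiber product at $D_{s+j}$ (coming from $\delta^n_{s+j}$) that underlies $\phi^{F\circ H}_{\Theta_1,\Theta_3}$, and the trace (coming from the $\sum\pi_1^*\theta\wedge\pi_2^*\theta^*$ term) that underlies $\phi^F_{\Theta_2,\Theta_3}\circ\phi^H_{\Theta_1,\Theta_2}$. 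The homotopy $P_{\Theta_1,\Theta_2,\Theta_3}$ is therefore constructed by integrating one extra $f^{D,n}$ at every $D$-junction between the $\cH$-factor and the $\cF$-factor.

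Explicitly, for $\alpha\in h(\cC,s)$ and $\gamma\in h(\cE,s+k)$ I would set
\begin{equation}\label{pdef}
\langle P_{\Theta_1,\Theta_2,\Theta_3}[\alpha], [\gamma]\rangle_{s+k} := \lim_{n\to\infty}\sum (-1)^{\bullet}\, \cF\times \cH^{s,k}_{i_1,\ldots,i_p|j_1,\ldots,j_q|k_1,\ldots,k_r}[\alpha, f^{C,n}_*, \ldots, f^{D,n}_*, \ldots, f^{E,n}_*, \gamma],
\end{equation}
where the sum ranges over all index configurations with $p,q,r\ge 0$ and $q\ge 1$, one $f^{D,n}_{s+j_a}$ is inserted at each $j_a$-junction (so in particular between $\cH_{s+i_p,s+j_1}$ and $\cM^D_{s+j_1,s+j_2}$, and between $\cM^D_{s+j_{q-1},s+j_q}$ and $\cF_{s+j_q,s+k_1}$), and the sign $\bullet$ is fixed by combining the sign conventions in \eqref{mordef} for $\phi^H$ and $\phi^F$ with one extra shift of the $\ddagger(\fH,\alpha,*)$-type that accounts for the additional ``middle'' $f^{D,n}$. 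Convergence of the limit is the direct analogue of Lemma~\ref{conv1} in this moduli context.

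To verify the stated identity, I would expand $\phi^{F\circ H}_{\Theta_1,\Theta_3}-\phi^F_{\Theta_2,\Theta_3}\circ\phi^H_{\Theta_1,\Theta_2}+P_{\Theta_1,\Theta_2,\Theta_3}\circ d_{BC,\Theta_1}+d_{BC,\Theta_3}\circ P_{\Theta_1,\Theta_2,\Theta_3}$ using definitions and follow the template of Proposition~\ref{dsquare} verbatim. Each operator is split into its leading term (no $f$'s) and its error term. The leading term of $\phi^{F\circ H}$ integrates over $\cH_{i,j}\times_j \cF_{j,k}$, while the leading term of $\phi^F\circ\phi^H$ is the corresponding trace at $D_j$. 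Applying Stokes' theorem to the distinguished $f^{D,n}$ factors in $P$ produces the difference $\delta^n_{s+j}-\sum_a\pi_1^*\theta_{s+j,a}\wedge \pi_2^*\theta_{s+j,a}^*$, whose two summands recover, in the limit, exactly the fiber-product and trace contributions (via the analogue of Lemma~\ref{conv2}). All remaining boundary contributions come from $\partial\cM^C_{*,*}$, $\partial\cM^D_{*,*}$, $\partial\cM^E_{*,*}$, $\partial\cH_{*,*}$ and $\partial\cF_{*,*}$; these are absorbed into $P\circ d_{BC,\Theta_1}$ and $d_{BC,\Theta_3}\circ P$ by iterating the replacement trick of Remark~\ref{rmk:replace}, namely expressing each new fiber product as a Cartesian product together with a Dirac current, rewriting the Dirac current as a trace plus $\rd f^n$, and pushing the $\rd f^n$ contributions into integrals of exact forms over higher-codimension strata until only closed moduli spaces remain.

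The main obstacle is, as always, the sign bookkeeping. The degree shifts $\ddagger(\cC,\alpha,*)$, $\ddagger(\fH,\alpha,*)$ and the analogous quantities $\ddagger(\cD,\cdot,*)$, $\ddagger(\fF,\cdot,*)$, $\ddagger(\fF\circ\fH,\alpha,*)$ have to be reconciled with the fiber-product sign conventions $(-1)^{c_j m^C_{*,*}}$, $(-1)^{d_j h_{*,*}}$ dictated by \eqref{ori} and Definitions~\ref{oridef} and~\ref{morphism}. The sign on the middle $f^{D,n}_{s+j}$ inside $P$ must be chosen precisely so that (i) its $\delta^n_{s+j}$-part matches the sign in the definition of $\phi^{F\circ H}$ coming from the composed flow premorphism, and (ii) its trace-part matches the sign produced when composing $\phi^F\circ\phi^H$ in the spirit of the calculation \eqref{relation1}--\eqref{relation2}. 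Once these sign conventions are pinned down by checking the lowest-order cases ($p=q=r=0$ with $q=1$, and its immediate neighbors), the inductive cancellation proceeds in exactly the same inductive format as the claim \eqref{exact} in the proof of Proposition~\ref{dsquare}, and terminates when all residual integrals are taken over closed moduli spaces and therefore vanish by Stokes' theorem.
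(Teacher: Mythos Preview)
Your proposal is correct and follows essentially the same approach as the paper: the operator $P$ is defined exactly as in \eqref{pdef} (integrals over $\cF\times\cH^{s,k}_{i_1,\ldots,i_p|j_1,\ldots,j_q|k_1,\ldots,k_r}$ with $q\ge 1$ and one $f^{D,n}$ at each $D$-junction), and the identity is established by the replacement trick of Remark~\ref{rmk:replace} using $\rd f^{D,n}=\delta^n-\sum\pi_1^*\theta\wedge\pi_2^*\theta^*$ together with the analogues of Lemma~\ref{conv1} and Lemma~\ref{conv2}.

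The only organizational difference is the entry point of the induction. The paper starts from $\phi^F\circ\phi^H$, splits it as $T_1+T_2$ where $T_1$ collects the terms with a single trace at the $D$-junction and no $f^{D,n}$'s (the rest being $T_2$), and then runs the replacement trick on $T_1$ to produce $\phi^{F\circ H}$ (from the $\delta^n$-parts, via fiber products), the exact terms (which eventually die on closed strata), $-T_2$ (the trace terms at $D$ with $q\ge 2$), and the two commutators with $d_{BC}$ (the trace terms at $C$ and at $E$, i.e.\ your $P\circ d_{BC,\Theta_1}$ and $d_{BC,\Theta_3}\circ P$). Your plan to expand all four summands simultaneously and cancel is equivalent; the paper's ordering just makes the inductive bookkeeping of the five families \eqref{comp1}--\eqref{comp5} slightly cleaner. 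One small imprecision in your write-up: you do not literally ``apply Stokes to the $f^{D,n}$ factors in $P$''; rather, you replace the trace at the $D$-junction by $\delta^n-\rd f^{D,n}$, and it is the resulting $\rd f^{D,n}$ that produces an exact integrand to which Stokes is applied---exactly as in \eqref{comp6} for the base case.
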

\begin{proof}
The theorem follows from the claim below.

For $\alpha \in h(\cC,v),\gamma\in h(\cE,v+k)$ with $k\in \Z$ and any $l\ge 1$, we have
\begin{eqnarray*}
0 & = &\sum_{r\le l} \sum_{p+q=r-1}(-1)^{\bigstar_1}  (\cF\circ \cH)^{v,k}_{i_1,\ldots,i_p|k_1,\ldots k_{q}}[\alpha,\ldots f^C_{v+i_*},\ldots,f^E_{v+k_{*}},\ldots,\gamma] \\
&  & + \sum_{p+r+w=l}(-1)^{\bigstar_2}(\cF\times \cH)^{v,k}_{i_1,\ldots,i_p|j_1,\ldots,j_r|k_1,\ldots,k_{w}}[\rd (\alpha,\ldots f^C_{v+i_*},\ldots, f^D_{v+j_*}, \ldots, f^E_{v+k_*},\ldots,\gamma)] \\
& & + \sum_{\substack{p+q+w\le l,\\ u\ge 1}}(-1)^{\bigstar_3} \tr^{v+i_u} (\cF\times \cH)^{v,k}_{i_1,\ldots,i_p|j_1,\ldots,j_q|k_1,\ldots,k_{w}}[\alpha,\ldots, f^C_{v+i_*},\ldots,\theta{\theta^C_{v+i_u}}^*, \ldots,f^D_{v+j_*},\ldots,f^E_{v+k_*},\ldots\gamma]\\
& & + \sum_{\substack{p+q+w\le l,\\ u\ge 1}}(-1)^{\bigstar_4}\tr^{v+j_u} \cF\times \cH^{v,k}_{i_1,\ldots,i_p|j_1,\ldots,j_q|k_1,\ldots,k_{w}}[\alpha, \ldots f^C_{v+i_*},\ldots, f^D_{v+j_*},\ldots,  \theta{\theta^D_{v+j_u}}^*,\ldots f^E_{v+k_*},\ldots,\gamma]\\
& & + \sum_{\substack{p+q+w\le l,\\ u\ge 1}}(-1)^{\bigstar_5} \tr^{v+k_u} \cF\times \cH^{v,k}_{i_1,\ldots,i_p|j_1,\ldots,j_q|k_1,\ldots,k_{w}}[\alpha,\ldots,  f^C_{v+i_*}, \ldots, f^D_{v+j_*}\ldots f^E_{v+k_*}	\ldots, \theta{\theta^E_{v+k_u}}^*,\ldots, \gamma]
\end{eqnarray*}
where we omit the obvious constraints $0<i_1<\ldots<i_p,j_1<\ldots<j_q,k_1<\ldots<k_w<k$. The indices for signs are
	\begin{eqnarray*}
		\bigstar_1 & = & 1+|\alpha|c_v+\dim (\cF\circ \cH)_{v,v+k_1}+\sum_{s=1}^p\ddagger(\cC,\alpha, i_s) +\sum_{s=1}^{q}\ddagger(\fF\circ \fH, \alpha, k_s),\\
		\bigstar_2 &= &|\alpha|c_v+\dim (\cF\circ \cH)_{v,v+k_1}+\sum_{s=1}^p\dagger(\cC,\alpha,i_s)+h_{v,v+j_1}+\sum_{s=1}^r\dagger(\fH,\alpha,j_s)\\ && \qquad +\sum_{s=1}^{w}\ddagger(\fF\circ \fH,\alpha,k_s),\\
		\bigstar_3 & = &|\alpha|(c_v+1)+\dim (\cF\circ \cH)_{v,v+k_1}+\sum_{s=1}^{u-1}\ddagger(\cC,\alpha, i_s)+\sum_{s=u}^p\dagger(\cC,\alpha,i_s)+h_{v,v+j_1}\\ && \qquad +\sum_{s=1}^r \dagger(\fH,\alpha,j_s)+\sum_{s=1}^{w}\ddagger(\fF\circ \fH, \alpha, k_s),\\
		\bigstar_4 & = & |\alpha|(c_v+1)+\dim (\cF\circ \cH)_{v,v+k_1}+\sum_{s=1}^p\ddagger(\cC,\alpha,i_s)+h_{v,v+j_1}+\sum_{s=1}^{u-1} \ddagger(\fH,\alpha,j_s)\\ &&\qquad+\sum_{s=u}^r \dagger(\fH,\alpha,j_s)+\sum_{s=1}^{w}\ddagger(\fF\circ \fH,\alpha,k_s),\\
		\bigstar_5 & = &|\alpha|(c_v+1)+\dim (\cF\circ \cH)_{v,v+k_1}+\sum_{s=1}^p \ddagger(\cC,\alpha,i_s)+h_{v,v+j_1} +\sum_{s=1}^r \ddagger(\fH,\alpha,j_s)\\
		&&\qquad+\sum_{s=1}^{u-1} \dagger(\fF\circ \fH, \alpha,k_s)+\sum_{s=u}^{w}\ddagger(\fF\circ \fH, \alpha, k_s).
	\end{eqnarray*}
The proof is again by induction on $l$, which we omit. Then for $l\gg 0$, the exact term is zero. It is direct to check that the first term is $-\la \phi^{F\circ H}\alpha,\gamma\ra_{v+k}$, the third term is $-\la P\circ d^C \alpha,\gamma \ra_{v+k}$, the fourth term is $\la \phi^F\circ \phi^H \alpha,\gamma\ra_{v+k}$ and the last term is $-\la d^E\circ P\alpha,\gamma \ra_{v+k}$, hence the theorem follows.
\end{proof}
As a corollary,  $\phi^{F\circ H}_{\Theta_1,\Theta_3}$ is a cochain map between $(\BC(\cC), d^C_{\BC,\Theta_1})$ and $(\BC(\cE), d^E_{\BC,\Theta_3})$ and that is homotopic to $\phi^F_{\Theta_2,\Theta_3}\circ\phi^H_{\Theta_1,\Theta_2}$.

\subsection{Flow homotopies induce cochain homotopies}\label{flowhomotopy}
In this subsection, we introduce the flow homotopies between flow premorphisms. Such structures can be viewed as the analogue of the geometric data needed to define homotopies between continuation maps in Floer theories \cite[Chapter 11]{audin2014morse}.

\begin{definition}\label{homot}
	An \textbf{oriented flow homotopy} $\sY$ between two flow \emph{premorphisms} $\fF=\{\cF_{i,j}\}$ and $\fH=\{\cH_{i,j}\}$ from $\cC$ to $\cD$ is a family of oriented compact manifolds $\{\cY_{i,j}\}$ with smooth source and target maps $s:\cY_{i,j}\to C_i$ and $t:\cY_{i,j}\to D_j$, such that the following holds.
	\begin{enumerate}
		\item\label{h1} There are smooth maps $\iota_F,\iota_H: \cF_{i,j},\cH_{i,j}\to \cY_{i,j}$, such that $s\circ\iota_{F}=s^{F}$, $s\circ\iota_{H}=s^{H}$, $t\circ \iota_{F}=t^{F}$ and $t\circ \iota_{H}=t^{H}$ where $s^{F},s^{H},t^{F},t^{H}$ are the source and target maps for $\fF$ and $\fH$ respectively. 
	    \item\label{h2}  $\exists N \in \N$,  such that when $i-j>N$, we have $\cY_{i,j}=\emptyset$.
	    \item\label{h3}  For all $i_0<\ldots<i_k, j_0<\ldots<j_l$, the fiber products $\cM^C_{i_0,i_1}\times_{i_1}\ldots \times_{i_k} \cY_{i_k,j_0}\times_{j_0}\ldots\times_{j_{l-1}}\cM^D_{j_{l-1},j_l}$ is cut out transversely.
	    \item\label{h4} There are smooth maps $m_L:\cM^C_{i,j}\times_j \cY_{j,k}\to \cY_{i,k}$ and $m_R: \cY_{i,j}\times_j \cM^D_{j,k}\to \cY_{i,k}$, such that the following holds,
	    $$
	    \begin{array}{rclrcl}
	     s\circ m_L(a,b) & = & s^C(a), &
	     t\circ m_L(a,b) & =& t(b),\\ 
	     s\circ m_R(a,b) & = & s(a), &
	     t\circ m_R(a,b) & = & t^D(b). 
	    \end{array}
	    $$
        Here $s^C$ is the source map for $\cC$ and $t^D$ is the target map for $\cD$.
	    \item\label{h5} The map $\iota_F\cup \iota_H\cup m_L\cup m_R: \cF_{i,k}\cup\cH_{i,k}\cup \left(\cup_j \cM^\cC_{i,j}\times_j \cY_{j,k}\right) \cup \left(\cup_j \cY_{i,j}\times_j \cM^\cD_{j,k}\right)\to \partial \cY_{i,k}$ is a diffeomorphism up to measure zero sets. 
		\item\label{h6} The orientation $[\cY_{i,j}]$ has the following properties.
		$$\begin{aligned}\partial [\cY_{i,j}]=\iota_F([\cF_{i,j}])-\iota_H([\cH_{i,j}])&+\sum_{p>0} (-1)^{c_{i+p}+1} m_L([\cM^C_{i,i+p} \times_{i+p} \cY_{i+p,j}]) \\ & +\sum_{p>0} (-1)^{y_{i,j}}m_R([\cY_{i,j-p}\times_{j-p} \cM^D_{j-p,j}])\end{aligned},$$
		$$(t^C\times s)^*[N_j][\cM^C_{i,j}\times_j \cY_{j,k}]=(-1)^{c_j m^C_{i,j}}[\cM^C_{i,j}][\cY_{j,k}],$$ 
		$$(t\times s^{D})^*[N_j][\cH_{i,j}\times_j \cM^D_{j,k}]=(-1)^{d_jy_{i,j}}[\cY_{i,j}][\cM^D_{j,k}],$$
		where $y_{i,j}:=\dim \cY_{i,j}$.
	\end{enumerate}	
\end{definition}

The main result of this subsection is that flow homotopies induce homotopies between the maps induced by the boundary flow premorphisms (which are not necessarily cochain morphisms). Before stating the theorem, we introduce the following notation.
\begin{enumerate}
	\item 
	 For $k\in \Z$ and $0<i_1<\ldots<i_p$ and $j_1<\ldots<j_q<k$,
	 $$\cY^{v,k}_{i_1,\ldots,i_p|j_1\ldots,j_q}:=\cM^C_{v,v+i_1}\times \ldots \times\cM^C_{v+i_{p-1},s+i_p}\times \cY_{v+i_p,v+j_1}\times \cM^D_{v+j_1,v+j_2}\times \ldots \times \cM^D_{v+j_q,v+k}.$$
	\item $\cY^{*,*}_{\ldots }[\alpha,  f^C_*, \ldots, f^D_* \ldots, \gamma]$ is defined similarly to \eqref{pair1}.
	\item For $\alpha \in h(\cC,v)$, we define $\dagger(\sY,\alpha, k):=(|\alpha|+y_{v,v+k})(d_{v+k}+1)$ and $\ddagger(\sY,\alpha, k):=(|\alpha|+y_{v,v+k}+1)(d_{v+k}+1)$.
\end{enumerate}
To state the formula for the homotopy operator $\Lambda^Y$, we suppress the subscripts $\Theta_1,\Theta_2$ for simplicity. Let $\alpha\in h(\cC,v)$ and $\gamma \in h(\cD,v+k)$, then $\left\langle\Lambda^Y [\alpha],[\gamma]\right\rangle_{v+k}$ is defined to be:
\begin{equation}\label{hdef}\sum_{\substack{p,q\ge 0\\ 0=i_0<\ldots<i_p\\ j_1<\ldots<j_{q+1}=k}}(-1)^\clubsuit \cY^{v,k}_{i_1,\ldots,i_p|j_1,\ldots,j_q} [\alpha,f^C_{v+i_1},\ldots, f^D_{v+j_q},\gamma],\end{equation}
where 
$$\clubsuit:=|\alpha|(c_v+1)+y_{v+i_p,v+j_1}+\sum_{w=1}^p\ddagger(\cC,\alpha,i_w)+\sum_{w=1}^q \ddagger(\sY,\alpha,j_w).$$
\begin{theorem}\label{thm:homotopy}
	Given an oriented flow homotopy $\sY$ between two oriented flow premorphisms $\fF,\fH:\cC\Rightarrow \cD$, after fixing defining data $\Theta_1,\Theta_2$ for $\cC,\cD$ respectively, there exists an operator $\Lambda^Y_{\Theta_1,\Theta_2}: \BC(\cC)\to \BC(\cD)$ defined by \eqref{hdef}, such that
	$$d^D_{\BC,\Theta_2}\circ\Lambda^Y_{\Theta_1,\Theta_2}+\Lambda_{\Theta_1,\Theta_2}^Y\circ d^C_{\BC,\Theta_1}+\phi^F_{\Theta_1,\Theta_2}-\phi^H_{\Theta_1,\Theta_2}=0.$$
\end{theorem}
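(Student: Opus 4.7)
The plan is to define $\Lambda^K_{\Theta_1,\Theta_2}$ by direct analogy with the cochain morphism formula \eqref{mordef}, replacing the moduli spaces $\cH_{i,j}$ by $\cK_{i,j}$. Explicitly, for $\alpha \in h(\cC,s)$ and $\gamma \in h(\cD,s+k)$, I would set
\begin{equation}\label{hdef}
\langle \Lambda^K_{\Theta_1,\Theta_2}[\alpha],[\gamma]\rangle_{s+k} := (-1)^{\star_0}\, \cK^{s,k}_{|}[\alpha,\gamma] \;+\; \lim_{n\to \infty} \sum (-1)^{\bigstar} \cK^{s,k}_{i_1,\ldots,i_p|j_1,\ldots,j_q}\bigl[\alpha, f^{C,n}_{s+i_1},\ldots, f^{C,n}_{s+i_p}, f^{D,n}_{s+j_1},\ldots, f^{D,n}_{s+j_q},\gamma\bigr],
\end{equation}
summing over $0<i_1<\ldots<i_p$ and $j_1<\ldots<j_q<k$, with signs $\star_0, \bigstar$ chosen by the same recipe as for $\phi^H$ in \eqref{mordef} but shifted by one (reflecting that $\cK$ is one dimension larger than $\cF$ and $\cH$ along the homotopy direction).

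The verification of the chain homotopy relation follows exactly the template of Proposition \ref{dsquare} and Theorem \ref{Mor}. First I would split every operator into a leading term (no $f$-insertions) and an error term, and compute the leading contribution of $d^D_{BC,\Theta_2}\circ \Lambda^K_{\Theta_1,\Theta_2} + \Lambda^K_{\Theta_1,\Theta_2}\circ d^C_{BC,\Theta_1}$ via \eqref{formula1}. The resulting trace at each intermediate critical manifold $C_{s+j}$ or $D_{s+j}$ is rewritten using the identity $\sum_a \pi_1^*\theta_{j,a}\wedge \pi_2^*\theta_{j,a}^* = \delta^n_j - \rd f^n_j$. The $\delta^n_j$-contribution converges by (the appropriate analogue of) Lemma \ref{conv2} to an integration over the fiber-product locus inside $\partial \cK^{s,k}_{i_1,\ldots,i_p|j_1,\ldots,j_q}$, while the $\rd f^n_j$-contribution is an exact form and is handled by Stokes' theorem, introducing one more level of fiber products.

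The decisive geometric input is condition \eqref{h6} of Definition \ref{homot}, which decomposes $\partial \cK_{i,k}$ into four families: $\iota_F(\cF_{i,k})$ with positive sign, $-\iota_H(\cH_{i,k})$ with negative sign, and two families of fiber products $\cM^C_{*,*}\times_{*}\cK$ and $\cK\times_{*}\cM^D_{*,*}$. After iterating the ``fiber product $\to$ Cartesian $\times$ Dirac $\to$ projection $+$ $\rd f$'' scheme of Remark \ref{rmk:replace} until no interior $f$-insertions remain, the four types of boundary pieces reassemble into $+\phi^F_{\Theta_1,\Theta_2}$ from the $\iota_F$-boundary, $-\phi^H_{\Theta_1,\Theta_2}$ from the $\iota_H$-boundary, and the two composite operators $d^D_{BC,\Theta_2}\circ \Lambda^K_{\Theta_1,\Theta_2}$ and $\Lambda^K_{\Theta_1,\Theta_2}\circ d^C_{BC,\Theta_1}$ (with appropriate signs) from the $\cM^D$- and $\cM^C$-fiber-product boundaries respectively. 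The inductive statement carrying this through is modeled on \eqref{exact}, but now with four rather than one terminal trace family.

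The main obstacle, as in the earlier proofs, is sign bookkeeping. Unlike for $\phi^H$, the boundary decomposition of $\cK$ mixes two genuinely different contributions: ``endpoint'' pieces $\pm\iota_{F/H}$ carrying intrinsic signs independent of any indices, and fiber-product pieces carrying combinatorial signs depending on $c_{i+p}$ and $k_{i,j-p}$. Reconciling these after every replacement step with the signs built into \eqref{def} and \eqref{mordef} is delicate. I would manage it by systematically tracking parities against the dimension relations $k_{i,l}=k_{i,j}+m^D_{j,l}-d_j+1=m^C_{i,j}+k_{j,l}-c_j+1$ forced by condition \eqref{h3}, and against the $\dagger,\ddagger$ conventions, deferring the routine verification to Appendix \ref{proof}.
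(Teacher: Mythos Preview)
Your proposal is correct and follows essentially the same approach as the paper's own proof: split $\Lambda^K$ into leading plus error terms, compute the leading part of $d^D\circ\Lambda^K+\Lambda^K\circ d^C$, and iterate the Stokes/replacement scheme of Remark~\ref{rmk:replace}, with the new feature being that the boundary of $\cK$ contributes two extra pieces $\iota_F(\cF)$ and $-\iota_H(\cH)$ that produce the $\phi^F-\phi^H$ term. One small clarification: the $\cM^C$- and $\cM^D$-fiber-product boundaries do not reassemble into the full composites $d^D\circ\Lambda^K$ and $\Lambda^K\circ d^C$ (those are what you are computing to begin with); rather, they produce the trace terms that cancel the \emph{error} contributions of those composites, exactly as in Theorem~\ref{Mor}.
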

\begin{proof}
Similar to the proof of Proposition \ref{dsquare} and Theorem \ref{Mor}, this theorem follows from the following claim, whose proof is again by induction and will be omitted.

For $\alpha\in h(\cC,v),\gamma \in h(\cD,v+k)$ with $k\in \Z$ and any $r\ge 0$, we have
\begin{eqnarray*}
0 & = &  \sum_{0\le p\le r} (-1)^{\clubsuit_1} \cY^{v,k}_{i_1,\ldots,i_p|j_1,\ldots, j_{r-p}} [\rd(\alpha, f^C_{v+i_1},\ldots ,f^D_{v+j_{r-p}},\gamma)]\\
& &	+ \sum_{\substack{0\le p\le q\le r \\1\le u\le p}}(-1)^{\clubsuit_2} \tr^{v+i_u}\cY^{v,k}_{i_1,\ldots,i_p|j_1,\ldots,j_{q-p}}[\alpha, f^C_{v+i_1},\ldots, \theta{\theta^C_{v+i_u}}^*,\ldots, f^D_{v+i_{q-p}},\gamma]\\
& & +  \sum_{\substack{0\le p\le q\le r\\1\le u\le q-p}} (-1)^{\clubsuit_3}\tr^{v+j_u} \cY^{v,k}_{i_1,\ldots,i_p|j_1,\ldots,j_{q-p}} [\alpha, f^C_{v+i_1},\ldots, \theta{\theta^D_{v+j_u}}^*,\ldots, f^D_{v+i_{q-p}},\gamma]\\
& & +  \sum_{\substack{0\le p\le q< r}}(-1)^{\clubsuit_4}\left(\cF^{v,k}|_{i_1,\ldots, i_p|j_1,\ldots, j_{q-p}}-\cH^{v,k}|_{i_1,\ldots, i_p|j_1,\ldots, j_{q-p}}\right)[\alpha, f^C_{v+i_1},\ldots, f^D_{v+j_{q-p}},\gamma].
\end{eqnarray*}
Here
	\begin{eqnarray*}
	\clubsuit_1 & = & |\alpha|c_v+y_{v+i_p,v+j_1}+\sum_{w=1}^p\dagger(\cC,\alpha,i_w)+\sum_{w=1}^{r-p} \dagger(\sY,\alpha,j_w),\\
	\clubsuit_2 & = & |\alpha|(c_v+1)+y_{v+i_p,v+j_1}+\sum_{w=1}^{u-1}\ddagger(\cC,\alpha,i_w)+\sum_{w=u}^p\dagger(\cC,\alpha,i_w)+\sum_{w=1}^{q-p} \dagger(\sY,\alpha,j_w),\\
	\clubsuit_3 & = & |\alpha|(c_v+1)+y_{v+i_p,v+j_1}+ \sum_{w=1}^p\ddagger(\cC,\alpha,i_w)+\sum_{w=1}^{u-1} \ddagger(\sY,\alpha,j_w) +\sum_{w=u}^{q-p}\dagger(\sY,\alpha,j_w),\\
	\clubsuit_4 &= & |\alpha|c_v+y_{v,v+j_1}+\sum_{w=1}^p\ddagger(\cC,\alpha,i_w)+\sum_{w=1}^{q-p}\dagger(\sY,\alpha,j_w).
	\end{eqnarray*}
\end{proof}

\begin{remark}
	Theorem \ref{thm:homotopy} does not require that $\Phi^F_{\Theta_1,\Theta_2}$ or $\Phi^H_{\Theta_1,\Theta_2}$ is a cochain morphism. When they are cochain morphisms (in fact, that one of them is a cochain morphism would imply the other is also by Theorem \ref{thm:homotopy}), Theorem \ref{thm:homotopy} implies that they are homotopic to each other. 
\end{remark}

\subsection{The minimal Morse-Bott cochain complex is canonical}\label{canonical}
Unlike the Morse case, where the defining data is unique, there are a lot of freedom in choosing the defining-data for the minimal Morse-Bott cochain complex, i.e.\ choices of quasi-isomorphic embeddings, choices of Thom classes and choices of $f_i^n$. The cochain morphism $\phi^H_{\Theta,\Theta'}$ induced from the flow morphism $\fH$ by \eqref{mordef} also depends on $\Theta,\Theta'$. Although Theorem \ref{thm:main} asserts that the cohomology is independent of the defining data. It is important to have the isomorphism canonical in a functorial way w.r.t.\ the choice of defining data. In this section,  we prove the construction of the minimal Morse-Bott cochain complex $(\BC,d_{\BC,\Theta})$ is natural w.r.t.\ the defining data $\Theta$.  Moreover, we  will show in this subsection that the cochain morphism $\phi^H_{\Theta,\Theta'}$ from \eqref{mordef} is also canonical in a suitable sense. To explain the claim above in more detail, we introduce the following category of defining date of an oriented flow category. 

\begin{definition}
	Given an oriented flow category $\cC$,  $\cD ata(\cC)$ is defined to be the category whose objects are defining data of $\cC$ and there is exactly one morphism between any two objects. 
\end{definition}

For every object $\Theta$ in $\cD ata(\cC)$, we can associate it with a cochain complex $(\BC,d_{\BC,\Theta})$. The following theorem says that such an assignment can be completed to a functor from $\cD ata(\cC)\to\cK(\cC h)$, where $\cK(\cC h)$ is the homotopy category of cochain complexes.
\begin{theorem}\label{canonicalthm1}
	$$
	\begin{aligned}
	\Theta & \mapsto (\BC,d_{\BC,\Theta}),\\ 
	(\Theta_1\to \Theta_2) & \mapsto \left(\phi^I_{\Theta_1,\Theta_2}:(\BC,d_{\BC,\Theta_1})\to (\BC,d_{\BC,\Theta_2})\right)
	\end{aligned}$$ 
	defines a functor $\mathcal{BC}(\cC): \cD ata(\cC)\to \cK (\cC h)$, where $\fI$ is the identity flow morphism used to define $\phi^I_{\Theta_1,\Theta_2}$ by \eqref{mordef}. 
\end{theorem}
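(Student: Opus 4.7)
The plan is to define $\mathcal{BC}^C$ on objects and morphisms as stated, confirm the source maps to $\cK(\cC h)$ are well-defined, and then verify the two functoriality axioms modulo cochain homotopy. That $\phi^I_{\Theta_1,\Theta_2}$ is a cochain morphism follows immediately from Theorem \ref{Mor} applied to the identity flow morphism $\fI$ of Lemma \ref{identitymor}. Since morphisms in $\cK(\cC h)$ are homotopy classes of cochain maps, what remains is to check (i) $\phi^I_{\Theta,\Theta}$ is cochain homotopic to $\id_{BC}$ for every defining data $\Theta$, and (ii) $\phi^I_{\Theta_2,\Theta_3}\circ\phi^I_{\Theta_1,\Theta_2}$ is cochain homotopic to $\phi^I_{\Theta_1,\Theta_3}$ for every triple.

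For (i), I would introduce the ``collapsed'' flow premorphism $\fI_0$ from $\cC$ to $\cC$ with $\cI^0_{i,i}=C_i$ (source and target the identity) and $\cI^0_{i,j}=\emptyset$ for $i\ne j$. Reading off \eqref{mordef} directly, $\phi^{I_0}_{\Theta,\Theta}$ has no higher $k>0$ components (all relevant factors are empty), and its $k=0$ component is $\pm\int_{C_s}\alpha\wedge\gamma$; checking the sign $(-1)^{|\alpha|c_s+h_{s,s}}=(-1)^{c_s(|\alpha|+1)}$ against the pairing \eqref{pair} on cohomology classes with $|\alpha|+|\gamma|=c_s$ shows this is exactly $\id_{BC}$. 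I would then construct an oriented flow homotopy $\sK^{\mathrm{id}}$ between $\fI_0$ and $\fI$ by
\[
\cK^{\mathrm{id}}_{i,j}:=\cM_{i,j}\times T_{i,j},\qquad T_{i,j}:=\{(t,u)\in\R^2\st 0\le u\le 1,\ 0\le t\le u(j-i)\},
\]
with source, target and composition maps pulled back from $\cM_{i,j}$ and from the obvious identifications on $T_{i,j}$. The $u=0$ edge recovers $\fI_0$, the $u=1$ edge recovers $\fI$, and the slanted side decomposes via the boundary of $\cM_{i,j}$ into the required fiber products with $\cM^{C}_{*,*}$ on each side, matching Definition \ref{homot}. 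Theorem \ref{thm:homotopy} then yields $\phi^I_{\Theta,\Theta}\sim\phi^{I_0}_{\Theta,\Theta}=\id_{BC}$.

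For (ii), I would combine Theorem \ref{CompMor2} with a flow homotopy between $\fI\circ\fI$ and $\fI$. Composability of $\fI$ with itself in the sense of Definition \ref{CompMor1} follows from property \eqref{F3} of a flow category applied to the fiber products defining $\fI$, so Theorem \ref{CompMor2} gives $\phi^I_{\Theta_2,\Theta_3}\circ\phi^I_{\Theta_1,\Theta_2}\sim\phi^{I\circ I}_{\Theta_1,\Theta_3}$. On each component $\cM_{i,k}\times_k\cM_{k,j}\times[0,k-i]\times[0,j-k]$ of $(\fI\circ\fI)_{i,j}$, the natural ``concatenation'' map to $\cM_{i,j}\times[0,j-i]$ (glue the two broken pieces and use the parametrization $t\mapsto t$ on the first segment, $s\mapsto (k-i)+s$ on the second) identifies $\fI\circ\fI$ with a part of the corner stratum of $\fI$; extending this by an interval deformation parameter produces an oriented flow homotopy $\sK^{\mathrm{comp}}$ between $\fI\circ\fI$ and $\fI$. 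A second application of Theorem \ref{thm:homotopy} gives $\phi^{I\circ I}_{\Theta_1,\Theta_3}\sim\phi^I_{\Theta_1,\Theta_3}$, and chaining the two homotopies proves (ii).

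The hard part will be the orientation bookkeeping for $\sK^{\mathrm{id}}$ and $\sK^{\mathrm{comp}}$: the signs in Definition \ref{homot}(\ref{h6}) must be matched precisely, and the calculations are close cousins of those in the proof of Lemma \ref{identitymor}, but now with an extra deformation parameter. Once the triangle and prism orientations are fixed so that the induced boundary orientations on the $u=0$, $u=1$ and slanted faces agree with the prescribed signs for $\iota_{F}[\fF]$, $\iota_{H}[\fH]$, and $m_L,m_R$ respectively, all the analytic hypotheses (cut-out transversality, compactness, existence of $N$ in Definition \ref{homot}(\ref{h2})) follow from the corresponding properties already established for $\cC$ and $\fI$, so the proof reduces to invoking Theorems \ref{Mor}, \ref{CompMor2}, and \ref{thm:homotopy} in sequence.
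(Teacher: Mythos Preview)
Your approach differs substantially from the paper's, and the explicit flow homotopies you propose do not satisfy Definition~\ref{homot}. For $\sK^{\mathrm{id}}$: when $i<j$, the triangle $T_{i,j}$ has \emph{three} depth-$1$ boundary edges ($u=1$, $t=0$, and the slanted edge), plus $\partial\cM_{i,j}\times T_{i,j}$ contributes further boundary. The $u=1$ edge gives $\cI_{i,j}$ as you say, but the $t=0$ and slanted edges contribute pieces $\cM_{i,j}\times(\text{edge})$ of dimension $m_{i,j}+1$ that are neither $\cI^0_{i,j}$ (empty for $i<j$) nor of the form $\cM_{i,k}\times_k\cK_{k,j}$ or $\cK_{i,k}\times_k\cM_{k,j}$ (those live over $\partial\cM_{i,j}$, not $\cM_{i,j}$). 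So condition~(\ref{h5}) fails. Your assertion that ``the slanted side decomposes via the boundary of $\cM_{i,j}$'' conflates $\cM_{i,j}\times\partial T_{i,j}$ with $\partial\cM_{i,j}\times T_{i,j}$. The description of $\sK^{\mathrm{comp}}$ is similarly underspecified: the concatenation map you sketch lands in the corner stratum of $\cI_{i,j}$, and it is not clear how to thicken this to a compact manifold whose boundary decomposes as Definition~\ref{homot} requires.

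The paper's proof avoids building any new flow homotopies. It exploits a special feature of $\fI$: since $s,t$ on $\cI_{i,j}=\cM_{i,j}\times[0,j-i]$ factor through $\cM_{i,j}$, every integral in \eqref{mordef} vanishes unless the interval factor is covered by the form, which forces the middle index of the $\cH$-factor to coincide with an adjacent $f_*$-index. A direct comparison then shows $\phi^I_{\Theta,\Theta}=\phi^{I\circ I}_{\Theta,\Theta}$ \emph{as maps}: both equal $\id+N$ with the same strictly upper-triangular $N$. Theorem~\ref{CompMor2} gives $(\id+N)-(\id+N)^2=P\,d_{BC}+d_{BC}\,P$; multiplying by the inverse $(\id+N)^{-1}$ (which exists by upper-triangularity) yields $\phi^I_{\Theta,\Theta}\sim\id$. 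For functoriality the same interval argument gives $\phi^{I\circ I}_{\Theta_1,\Theta_3}=\phi^I_{\Theta_1,\Theta_3}$ on the nose, and Theorem~\ref{CompMor2} then directly shows this is homotopic to $\phi^I_{\Theta_2,\Theta_3}\circ\phi^I_{\Theta_1,\Theta_2}$. No auxiliary premorphism $\fI_0$ or flow homotopies are needed.
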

\begin{proof}
\textit{Step 1, $\phi^I_{\Theta,\Theta}$ is homotopic to identity.} 
It is not hard to check that $\phi^{I\circ I}_{\Theta,\Theta}$ can be written as $\id+M$ with $M$  strictly upper triangular. Note that for $i<j$, $I_{i,j}=\cM_{i,j}\times [0, j-i]$ and $(I\circ I)_{i,j}=\cup_{k, i\le k\le j} I_{i,k}\times_k I_{k,j}$ have an interval direction.  Since the pullback of differential forms by source and target maps can not cover that interval direction, we have
$$I^{v,k}_{\ldots,p|q,\ldots} [\ldots, f_{v+p},f_{v+q},\ldots]=(I\circ I)^{v,k}_{\ldots,p|q,\ldots} [\ldots, f_{v+p},f_{v+q},\ldots]=0, \text{ if }p\ne q,$$
$$I^{v,k}_{\ldots,p|}=(I\circ I)^{v,k}_{\ldots,p|}=0, \text{ if } p\ne k,$$
$$I^{v,k}_{|q,\ldots}=(I\circ I)^{v,k}_{|q,\ldots}=0, \text{ if } q\ne 0.$$
Therefore, for $k\in \mathbb{N}^+$, $\alpha \in h(\cC,v)$ and $\gamma \in h(\cC,v+k)$, we have
$$
\begin{array}{>{\displaystyle}rc>{\displaystyle}l}
\left\langle M[\alpha],[\gamma]\right\rangle_{v+k} & = & \sum_{\substack{1\le p\le q \le k\\ 0<i_1<\ldots<i_q<k}} (-1)^{\spadesuit_1} I\circ I^{v,k}_{i_1,\ldots,i_p|i_p,\ldots,i_q}[\alpha, f_{v+i_1},\ldots, f_{v+i_p},f_{v+i_p}, \ldots, f_{v+i_q},\gamma]\\
&  & + \sum_{\substack{1\le p \\ 0<i_1<\ldots<i_p=k}} (-1)^{\spadesuit_2} I\circ I^{v,k}_{i_1,\ldots,i_{p}|}[\alpha, f_{v+i_1},\ldots, f_{v+i_p}, f_{v+i_p},\gamma] \\
&  &+\sum_{\substack{1\le p \\ 0=i_1<\ldots<i_p<k}} (-1)^{\spadesuit_3} I\circ I^{v,k}_{|i_1,\ldots,i_{p}}[\alpha, f_{v+i_1}, f_{v+i_1}, \ldots, f_{v+i_p},\gamma],
\end{array}
$$
where $\spadesuit_1, \spadesuit_2,\spadesuit_3$ are determined according to \eqref{mordef}.

Similarly, we have a decomposition $\phi^I_{\Theta,\Theta}=\id+N$ with and $N$ strictly upper triangular. Note that $(I\circ I)_{v+i_p,v+i_p}=I_{v+i_p,v+i_p}=C_{v+i_p}$, hence we have 
$$\begin{aligned}& (I\circ I)^{v,k}  _{i_1,\ldots,i_p|i_p,\ldots,i_q}[\alpha, f_{v+i_1},\ldots, f_{v+i_p},f_{v+i_p}, \ldots, f_{v+i_q},\gamma]\\ = &I^{v,k}_{i_1,\ldots,i_p|i_p,\ldots,i_q}[\alpha, f_{v+i_1},\ldots, f_{v+i_p},f_{v+i_p}, \ldots, f_{v+i_q},\gamma].\end{aligned}$$
Similarly for the remaining two terms of $M,N$, thus we have $N=M$.  Then by Theorem \ref{CompMor2},
$$(\id+M)-(\id+M)^2=P\circ d_{\BC,\Theta}+d_{\BC,\Theta}\circ P.$$
Since $\id+M$ is a cochain isomorphism, we have:
$$\id-(\id+M)=(\id+M)^{-1}\circ P\circ d_{\BC,\Theta}+d_{\BC,\Theta}\circ (\id+M)^{-1}\circ P.$$
Thus $\id+M=\id+N=\phi^I_{\Theta,\Theta}$ is homotopic to identity.

\noindent\textit{Step 2, functoriality.} Given three defining-data $\Theta_1,\Theta_2,\Theta_3$, by the same argument as above we have up to homotopy that
$$\phi^I_{\Theta_1,\Theta_3}=\phi^{I\circ I}_{\Theta_1,\Theta_3}.$$
By Theorem \ref{CompMor2}, 
$$\phi^{I\circ I}_{\Theta_1,\Theta_3}-\phi^I_{\Theta_2,\Theta_3}\circ \phi^I_{\Theta_1,\Theta_2}+P\circ d_{\BC,\Theta_1}+d_{\BC,\Theta_3}\circ P=0.$$
Thus $\phi^I_{\Theta_1,\Theta_3}$ is homotopic to $\phi^I_{\Theta_2,\Theta_3}\circ \phi^I_{\Theta_1,\Theta_2}$.
\end{proof}

\begin{remark}
    A similar mechanism of proof appeared in \cite[Proposition 7.7.4]{pardon2016algebraic}, where the situation is Morse and the auxiliary data (which can be viewed as the analogue of the defining data) are choices in the construction of virtual fundamental cycles.
\end{remark}

To explain the functoriality for flow morphisms, we introduce the following category.
\begin{definition}
	Let $\cC,\cD$ be two oriented flow categories, $\cD ata(\cC\to \cD)$ is defined to be the category whose objects are defining data of $\cC$ and $\cD$. There is exactly one morphism from $\Theta_1$ to $\Theta_2$ if $\Theta_1,\Theta_2$ are defining data for the same flow category or $\Theta_1,\Theta_2$ are defining data for $\cC$ and $\cD$ respectively.
\end{definition}
Then $\cD ata(\cC)$ and $\cD ata(\cD)$ are full subcategories of $\cD ata(\cC\to \cD)$. If there is an oriented flow morphism $\fH:\cC\rightarrow \cD$, then for any defining data $\Theta$ and $\Theta'$ of $\cC$ and $\cD$ respectively, we can assign a cochain morphism  $\phi^H_{\Theta,\Theta'}:(\BC(\cC), d^C_{\BC,\Theta}) \to (\BC(\cD), d^D_{\BC,\Theta'})$. The next theorem states that such assignment along with $\cB\cC(\cC)$ and $\cB\cC(\cD)$ is a functor.

\begin{theorem}\label{canonicalthm2}
	For an oriented flow morphism $\fH$, there is a functor $$\Phi^H:  \mathcal{D}ata(\cC\to \cD)\to \mathcal{K}(\mathcal{C}h)$$ which extends functors $\mathcal{BC}(\cC)$ and $\mathcal{BC}(\cD)$ by sending the morphism $\Theta^C\to \Theta^D$ to $\phi^H_{\Theta^C,\Theta^D}$. Here $\Theta^C, \Theta^D$ are defining data for $\cC,\cD$ respectively.
\end{theorem}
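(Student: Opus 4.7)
The plan is to reduce all non-trivial functoriality checks to direct applications of Theorem \ref{CompMor2} together with the ``interval-vanishing'' mechanism that already drove Step 1 of Theorem \ref{canonicalthm1}. First I would declare $\Phi^H$ on objects by setting $\Phi^H(\Theta) := \mathcal{BC}^C(\Theta)$ if $\Theta \in \mathcal{D}ata^C$ and $\Phi^H(\Theta) := \mathcal{BC}^D(\Theta)$ if $\Theta \in \mathcal{D}ata^D$, and on morphisms by keeping $\mathcal{BC}^C$ and $\mathcal{BC}^D$ on the two full subcategories while sending the unique morphism $\Theta^C \to \Theta^D$ to the homotopy class $[\phi^H_{\Theta^C,\Theta^D}]$ produced by Theorem \ref{Mor}. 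Well-definedness on objects and on intra-$\mathcal{D}ata^C$/intra-$\mathcal{D}ata^D$ morphisms is covered by Theorem \ref{canonicalthm1}; identities are automatic since the identity in $\mathcal{D}ata^C \to \mathcal{D}ata^D$ on an object of $\mathcal{D}ata^C$ is still an intra-$\mathcal{D}ata^C$ morphism.

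The new compositions to verify are of two binary types: (a) $\Theta_1^C \to \Theta_2^C \to \Theta^D$, requiring $[\phi^H_{\Theta_2^C,\Theta^D}] \circ [\phi^I_{\Theta_1^C,\Theta_2^C}] = [\phi^H_{\Theta_1^C,\Theta^D}]$, and (b) $\Theta^C \to \Theta_1^D \to \Theta_2^D$, requiring $[\phi^I_{\Theta_1^D,\Theta_2^D}] \circ [\phi^H_{\Theta^C,\Theta_1^D}] = [\phi^H_{\Theta^C,\Theta_2^D}]$. For (a), I would first note that the identity flow morphism $\fI_C : \cC \Rightarrow \cC$ and $\fH: \cC \Rightarrow \cD$ are composable in the sense of Definition \ref{CompMor1}, since $\fI_C$ only introduces auxiliary interval factors into the fiber products already assumed transverse for $\fH$ and $\cC$. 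Then Theorem \ref{CompMor2} supplies a chain homotopy
$$
\phi^{H \circ I_C}_{\Theta_1^C,\Theta^D} \;-\; \phi^H_{\Theta_2^C,\Theta^D} \circ \phi^I_{\Theta_1^C,\Theta_2^C}
\;=\; P \circ d^C_{BC,\Theta_1^C} + d^D_{BC,\Theta^D} \circ P.
$$
It then suffices to show the equality $\phi^{H \circ I_C}_{\Theta_1^C,\Theta^D} = \phi^H_{\Theta_1^C,\Theta^D}$ on the nose.

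To establish this last equality I would unwind the composition $(\fH \circ \fI_C)_{i,k} = \bigsqcup_{j \geq i} (\cM^C_{i,j} \times [0, j-i]) \times_j \cH_{j,k}$. For $j > i$ every component carries an explicit interval factor $[0, j-i]$, and every form appearing in \eqref{mordef} is a pullback by some source or target map, hence has no $dt$-component; therefore the corresponding summands in $\phi^{H \circ I_C}$ integrate to zero by a degree-count analogous to the $p \ne q$ vanishing used in Step 1 of Theorem \ref{canonicalthm1}. Only the $j=i$ component survives, in which $\fI_C$ collapses to the identity and $(\fH \circ \fI_C)_{i,k}$ reduces to $\cH_{i,k}$, so the defining-data formula \eqref{mordef} for $\phi^{H \circ I_C}$ term-by-term matches that for $\phi^H_{\Theta_1^C,\Theta^D}$. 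Case (b) is completely symmetric using $\fI_D \circ \fH$.

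Once (a) and (b) are established, any triple composition in $\mathcal{D}ata^C \to \mathcal{D}ata^D$ decomposes as an associative combination of intra-category morphisms (covered by Theorem \ref{canonicalthm1}) with a single crossing $\Theta^C \to \Theta^D$, and repeated application of (a) and (b) yields equality of the two bracketings in $\mathcal{K}(\mathcal{C}h)$. The principal obstacle I foresee is the bookkeeping needed to make the ``interval kills everything except $j=i$'' argument watertight at the level of the signed formula \eqref{mordef}: one must verify that each sequence $0 < i_1 < \cdots < i_p$ and $j_1 < \cdots < j_q < k$ labelling a summand in $\phi^{H \circ I_C}$ either factors through an interval factor (in which case the integrand has no $dt$ and the summand vanishes), or corresponds bijectively, with matching signs, to a summand in \eqref{mordef} for $\phi^H$. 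This amounts to re-running the sign and index computation of Step 1 of Theorem \ref{canonicalthm1}, adapted to the asymmetric setting where the $\cC$-side data is wrapped in intervals and the $\cD$-side data is untouched (and symmetrically for case (b)); it is routine but technically the heaviest step of the argument.
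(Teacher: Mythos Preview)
Your approach is correct and very close in spirit to the paper's, but the two arguments are organized slightly differently. You apply Theorem \ref{CompMor2} once (with middle defining data $\Theta_2^C$) and then argue directly that $\phi^{H\circ I_C}_{\Theta_1^C,\Theta^D} = \phi^H_{\Theta_1^C,\Theta^D}$ on the nose via the interval-vanishing mechanism. The paper instead applies Theorem \ref{CompMor2} \emph{twice}---once with middle data $\Theta_2^C$ and once with middle data $\Theta_1^C$---to obtain that both $\phi^H_{\Theta_2^C,\Theta^D}\circ\phi^I_{\Theta_1^C,\Theta_2^C}$ and $\phi^H_{\Theta_1^C,\Theta^D}\circ\phi^I_{\Theta_1^C,\Theta_1^C}$ are homotopic to $\phi^{H\circ I}_{\Theta_1^C,\Theta^D}$, and then invokes the conclusion of Theorem \ref{canonicalthm1} that $\phi^I_{\Theta_1^C,\Theta_1^C}\simeq\id$ to finish.

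The tradeoff: your route is more self-contained (it does not black-box the $\phi^I_{\Theta,\Theta}\simeq\id$ step) but requires redoing the interval-vanishing computation in the asymmetric setting, including the orientation check that the $j=i$ piece of $(\fH\circ\fI_C)_{i,k}$ really is $\cH_{i,k}$ with the correct sign. This check goes through: all pieces of $(\fH\circ\fI_C)_{i,k}$ have common dimension $h_{i,k}$, the source and target maps factor through projections killing the interval coordinate, and the $j=i$ orientation agrees with $[\cH_{i,k}]$ by the same application of Lemma \ref{linearori} used in Definition/Lemma \ref{identitymor}. The paper's route is terser because it recycles the hard step already done inside Theorem \ref{canonicalthm1}. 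Either way, your identification of the ``signed bookkeeping for the interval-vanishing'' as the heaviest step is accurate, and there is no genuine gap.
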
	
\begin{proof}
 We only need to prove the functoriality. We use $\Theta^C,\Theta^D$ to denote defining data for $\cC,\cD$ respectively.  By Theorem \ref{CompMor2}, we have $\phi^{H\circ I}_{\Theta^C_1,\Theta^D}$ is homotopic to  $\phi^H_{\Theta_2^C,\Theta^D}\circ \phi^I_{\Theta_1^C,\Theta_2^C}$
and $\phi^{H\circ I}_{\Theta^C_1,\Theta^D}$ is homotopic to $ \phi^H_{\Theta_1^C,\Theta^D}\circ \phi^I_{\Theta_1^C,\Theta_1^C}$. Since $\phi^I_{\Theta_1^C,\Theta_1^C}$ is homotopic to identity by Theorem \ref{canonicalthm1}, thus $\phi^H_{\Theta_2^C,\Theta^D}\circ \phi^I_{\Theta_1^C,\Theta_2^C}$ is homotopic to $\phi^H_{\Theta_1^C,\Theta^D}$. Similarly, we have $\phi^I_{\Theta_1^D,\Theta_2^D}\circ \phi^H_{\Theta^C,\Theta_1^D}$ is homotopic to $\phi^H_{\Theta^C,\Theta_2^D}$.
\end{proof}

\subsection{Flow subcategories and flow quotient categories}\label{sub:sub}
In this part, we introduce subcategories and quotient categories in the setting of flow categories, which on the cochain complex level correspond to subcomplexes and quotient complexes. 
\begin{definition}\label{def:sub}
	Let $\cC = \{C_i,\cM_{i,j}\}$ be an oriented flow category.  A subset $A$ of $\Z$ is called a $\cC$-subset if for all $i \in A$ we have $j \notin A$ implies $\cM_{i,j} = \emptyset$. 
\end{definition}
A basic example of $\cC$-subset is the set of integers bigger than a fixed number.
\begin{proposition}\label{prop:subcat}
	Let $\cC = \{C_i,\cM_{i,j}\}$ be an oriented flow category and $A$ be a $\cC$-subset.  Then we have the following two flow categories $\cC_A = \{C_i, \cM_{i,j}, i, j \in A\}$ and $\cC_{/A} = \{C_i,\cM_{i,j},i,j,\notin A\}$.	
\end{proposition}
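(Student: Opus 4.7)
The plan is to verify the four conditions of Definition \ref{flow} (plus the coherent orientation of Definition \ref{oridef}) for $\cC_A$ and $\cC_{/A}$ by checking that the required structure is inherited from $\cC$, the only nontrivial point being that no composition in $\cC$ leaves the chosen index set.

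First I would note that conditions \eqref{F1} and \eqref{F2} are immediate: the objects $C_i$ and morphism spaces $\cM_{i,j}$ with $i,j\in A$ (respectively $i,j\notin A$) are already closed manifolds and compact manifolds, and the conditions $\cM_{i,i}=C_i$, $\cM_{i,j}=\emptyset$ for $j<i$, smoothness of $s,t$ are unchanged under restriction. Condition \eqref{F3} is also immediate because fiber-product transversality of a subfamily of $\{\cM_{i,j}\}$ is a special case of the transversality assumed for $\cC$.

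The key step is \eqref{F4}: the composition map on $\cC_A$ (resp.\ $\cC_{/A}$) must still surject, up to measure zero, onto $\partial\cM_{i,k}$ for $i,k\in A$ (resp.\ $i,k\notin A$). Here I use the defining property of a $\cC$-subset. For $\cC_A$, pick $i,k\in A$ and any $j$ with $i<j<k$. If $j\notin A$, then since $i\in A$ and $j\in \Z\setminus A$, Definition \ref{def:sub} gives $\cM_{i,j}=\emptyset$, so the corresponding component $\cM_{i,j}\times_j\cM_{j,k}$ is empty. Therefore
\[
  \bigsqcup_{\substack{i<j<k\\ j\in A}}\cM_{i,j}\times_j\cM_{j,k}
  \;=\;\bigsqcup_{i<j<k}\cM_{i,j}\times_j\cM_{j,k},
\]
and the composition map from the left-hand side to $\partial\cM_{i,k}$ is the same one supplied by $\cC$, hence a diffeomorphism up to zero-measure. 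The argument for $\cC_{/A}$ is symmetric: if $i,k\notin A$ and $j\in A$, then $j\in A$, $k\in\Z\setminus A$ force $\cM_{j,k}=\emptyset$, so again only indices in $\Z\setminus A$ contribute, and the composition map from $\cC$ descends.

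Finally I would observe that the coherent orientation data (Definition \ref{oridef}) for $\cC_A$ and $\cC_{/A}$ is obtained by simply restricting the orientations $[C_i]$, $[\cM_{i,j}]$, and $[\cM_{i,j}\times_j\cM_{j,k}]$ of $\cC$ to the allowed indices; the three orientation relations in Definition \ref{oridef} involve only indices $i\le j\le k$ all within $A$ (resp.\ $\Z\setminus A$), so they continue to hold. I expect no serious obstacle; the only subtle point is the bookkeeping in \eqref{F4}, and the $\cC$-subset condition was designed precisely to eliminate unwanted boundary strata.
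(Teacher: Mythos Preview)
Your proof is correct and follows essentially the same approach as the paper. The paper's proof is a terse two-sentence version of exactly your argument for condition \eqref{F4}: for $i,k\in A$ and $i<j<k$, if $j\notin A$ then one of $\cM_{i,j},\cM_{j,k}$ is empty by the $\cC$-subset condition (and symmetrically for $\cC_{/A}$); your write-up simply fleshes this out and adds the routine verifications of \eqref{F1}--\eqref{F3} and the orientation restriction, which the paper leaves implicit.
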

\begin{proof}
    It is clear that both $\cC_A$ and $\cC_{/A}$ are subcategories. Then it is sufficient to prove that the boundary of morphism spaces comes from fiber products of the morphisms spaces for both $\cC_A$ and $\cC_{/A}$. Since the boundary $\partial \cM_{i,k}$ comes from $\cM_{i,j}\times_j\cM_{i,k}$, if both $i,k \in A$, then $j \in A$, otherwise one of $\cM_{i,j}$ and $\cM_{j,k}$ is empty. Similarly for $\cC_{/A}$.
\end{proof}
We will call $\cC_A$ a \textbf{flow subcategory} and $\cC_{/A}$ the associated \textbf{flow quotient category}. 
\begin{remark}
	A finer definition of subcategory is using a subset of components of $\Obj(\cC)$, such that a similar condition to  Definition \ref{def:sub} holds.
\end{remark}	

From Definition \ref{def:MMBCC}, when the  defining data of $\cC_A$ and $\cC_{/A}$ are the restriction of a defining data on $\cC$, we have the following tautological short exact sequence,
\begin{equation}\label{eqn:naturalexact}
0 \to \BC(\cC_A) \to \BC(\cC) \to \BC(\cC_{/A}) \to 0,
\end{equation}
by the obvious inclusion and projection. To make the structure more compatible with concepts introduced here and our future applications \cite{ring}, we lift the short exact sequence to the flow morphism level. We first introduce the following.

\begin{lemma}\label{lemma:commutative}
Assume $(V_0\oplus V_1,d)$ is a cochain complex with the property that $d(V_0)\subset V_0$, i.e.\ $d$ has a decomposition into $d_{00}+d_{10}+d_{11}$, where $d_{ab}:V_a\to V_b$. Suppose we have another cochain complex  $(V'_0\oplus V'_1,d')$ with the same property. Assume the following squares are commutative up to homotopies $H_1,H_2$ with the property that $\Ima H_1\subset V'_0$ and $V_0\subset \ker H_2$ and the middle morphism $\phi$ has the same decomposition $\phi_{00}+\phi_{10}+\phi_{11}$, i.e.\ $\phi(V_0)\subset V'_0$
$$\xymatrix{
0 \ar[r] & V_0 \ar[r]\ar[d]^{\psi} & V_0\oplus V_1 \ar[r]\ar[d]^{\phi} & V_1\ar[d]^{\eta} \ar[r] & 0\\
0 \ar[r] & V'_0 \ar[r] & V'_0\oplus V'_1 \ar[r] & V'_1 \ar[r] & 0
}$$
Then they induce a morphism between the long exact sequences of cohomology.
\end{lemma}
\begin{proof}
    We only need to prove the following square is commutative:
    $$
    \xymatrix{
    H(V_1) \ar[r]^{d_{10}}\ar[d]^{\eta} & H(V_0)\ar[d]^{\psi} \\
    H(V'_1) \ar[r]^{d'_{10}} & H(V'_0)}
    $$
    By $\Ima H_1\subset V'_0$ and $V_0\subset \ker H_2$, we have $\psi=\phi_{00}$ and $\eta=\phi_{11}$ on cohomology. Then the claim follows because the square below is commutative up to the homotopy $\phi_{10}$,\footnote{See Remark \ref{rmk:triangle} for the explanation of the sign, although it does not affect the map on cohomology.}
    $$
    \xymatrix{
    (V_1,d_{11}) \ar[r]^{d_{10}}\ar[d]^{\phi_{11}} & (V_0,-d_{00})\ar[d]^{\phi_{00}} \\
    (V'_1,d'_{11}) \ar[r]^{d'_{10}} & (V'_0,-d'_{00})}
    $$
\end{proof}

\begin{proposition}\label{prop:exact}
Let $\cC = \{C_i,\cM_{i,j}\}$ be an oriented flow category and $A$ a $\cC$-subset. Then we have two flow morphisms $\fI_A:\cC_A \Rightarrow \cC$ and $\fP_A:\cC \Rightarrow \cC_{/A}$, which induces a short exact sequence  $0 \to \BC(\cC_A) \to \BC(\cC) \to \BC(\cC_{/A}) \to 0$. The induced long exact sequence is isomorphic to  that of \eqref{eqn:naturalexact}, if the defining data for $\cC_A,\cC_{/A}$ are the restriction of defining data on $\cC$.
\end{proposition}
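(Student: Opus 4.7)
The plan is to construct $\fI_A$ and $\fP_A$ by restricting the identity flow morphism $\fI$ of $\cC$ from Definition/Lemma \ref{identitymor}. Define $\cI_{A,i,j} := \cM_{i,j} \times [0,j-i]$ for $i \in A$, $j \in \Z$, $i \le j$, with source, target, compositions, and orientations inherited from $\fI$, and $\cI_{A,i,j} := \emptyset$ otherwise; analogously, set $\cP_{A,i,j} := \cM_{i,j} \times [0,j-i]$ for $i \in \Z$, $j \in \Z \setminus A$, $i \le j$, and empty otherwise. The $\cC$-subset property $\cM_{i,j} = \emptyset$ for $i \in A, j \notin A$ immediately forces $\cI_{A,i,j} = \emptyset$ whenever $j \notin A$, and $\cP_{A,i,j} = \emptyset$ whenever $i \in A$. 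Verification of the flow-morphism axioms of Definition \ref{morphism} is then inherited from those of $\fI$: every fiber product appearing in the boundary decomposition of Definition/Lemma \ref{identitymor} has all its indices in $A$ for $\fI_A$ or all outside $A$ for $\fP_A$ by the same property, so the composition maps $m_L, m_R$ restrict properly and the orientation relations carry over.

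By Theorem \ref{Mor}, $\fI_A$ and $\fP_A$ induce cochain maps $\phi^{I_A}: BC^{\cC_A} \to BC^{\cC}$ and $\phi^{P_A}: BC^{\cC} \to BC^{\cC_{/A}}$. A generic term $\cH^{s,k}_{i_1,\ldots,i_p|j_1,\ldots,j_q}[\cdots]$ in formula \eqref{mordef} contains an $\fI$-factor $\cI_{A,s+i_p,s+j_1}$; whenever $i_p < j_1$ the extra interval direction $[0, j_1 - i_p]$ cannot be covered by pullbacks of forms through source and target maps, and the integral vanishes by a degree count (cf.\ Step 1 of the proof of Theorem \ref{canonicalthm1}). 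Hence the surviving terms have $i_p = j_1$, which combined with the $\cC$-subset property pins all indices to $A$ (resp.\ to $\Z \setminus A$). It follows that $\phi^{I_A}$ lands in the subcomplex $N := \ker \pi \subset BC^{\cC}$ (the $A$-indexed part) and decomposes as $\phi^{I_A} = \iota \circ (\id + L_1)$ with $L_1$ strictly upper triangular on $BC^{\cC_A}$ (arising from the $k \ge 1$ contributions); dually, $\phi^{P_A}$ vanishes on $N$ and decomposes as $\phi^{P_A} = (\id + L_2)\circ \pi$ with $L_2$ strictly upper triangular on $BC^{\cC_{/A}}$. Strict upper triangularity makes the Neumann series $\sum_n (-L_k)^n$ converge termwise in the direct-limit-product structure of $BC$, so $\id + L_1$ and $\id + L_2$ are invertible; since $\phi^{I_A}, \phi^{P_A}, \iota, \pi$ are all cochain maps (the $\cC$-subset property ensures $d_{BC}$ preserves $N$, making $\iota, \pi$ genuine cochain maps), the operators $\id + L_1$ and $\id + L_2$ are in fact cochain automorphisms.

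Exactness of the induced sequence then follows: $\phi^{I_A}$ is injective with image $N$, $\phi^{P_A}$ is surjective with kernel $N$, and these agree. To construct the isomorphism with \eqref{eqn:naturalexact}, take vertical cochain isomorphisms $\id + L_1$ on $BC^{\cC_A}$, $\id$ on $BC^{\cC}$, and $(\id + L_2)^{-1}$ on $BC^{\cC_{/A}}$; commutativity of the two squares reduces to $\iota \circ (\id + L_1) = \phi^{I_A}$ and $(\id + L_2)^{-1} \circ \phi^{P_A} = \pi$, both immediate from the decompositions above. This yields an isomorphism of short exact sequences on the nose, which is in fact stronger than the stated conclusion up to homotopy. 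The main technical point will be the verification of the decomposition $\phi^{I_A} = \iota \circ (\id + L_1)$ via formula \eqref{mordef} together with checking the orientation inheritance for $\fI_A$ and $\fP_A$; both are bookkeeping exercises in the same style as Step 1 of Theorem \ref{canonicalthm1}.
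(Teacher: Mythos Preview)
Your construction of $\fI_A$ and $\fP_A$ agrees with the paper's, and your proof is correct. The route to the comparison with \eqref{eqn:naturalexact}, however, differs from the paper's in an interesting way.

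The paper writes $\phi^{\fI_A}=i+N$ with $N$ strictly upper triangular, observes that $N=\phi^{\fI_{\cC_A}}-\id$, and then invokes the composition machinery: since $\fI_A\circ\fI_{\cC_A}$ and $\fI_A$ induce the same map (by the interval argument you cite), Theorem~\ref{CompMor2} gives $(i+N)\circ(\id+N)\simeq i+N$, and multiplying by $(\id+N)^{-1}$ yields $\phi^{\fI_A}\simeq i$. The resulting diagram has identity vertical maps and commutes only up to homotopy. You instead factor $\phi^{I_A}=\iota\circ(\id+L_1)$ and note that $\id+L_1$ is a cochain automorphism---which follows either from injectivity of $\iota$ together with the fact that $\phi^{I_A}$ and $\iota$ are cochain maps, or more directly from $\id+L_1=\phi^{I_{\cC_A}}$ being a cochain map by Theorem~\ref{Mor}. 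This produces an honest isomorphism of short exact sequences with vertical maps $\id+L_1$, $\id$, $(\id+L_2)^{-1}$, which is stronger than what the statement asks for.

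Your approach is more elementary in that it bypasses Theorem~\ref{CompMor2} entirely; the paper's approach is more in keeping with the philosophy of the section that all relations should be witnessed by flow-level constructions. One small point worth making explicit in your write-up: the identification $\phi^{I_A}=\iota\circ\phi^{I_{\cC_A}}$ (which is what your factorization amounts to) requires that the defining data on $\cC_A$ be the restriction of that on $\cC$, so that the primitives $f^D$ appearing in \eqref{mordef} for $\fI_A$ coincide with those for $\fI_{\cC_A}$ at indices in $A$. You allude to this when noting that $\iota,\pi$ are cochain maps, but it is worth stating that the same compatibility is what makes the factorization go through.
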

\begin{proof}
	$\fI_A$ is the identity flow morphism of $\cC_A$ when the target lands in $A$, and empty set otherwise. $\fP_A$ is the identity flow morphism of $\cC_{/A}$ when the source lands outside $A$, and the empty set otherwise. Similar to the proof of Proposition \ref{prop:subcat}, both $\fI_A$ and $\fP_A$ are oriented flow morphisms. Since the induced cochain morphism of  $\fI_A$ maps $\BC(\cC_A)$ isomorphically to the subspace of $\BC(\cC)$  generated by $H^*(C_i)$ for $i\in A$ and the induced cochain morphism of $\fP_A$ vanishes on the subspace of $\BC(\cC)$  generated by $H^*(C_i)$ for $i\in A$ and map the subspace generated by $H^*(C_i)$ for $i\notin A$ isomorphically to $\BC(\cC_{/A})$, then we have a short exact sequence as below. Moreover, we claim that we have the following diagram of short exact sequences which is commutative up to homotopy.
	$$
	\xymatrix{
	0  \ar[r] & \BC(\cC_A) \ar[r]^{\phi^{I_A}} \ar[d]^{\id} & \BC(\cC) \ar[r]^{\phi^{P_A}} \ar[d]^{\id} & \BC(\cC_{/A}) \ar[r] \ar[d]^{\id} & 0\\
	0  \ar[r] & \BC(\cC_A) \ar[r]^{i} & \BC(\cC) \ar[r]^{\pi}  & \BC(\cC_{/A}) \ar[r]  & 0,}
	$$
	where the second row is the tautological short sequence \eqref{eqn:naturalexact}. To prove the claim it is equivalent to prove $\phi^{I_A}$ is homotopic to inclusion $i$ and $\phi^{P_A}$ is homotopic to the projection $\pi$.  Note that $\phi^{I_A} = i +N$ with $N$ a strict upper triangular matrix and $N = \phi^{I_A} - i =i\circ (\phi^{I_{\cC_A}}-\id)$. Similar to the proof of Theorem \ref{canonicalthm1}, we have $\fI_A\circ \fI_{\cC_A}$ and $\fI_A$ induces the same map. Hence we have $(i+N)\circ (\id +N)$ is homotopic to $i+N$ by Theorem \ref{CompMor2}. Hence $i+N$ is homotopic to $i$ if we multiply $(\id+N)^{-1}$ to the right of the homotopy relation. Similarly, we have $\phi^{P_A}$ is homotopic to the projection $\pi$. It is clear from  Theorem \ref{CompMor2} that those homotopies satisfy the conditions of Lemma \ref{lemma:commutative}, hence the claim follows.
\end{proof}

\begin{remark}\label{rmk:triangle}
    The conclusion of Lemma \ref{lemma:commutative} can be rephrased as $V_0\to V_0\oplus V_1\to V_1\to V_0[1]$ and  $V'_0\to V'_0\oplus V'_1\to V'_1\to V'_0[1]$ are equivalent distinguished triangles in $\cK(\cC h)$\footnote{When $(V,d)$ is ungraded, $V[1]$ simply means $(V,-d)$.}. In view of \S \ref{canonical}, the minimal Morse-Bott cochain complex is only well-defined in $\cK(\cC h)$. It is natural to expect that we only get well-defined distinguished triangles in  $\cK(\cC h)$.
\end{remark}

\begin{definition}
Let $\cC,\cD$ be two oriented flow categories, $A$ a $\cC$-subset and $B$ a $\cD$-subset.  We say an oriented flow morphism $\fH$ maps $A$ to $B$, iff $\cH_{i,j} = \emptyset$ whenever $i\in A, j \notin B$. 
\end{definition}
\begin{proposition}\label{prop:func_exact}
	Let $\cC,\cD$ be two oriented flow categories, $A$ a $\cC$-subset and $B$ a $\cD$-subset.  Assume an oriented flow morphism $\fH$ maps $A$ to $B$. Then we have oriented flow morphisms $\fH_A:\cC_A \Rightarrow \cD_B$ and $\fH_{/A}:\cC_{/A} \Rightarrow \cD_{/B}$ and on the cochain level, they induce a morphism between the long exact sequences.
\end{proposition}
\begin{proof}
	$\fH_A$ is the restriction of $\fH$ when both source and target land in $A$ and $B$ respectively.  	$\fH_{/A}$ is the restriction of $\fH$ when both source and target land in complements of $A$ and $B$ respectively.  Then $\fH_A$ and $\fH_{/A}$ are flow morphisms by a direct check similar to Proposition \ref{prop:subcat}. We define $\fF$ to be the flow morphism from $\cC_A$ to $\cD$, which is the restriction of $\fH$ to $\cC_A$. Since $\cH_{i,j} = \emptyset$ whenever $i\in A, j \notin B$, we have $\fH$ must land in $\cD_{B}$. Then by the same argument in Theorem \ref{canonicalthm1}, $\fH\circ \fI_A$, $\fI_B\circ \fH_A$ and $\fF$ induce the same cochain morphism. Then Theorem \ref{CompMor2} implies that both $\phi^{H}\circ \phi^{I_A}$ and $\phi^{I_B}\circ \phi^{H_A}$ are homotopic to $\phi^{\fF}$. Similarly, we have $\phi^{H_{/A}}\circ \phi^{P_A}$ and $\phi^{P_B}\circ \phi^H$ are homotopic. It is clear that the homotopies and $\phi^H$ satisfy the conditions in Lemma \ref{lemma:commutative}, hence the claim follows. 
\end{proof}
\begin{remark}
    It is clear that the identity flow morphism maps $A$ to $A$. Hence Proposition \ref{prop:func_exact} implies that the long exact sequence from Proposition \ref{prop:exact} is independent of the defining data and is isomorphic to the long exact sequence induced from \eqref{eqn:naturalexact}.
\end{remark}

\section{Action Spectral Sequence}\label{specseq}
Given a Morse-Bott function on a closed manifold $M$, there is a spectral sequence converging to $H^*(M)$, with the first page generated by the cohomology of critical manifolds (sometimes twisted by a local system). Such a spectral sequence is sometimes referred to as the Morse-Bott spectral sequence. For flow categories, Austin-Braam's construction \cite{austin1995morse} comes with a spectral sequence, which is induced by the an action filtration.  Moreover, it was shown under the fibration condition that the spectral sequence from Austin-Braam's construction (from the first page) is isomorphic to the Morse-Bott spectral sequence.  Similar spectral sequences from action filtration in Floer theory can be found in many places, e.g.\ \cite{seidel2008biased}. In many cases, the spectral sequence is an invariant of the Morse-Bott function, i.e.\ independent of other auxiliary structures. For example, in the finite dimensional Morse-Bott theory, any reasonable construction should recover the Morse-Bott spectral sequence, which can be constructed using only the Morse-Bott function in a purely topological manner.

The goal of this section is to prove those results for the minimal Morse-Bott cochain complex. The existences of an ``action" filtration is encoded in the definition of a flow category by requiring $\cM_{i,j} = \emptyset$ for $i > j$, since we secretly order $C_i$ by their critical values of the hypothetical Morse-Bott functional. For basics of spectral sequences arising from filtrations, we refer readers to \cite{mccleary2001user,weibel1995introduction}.

Let $\mathcal{C}:=\{C_i,\mathcal{M}_{i,j}\}$ be an oriented flow category, we have the following ``action" filtration on the minimal Morse-Bott cochain complex $\BC$,
$$F_p\BC:=\prod_{i\ge p} H^*(C_i)\subset F_{p-1}\BC\subset \BC.$$
It is clear from definition that the differential $d_{\BC,\Theta}$ is compatible with this filtration for any defining data $\Theta$. The associated spectral sequence can be described explicitly as follows. We define $Z^p_{k+1}$ to be space of $\alpha_0 \in H^*(C_p)$, such that there exist $\alpha_1,\alpha_2 \ldots \alpha_{k-1}\in H^*(C_*)$ and (we suppress the subscript $\Theta$ in $d_{i,\Theta}$ for simplicity.)
\begin{equation}\label{Z} \begin{array}{rcl}
d_1\alpha_0&=&0,\\
d_2\alpha_0+d_1\alpha_1&= & 0,\\
d_3\alpha_0+d_2\alpha_1+d_1\alpha_2 &=& 0,\\
\ldots & &\\
d_{k}\alpha_0+d_{k-1}\alpha_1+\ldots d_1\alpha_{k-1} &=& 0.
\end{array}\end{equation}
We define $B^p_{k+1}$ to be space of $\alpha \in H^*(C_p)$, such
that there exist $\alpha_0,\alpha_1,\ldots, \alpha_{k-1}\in H^*(C_*)$  and
\begin{equation}\label{B}
\begin{array}{rcl}
\alpha&=& d_{k}\alpha_0+d_{k-1}\alpha_1+\ldots d_1\alpha_{k-1}, \\
0 &=& d_{k-1}\alpha_0+d_{k-2}\alpha_1+\ldots d_1\alpha_{k-2}, \\
 & & \ldots\\
0&= & d_1\alpha_0. \end{array}\end{equation}
On $Z^p_{k+1}/B^p_{k+1}$, there is a map $\partial_{k+1}:Z^p_{k+1}/B^p_{k+1} \to Z^{p+k+1}_{k+1}/B^{p+k+1}_{k+1}$ defined by $\partial_{k+1}\alpha_0:=d_{k+1}\alpha_0+d_{k}\alpha_1+\ldots d_2\alpha_{k-1}$. Since the differential on the minimal Morse-Bott cochain complex has the special form of $\prod d_i$, unwrapping Larey's theorem on the spectral sequence associated to a filtered complex, we have the following. 

\begin{proposition}[{\cite{mccleary2001user}}]{\label{ss}} Following the notation above, we have
	 $$B^p_1\subset B^p_2\subset \ldots B^p_k\subset \cup_{k} B^p_k=B^p_\infty\subset Z^p_\infty=\cap_k Z^p_k\ldots\subset Z^p_k\subset \ldots \subset Z^p_2\subset Z^p_1.$$
	In addition, $\partial_k$ is a well-defined map from $Z^p_k/B^p_k$ to $Z^{p+k+1}_k/B^{p+k+1}_k$, such that $\partial_k^2=0$ and $Z^p_{k+1}/B^p_{k+1}\simeq H^p(Z_k/B_k,\partial_k)$ (here we view the supscript $p$ as grading and then $\partial_k$ has grading $k+1$ on $Z_k/B_k$.). Hence we have a spectral sequence $(E^p_{k}:=Z^p_k/B^p_k,\partial_k)$, 
	with $$E^p_\infty:=Z^p_\infty/B^p_\infty\simeq F_pH(\BC,d_{\BC})/F_{p+1}H(\BC, d_{\BC}),$$
	where $F_pH(\BC,d_{\BC})$ is the associated filtration on the cohomology of $(\BC,d_{\BC})$. In other words, the spectral sequence $(E^p_{k},\partial_k)$ is the spectral sequence induced from the filtration $F_p\BC$. 
\end{proposition}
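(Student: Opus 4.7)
The plan is to recognize the combinatorial construction above as the standard spectral sequence of a filtered cochain complex applied to $(BC, d_{BC})$ with the decreasing action filtration $F_pBC = \prod_{i \geq p} H^*(C_i)$. Since $d_{BC} = \prod_{k \geq 1} d_k$ with $d_k\colon H^*(C_s)\to H^*(C_{s+k})$ strictly increases filtration degree, and the leading-component projection $\pi_p\colon F_pBC \twoheadrightarrow H^*(C_p)$ has kernel exactly $F_{p+1}BC$, I can transport the standard filtered-complex construction (as in Weibel or McCleary) onto the graded pieces $H^*(C_p)$ and match each object with the formulas in the proposition.

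The first substantive step is to verify that, under $\pi_p$, the subspaces $Z_k^p$ and $B_k^p$ defined by \eqref{Z} and \eqref{B} coincide respectively with $\pi_p\bigl(F_pBC \cap d_{BC}^{-1}(F_{p+k}BC)\bigr)$ and $\pi_p\bigl(F_pBC \cap d_{BC}(F_{p-k+1}BC)\bigr)$. Unwinding the component formula $(d_{BC}x)_{p+j} = \sum_{i=0}^{j-1} d_{j-i}\,x_{p+i}$ term-by-term, the condition $d_{BC}x \in F_{p+k}BC$ reduces exactly to the system \eqref{Z} with $\alpha_i = x_{p+i}$, while the condition that $\alpha$ be the leading component of $d_{BC}y$ for some $y\in F_{p-k+1}BC$ produces \eqref{B} with $\alpha_i = y_{p-k+1+i}$. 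Once this matching is in place, the nested inclusions of the $B$'s and $Z$'s, the well-definedness of $\partial_k$ modulo $B_k$-boundaries, the identity $\partial_k^2 = 0$, and the isomorphism $Z_{k+1}^p/B_{k+1}^p \cong H^p(Z_k/B_k, \partial_k)$ are formal consequences of $d_{BC}^2 = 0$ and the standard algebraic manipulations for filtered complexes.

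The convergence statement $E_\infty^p \cong F_pH(BC,d_{BC})/F_{p+1}H(BC,d_{BC})$ requires the filtration to be exhaustive (every element of $BC$ lies in some $F_pBC$, which holds since elements have eventually-vanishing components in the negative direction) and Hausdorff ($\cap_p F_pBC = 0$, also immediate from the definition of $BC$). I expect the hardest part to be the passage from ``approximate cocycles at every order $k$'' (which is what $Z_\infty^p = \cap_k Z_k^p$ encodes) to genuine cocycles in $F_pBC$: given $\alpha_0 \in Z_\infty^p$, the auxiliaries $(\alpha_1^{(k)},\ldots,\alpha_{k-1}^{(k)})$ solving the first $k$ equations need not be compatible across $k$, yet one must produce a single sequence $(\alpha_1, \alpha_2, \ldots)$ solving all equations simultaneously. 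This is a Mittag-Leffler-type step: the solution sets $S_k$ of partial sequences form an inverse system under truncation, and one must check, using finite-dimensionality of each $H^*(C_i)$ together with the descending chain condition on affine subspaces, that the images stabilize so that $\varprojlim S_k$ is non-empty. A parallel argument identifies $B_\infty^p$ with the leading components of $d_{BC}$-coboundaries landing in $F_pBC$, completing the identification $E_\infty^p = F_pH/F_{p+1}H$.
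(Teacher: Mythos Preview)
Your outline is correct and is precisely the standard unwinding of the spectral sequence of a filtered complex, specialized to the situation where the associated graded pieces $F_pBC/F_{p+1}BC \cong H^*(C_p)$ are identified via the leading-component projection $\pi_p$. The paper does not supply its own proof of this proposition: it is stated as a citation from \cite{mccleary2001user}, prefaced only by the remark that one should ``unwrap Leray's theorem on the spectral sequence associated to a filtered module.'' So there is no paper-side argument to compare against; you have simply written out what the citation covers.

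One comment on the convergence step you flag as the hardest: your instinct is right that the passage from $\alpha_0 \in \bigcap_k Z_k^p$ to a genuine cocycle requires assembling compatible auxiliaries, and that finite-dimensionality of each $H^*(C_i)$ is what makes the Mittag-Leffler argument go through (the descending chain of affine solution sets in each fixed $H^*(C_{p+i})$ stabilizes). The paper itself is aware that convergence is delicate in general: immediately after the proposition it records the short exact sequence involving $\varprojlim^1 F_pH(BC,d_{BC})$, which is exactly the obstruction to recovering $H(BC,d_{BC})$ from $E_\infty$. So your care here is appropriate and matches the level of precision the surrounding text expects.
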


\begin{remark}
    Since we do not assume $\cC$ carries a grading structure, we do not have a grading on $\BC$ (as well as its relation to the natural degree on $H^*(C_*)$) in general. In particular, we will not get a multi-complex in \cite{banyaga2010morse}. The cost is that we can not further refine the spectral sequence in $E^p_k$ using their degrees on $H^*(C_p)$.
\end{remark}

The second page of the spectral sequence is computed by taking the cohomology with respect to $\partial_1=d_1$ in \eqref{def}. Since $d_1$ is computed using $\mathcal{M}_{*,*+1}$, which are manifolds without boundary, $d_1$ is simply pullback and pushforward of cohomology. It is more accessible in good cases, works in this direction using cascades constructions can be found in \cite{diogo2018morse,diogo2018symplectic}. In general, even though $d_i$ depends on defining data in general for $i\ge 2$, $\partial_i$ does not for any $i$.

\begin{proposition}\label{prop:specinv}
Every page of the spectral sequence is independent of the defining data. 
\end{proposition}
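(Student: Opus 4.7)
The plan is to construct, for any two defining-data $\Theta_1,\Theta_2$, a filtered cochain isomorphism that is the identity on the associated graded, and then invoke the standard fact that such a morphism induces the identity on every page of the filtration spectral sequence.

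First, I would observe that the identity flow morphism $\fI:\cC\Rightarrow \cC$ from Definition/Lemma \ref{identitymor} satisfies $\cI_{i,j}=\emptyset$ whenever $i>j$. Inspecting the formula \eqref{mordef}, the induced cochain morphism $\phi^I_{\Theta_1,\Theta_2}:(BC,d_{BC,\Theta_1})\to(BC,d_{BC,\Theta_2})$ therefore sends $H^*(C_s)$ into $\prod_{t\ge s}H^*(C_t)$ and so preserves the filtration $F_pBC=\prod_{i\ge p}H^*(C_i)$. Moreover, as was already used in the proof of Theorem \ref{thm:main}, one has a decomposition $\phi^I_{\Theta_1,\Theta_2}=\id+N$, where $N$ strictly increases the filtration, i.e.\ $N(H^*(C_s))\subseteq \prod_{t>s}H^*(C_t)$, and consequently $\phi^I_{\Theta_1,\Theta_2}$ is a filtered isomorphism with inverse $\sum_{n\ge 0}(-N)^n$.

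Second, I would invoke the standard functoriality of the spectral sequence of a filtered complex: a filtered cochain morphism induces a morphism $E_k^p(\Theta_1)\to E_k^p(\Theta_2)$ for every $k\ge 0$ and every $p$. Since $N$ strictly increases the filtration, the map on the associated graded $E_0$ is simply the identity on $H^*(C_p)$. Inductively, this implies that the induced map on every subsequent page $E_k$ is also the identity, and in particular an isomorphism. This establishes the independence of the defining-data.

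To upgrade independence to a canonical identification, I would then verify that the transition isomorphisms are compatible under composition. By Theorem \ref{canonicalthm1}, $\phi^I_{\Theta_2,\Theta_3}\circ\phi^I_{\Theta_1,\Theta_2}$ and $\phi^I_{\Theta_1,\Theta_3}$ differ by the chain homotopy $P$ from Theorem \ref{CompMor2} applied to $\fI\circ\fI$. Inspecting \eqref{pdef}, all moduli spaces entering $P$ are fibered products of pieces of the identity flow morphism, all of which vanish unless indices weakly increase, so $P$ preserves (in fact strictly increases) the filtration. Hence the homotopy descends to the graded pieces and disappears on each page.

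I do not anticipate any real obstacle. The heart of the argument is already packaged in Theorem \ref{thm:main} and Theorem \ref{canonicalthm1}; the only new ingredient is the simple, purely formal observation that the identity flow morphism and all associated homotopies preserve the action filtration, which then allows the canonical quasi-isomorphism between minimal Morse-Bott cochain complexes to be promoted to a canonical isomorphism of spectral sequences.
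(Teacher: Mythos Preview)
Your proposal is correct and follows essentially the same argument as the paper: use that the identity flow morphism $\fI$ yields a filtration-preserving cochain map which is the identity on $E_0$, hence induces an isomorphism on every page. Your additional paragraph on canonical compatibility via the homotopy $P$ from Theorem~\ref{CompMor2} is extra detail not present in the paper's proof, but it is correct and a natural strengthening.
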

\begin{proof}
The identity flow morphism $\fI$ induces a cochain map  $ \phi^I_{\Theta_1,\Theta_2}:(\BC,d_{\BC,\Theta_1})\to (\BC,d_{\BC,\Theta_2})$. The cochain map $ \phi^I$ preserves the filtrations, thus it induces a morphism between spectral sequences. Since the induced map on the zeroth page is the identity, thus it induces isomorphisms on every page.
\end{proof}	
\begin{remark}
	Proposition \ref{prop:specinv} only asserts the invariance of the spectral sequence w.r.t. defining data for a fixed flow category. However, the spectral sequence is expected to be an invariant of the hypothetical Morse-Bott functional, i.e.\ independent of other choices (metrics, almost complex structures, abstract perturbations) in the construction of the flow category. To prove such claim, one needs to study the underlying moduli problem and deploy some virtual techniques. We will touch this aspect of the theory briefly in \S \ref{poly}.  The spectral sequence is also expected to be independent of the specific construction method. It is an interesting question to find applications of those invariants, particularly in the quantitative aspects of symplectic geometry like the symplectic embedding problems.
\end{remark}

The final page of the spectral sequence only recovers the associated graded of the cohomology w.r.t.\ the induced filtration. If we denote
$$E_\infty := \varprojlim_p\varinjlim_{q}\oplus_{i=q}^p E^i_{\infty},$$
i.e.\ direct sum at the negative end and direct limit at the positive end of $E^i_\infty$. Following \cite[Proof of Lemma 3.10]{mccleary2001user}, we have the following exact sequence (note that we are using field coefficients),
$$0\to \varprojlim_p F_p H(\BC,d_{BC})\to H(\BC,d_{\BC})\to E_\infty\to \varprojlim_p{}^1 F_pH(\BC,d_{\BC})\to 0.$$
In some good case like $F_p\BC=0$ for $p\gg 0$, $E_\infty$ is (non-canonically) isomorphic to the Morse-Bott cohomology. For example, the symplectic cohomology considered in \cite{seidel2008biased} satisfies such condition, as the symplectic action is bounded from above.

\section{Orientations and Local Systems}\label{s5}
The aim of this section is explaining how orientation conventions in Definition \ref{oridef}, Definition \ref{morphism} and Definition \ref{homot} arise in applications. In applications like Morse theories or Floer theories, coherent orientations usually use extra structures from the moduli problem, namely the gluing theorem for the determinant line bundles of  Fredholm sections, see \cite{floer1993coherent}. Similar properties and constructions exist in Floer theories of different flavors beyond cohomology theory, e.g.\ \cite{bourgeois2004coherent,fukaya2010lagrangian,seidel2008fukaya}. In this section, we explain the structure which is necessary for the existence of coherent orientations on flow categories and how they arise in applications. Then we generalize the construction of the minimal Morse-Bott cochain complex to flow categories with local systems, where critical manifolds $C_i$ can be non-orientable.

\subsection{Orientations for flow categories}\label{orientation}
\subsubsection{Orientations in the Morse case} 
We first review how coherent orientations arise in the construction of Hamiltonian-Floer cohomology in the non-degenerate (Morse) case following \cite{abouzaid2013symplectic}. We will not just orient $0$ and $1$ dimensional moduli spaces but all of them and show that they satisfy Definition \ref{oridef}. Assume a symplectic manifold $(M,\omega)$ is symplectically aspherical, i.e.\ $\omega|_{\pi_2(M)}=0$. Let $H_t:S^1\times M \to \R$ be a Hamiltonian, such that all contractible 1-periodic orbits of the Hamiltonian vector field $X_{H_t}$ are non-degenerate. For simplicity, we assume that every moduli space of Floer cylinders is cut out transversely. We note here that the orientation problem is independent from many other aspects of the theory, in particular the transversality problem\footnote{In the non-transverse case, the discussion of the determinant line bundle below can be lifted to the underlying Banach manifolds/polyfolds. However, when transversality holds, there is a canonical isomorphism depending on the section/perturbation from the determinant bundle of the moduli space to $o_{i,j}$, such that it is compatible with gluing, i.e.\ \eqref{o4} and \eqref{o5} below.}. In other words, we have a flow category $\{x_i,\cM_{i,j} \}$, where $x_i$ is a non-degenerate contractible periodic orbit and $\cM_{i,j}$ is the \emph{compactified} moduli space of Floer cylinders from $x_i$ to $x_j$, where the symplectic action of $x_i$ is smaller than that of $x_j$ iff $i<j$.

To orient $\cM_{i,j}$ in a coherent way such that Definition \ref{oridef} holds, we recall the following extra structures that can be associated to the moduli spaces $\cM_{i,j}$ in the Hamiltonian-Floer cohomology. 
\begin{enumerate}
	\item\label{o1} For every periodic orbit $x_i$, we can assign an orientation line $o_i$ with a $\Z/2$ grading.  Such a line is constructed from the determinant line of a perturbed $\overline{\partial}$ operator over $\C$ with one positive end at infinity \cite[(1.4.8)]{abouzaid2013symplectic} and the grading is the index of the operator ($\mod 2$).
	\item\label{o2} For every point in $\cM_{i,j}$, there is an orientation line with a $\Z/2$ grading coming from the determinant line bundle of the linearized Floer equation at that point. All these lines form a line bundle $o_{i,j}$ over $\cM_{i,j}$. We refer readers to  \cite{zinger2013determinant} for the topology on the determinant bundle.
	\item\label{o3} By the gluing theorem for linear Fredholm operators \cite[Lemma 1.4.5]{abouzaid2013symplectic}, we have a grading preserving isomorphism over $\cM_{x,y}$,
	\begin{equation}\label{ori1}
	\rho_{i,j}:s^*o_i\otimes o_{i,j}\to t^*o_j.
	\end{equation} 
	Over $\cM_{i,j}\times \cM_{j,k}\subset \partial \cM_{i,k}$, there is a grading preserving isomorphism,
	$$\rho_{i,j,k}: \pi_1^*o_{i,j}\otimes \pi_2^*o_{j,k}\to o_{i,k},$$
	where $\pi_1,\pi_2$ are the two projections.  $\rho_{i,j}$ and $\rho_{i,j,k}$ are compatible in the sense that there is commutative diagram  over $\cM_{i,j}\times \mathcal{M}_{j,k}$ up to multiplying by a positive number,
	$$\resizebox{14cm}{!}{\xymatrix{s^*o_i \otimes \pi_1^* o_{i,j}\otimes \pi_2^*o_{j,k}\ar[rr]^{\quad\rho_{i,j}\otimes \id }\ar[d]^{\id \otimes \rho_{i,j,k}} & & \pi_1^*t^*o_j\otimes \pi_2^*o_{j,k}\ar[r]^{=} & \pi_2^*s^*o_j\otimes \pi_2^*o_{j,k}\ar[rr]^{\qquad \pi_2^*\rho_{j,k}} & & \pi_2^*t^*o_k\ar[r]^{=} & t^*o_k\ar[d]\\
	 	s^*o_i\otimes o_{i,k} \ar[rrrrrr]^{\rho_{i,k}}& & & & & & t^*o_k.
	 }}$$
	\item\label{o4} Let $\overline{\partial}_{i,j}$ be the Floer operator cutting out $\cM_{i,j}$. When transversality holds for every moduli space, $\ker \rD \overline{\partial}_{i,j}$ is a vector bundle over $\cM_{i,j}$. $\ker \rD \overline{\partial}_{i,j}$ contains an oriented trivial line subbundle $\underline{\R}$ induced by the $\R$ translation action and 
	\begin{equation}\label{eqn:trivial}
	\ker \rD \overline{\partial}_{i,j} = T\cM_{i,j}\oplus \underline{\R}. 
	\end{equation}
	Moreover, we have a grading preserving isomorphism $\phi_{i,j}:o_{i,j} \to \det \ker \rD \overline{\partial}_{i,j}$. 
	\item\label{o5} On $\cM_{i,j}\times \cM_{j,k}$, we have an isomorphism $\ker \rD \overline{\partial}_{i,j}\oplus \ker \rD \overline{\partial}_{j,k} \stackrel{\phi}{\to} \ker \rD \overline{\partial}_{i,k}$ and the following holds (we suppress the pullbacks),
	$$\xymatrix{
		o_{i,j}\otimes o_{j,k}\ar[d]^{\phi_{i,j}\otimes \phi_{j,k}} \ar[r]^{\rho_{i,j,k}} & o_{i,k}\ar[d]^{\phi_{i,k}} \\
		\det \ker \rD \overline{\partial}_{i,j} \otimes \det \ker \rD \overline{\partial}_{j,k} \ar[r]^{\qquad \quad \det \phi} & 	\det \ker \rD \overline{\partial}_{i,k}.
		}
	$$
	\item\label{o6} Let $\underline{\R}_r$, $\underline{\R}_s$ and $\underline{\R}_t$ be the trivial subbundle in $\ker \rD \overline{\partial}_{i,j}$, $\ker \rD \overline{\partial}_{j,k}$ and $\ker\rD \overline{\partial}_{i,k}$ respectively. Then by \cite[Lemma 1.5.7]{abouzaid2013symplectic}, we have
	\begin{equation}\label{boundary}
	\phi(\la r,s\ra) = t, 
	\quad \phi(\la -r,s\ra) \text{ is pointing out along } \cM_{i,j}\times \cM_{j,k}\subset \partial \cM_{i,k} \text{ in  \eqref{eqn:trivial}}.
	\end{equation} 
\end{enumerate}

\begin{proposition}\label{prop:ori}
	If we fix an orientation for every $o_i$, then \eqref{o3} and \eqref{o4} determine an orientation of $\cM_{i,j}$ and  $[\cM_{i,j}][\cM_{j,k}]=(-1)^{m_{i,j}+1}\partial [\cM_{i,k}]|_{\cM_{i,j}\times \cM_{j,k}}$.
\end{proposition}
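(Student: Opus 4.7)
The plan is to construct the orientation on $\cM_{i,j}$ by chasing the data in (1)--(4), and then to verify the boundary formula by a direct sign computation using (5)--(6).

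First I would construct the orientation. Fixing orientations $[o_i]$ for each $i$, the isomorphism $\rho_{i,j}: s^*o_i \otimes o_{i,j} \to t^*o_j$ from (3) gives a canonical orientation $[o_{i,j}]$ of the line bundle $o_{i,j} \to \cM_{i,j}$. The isomorphism $\phi_{i,j}: o_{i,j} \to \det\ker \rD\overline{\partial}_{i,j}$ from (4) transports this to an orientation of $\det\ker\rD\overline{\partial}_{i,j}$. Since $\ker\rD\overline{\partial}_{i,j} = T\cM_{i,j} \oplus \underline{\R}_r$, and the trivial summand $\underline{\R}_r$ carries the canonical orientation $[r]$ coming from $\R$-translation, the relation $[T\cM_{i,j}][r] = \phi_{i,j}([o_{i,j}])$ (with the convention that $r$ appears last) uniquely determines $[\cM_{i,j}]$.

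Next I would prove the boundary formula. At a point of $\cM_{i,j}\times \cM_{j,k} \subset \partial\cM_{i,k}$, the compatibility diagram in (5) combined with the construction above gives the key identity
\begin{equation}\label{eqn:keyorient}
\phi\bigl([T\cM_{i,j}][r]\otimes[T\cM_{j,k}][s]\bigr) = [T\cM_{i,k}][t],
\end{equation}
where $\phi:\ker\rD\overline{\partial}_{i,j}\oplus \ker\rD\overline{\partial}_{j,k} \to \ker\rD\overline{\partial}_{i,k}$ is the gluing isomorphism. I would then use (6) to identify the image of $\R r \oplus \R s$: the two vectors $\phi(r,s)=t$ and $\phi(-r,s) = \nu$ (outward normal) form a basis with the same orientation as $(t,\nu)$, by the explicit $2\times 2$ change-of-basis having positive determinant $\frac{1}{2}$. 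Rearranging factors in \eqref{eqn:keyorient}, I would move $r$ past $[T\cM_{j,k}]$, picking up $(-1)^{m_{j,k}}$, and then rewrite $[r][s]$ as $[t][\nu]$ to obtain
\begin{equation*}
[T\cM_{i,k}][t] = (-1)^{m_{j,k}}[T\cM_{i,j}][T\cM_{j,k}][t][\nu].
\end{equation*}
Cancelling $[t]$ after a transposition and moving $[\nu]$ to the front contributes $(-1)^{1+m_{i,j}+m_{j,k}}$, so that
\begin{equation*}
[T\cM_{i,k}] = (-1)^{m_{i,j}+1}[\nu][\cM_{i,j}][\cM_{j,k}].
\end{equation*}
Finally, the Stokes' convention $[\cM_{i,k}] = [\nu]\,\partial[\cM_{i,k}]$ at a boundary point lets me cancel $[\nu]$ and conclude $\partial[\cM_{i,k}]|_{\cM_{i,j}\times\cM_{j,k}} = (-1)^{m_{i,j}+1}[\cM_{i,j}][\cM_{j,k}]$, which rearranges to the stated formula.

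The main obstacle is the sign bookkeeping in the last step, since it involves several transpositions of the 1-dimensional factors $[r]$, $[s]$, $[t]$, and $[\nu]$ past the tangent orientations of potentially high degree $m_{i,j}, m_{j,k}$; I would need to be careful that the convention chosen in the construction of $[\cM_{i,j}]$ (i.e. putting the $\R$-translation factor on the right of $[T\cM_{i,j}]$) is used consistently and matches the gluing convention in (5)--(6). The identification of $\phi(-r,s)$ with the outward normal direction in $T\cM_{i,k}$, rather than merely a direction in the ambient kernel, is the geometric content behind \eqref{eqn:keyorient} and will need to be invoked carefully; everything else is an exercise in applying the Koszul sign rule.
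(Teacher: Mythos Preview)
Your proposal is correct and follows essentially the same approach as the paper: construct $[\cM_{i,j}]$ from $[o_i]$ via $\rho_{i,j}$ and $\phi_{i,j}$, obtain the key identity $[\cM_{i,j}][\underline{\R}_r][\cM_{j,k}][\underline{\R}_s]=[\cM_{i,k}][\underline{\R}_t]$ from the compatibility diagrams (o3) and (o5), and then deduce the boundary formula from (o6). The paper's proof is extremely terse at the last step (it simply writes ``Then by \eqref{o6}, we have $[\cM_{i,j}][\cM_{j,k}]=(-1)^{m_{i,j}+1}\partial [\cM_{i,k}]$''), whereas you have correctly spelled out the Koszul sign computation that underlies this claim.
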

\begin{proof}
	Given orientations of $o_i$, then the isomorphism $\rho_{i,j}$ determines an orientation of $o_{i,j}$.  Then by \eqref{o4} and $\phi_{i,j}$, there is an induced orientation $[\cM_{i,j}]$. We claim this orientation satisfies the claimed relation. By \eqref{o3}, $\rho_{i,j,k}$ preserves the orientations. Therefore $\phi:\ker \rD \overline{\partial}_{i,j} \oplus \ker \rD \overline{\partial}_{j,k} \to \ker \rD \overline{\partial}_{i,k}$ preserves the orientations. That is $[\cM_{i,j}][\underline{\R}_r][\cM_{j,k}][\underline{\R}_s]=[\cM_{i,k}][\underline{\R}_t]$. Then by \eqref{o6}, we have $[\cM_{i,j}][\cM_{j,k}]=(-1)^{m_{i,j}+1}\partial [\cM_{i,k}]|_{\cM_{i,j}\times \cM_{j,k}}$.
\end{proof}

Orientations from Proposition \ref{prop:ori} can be used to prove $d^2=0$ for Hamiltonian-Floer cohomology in the non-degenerate case. Moreover orientations $-[\cM_{i,j}]$ fit into the orientation convention in Definition \ref{oridef}.

\subsubsection{Orientations in the Morse-Bott case}\label{ss:ori_MB}
We should expect similar structures and properties in Morse-Bott theories. We phrase the structures as a definition and explain how to get an oriented flow category from there. Before stating the definition, we first introduce some notation.
\begin{enumerate}
	\item Let $E \to M$ be a vector bundle, then $\det E:= \wedge^{\max} E$ with $\Z/2$ grading $\rank E$ ($\mod 2$). We write $\det C:=\det TC$.
	\item For $\Z/2$ graded line bundles $o_1,o_2$, unless stated otherwise, the map $o_1\otimes o_2 \to o_2\otimes o_1$ is defined by 
	\begin{equation}\label{eqn:switch}
	v_1\otimes v_2 \to (-1)^{|o_1||o_2|}v_2\otimes v_1
	\end{equation}
	for vectors $v_1,v_2$ in $o_1, o_2$ respectively. 
	\item Let $\Delta$ be the diagonal in $C\times C$ with normal bundle $N$. Unless stated otherwise, the map $\det \Delta\otimes \det N \to \det C \otimes \det C$ on $\Delta$ is the map induced by the isomorphism $T\Delta\oplus N \to TC \oplus TC$. In particular, if we orient $N$ following Example \ref{ex:ori}, such a map preserves orientations.
\end{enumerate}

\begin{definition}\label{os}
	An \textbf{orientation structure} on a flow category $\cC=\{C_i,\cM_{i,j}\}$ consists of the following structures.
	\begin{enumerate}
		\item\label{os1} There are topological line bundles $o_i$ over $C_i$ with $\Z/2$ gradings for every $C_i$ and topological line bundles $o_{i,j}$ over $\cM_{i,j}$ with $\Z/2$ gradings for every $\cM_{i,j}$.
		\item\label{os2} There is a grading preserving bundle isomorphism over $\cM_{i,j}$,
		\begin{equation}\label{ori4}
		\rho_{i,j}: s^* o_i\otimes s^* \det C_i \otimes o_{i,j}\to t^*o_j\end{equation}
		and a grading preserving bundle isomorphism over $\cM_{i,j}\times_j \cM_{j,k}\subset \partial \cM_{i,k}$,
		\begin{equation}\label{ori55}\rho_{i,j,k}: \pi_1^*o_{i,j}\otimes (t\times s)^*\det T\Delta_{j}\otimes \pi_2^* o_{j,k}\to o_{i,k}|_{\cM_{i,j}\times_j \cM_{j,k}}.
		\end{equation}
		The bundle isomorphisms are compatible in the sense that the following diagram over $\cM_{i,j}\times_j \cM_{j,k}$ is commutative up to multiplying by a positive number,	
		\begin{equation}\label{ori5}
		\resizebox{14cm}{!}{
		\xymatrix{s^*o_i \otimes s^*\det C_i\otimes \pi_1^*o_{i,j}\otimes (t\times s)^*\det \Delta_j\otimes \pi_2^* o_{j,k}\ar[rr]^{\qquad \qquad  \rho_{i,j}\otimes \id}\ar[d]^{\id \otimes \rho_{i,j,k}} & & \pi_2^*s^*o_j\otimes \pi_2^* s^* \det C_j\otimes \pi_2^*o_{j,k}\ar[r]^{\qquad \qquad \quad \pi_2^*\rho_{j,k}} & t^*o_k\ar[d] \\
		 	s^*o_i \otimes s^* \det C_i \otimes  o_{i,k}\ar[rrr]^{\rho_{i,k}}& & & t^*o_k.
		 }}\end{equation}
		 The diagram makes sense because over the fiber product $\cM_{i,j}\times_j\cM_{j,k}$, $\pi_1^*t^*o_j=\pi_2^*s^*o_j$ and $(t\times s)^* \det \Delta_j=\pi_2^* s^* \det C_j$.
		\item\label{os3} There are vector bundles $V_{i,j}$ over $\cM_{i,j}$ with smooth bundle maps $S_{i,j}: V_{i,j} \to TC_i, T_{i,j}: V_{i,j} \to TC_j$ covering $s_{i,j}:\cM_{i,j}\to C_i$ and $t_{i,j}:\cM_{i,j}\to C_j$ respectively. Moreover, there is an oriented trivial subbundle $\underline{\R}$  of $V_{i,j}$ such that $S_{i,j}(\underline{\R})=T_{i,j}(\underline{\R})=0$ and
		\begin{equation}
		V_{i,j} = T\cM_{i,j}\oplus \underline{\R},
		\end{equation}
		and $S_{i,j}|_{ T\cM_{i,j}}=\rd s_{i,j}$,$T_{i,j}|_{ T\cM_{i,j}}=\rd t_{i,j}$. There is a grading preserving isomorphism $\phi_{i,j}: s^*\det C_i\otimes o_{i,j}\otimes t^*\det C_j \to \det V_{i,j}$.
		\item\label{os4} On $\cM_{i,j}\times_j \cM_{j,k}$, we have $V_{i,j}\times_{TC_j}V_{j,k} = V_{i,k}$ and the following holds,
		\begin{equation}\label{dia:MB}
		\resizebox{14cm}{!}{
		\xymatrix{(t\times s)^*\det N_j\otimes s^*\det C_i \otimes o_{i,j}\otimes (t\times s)^*\det \Delta_j \otimes o_{j,k}\otimes t^*\det C_k \ar[r]^{\qquad \qquad  \quad\rho_{i,j,k}}\ar[d] & (t\times s)^*\det N_j\otimes s^*\det C_i \otimes o_{i,k} \otimes t^* \det C_k \ar[ddd]^{\phi_{i,k}}\\
		s^*\det C_i \otimes o_{i,j}\otimes (t\times s)^*(\det \Delta_j\otimes\det N_j) \otimes o_{j,k}\otimes t^*\det C_k \ar[d] & \\
		s^*\det C_i \otimes o_{i,j}\otimes t^*\det C_j \otimes s^* \det C_j \otimes o_{j,k}\otimes t^*\det C_k \ar[d]^{\phi_{i,j}\otimes \phi_{j,k}} & \\
		\det V_{i,j}\otimes \det V_{j,k} \ar[r] & (t\times s)^* \det N_j \otimes \det V_{i,k},}
		}
		\end{equation}
		where the last map is induced by the isomorphism $V_{i,j}\oplus V_{j,k} =  (t\times s)^*N_j \oplus V_{i,k}$. 
		\item\label{os5} Let $\underline{\R}_r$, $\underline{\R}_s$ and $\underline{\R}_t$ be the trivial subbundle in $V_{i,j}$, $V_{j,k}$ and $V_{i,j}$ respectively. We have
		\begin{equation}\label{boundaryMB}
		\la r,s\ra = t, \quad \la -r,s\ra \text{ is pointing out along } \cM_{i,j}\times_j \cM_{j,k}\subset \partial \cM_{i,k}.
		\end{equation} 
	\end{enumerate}
\end{definition}

In applications, the topological line bundle $o_i$ is the determinant line bundle of a perturbed Floer equation with exponential decay at the end over a domain with one positive end. For the details on exponential decay, we refer readers to \cite{bourgeois2002morse, frauenfelder2004arnold}. The topological line bundle $o_{i,j}$ usually comes from the determinant bundle of the Floer equation with exponential decays at both ends over a cylinder. The bundle isomorphism and their compatible diagram come from a version of the linear gluing theorem for Fredholm operators \cite{abouzaid2013symplectic, floer1993coherent}. $V_{i,j}$ is the kernel of the linearized Floer operator defining $\cM_{i,j}$ and the trivial subbundle comes from the $\R$ translation. The last condition \eqref{os5} comes from a similar argument in \cite[Lemma 1.5.7]{abouzaid2013symplectic}. $o_{i,j}$ can be defined on the background Banach manifold or polyfolds \cite[Chapter 6]{hofer2017polyfold}, however $V_{i,j}$ is defined only when transversality holds. \eqref{os3} of Definition \ref{os} states the relation between $V_{i,j}$, $o_{i,j}$ and $T\cM_{i,j}$. \eqref{os4} states the compatibility with the gluing map $\rho_{i,j,k}$. 
\begin{remark}
    Similar to Definition \ref{def:grade}, Definition \ref{os} is a simplified version. In general, we should associate each component of $C_i$ with a line bundle and each component of $\cM_{i,j}$ with a bundle isomorphism satisfying similar compatibility conditions.
\end{remark}
\begin{remark}
	Definition \ref{os} is modeled on the classical treatment of the Floer equation \cite{bourgeois2002morse, frauenfelder2004arnold}, that is, we modulo out the $\R$ translation after solving the Floer equation. Hence we expect bundles $V_{i,j}$ over $\cM_{i,j}$ contains a trivial oriented $\R$ direction. If we use the polyfold setup, then the Floer operator is defined on polyfolds of cylinders with the $\R$ translation already quotiented out, see \cite{hofer2017application, wehrheim2012fredholm}. One can adjust Definition \ref{os} to be consistent with such a point of view. 
\end{remark}

\begin{proposition}\label{orientation1}
	Assume the flow category $\cC$ has an orientation structure and all the line bundles $o_i$ are oriented and $C_i$ are oriented.  Then $\cC$ can be coherently oriented.
\end{proposition}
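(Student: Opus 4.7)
The plan is to construct the orientations $[\cM_{i,j}]$ from the given data in three stages and then verify the coherent orientation axioms of Definition \ref{oridef} directly from the compatibility diagrams in Definition \ref{os}.

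First, I would induce orientations of the auxiliary bundles $o_{i,j}$. Since $o_i$ and $\det C_i$ are oriented for every $i$, the grading-preserving isomorphism $\rho_{i,j}: s^*o_i \otimes s^*\det C_i \otimes o_{i,j} \to t^*o_j$ from \eqref{ori4} pins down a unique orientation $[o_{i,j}]$. Then the isomorphism $\phi_{i,j}: s^*\det C_i \otimes o_{i,j} \otimes t^*\det C_j \to \det V_{i,j}$ determines an orientation $[V_{i,j}]$, and the oriented decomposition $V_{i,j} = T\cM_{i,j}\oplus \underline{\R}$ (with $\underline{\R}$ carrying its canonical orientation from \eqref{os3}) lets me peel off an orientation on $T\cM_{i,j}$. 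Finally I define $[\cM_{i,j}]$ to be this orientation up to a fixed sign $(-1)^{m_{i,j}+1}$ (or a similar universal sign) chosen so as to match the conventions in Definition \ref{oridef} --- this is exactly the same sign adjustment that appeared in Proposition \ref{prop:ori} in the Morse case.

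Next I would verify the fiber product orientation axiom \eqref{cond:ori2}. This is a pure diagram chase on \eqref{dia:MB}: starting from the orientations assigned to $o_{i,j}, o_{j,k}, o_{i,k}$ via $\rho_{i,j}, \rho_{j,k}, \rho_{i,k}$, the commutativity of \eqref{ori5} guarantees that $\rho_{i,j,k}$ is orientation-preserving with respect to these induced orientations. Then the left column of \eqref{dia:MB}, together with the orientation convention on $[\Delta_j][N_j]$ from Example \ref{ex:ori}, produces the required identity
\[
(t_{i,j}\times s_{j,k})^*[N_j]\,[\cM_{i,j}\times_j\cM_{j,k}] = (-1)^{c_j m_{i,j}}[\cM_{i,j}][\cM_{j,k}],
\]
after tracking the Koszul signs introduced by the commutation rule \eqref{eqn:switch} between $\Z_2$-graded lines and by the sign normalization in the definition of $[\cM_{i,j}]$.

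Finally I would verify the boundary axiom \eqref{cond:ori3}. The key input is \eqref{boundaryMB}: along $\cM_{i,j}\times_j\cM_{j,k}\subset\partial\cM_{i,k}$, the concatenation of the two $\R$-translation vectors $\la r,s\ra$ equals $t$, and $-r$ is outward pointing. Translating this into the language of determinant lines, the outward-normal-first convention for $\partial[\cM_{i,k}]$ introduces exactly the sign $(-1)^{m_{i,j}}$ (from moving the outward factor through $T\cM_{i,j}$) combined with the sign coming from my normalization $(-1)^{m_{i,j}+1}$ in the definition of $[\cM_{i,j}]$ vs.\ $[V_{i,j}]$. Combining these with the fiber-product identity just established yields
\[
\partial[\cM_{i,k}]\big|_{\cM_{i,j}\times_j\cM_{j,k}} = (-1)^{m_{i,j}}[\cM_{i,j}\times_j\cM_{j,k}],
\]
which is condition \eqref{cond:ori3}.

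The main obstacle I anticipate is bookkeeping the Koszul signs coming from \eqref{eqn:switch} and the grading shifts introduced by $\det C_i$ and by the trivial $\underline{\R}$ factor: there are several places where a $\Z_2$-graded line must be commuted past another in order to match the two sides of \eqref{dia:MB}, and a single miscount will shift the final sign. The cleanest way to handle this is probably to define the orientation of $\cM_{i,j}$ so that the induced orientation of $V_{i,j}$ is $[\underline{\R}][\cM_{i,j}]$ (rather than $[\cM_{i,j}][\underline{\R}]$), which absorbs one of the awkward parity shifts, and then to perform all remaining sign computations inside the large diagram \eqref{dia:MB} once and for all --- mirroring the argument of Proposition \ref{prop:ori} but now with the extra $\det C_i, \det C_j, \det \Delta_j$ and $\det N_j$ factors present.
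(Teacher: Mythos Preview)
Your proposal is correct and follows essentially the same route as the paper: orient $o_{i,j}$ via $\rho_{i,j}$, pass to $[V_{i,j}]$ via $\phi_{i,j}$, peel off $\underline{\R}$ to orient $\cM_{i,j}$, use \eqref{ori5} to see that $\rho_{i,j,k}$ preserves the induced orientations, and then combine \eqref{dia:MB} with \eqref{boundaryMB} to get the coherence relation. Two small remarks: the paper does not verify \eqref{cond:ori2} and \eqref{cond:ori3} separately but instead derives the single combined identity in Definition~\ref{oridef} directly (which is slightly cleaner, since \eqref{dia:MB} already mixes the fiber-product and boundary information through $V_{i,k}|_{\cM_{i,j}\times_j\cM_{j,k}}$), and the universal sign correction that actually works is a uniform $-[\cM_{i,j}]$ rather than $(-1)^{m_{i,j}+1}$.
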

\begin{proof}
By the map $\rho_{i,j}$ in \eqref{ori4}, if $o_i$ are oriented and $C_i$ are oriented, then there are induced orientations $[o_{i,j}]$ on $o_{i,j}$. By \eqref{ori5}, over the fiber product $\cM_{i,j}\times_j\cM_{j,k}$ we have
\begin{equation}\label{eqn:comp}
\rho_{i,j,k}(\pi_1^*[o_{i,j}]\otimes (t\times s)^*[\Delta_j]\otimes \pi_2^*[o_{j,k}]) = [o_{i,k}].
\end{equation}
Using $\phi_{i,j}$ in \eqref{os4} in Definition \ref{os}, we have an orientation $[V_{i,j}]$ on $V_{i,j}$. Then by \eqref{eqn:comp}, the commutative diagram \eqref{dia:MB} in Definition \ref{os} implies that the natural map $V_{i,j}\oplus V_{j,k}\to (t\times s)^*N_j \times V_{i,k}$ induces 
$$[V_{i,j}]\otimes [V_{j,k}] \mapsto (-1)^{c_j(m_{i,j}+1)}(t\times s)^*[N_j]\otimes [V_{i,k}].$$
on the prescribed orientations. By \eqref{os3} of Definition \ref{os}, the orientation $[V_{i,j}]$ induces an orientation $[\cM_{i,j}]$. Hence we have on $\cM_{i,j}\times_{j}\cM_{j,k}\subset \partial \cM_{i,k}$, we have
$$[\cM_{i,j}][\underline{\R}_r][\cM_{j,k}][\underline{\R}_s] = (-1)^{c_j(m_{i,j}+1)}(t\times s)^*[N_j][\cM_{i,k}][\underline{\R}_t].$$
Then \eqref{os5} of Definition \ref{os} implies that
$$[\cM_{i,j}][\cM_{j,k}] = (-1)^{c_jm_{i,j}+m_{i,j}+1}(t\times s)^*[N_j]\partial[\cM_{i,k}]|_{\cM_{i,j}\times_j\cM_{j,k}}.$$
Then orientations $-[\cM_{i,j}]$ satisfy Definition \ref{oridef}.\footnote{One can certainly modify the definition of coherent orientations of a flow category (Definition \ref{oridef}) so that $[\cM_{i,j}]$ gives a coherent orientation. Then the signs in \eqref{def} do not factorize nicely.}
\end{proof}
When the $o_i$ are not oriented or the $C_i$ are not oriented, Definition \ref{os} gives all the structures we need to work with the local system $o_i$. We discussion such generalization in \S \ref{sub:localsystem}.

\subsubsection{Orientations for flow morphisms}
Next, we explain how the orientation convention in Definition \ref{morphism} arise in application. 
\begin{definition}\label{MorOS}
	Assume $\fH=\{\cH_{i,j}\}$ is a flow morphism from flow category $\cC$ to $\cD$, such that $\cC$ and $\cD$ have orientation structures. A compatible orientation structure on $\fH$ is the following.
	\begin{enumerate}
		\item\label{MorOS1} There are $\Z/2$ graded line bundles $o^H_{i,j}$ over $\cH_{i,j}$. Over $\cH_{i,j}$, we have a grading preserving isomorphism
		\begin{equation}\label{ori7}
		\rho^H_{i,j}: s^*o^C_i\otimes s^*\det C_i\otimes o^H_{i,j}\to t^*o^D_j.
		\end{equation}
		\item\label{MorOS2}  Over the fiber product $\cM^C_{i,j}\times_j\cH_{j,k}\subset \partial \cH_{i,k}$, we have a grading preserving isomorphism
		\begin{equation}\label{ori8}
		\rho^{C,H}_{i,j,k}:\pi_1^*o^C_{i,j}\otimes (t\times s)^* \det \Delta^C_j\otimes \pi_2^*o^{H}_{j,k}\to o^H_{i,k}.
		\end{equation} 
		Over the fiber product $\cH_{i,j}\times_j \cM^D_{j,k}\subset \partial \cH_{i,k}$, we have a grading preserving isomorphism
		\begin{equation}\label{ori8'}
		\rho^{H,D}_{i,j,k}:\pi_1^*o^H_{i,j}\otimes (t\times s)^* \det \Delta^D_j\otimes \pi_2^*o^{D}_{j,k}\to o^H_{i,k}.
		\end{equation} 
		\item\label{MorOS3} The bundle isomorphisms in \eqref{MorOS1} and \eqref{MorOS2} are compatible in the sense that over $\cM^C_{i,j}\times_j \cH_{j,k}$ and $\cH_{i,j}\times_j\cM^D_{j,k}$, we have the following commutative diagrams respectively,
		\begin{equation}\label{ori9'}
		\resizebox{14cm}{!}{
		\xymatrix{s^*o^C_i \otimes s^*\det C_i\otimes \pi_1^*o^C_{i,j}\otimes (t\times s)^*\det \Delta^C_j\otimes \pi_2^* o^H_{j,k}\ar[rr]^{\qquad \qquad \quad\rho^C_{i,j}\otimes \id}\ar[d]^{\id \otimes\rho^{C,H}_{i,j,k}}& & \pi_2^*s^*o^D_j\otimes \pi_2^* s^* \det D_j\otimes \pi_2^*o^D_{j,k}\ar[r]^{\qquad \qquad \quad \rho^{H}_{j,k}} & t^*o_k\ar[d]\\
			s^*o^C_i \otimes s^* \det C_i \otimes  o^H_{i,k}\ar[rrr]^{\rho^H_{i,k}}& & & t^*o_k,
		}
		}\end{equation}		
		\begin{equation}\label{ori9}
		\resizebox{14cm}{!}{
		\xymatrix{s^*o^C_i \otimes s^*\det C_i\otimes \pi_1^*o^H_{i,j}\otimes (t\times s)^*\det \Delta^D_j\otimes \pi_2^* o^D_{j,k}\ar[rr]^{\qquad \qquad \quad\rho^H_{i,j}\otimes \id}\ar[d]^{\id \otimes\rho^{H,D}_{i,j,k}}& & \pi_2^*s^*o^D_j\otimes \pi_2^* s^* \det D_j\otimes \pi_2^*o^D_{j,k}\ar[r]^{\qquad \qquad \quad \rho^{D}_{j,k}} & t^*o_k\ar[d]\\
			s^*o^C_i \otimes s^* \det C_i \otimes  o^H_{i,k}\ar[rrr]^{\rho^H_{i,k}}& & & t^*o_k.
		}}\end{equation}	
		\item\label{MorOS4} There is a grading preserving isomorphism $\phi^H_{i,j}:s^*\det C_i \otimes o^H_{i,j}\otimes t^* \det D_j \to \det T\cH_{i,j}$.
		\item On $\cM^C_{i,j}\times_j \cH_{j,k}\subset \partial \cH_{i,k}$, we have $V^C_{i,j}\times_{TC_j} T\cH_{j,k} = T\cH_{i,k}$ and the following holds,
		\begin{equation}\label{ori''}
		\resizebox{14cm}{!}{
		\xymatrix{ (t\times s) \det N^C_j \otimes s^*\det C_i \otimes o^C_{i,j}\otimes (t\times s)^*\det \Delta^C_j \otimes o^{H}_{j,k} \otimes t^*\det D_k \ar[d]\ar[r]^{\qquad \qquad \rho^{C,H}_{i,j,k}} & (t\times s)^* \det N^C_j \otimes s^*\det C_i\otimes o^H_{i,k} \otimes t^*\det D_k \ar[ddd]^{\phi^H_{i,k}} \\
		s^*\det C_i \otimes o^C_{i,j}\otimes (t\times s)^*(\det \Delta^C_j\otimes N^C_j) \otimes o^{H}_{j,k} \otimes t^*\det D_k \ar[d] &  \\
		s^*\det C_i \otimes o^C_{i,j}\otimes t^*\det C_j \otimes s^* \det C_j \otimes o^{H}_{j,k} \otimes t^*\det D_k \ar[d]^{\phi^C_{i,j}\otimes \phi^H_{j,k}}&  \\
		\det V^C_{i,j}\otimes \det T\cH_{j,k} \ar[r] & (t\times s)^* \det N^C_j \otimes \det T\cH_{i,k},}
		}
		\end{equation}
		where the last row is induced by the isomorphism $V^C_{i,j}\oplus T\cH_{j,k}\to (t\times s)^*N^C_j \oplus T\cH_{i,k}$.
		
		On $\cH_{i,j}\times_j \cM^D_{j,k}\subset \partial \cH_{i,k}$, we have $T\cH_{i,j}\times_{TD_j} V^D_{j,k} = T\cH_{i,k}$ and the following holds,
		\begin{equation}\label{ori'}
		\resizebox{14cm}{!}{
		\xymatrix{ (t\times s) \det N^D_j \otimes s^*\det C_i \otimes o^H_{i,j}\otimes (t\times s)^*\det \Delta^D_j \otimes o^{D}_{j,k} \otimes t^*\det D_k \ar[d]\ar[r]^{\qquad \qquad \rho^{H,D}_{i,j,k}} & (t\times s)^* \det N^D_j \otimes s^*\det C_i\otimes o^H_{i,k} \otimes t^*\det D_k \ar[ddd]^{\phi^H_{i,k}} \\
		s^*\det C_i \otimes o^H_{i,j}\otimes (t\times s)^*(\det \Delta^D_j\otimes N^D_j) \otimes o^D_{j,k} \otimes t^*\det D_k \ar[d] &  \\
		s^*\det C_i \otimes o^H_{i,j}\otimes t^*\det D_j \otimes s^* \det D_j \otimes o^D_{j,k} \otimes t^*\det D_k \ar[d]^{\phi^H_{i,j}\otimes \phi^D_{j,k}}&  \\
		\det T\cH_{i,j}\otimes \det V^D_{j,k} \ar[r] & (t\times s)^* \det N^D_j \otimes \det T\cH_{i,k},}
		}
		\end{equation}
		where the last row is induced by the isomorphism $T\cH_{i,j}\oplus V^D_{j,k}\to (t\times s)^*N^D_j \oplus T\cH_{i,k}$.
		\item Let $\underline{\R}_s,\underline{\R}_t$ be the trivial line in $V^C_{i,j}$ and $V^D_{j,k}$ respectively, then $s$ points in along $\cM^C_{i,j}\times_j \cH_{j,k}\subset \partial \cH_{i,k}$ and $t$ points out along $\cH_{i,j}\times_j \cM^D_{j,k}\subset \partial \cH_{i,k}$.
	\end{enumerate}
\end{definition}
In the example of Hamiltonian Floer cohomology for non-degenerate Hamiltonians, the bundle $o^H_{i,j}$ is the determinant line bundle of the time-dependent Floer equation \cite[p.~384]{audin2014morse}. In the Morse-Bott case, $o^H_{i,j}$ is the determinant line bundle of the time-dependent Floer equation with exponential decays at both ends. By the same argument in Proposition \ref{orientation1}, we have the following.
\begin{proposition}\label{orientation2}
Let $\cC$, $\cD$ be two flow categories with orientation structures and $\fH$ be a flow morphism from $\cC$ to $\cD$ with a compatible orientation structure. Assume $o_i^C, o_i^D, C_i, D_i$ are oriented and $\cC$ and $\cD$ are oriented using Proposition \ref{orientation1}. Then \eqref{MorOS1} and \eqref{MorOS4} of Definition \ref{MorOS} determine orientations on $\cH_{i,j}$, such that $\fH$ is an oriented flow morphism from $\cC$ to $\cD$.  
\end{proposition}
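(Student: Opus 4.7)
The plan is to mirror the argument of Proposition \ref{orientation1}, producing orientations on $o^H_{i,j}$ and then on $\cH_{i,j}$, and finally verifying the three orientation identities of Definition \ref{morphism}\eqref{m3}. First, since $o^C_i,\det C_i,o^D_j$ are oriented, the grading preserving isomorphism $\rho^H_{i,j}$ in \eqref{ori7} determines an orientation $[o^H_{i,j}]$. Next, because $V^H_{i,j}=T\cH_{i,j}$, the isomorphism $\phi^H_{i,j}:s^*\det C_i\otimes o^H_{i,j}\otimes t^*\det D_j\to \det V^H_{i,j}$ in \eqref{MorOS4} together with the oriented line bundles on both sides determines an orientation $[V^H_{i,j}]=[\cH_{i,j}]$; as in Proposition \ref{orientation1} we will introduce a global sign $-1$ at the end of the construction so that the resulting signs match Definition \ref{morphism} cleanly.

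Next I would verify the two fiber product identities. Over $\cM^C_{i,j}\times_j \cH_{j,k}\subset \partial \cH_{i,k}$, the commutative square \eqref{ori9'} combined with \eqref{eqn:comp} applied to $\rho^{C,H}_{i,j,k}$ shows that the orientation $[o^H_{i,k}]$ agrees with the orientation induced by the fiber product on $\pi_1^*o^C_{i,j}\otimes(t\times s)^*\det\Delta^C_j\otimes\pi_2^*o^H_{j,k}$. Feeding this into the diagram \eqref{ori''} and tracking the transposition sign \eqref{eqn:switch} produced by moving $(t\times s)^*\det N^C_j$ across $s^*\det C_i\otimes o^C_{i,j}$, one obtains
\[
[V^C_{i,j}][V^H_{j,k}]=(-1)^{c_j(m^C_{i,j}+1)}(t^C\times s)^*[N^C_j]\,[V^H_{i,k}]
\]
on the nose. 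The trivial $\underline{\R}_r$ factor sitting inside $V^C_{i,j}$ but not in $V^H_{j,k}$ or $V^H_{i,k}$ accounts for a sign $(-1)^{m^C_{i,j}+1}$ when passed across $[\cH_{j,k}]$, and then the last condition of Definition \ref{MorOS} (the $s$-direction points into $\cH_{i,k}$) converts $\underline{\R}_r$ into an outward normal, contributing another $(-1)$. After the global sign flip this produces the required $(t^C\times s)^*[N_j][\cM^C_{i,j}\times_j\cH_{j,k}]=(-1)^{c_j m^C_{i,j}}[\cM^C_{i,j}][\cH_{j,k}]$. The symmetric identity on $\cH_{i,j}\times_j\cM^D_{j,k}$ follows by the same bookkeeping using \eqref{ori9} and \eqref{ori'}, now with the $\underline{\R}_t$ factor pointing out and thus not contributing an extra sign.

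The final step is the boundary decomposition $\partial[\cH_{i,j}]$. The argument is purely dimensional: on each boundary stratum the induced orientation as part of $\partial\cH_{i,k}$ is compared to the product/fiber-product orientation using the inward/outward normal rule. On $\cM^C_{i,i+p}\times_{i+p}\cH_{i+p,k}$, the outward normal arises from the outward direction of $\underline{\R}_r$ in $V^C_{i,i+p}$, and Stokes-style orientation algebra exactly as in the proof of Lemma \ref{identitymor} yields the sign $(-1)^{m^C_{i,i+p}}$; on $\cH_{i,j-p}\times_{j-p}\cM^D_{j-p,j}$ the outward direction is $\underline{\R}_t$ in $V^D_{j-p,j}$, which sits to the right of $[\cH_{i,j-p}]$ and thus yields the sign $(-1)^{h_{i,j-p}}$. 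Combining gives precisely the formula in Definition \ref{morphism}\eqref{m3}.

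The only delicate point, and the one I expect to require the most care, is correctly tracking the Koszul signs from \eqref{eqn:switch} when commuting the various determinant and $\det\Delta,\det N$ factors across the tensor products in the large diagram \eqref{ori''} and \eqref{ori'}; getting these to line up with the $(-1)^{c_j m^C_{i,j}}$ and $(-1)^{d_j h_{i,j}}$ in the statement is the reason the global sign flip on $[\cH_{i,j}]$ (analogous to $-[\cM_{i,j}]$ in Proposition \ref{orientation1}) is needed. Everything else is a direct transcription of the proof of Proposition \ref{orientation1}.
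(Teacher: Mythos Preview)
Your approach is correct and is exactly what the paper does: the paper's proof is the single sentence ``By the same argument in Proposition \ref{orientation1}'', and you have written out that argument in the flow-morphism setting. One small imprecision: the sign $(-1)^{m^C_{i,j}+1}$ does not arise from moving $\underline{\R}_r$ ``across $[\cH_{j,k}]$'' (that would give $(-1)^{h_{j,k}}$) but rather from commuting it leftward past $[\cM^C_{i,j}]$ to become an outward normal, together with the sign from $r$ pointing inward; you have correctly flagged this Koszul bookkeeping as the only delicate point, and the final identities you arrive at are the right ones.
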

\begin{remark}
	A compatible orientation structure on a flow premorphism is \eqref{MorOS1} and \eqref{MorOS4} of Definition \ref{MorOS}, hence we have enough structures to orient the spaces in a flow premorphism when $o^C_i,o^D_j, C_i,D_i$ are oriented.  The composition $\fF\circ \fH$ of two composable flow morphisms $\fF,\fH$ with compatible orientation structures has a natural compatible orientation structure, where $o^{F\circ H}_{i,j}|_{\cH_{i,j}\times_j \cF_{j,k}} = \pi_1^*o^H_{i,j}\otimes (t^H_{i,j}\times s^F_{j,k})^*\det \Delta^D_j \otimes \pi_2^*o^F_{j,k}$.
\end{remark}

\subsubsection{Orientations for flow homotopies}    
In applications, a flow homotopy from $\fH$ to $\fF$ usually comes from considering a time-dependent Floer equation with an extra $[0,1]_z$ parameter \cite[p.~414]{audin2014morse}, such that when $z=0$, the equation defines the flow morphism $\fH$ and when $z=1$, the equation defines the flow morphism $\fF$.  Hence we have the following definition.

\begin{definition}\label{HomOS}
	Let $\fH,\fF$ be two flow premorphisms with orientation structures from $\cC$ to $\cD$ with orientation structures compatible with that of $\cC$ and $\cD$. A flow homotopy $\sY$ between $\fH$ and $\fF$ is said to have a compatible orientation structure if the following holds.
	\begin{enumerate}
		\item\label{HomOS1} There are $\Z/2$ graded line bundles $o^Y_{i,j}$ over $\cY_{i,j}$. Over $\cY_{i,j}$ we have a grading preserving isomorphism
		\begin{equation}\label{ori11}\rho^Y_{i,j}: s^*o^C_i\otimes s^*\det C_i\otimes o^Y_{i,j}\to t^*o^D_j.\end{equation}		
		\item\label{HomOS2} Over the fiber product $\cM^C_{i,j}\times_j\cY_{j,k}\subset \cY_{i,k}$, we have a grading preserving isomorphism
		\begin{equation}\label{ori12} \rho^{C,Y}_{i,j,k}: \pi_1^*o^C_{i,j}\otimes (t\times s)^* \det \Delta^C_j\otimes \pi_2^*o^{Y}_{j,k}\to o^Y_{i,k}.\end{equation}
		 Over the fiber product $\cY_{i,j}\times_j \cM^D_{j,k}\subset \partial \cY_{i,k}$, we have a grading preserving isomorphism 
		\begin{equation}\label{ori12'} \rho^{Y,D}_{i,j,k}: \pi_1^*o^Y_{i,j}\otimes (t\times s)^* \det \Delta^D_j\otimes \pi_2^*o^{D}_{j,k}\to o^Y_{i,k}.\end{equation} 
		\item\label{HomOS3} $\rho^Y_{i,j}$, $\rho^{C,Y}_{i,j,k}$ and $\rho^{Y,D}_{i,j,k}$ are compatible such that the similar commutative diagrams in \eqref{MorOS3} of Definition \ref{MorOS} hold.
		\item\label{HomOS4} On $\cH_{i,j}\subset \partial \cY_{i,j}$, we have $o^Y_{i,j}|_{\cH_{i,j}} = o^H_{i,j}$ and $\rho^Y_{i,j}|_{\cH_{i,j}} = \rho^H_{i,j}$, similarly for $\cF_{i,j}\subset \partial \cY_{i,j}$.
		\item\label{HomOS5} We have $T\cY_{i,j}|_{\cH_{i,j}} = \underline{\R}_z \oplus T\cH_{i,j}$ with $z$ pointing in along the boundary and $T\cY_{i,j}|_{\cF_{i,j}} = \underline{\R}_z \oplus T\cF_{i,j}$ with $z$ pointing out along the boundary. And there is a $\Z/2$ bundle isomorphism $\phi^Y_{i,j}:\underline{\R}_z\otimes s^*\det C_i \otimes o^Y_{i,j}\otimes t^*\det D_j \to \det T\cY_{i,j}$, such that $\phi^Y_{i,j}|_{\cH_{i,j}} = \id_{\underline{\R}_z}\otimes \phi^H_{i,j}$ and $\phi^Y_{i,j}|_{\cF_{i,j}} = \id_{\underline{\R}_z}\otimes \phi^F_{i,j}$. 
		\item\label{HomOS6} On $\cM^C_{i,j}\times_j \cY_{j,k}\subset \partial \cY_{i,k}$, we have $V^C_{i,j}\times_{TC_j} T\cY_{j,k} = T\cY_{i,k}$ and the following holds (we suppress the pullback notation),
		\begin{equation}\label{ori13}
		\resizebox{14cm}{!}{
		\xymatrix{\underline{\R}_z\otimes \det N^C_j \otimes \det C_i \otimes o^C_{i,j}\otimes \det \Delta^C_j \otimes o^{Y}_{j,k} \otimes \det D_k \ar[d]\ar[r]^{\qquad \qquad \rho^{C,Y}_{i,j,k}} & \underline{\R}_z\otimes \det N^C_j \otimes \det C_i\otimes o^Y_{i,k} \otimes \det D_k \ar[ddd]^{\phi^Y_{i,k}} \\
		\underline{\R}_z\otimes \det C_i \otimes o^C_{i,j}\otimes \det \Delta^C_j\otimes N^C_j \otimes o^{Y}_{j,k} \otimes \det D_k \ar[d] &  \\
		\det C_i \otimes o^C_{i,j}\otimes \det C_j \otimes \underline{\R}_z \otimes  \det C_j \otimes o^{Y}_{j,k} \otimes \det D_k \ar[d]^{\phi^C_{i,j}\otimes \phi^Y_{j,k}}&  \\
		\det V^C_{i,j}\otimes \det T\cY_{j,k} \ar[r] & (t\times s)^* \det N^C_j \otimes \det T\cY_{i,k},}
		}
		\end{equation}
		where the last row is induced by the isomorphism $V^C_{i,j}\oplus T\cY_{j,k}\to (t\times s)^*N^C_j \oplus T\cY_{i,k}$. 
		
		On $\cY_{i,j}\times_j \cM^D_{j,k}\subset \partial \cY_{i,k}$, we have $T\cY_{i,j}\times_{TD_j} V^D_{j,k} = T\cY_{i,k}$ and the following holds,
		\begin{equation}\label{ori13'}
		\resizebox{14cm}{!}{
		\xymatrix{\underline{\R}_z\otimes\det N^D_j \otimes \det C_i \otimes o^Y_{i,j}\otimes \det \Delta^D_j \otimes o^{D}_{j,k} \otimes \det D_k \ar[d]\ar[r]^{\qquad \qquad \rho^{Y,D}_{i,j,k}} & \det N^D_j \otimes \det C_i\otimes o^Y_{i,k} \otimes\det D_k \ar[ddd]^{\phi^Y_{i,k}} \\
		\underline{\R}_z\otimes\det C_i \otimes o^Y_{i,j}\otimes \det \Delta^D_j\otimes N^D_j \otimes o^D_{j,k} \otimes \det D_k \ar[d] &  \\
		\det C_i \otimes o^Y_{i,j}\otimes \det D_j \otimes \underline{\R}_z\otimes \det D_j \otimes o^D_{j,k} \otimes \det D_k \ar[d]^{\phi^Y_{i,j}\otimes \phi^D_{j,k}}&  \\
		\det T\cY_{i,j}\otimes \det V^D_{j,k} \ar[r]^{(-1)^{d_j}} & \det N^D_j \otimes \det T\cY_{i,k},}
		}
		\end{equation}
		where the last row is induced by the isomorphism $T\cY_{i,j}\oplus V^D_{j,k}\to (t\times s)^*N^D_j \oplus T\cY_{i,k}$ twisted by $(-1)^{d_j}$ (because of the extra $\R_z$).
		\item\label{HomOS7} Let $\underline{\R}_s,\underline{\R}_t$ be the trivial line in $V^C_{i,j}$ and $V^D_{j,k}$ respectively, then $s$ points in along $\cM^C_{i,j}\times \cY_{j,k}\subset \partial \cY_{i,k}$ and $t$ points out along $\cY_{i,j}\times \cM^D_{j,k}\subset \partial \cY_{i,k}$. 
	\end{enumerate}
\end{definition}
If we can fix orientations of $o^C_i$,$o^D_i$, $C_i$ and $D_i$, then \eqref{HomOS1}, \eqref{HomOS4} and \eqref{HomOS5} imply the induced orientations of $\cY_{i,j}$, $\cH_{i,j}$ and $\cF_{i,j}$ satisfy 
$$
\partial [\cY_{i,j}|_{\cH_{i,j}}]=-[\cH_{i,j}], \quad \partial [\cY_{i,j}|_{\cF_{i,j}}]=[\cF_{i,j}].
$$
In general, we have the analog of Proposition \ref{orientation1} and \ref{orientation2} as follows.
\begin{proposition}
	Let $\sY$ be a flow homotopy between two flow premorphisms $\fH,\fF$ from $\cC$ to $\cD$. Assume everything is equipped with compatible orientation structures and $o^C_i, o^D_i, C_i, D_i$ are oriented. If $\cC,\cD,\fH,\fF$ are oriented by Proposition \ref{orientation1} and \ref{orientation2}, then $\cY_{i,j}$ can be oriented by \eqref{HomOS1} and \eqref{HomOS5} of Definition \ref{HomOS} such that $\sY$ is an oriented flow homotopy between $\fH$ and $\fF$.  
\end{proposition}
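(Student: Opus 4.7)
The plan is to mirror the argument used for Propositions \ref{orientation1} and \ref{orientation2}, but with the extra input coming from the parameter direction $\underline{\R}_z$ in \eqref{HomOS5}. I will first use \eqref{HomOS1} to propagate the chosen orientations of $o^C_i$, $C_i$ to an orientation $[o^K_{i,j}]$ on the homotopy line bundle via the isomorphism $\rho^K_{i,j}$. Then I will apply $\phi^K_{i,j}$ in \eqref{HomOS5} to obtain an orientation $[V^K_{i,j}] = [\cK_{i,j}]$, since $V^K_{i,j}=T\cK_{i,j}$. The parameter line $\underline{\R}_z$ plays the distinguished role of recording ``which end of the homotopy we are at,'' and its placement in the tensor product of \eqref{HomOS5} (together with its inward/outward orientation convention) is precisely what will control the sign in the boundary relation.

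The next step is to verify the boundary formula in \eqref{h6} of Definition \ref{homot}. The four contributions to $\partial[\cK_{i,j}]$ will be checked separately. For the $\cH_{i,j}$ and $\cF_{i,j}$ pieces, I will compare $\phi^K_{i,j}|_{\cH_{i,j}}=\id_{\underline{\R}_z}\otimes \phi^H_{i,j}$ (and the analogous statement on $\cF_{i,j}$) with the convention that $z$ points \emph{in} at $\cH_{i,j}$ and \emph{out} at $\cF_{i,j}$; this yields the signs $-[\cH_{i,j}]$ and $+[\cF_{i,j}]$, exactly matching the first two terms of \eqref{h6}. For the two fiber-product pieces of the boundary I will run the same tensor-algebra computation as in Proposition \ref{orientation1}: insert the normal bundle $N^C_j$ or $N^D_j$, use the compatibility square \eqref{ori13} or \eqref{ori13'}, commute factors using the Koszul convention \eqref{eqn:switch}, and collect the accumulated signs. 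By \eqref{HomOS7} the trivial line $\underline{\R}_s$ or $\underline{\R}_t$ is pointing in/out of $\partial\cK_{i,k}$ respectively, which produces the two remaining boundary signs $(-1)^{c_{i+p}+1}$ and $(-1)^{k_{i,j}}$.

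Finally, the normal-bundle coherence equations
\begin{equation*}
(t^{\cC}\times s)^{*}[N_j]\,[\cM^C_{i,j}\times_j \cK_{j,k}] = (-1)^{c_j m^C_{i,j}}[\cM^C_{i,j}][\cK_{j,k}],
\end{equation*}
\begin{equation*}
(t\times s^{\cD})^{*}[N_j]\,[\cK_{i,j}\times_j \cM^D_{j,k}] = (-1)^{d_j k_{i,j}}[\cK_{i,j}][\cM^D_{j,k}],
\end{equation*}
fall out by reading off the top row of the diagrams \eqref{ori13} and \eqref{ori13'} against the bottom row: the left edges factor through the already-verified identifications $\det\Delta\otimes\det N\to \det C\otimes\det C$ (resp.\ with $D$), and the bottom map $V\oplus V\to N\oplus V$ introduces the dimensional Koszul sign $(-1)^{c_j m^C_{i,j}}$ (resp.\ $(-1)^{d_j k_{i,j}}$). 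This is essentially the same computation that appeared in Proposition \ref{orientation1}, pulled back along $\underline{\R}_z$, so the algebra is formally identical.

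The main obstacle is strictly bookkeeping: chasing the Koszul signs through the large commutative diagrams \eqref{ori13} and \eqref{ori13'}, making sure that the $\underline{\R}_z$ factor is consistently placed on the outside of each tensor product, and that the conventions ``$z$ inward at $\cH$, outward at $\cF$'' from \eqref{HomOS5} together with ``$s$ inward at $\cM^C\times_j\cK$, $t$ outward at $\cK\times_j\cM^D$'' from \eqref{HomOS7} combine to give exactly the signs $-1$, $+1$, $(-1)^{c_{i+p}+1}$, $(-1)^{k_{i,j}}$ in \eqref{h6}. There is no new geometric content beyond Propositions \ref{orientation1} and \ref{orientation2}; the proof is a one-parameter extension of them, and I would present it by factoring out the recurring sign computations into a small lemma about the Koszul rule for an extra trivial line.
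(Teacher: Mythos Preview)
Your proposal is correct and follows exactly the approach the paper intends. The paper does not give an explicit proof of this proposition; it simply states that it is the analog of Propositions \ref{orientation1} and \ref{orientation2}, and your write-up faithfully carries out that analog, including the role of the extra $\underline{\R}_z$ direction in producing the $\pm[\cH_{i,j}]$, $\pm[\cF_{i,j}]$ signs and the use of \eqref{ori13}, \eqref{ori13'}, \eqref{HomOS7} for the fiber-product boundary pieces.
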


\subsection{Local Systems}\label{sub:localsystem}
From the discussion in \S \ref{orientation}, to orient a flow category, a flow morphism or a flow homotopy with orientation structures, we need to orient $o_i$ and $C_i$. However, in the Morse-Bott case, it is possible that $C_i$ is not orientable or $o_i$ is not orientable. Hence we need to upgrade the minimal Morse-Bott cochain complex to a version with local systems. In fact, Definition \ref{os}, Definition \ref{MorOS} and Definition \ref{HomOS} already provides all the structures needed to define a cochain complex without any orientable assumptions, the generator will be the cohomology of $C_i$ twisted by $o_i$. In the case of finite dimensional Morse-Bott theory, let $C$ be a critical manifold with stable bundle $S$. Then in view of the Thom isomorphism, the contribution from a critical manifold $C$ to the  total cohomology should be the cohomology with local system $H^*(C,\det S)$.  In the abstract setting, if a flow category has an orientation structure, then the line bundle $o_i$ plays the role of $\det S$. 

We will introduce a more compact definition just like Definition \ref{oridef}. First we introduce some notation.  Let $\cC = \{C_i,\cM_{i,j}\}$ be a flow category. Over $\cM_{i,j}\times_j\cM_{j,k}\subset \partial \cM_{i,k}$, we have an induced isomorphism $T\cM_{i,j}\oplus T\cM_{j,k} \to (t\times s)^*N_j \oplus T\partial \cM_{i,k}.$ If we use the identification $t^*TC_j \to (t\times s)^* N_j, v \mapsto (-v,v)$, we have an isomorphism $T\cM_{i,j}\oplus T\cM_{j,k} \to t^*C_j \oplus T\partial \cM_{i,k}.$ Therefore we have an isomorphism over $\cM_{i,j}\times_j\cM_{j,k}$
$$\det \cM_{i,j}\otimes \det \cM_{j,k} \to t^* \det C_j \otimes \det \partial \cM_{ik}.$$
Using the isomorphism $\underline{\R}_{out} \oplus T\partial \cM_{i,k} = T\cM_{i,k}$, there is a natural isomorphism $\det \partial \cM_{i,k} \to \det \cM_{i,k}$ preserving compatible orientations. Hence have an isomorphism of degree $1$
$$\det \cM_{i,j}\otimes \det \cM_{j,k} \to t^* \det C_j \otimes \det \cM_{ik},$$
which induces an isomorphism
\begin{equation}\label{eqn:localcon}
f: \det \cM_{i,j}\otimes t^* \textstyle{\det^*} C_j \otimes \det \cM_{j,k} \to \det \cM_{j,k},
\end{equation}
where $\det^*C_j=(\det C_j)^*$. Here $f$ is induced by  the natural isomorphism $t^* \det C_j \otimes t^* \det^* C_j =\R$ and the order switch convention \eqref{eqn:switch}. 
\begin{definition}\label{def:local}
	Let $\cC = \{C_i,\cM_{i,j}\}$ be a flow category. Then \textbf{a local system} on $\cC$ consists of the following.
	\begin{enumerate}
		\item There is a line bundle $o_i$ on each $C_i$. 
		\item Over the $\cM_{i,j}$, there is a bundle isomorphism, 
		$$\rho_{i,j}:s^*o_i\otimes \det \cM_{i,j} \otimes t^* \textstyle{\det^*} C_j \to t^*o_j,$$
		such that the following digram over $\cM_{i,j}\times_j\cM_{j,k}\subset \partial \cM_{i,k}$ is commutative,
		$$
		\resizebox{14cm}{!}{
		\xymatrix{
			s^*o_i\otimes \det \cM_{i,j} \otimes t^* \det^* C_j \otimes \det \cM_{j,k}  \otimes t^* \det^* C_k \ar[r]^{\qquad \qquad \rho_{i,j}} \ar[d]^{f} & s^*o_j\otimes \det \cM_{j,k} \otimes t^*\det ^* C_k \ar[r]^{\qquad \qquad \rho_{j,k}} & t^*o_k\ar[d]\\
			s^*o_i\otimes \det \cM_{i,k}\otimes t^*\det^* C_k \ar[rr]^{(-1)^{m_{i,j}+1}\rho_{i,k}} & & t^*o_k,
			}
			}
		$$
		where $f$ is defined in \eqref{eqn:localcon}.
	\end{enumerate}
\end{definition}

\begin{proposition}\label{prop:localsys}
	If $\cC$ has an orientation structure, then $o_i$ is a local system on $\cC$.
\end{proposition}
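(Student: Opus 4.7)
The plan is to extract the data of a local system (Definition \ref{def:local}) from the orientation structure (Definition \ref{os}), and then verify the compatibility pentagon.

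First I would construct the required isomorphism. The grading-preserving isomorphism $\phi_{i,j}:s^*\det C_i\otimes o_{i,j}\otimes t^*\det C_j\to \det V_{i,j}$ from Definition \ref{os}, together with the decomposition $V_{i,j} = T\cM_{i,j}\oplus \underline{\R}$ and the canonical trivialization of the oriented line $\underline{\R}$, produces an isomorphism $s^*\det C_i\otimes o_{i,j}\otimes t^*\det C_j \cong \det \cM_{i,j}$. Rearranging factors via the $\Z_2$-graded swap \eqref{eqn:switch} and the canonical pairing $t^*\det C_j\otimes t^*\det^*C_j = \R$ then yields $s^*\det C_i\otimes o_{i,j}\cong \det \cM_{i,j}\otimes t^*\det^*C_j$. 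Substituting this into $\rho_{i,j}$ of \eqref{ori4} produces the required bundle isomorphism $\rho_{i,j}:s^*o_i\otimes \det \cM_{i,j}\otimes t^*\det^*C_j\to t^*o_j$.

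Next I would verify the pentagon of Definition \ref{def:local} by a diagram chase restricted to $\cM_{i,j}\times_j\cM_{j,k}\subset \partial \cM_{i,k}$. The chase combines three ingredients: the pentagon \eqref{ori5}, which controls the composition of the original $\rho$'s through $\rho_{i,j,k}$; the compatibility \eqref{dia:MB}, which converts $\rho_{i,j,k}$ into $\phi_{i,j}\otimes \phi_{j,k}$ versus $\phi_{i,k}$ across the identification $V_{i,j}\oplus V_{j,k} = (t\times s)^*N_j\oplus V_{i,k}$; and the boundary relation \eqref{boundaryMB}, which identifies the outward normal of $\partial \cM_{i,k}$ with $-\underline{\R}_r$. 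After eliminating each $\det V$ in favor of $\det \cM$ via the trivial $\underline{\R}$ factor, and using the identification $t^*TC_j \cong (t\times s)^*N_j$, $v\mapsto (-v,v)$, the map that emerges from \eqref{dia:MB} becomes exactly the map $f$ of \eqref{eqn:localcon} appearing in Definition \ref{def:local}.

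The main obstacle will be the sign bookkeeping. One must track $\Z_2$-graded commutations via \eqref{eqn:switch}, exploit the parity $|o_{i,j}|\equiv m_{i,j}+1+c_i+c_j \pmod 2$ forced by the grading of $\phi_{i,j}$, and account for the degree-one shift built into the oriented trivialization of $\underline{\R}$. The analysis is structurally identical in spirit to the proof of Proposition \ref{orientation1}: the factor $(-1)^{c_j(m_{i,j}+1)}$ coming from the diagram \eqref{dia:MB}, combined with the outward-normal flip of \eqref{boundaryMB} and the reordering of $t^*\det C_j$ past $o_{i,j}$, should collapse to exactly the sign $(-1)^{m_{i,j}+1}$ required by Definition \ref{def:local}. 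A systematic convention—moving all $\det C$ and $\underline{\R}$ factors into prescribed positions before applying the $\phi$'s—should keep the bookkeeping manageable.
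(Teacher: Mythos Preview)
Your proposal is correct and follows essentially the same route as the paper: use $\phi_{i,j}$ together with $V_{i,j}=T\cM_{i,j}\oplus\underline{\R}$ to convert $s^*\det C_i\otimes o_{i,j}$ into $\det\cM_{i,j}\otimes t^*\det^*C_j$, substitute into \eqref{ori4}, and then deduce the pentagon from \eqref{ori5}, \eqref{dia:MB}, \eqref{boundaryMB} by the same argument as in Proposition~\ref{orientation1}. The paper's proof is terser, simply pointing to Proposition~\ref{orientation1} for the compatibility check, whereas you spell out the sign bookkeeping more explicitly.
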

\begin{proof}
	Since $\cC$ has an orientation structure, i.e.\ we have isomorphisms $\rho^C_{i,j}:s^*o_i\otimes s^*\det C_i \otimes o_{i,j} \to t^*o_j$, $V_{i,j} = T\cM_{i,j}\oplus \underline{\R}$ and $\phi_{i,j}:s^*o_i \otimes o_{i,j}\otimes t^*o_j \to \det V_{i,j}$. Then using the natural orientation on $\underline{\R}$ and isomorphisms $\phi_{i,j}$, $\rho^C_{i,j}$, we get an isomorphism $\rho_{i,j}:s^*o_i\otimes \det \cM_{i,j}\otimes t^*\det^*C_j \to t^*o_j$. By the same argument in Proposition \ref{orientation1}, \eqref{os4} and \eqref{os5} of Definition \ref{os} imply the commutative diagram in Definition \ref{def:local}. 
\end{proof}
Similarly, we have the following definitions of local systems on flow morphism and flow homotopies. 
\begin{definition}
	Let $\fH = \{\cH_{i,j}\}$ be a flow morphism from the flow category $\cC$ to the flow category  $\cD$. Both $\cC$ and $\cD$ are equipped with local systems. We say $\fH$ has a \textbf{compatible local system} if on each $\cH_{i,j}$ we have an isomorphism 
	$$\rho^H_{i,j}:s^*o^C_i\otimes \det \cH_{i,j} \otimes t^* \textstyle{\det^*} C_j \to t^*o^D_j,$$
	such that we have the following two commutative diagrams over $\cM^C_{i,j}\times_j\cH_{j,k}\subset \partial \cH_{i,k}$ and $\cH_{i,j}\times_j \cM^D_{j,k}\subset \partial \cH_{i,k}$ respectively, 
	$$
	\resizebox{14cm}{!}{
	\xymatrix{
	s^*o^C_i \otimes \det \cM^C_{i,j} \otimes t^*\det^* C_j \otimes \det \cH_{j,k} \otimes t^*\det^* D_k \ar[r]^{\qquad \qquad \quad \rho^C_{i,j}} \ar[d]^f & s^*o^C_j\otimes \det \cH_{j,k} \otimes t^* \det^* D_k \ar[r]^{\qquad \qquad \quad\rho^H_{j,k}} & t^*o^D_k\ar[d] \\
	s^*o^C_i \otimes \cH_{i,k} \otimes t^*\det^* D_k \ar[rr]^{(-1)^{m^C_{i,j}+1}\rho^H_{i,k}} & & t^*o^D_k,
	}
	}
	$$
	$$
	\resizebox{14cm}{!}{
	\xymatrix{
		s^*o^C_i \otimes \det \cH_{i,j} \otimes t^*\det^* D_j \otimes \det \cM^D_{j,k} \otimes t^*\det^* D_k \ar[r]^{\qquad \qquad \quad\rho^H_{i,j}} \ar[d]^f & s^*o^D_j\otimes \det \cM^D_{j,k} \otimes t^* \det^* D_k \ar[r]^{\qquad \qquad \quad\rho^D_{j,k}} & t^*o^D_k\ar[d] \\
		s^*o^C_i \otimes \cH_{i,k} \otimes t^*\det^* D_k \ar[rr]^{(-1)^{h_{i,k}+1}\rho^H_{i,k}} & & t^*o^D_k,
	}}
	$$	
	where the map $f$ in the first columns of both digram is defined in a similar way to \eqref{eqn:localcon}.
\end{definition}

\begin{definition}
	A compatible local system on a flow premorphism $\fH$ from $\cC$ to $\cD$ consists of bundle isomorphisms $\rho^H_{i,j}:s^*o^C_i \otimes \det \cH_{i,j} \otimes t^* \det^* D_j \to t^* o^D_j$ on every $\cH_{i,j}$.
\end{definition}
\begin{definition}
		Let $\sY$ be a flow morphism between flow premorphisms $\fH$ and $\fF$ from the flow category $\cC$ to the flow category  $\cD$. Assume $\cC$, $\cD$, $\fH$ and $\fF$ are equipped with compatible local systems. We say $\sY$ has a \textbf{compatible local system} if on each $\cY_{i,j}$ we have an isomorphism $\rho^Y_{i,j}:s^*o^C_i\otimes \det \cY_{i,j} \otimes t^*\det^* D_j \to t^*o^D_j$ such the the following holds.
		\begin{enumerate}
			\item Under the identification $\det \cY_{i,j}|_{\cF_{i,j}} = \det \cF_{i,j}$ induced by $\underline{\R}_{out} \oplus T\cF_{i,j} = T\cY_{i,j}|_{\cF_{i,j}}$, we have $\rho^Y_{i,j}|_{\cF_{i,j}}= \rho^F_{i,j}$. Under the identification $\det \cY_{i,j}|_{\cH_{i,j}} = \det \cH_{i,j}$ induced by $\underline{\R}_{in} \oplus T\cH_{i,j} = T\cY_{i,j}|_{\cH_{i,j}}$, we have $\rho^Y_{i,j}|_{\cH_{i,j}}= \rho^H_{i,j}$. 
			\item  We have the following two commutative diagrams over $\cM^C_{i,j}\times_j\cY_{j,k}\subset \partial \cY_{i,k}$ and $\cY_{i,j}\times_j \cM^D_{j,k}\subset \partial \cY_{i,k}$ respectively, 
			$$
			\resizebox{14cm}{!}{
			\xymatrix{
				s^*o^C_i \otimes \det \cM^C_{i,j} \otimes t^*\det^* C_j \otimes \det \cY_{j,k} \otimes t^*\det^* D_k \ar[r]^{\qquad \qquad \quad \rho^C_{i,j}} \ar[d]^f & s^*o^C_j\otimes \det \cY_{j,k} \otimes t^* \det^* D_k \ar[r]^{\qquad \qquad \quad\rho^Y_{j,k}} & t^*o^D_k\ar[d] \\
				s^*o^C_i \otimes \cY_{i,k} \otimes t^*\det^* D_k \ar[rr]^{(-1)^{c_{j}}\rho^Y_{i,k}} & & t^*o^D_k,
			}
			}
			$$
			$$
			\resizebox{14cm}{!}{
			\xymatrix{
				s^*o^C_i \otimes \det \cY_{i,j} \otimes t^*\det^* D_j \otimes \det \cM^D_{j,k} \otimes t^*\det^* D_k \ar[r]^{\qquad \qquad \quad\rho^Y_{i,j}} \ar[d]^f & s^*o^D_j\otimes \det \cM^D_{j,k} \otimes t^* \det^* D_k \ar[r]^{\qquad \qquad \quad\rho^D_{j,k}} & t^*o^D_k\ar[d] \\
				s^*o^C_i \otimes \cY_{i,k} \otimes t^*\det^* D_k \ar[rr]^{(-1)^{y_{i,k}+1}\rho^Y_{i,k}} & & t^*o^D_k,
			}}
			$$	
			where the map $f$ in the first columns of both digram is defined in a similar way to \eqref{eqn:localcon}.
		\end{enumerate}
\end{definition}

Propositions below follow from  arguments similar to the proof of Proposition \ref{prop:localsys}.
\begin{proposition}\label{prop:localmor}
	Let $\cC$ and $\cD$ be two flow categories with orientation structures. Assume $\fH$ is a flow morphism with compatible orientation structures. If $\cC$ and $\cD$ are given local systems using Proposition \ref{prop:localsys}, then $\fH$ has a compatible local system. 	If $\fH$ is only a flow premorphism from $\cC$ to $\cD$ with compatible orientation structure, then $\fH$ can be given a compatible local system.
\end{proposition}

\begin{proposition}
	Let $\cC, \cD$ be two flow categories with orientation structures and $\fH, \fF$ two flow premorphism with compatible orientation structures. Assume $\sY$ is a flow morphism with compatible orientation structures. If $\cC, \cD$ are given local systems using Proposition \ref{prop:localsys} and $\fH, \fF$ are given local systems using Proposition \ref{prop:localmor}, then $\sY$ has a compatible local system. 
\end{proposition}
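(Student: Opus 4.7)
The strategy is a direct parallel to Proposition \ref{prop:localsys}, upgraded by one dimension to accommodate the homotopy direction encoded in $\sK$. Given the orientation-structure data $(o^K_{i,j},\rho^K_{i,j},V^K_{i,j}=T\cK_{i,j},\phi^K_{i,j})$ of Definition \ref{HomOS}, I would solve the determinant isomorphism $\phi^K_{i,j}:\underline{\R}_z\otimes s^*\det C_i\otimes o^K_{i,j}\otimes t^*\det D_j\to \det\cK_{i,j}$ for the line bundle $o^K_{i,j}$ and substitute the result into the grading-preserving isomorphism $\rho^K_{i,j}:s^*o^C_i\otimes s^*\det C_i\otimes o^K_{i,j}\to t^*o^D_j$ coming from \eqref{HomOS1}. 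Using the canonical trivialization of $\underline{\R}_z$ and the cancellation $s^*\det C_i\otimes s^*\det{}^* C_i\cong \R$, this produces a bundle isomorphism $\widetilde{\rho}^K_{i,j}:s^*o^C_i\otimes \det\cK_{i,j}\otimes t^*\det{}^* D_j\to t^*o^D_j$, the candidate for the local system on $\sK$.

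The boundary restrictions are verified using \eqref{HomOS4} and \eqref{HomOS5}. On $\cH_{i,j}\subset \partial\cK_{i,j}$ the identification $T\cK_{i,j}|_{\cH_{i,j}}=\underline{\R}_z\oplus T\cH_{i,j}$ with $z$ pointing \emph{inward}, the relation $\phi^K_{i,j}|_{\cH_{i,j}}=\id_{\underline{\R}_z}\otimes \phi^H_{i,j}$, and $\rho^K_{i,j}|_{\cH_{i,j}}=\rho^H_{i,j}$ together show that $\widetilde{\rho}^K_{i,j}|_{\cH_{i,j}}$ matches $\rho^H_{i,j}$ (up to the sign generated by the inward $z$); the analogous computation on $\cF_{i,j}$, where $z$ is outward-pointing, yields $\rho^F_{i,j}$ with the opposite sign. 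This is the local-system counterpart of the oriented identity $\partial[\cK_{i,j}|_{\cH_{i,j}}]=-[\cH_{i,j}],\;\partial[\cK_{i,j}|_{\cF_{i,j}}]=[\cF_{i,j}]$.

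For the two commutative squares over $\cM^C_{i,j}\times_j\cK_{j,k}\subset \partial\cK_{i,k}$ and $\cK_{i,j}\times_j \cM^D_{j,k}\subset \partial\cK_{i,k}$, I would follow the blueprint of Proposition \ref{orientation1}: apply \eqref{HomOS3} (compatibility of the three $\rho$-isomorphisms) along one edge and \eqref{HomOS6},\eqref{HomOS7} (compatibility of $\phi^K$ under fiber products of $V^K$) along the other, and then chase through the identifications. The determinant-level identity $\det\cM^C_{i,j}\otimes \det\cK_{j,k}\cong (t\times s)^*\det N^C_j\otimes \det\cK_{i,k}$, together with its $\cK\times_j\cM^D$ analog, produces exactly the morphism $f$ of \eqref{eqn:localcon} on the left edge of each square, while the $\rho$'s combine on the right edge to $\widetilde{\rho}^K_{i,k}$. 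The signs $(-1)^{c_j}$ and $(-1)^{k_{i,k}+1}$ demanded by the conclusion arise from commuting $\underline{\R}_z$ past graded pieces and from the rank shift $k_{i,j}=h_{i,j}+1$.

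The main obstacle, as in the earlier local-system propositions, will be the careful bookkeeping of signs. The Koszul rule \eqref{eqn:switch}, the parity contributions of the $\underline{\R}_z$ factor as it commutes through graded line bundles, the asymmetric sign $(-1)^{d_j}$ appearing in the last row of \eqref{ori13'} but absent in \eqref{ori13}, and the inward-versus-outward orientation of $z$ at $\cH$ and $\cF$ all conspire to produce the specific signs required by the definition of compatible local system on a flow homotopy. Once this sign arithmetic is reconciled—most efficiently by factoring the verification into a boundary-restriction check (which reduces via \eqref{HomOS4}--\eqref{HomOS5} to the flow-premorphism case already handled in Remark \ref{rmk:localpre}) and a single fiber-product check that mirrors Proposition \ref{prop:localsys} verbatim—the proof goes through with no essentially new content.
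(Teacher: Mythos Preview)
Your proposal is correct and follows the same approach the paper intends: the paper does not give a detailed proof but simply states that the proposition ``follows from a similar argument in the proof of Proposition~\ref{prop:localsys},'' and your sketch is precisely an elaboration of that argument, using $\phi^K_{i,j}$ and $\rho^K_{i,j}$ to build $\widetilde{\rho}^K_{i,j}$ and then verifying the boundary and fiber-product compatibilities via \eqref{HomOS3}--\eqref{HomOS7}.
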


\subsubsection{de Rham theory with local systems} To generalize the construction of the minimal Morse-Bott cochain complex to flow categories with local systems, we first recall the de Rham theory with local systems \cite[\S 7]{bott2013differential}.  Let $M$ be manifold and $o$ a local system over $M$. The de Rham complex $\Omega^*(M,o)$ with local system $o$ is defined as sections of $\wedge T^*M \otimes_{\Z/2} o$. The usual exterior differential lifts to a differential on $\Omega^*(M,o)$ to a differential, which is still denoted by $\rd$. The associated cohomology is denoted by $H^*(M,o)$. The wedge product defines a bilinear map 
$$\Omega^*(M,o)\times \Omega^*(M,o')\to \Omega^*(M,o\otimes o'),$$ 
which induces a map on cohomology. Using local systems, the integration is well-defined for forms in $\Omega^*(M,\det M)$ without assuming $M$ is oriented. Moreover, we have the following form of Stokes's theorem
$$\int_{M} \rd \alpha = \int_{\partial M} i^*\alpha,$$
where $i:\Omega^*(M,\det M) \to \Omega^*(\partial M, \det \partial M)$ is defined by the restriction map and the isomorphism $\det M|_{\partial M} \to \det \partial M$ induced by the isomorphism $\underline{\R}_{out}\oplus T\partial M = T M$. 

Let $C$ be a closed manifold with a local system $o$. Since there is a canonical isomorphism from $o^*\otimes o$ to the trivial line bundle, we have a paring \begin{equation}\label{eqn:pairlocal}
H^*(C,o^*)\times H^*(C,o\otimes \det C)\to \R
\end{equation} 
by integrating over $C$. It is a non-degenerate pairing just like the usual case.

\subsubsection{The minimal Morse-Bott cochain complex for flow categories with local systems} Let $\cC = \{C_i,\cM_{i,j}\}$ be a flow category with a local system. We define $o_i^*\boxtimes(o_i\otimes \det C_i)$ to be $\pi_1^*o_i^*\otimes \pi_2^*(o_i\otimes \det C_i)$. Since $\pi_2^*\det C_i$ is canonically isomorphic to $\det \Delta_i$ and $(o^*_i\boxtimes o_i)|_{\Delta_i} = o^*_i\otimes o_i = \underline{\R}$,  when  $\omega \in \Omega^*(C_i\times C_i, o_i^*\boxtimes(o_i\otimes \det C_i))$ restricted to the diagonal $\Delta_i$, we have $\omega|_{\Delta_i} \in \Omega^*(\Delta_i, \det \Delta_i)$. Therefore $\int_{\Delta_i} \omega$ is well-defined. In particular, $\int_{\Delta_i}$ descends to a well-defined map on $H^*(C_i\times C_i, o_i^* \boxtimes(o_i\otimes \det C_i))$. Since the pairing in \eqref{eqn:pairlocal} is non-degenerate, we have $\int_{\Delta_i}$ is represented by a class in $H^*(C_i\times C_i, (o_i\otimes \det C_i)\boxtimes o_i^*) = H^*(C_i, o_i\otimes \det C_i)\otimes H^*(C_i, o_i^*)$. 

If we choose representatives $\{\theta_{i,a}\}\subset \Omega^*(C_i,o_i\otimes \det C_i)$ of a basis of $H^*(C_i, o_i\otimes \det C_i)$ and the representatives $\{\theta^*_{i,a}\}\subset \Omega^*(C_i,o_i^*)$ of the dual basis in $H^*(C_i,o_i^*)$ in the sense that $\la \theta_{i,a}^*, \theta_{i,b}\ra = (-1)^{\dim C_i \cdot |\theta_{i,b}|}\int_{C} \theta^*_{i,a} \wedge \theta_{i,b} = \delta_{ab}$.  Then $\sum_a \pi_1^* \theta_{i,a} \wedge \pi_2^* \theta_{i,a}^*$ represents $\int_{\Delta_i}$ by the same proof as Proposition \ref{prop:cohomologus}. On the other hand, there is a natural isomorphism $\pi_1^*\det C_i\otimes \pi_2^*\det C_i\simeq \det \Delta_i \otimes \det N_i$ over the diagonal $\Delta_i$, induced by the isomorphism $TC_i\oplus TC_i = T\Delta_i \oplus N_i$. Using the natural identification $\pi_2^* \det C_i\to \det \Delta_i$, there is an induced isomorphism $\pi_1^* \det C_i\to N_i$. A similar isomorphism was already used in the definition of \eqref{eqn:localcon}. Using this isomorphism, if a form in $\Omega^*(C_i\times C_i, (o_i \otimes \det C_i)\boxtimes o^*_i)$ is supported in the tubular neighborhood of $\Delta_i$, then it can be viewed as a form in  $\Omega^*(N_i, \det N_i)$. Using the twisted Thom isomorphism in \cite{sklyarenko2006thom}, we get another representative of $\int_{\Delta_i}$ by the Thom classes $\delta_i^n$. As a consequence we find primitives $f_i^n\in \Omega^*(C_i\times C_i,(o_i\otimes\det C_i)\boxtimes o^*_i)$, such that 
$$\rd f_i^n =\delta_i^n -\sum_a \pi_1^*\theta_{i,a}\wedge \pi_2^*\theta^*_{i,a},$$
with a relation similar to \eqref{eqn:diff} holds. Similarly to Definition \ref{def:def}, such choices are referred to as \textbf{defining data}.

Given defining data on a flow category with a local system, we define the minimal Morse-Bott chain complex to be
\begin{equation}\label{BC}
\BC(\cC):=\varinjlim_{q\to -\infty}\prod_{j=q}^\infty H^*(C_j,o^*_j)=\varinjlim_{q\to -\infty}\prod_{j=q}^\infty H^*(C_j,o_j)\text{ (since } o_i\simeq o_i^*\text{, but not canonically.)}
\end{equation}
Next, we explain how the formula \eqref{def} for $d_k$  in the construction of the minimal Morse-Bott cochain complex still makes sense in the setting of local systems. Let $\alpha \in \Omega^*(C_v,o^*_v), \gamma \in \Omega^*(C_{v+k},o_{v+k}\otimes \det C_{v+k})$, then $s^*\alpha\wedge t^*\gamma\in \Omega^*(\cM_{v,v+k},s^*o^*_v\otimes t^*o_{v+k}\otimes t^*\det C_{v+k})$. By Definition \ref{def:local}, we have an isomorphism
$$\rho_{v,v+k}: s^*o_v\otimes \det \cM_{v,v+k}\otimes t^*\textstyle{\det^*} C_{v+k} \to t^*o_{v+k},$$
which induces an isomorphism
\begin{equation}\label{eqn:identification}
\det \cM_{v,v+k} \to s^*o_v^* \otimes t^*o_{v+k} \otimes t^*\det C_{v+k}.
\end{equation}
Let $\psi_{v,v+k}$ denote the inverse of \eqref{eqn:identification} with an extra negative sign. The extra negative sign is to match up to the negative sign in the proof of Proposition \ref{orientation1}. Using $\psi_{v,v+k}$, we can view $s^*\alpha\wedge t^*\gamma$ as in $\Omega^*(\cM_{v,v+k},\det \cM_{v,v+k})$, hence the integration $\int_{\cM_{v,v+k}}s^*\alpha\wedge t^*\gamma$ is well-defined.

Next, we consider $\cM^{v,k}_{i}[\alpha, f^n_{v+i}, \gamma]$. Then $s^*\alpha \wedge (t\times s)^* f^n_{v+i} \wedge t^*\gamma$ is a form in $\Omega^*(\cM_{v,v+i}\times \cM_{v+i,v+k}, s^*o^*_v\otimes (t\times s)^*((o_{v+i} \otimes \det C_{v+i})\boxtimes o^*_{v+i})\otimes t^*(o_{v+k}\otimes \det C_{v+k}))$. 
Since we have
\begin{align*}
s^*o^*_v\otimes (t\times s)^*((o_{v+i}\otimes & \det C_{v+i})\boxtimes o^*_{v+i})\otimes t^*(o_{v+k} \otimes \det C_{v+k})=\\
& \left(s^*o^*_v \otimes t^*(o_{v+i}\otimes \det C_{v+i}) \right)\boxtimes \left(s^*o^*_{v+i}\otimes t^*(o_{v+k}\otimes \det C_{v+k})\right).
\end{align*}
Then using $\psi_{v,v+i}$ and $\psi_{v+i,v+k}$, we get a bundle isomorphism
\begin{align*}
    s^*o^*_v\otimes (t\times s)^*((o_{v+i} \otimes \det C_{v+i})\boxtimes o^*_{v+i})\otimes t^*(o_{v+k}\otimes \det C_{v+k}) & \to \det \cM_{v,v+i}\boxtimes \det \cM_{v+i,v+k} \\
    & \to \det (\cM_{v,v+i}\times \cM_{v+i,v+k}).
\end{align*}
Thus $\cM^{v,k}_{i}[\alpha, f^n_{v+i}, \gamma]$ is defined. Similarly,  $\cM^{v,k}_{i_1,\ldots,i_r}[\alpha, f_{v+i_1}^n,\ldots, f_{v+i_r}^n, \gamma]$ makes sense in the local system setting.  Thus the differential $d_{\BC}=\prod d_k$ is well-defined and $d^2_{\BC}=0$ by the same proof as Theorem \ref{thm:main}. 

\begin{theorem}
	Let $\cC$ be a flow category with a local system, then $(\BC(\cC),d_{\BC})$ is cochain complex for any defining data and the cohomology is independent of of defining data. 
\end{theorem}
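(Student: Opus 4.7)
The plan is to mirror the argument of Theorem~\ref{thm:main} essentially verbatim, replacing each oriented integration step by its twisted counterpart and checking that the bundle isomorphisms supplied by Definition~\ref{def:local} make all the sign conventions line up. The formalism developed just before the theorem already gives a well-defined pairing $\cM^{s,k}_{i_1,\ldots,i_r}[\alpha, f^n_{s+i_1},\ldots, f^n_{s+i_r},\gamma]$ by successively identifying $\det \cM_{s+i_{p-1},s+i_p}\otimes \ldots$ with tensor products of $s^*o^*\otimes t^*(o\otimes \det C)$ via the maps $\psi_{\cdot,\cdot}$. So all the operators $D_{k,T,\Theta}$ and $d_{k,\Theta}$ of Definition~\ref{def:MMBCC} make sense verbatim, and $(BC,d_{BC,\Theta})$ is at least a well-defined graded module with a degree-one endomorphism.

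For $d_{BC,\Theta}^2=0$, I would repeat the induction argument of Proposition~\ref{dsquare} step by step. The only places signs and orientations enter are: (i)~rewriting a composition as a trace via a representative $\sum_a \pi_1^*\theta_{i,a}\wedge \pi_2^*\theta_{i,a}^*$ of the diagonal class in $H^*(C_i\times C_i,(o_i\otimes \det C_i)\boxtimes o_i^*)$; (ii)~passing from the Thom class $\delta_i^n$ to the Dirac current $\delta_i$ on the diagonal via the twisted Thom isomorphism; and (iii)~Stokes' theorem on $\cM^{s,k}_{i_1,\ldots,i_r}$ using the isomorphism $\underline{\R}_{\rm out}\oplus T\partial\to T\cM$ and the compatibility isomorphism $f$ of \eqref{eqn:localcon} on each fiber-product boundary stratum. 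The commutative square in Definition~\ref{def:local} is precisely what is needed to convert an integration on $\cM_{i,j}\times_j\cM_{j,k}$ (viewed via the product local system) into an integration on the boundary stratum of $\cM_{i,k}$ (viewed via the local system $\rho_{i,k}$), with the prescribed sign $(-1)^{m_{i,j}+1}$. So the identities of Lemmas~\ref{conv1} and \ref{conv2} transfer, and the telescoping-plus-induction argument producing \eqref{exact} carries over unchanged; the trace terms cancel against $-(E_{k-i}\circ E_i+E_{k-i}\circ D_i+D_{k-i}\circ E_i)$ exactly as in the claim \eqref{eqn:claimtrace}.

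For the invariance statement, I would generalize the identity flow morphism and Theorem~\ref{canonicalthm1} to the local-system setting. The manifolds $\cI_{i,j}=\cM_{i,j}\times[0,j-i]$ inherit a canonical local system from the pullback of $\cM_{i,j}$'s local system, and the compatibility diagrams of Definition~\ref{def:local} for $\cI$ follow from those of $\cC$ together with the trivial factor $[0,j-i]$ (the oriented-case verification in Definition/Lemma~\ref{identitymor} only used the fiber-product orientation relations, which correspond bijectively to our bundle isomorphisms). Once this is in place, \eqref{mordef} defines a morphism $\phi^I_{\Theta_1,\Theta_2}\colon BC_{\Theta_1}\to BC_{\Theta_2}$ of the form $\id+N$ with $N$ strictly upper-triangular with respect to the filtration by index, hence invertible, and then a chain map by the local-system version of Theorem~\ref{Mor} proved in the same way.

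The main obstacle, and the only step that requires genuinely new bookkeeping rather than a mechanical transcription, is verifying that the local-system analogue of Lemma~\ref{conv2} holds with the correct sign. In the oriented case, the sign $(-1)^{(|\alpha|+m_{s,s+i_p})c_{s+i_p}}$ came from swapping $\delta^n$ past the normal bundle factor and re-expressing a Cartesian product as a fiber product. In the twisted case, one must trace through how the isomorphism $f$ of \eqref{eqn:localcon}, the Thom isomorphism for $(N_{i_p},\det N_{i_p})$, and the commuting conventions \eqref{eqn:switch} combine; one has to confirm that the composition agrees with the identifications $\psi_{s+i_{p-1},s+i_p}\otimes\psi_{s+i_p,s+i_{p+1}}\leadsto \psi_{s+i_{p-1},s+i_{p+1}}$ obtained from the commutative square of Definition~\ref{def:local} applied on $\cM_{s+i_{p-1},s+i_p}\times_{s+i_p}\cM_{s+i_p,s+i_{p+1}}$, up to the sign $(-1)^{m_{s+i_{p-1},s+i_p}+1}$ already built into that square. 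Once this compatibility is verified, all the sign computations in the proof of Proposition~\ref{dsquare} become formal and the theorem follows.
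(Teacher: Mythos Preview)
Your proposal is correct and matches the paper's approach exactly: the paper does not give a separate proof, instead stating that $d_{BC}^2=0$ holds ``by the same proof of Theorem~\ref{thm:main}'' and that the analogues of Theorems~\ref{Mor}, \ref{CompMor2}, \ref{thm:homotopy}, \ref{canonicalthm1}, \ref{canonicalthm2} (hence the independence of defining data via the identity flow morphism) go through ``by the same arguments.'' Your outline is in fact more explicit than the paper's own treatment, particularly in isolating the commutative square of Definition~\ref{def:local} as the ingredient replacing the coherent-orientation relation of Definition~\ref{oridef} in the sign check for the local-system version of Lemma~\ref{conv2}.
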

Similarly, we have analogues of Theorem \ref{Mor}, \ref{CompMor2}, \ref{thm:homotopy}, \ref{canonicalthm1} and \ref{canonicalthm2} in the setting of local systems by the same argument.

\section{Generalizations}\label{s6}
In this section, we give two generalizations of the minimal Morse-Bott cochain complex. The first generalization is dropping the compactness assumption on the $C_i$ in flow categories. The second generalization extracts abstract properties used in the construction of the minimal Morse-Bott cochain complex and provides more flexibility in choosing the ``homological perturbation" data. Such generalization leads to Gysin exact sequences for flow categories.
\subsection{Proper flow categories}\label{ss:prop}
We first generalize to the case that $C_i$ is not compact. However, we can not work with every noncompact manifold. Hence we introduce the following.

\begin{definition}\label{def:finite}
	A manifold $C$ is called of \textbf{finite type} iff $C$ is the interior of a compact manifold $M$ with boundary.  
\end{definition}
In particular, any closed manifold is of finite type. An infinite genus surface is not of finite type. For any manifold $C$ of finite type, $H^*(C)$ is a finite dimensional vector space.

\begin{definition}\label{def:prop}
	A \textbf{proper flow category} is defined similarly to Definition \ref{def:flow} except for the following two differences.
	\begin{enumerate}
	\item $C_i$ is manifold, such that each connected component of $C_i$ is of finite type. 
	\item $\cM_{i,j}$ is not assumed to be compact. However, the target map $t_{i,j}:\cM_{i,j}\to C_j$ is proper.
	\end{enumerate} 
\end{definition}
To explain the generalization of the minimal Morse-Bott cochain complex to proper flow categories, we first explain the counterpart of the perturbation data. Although the following discussion does not require a coherent orientation as explained in \S \ref{s5}. We assume $\{C_i,\cM_{i,j}\}$ is equipped with a coherent orientation for simplicity, in particular $C_i$ is oriented. Let $C$ be an oriented manifold of finite type and $\Omega_c^*(C)$ denote the space of compactly supported differential forms on $C$. Then we have a  bilinear form:
$$\Omega^*(C)\times \Omega^*_c(C)\to \R, \quad (\alpha,\beta) \mapsto \la \alpha,\beta \ra:= (-1)^{\dim C\cdot |\beta|}\int_{C} \alpha \wedge \beta,$$
and Lefschetz duality asserts that the bilinear form is non-degenerate on cohomology. 

\begin{definition}
	Let $C$ be an oriented manifold of finite type. We define $\Omega^*_{c,\cdot}(C\times C)$ to be 
	$$\{\alpha \in \Omega^*_{c,\cdot}(C\times C)| \supp(\alpha) \subset K\times C \text{ for some compact set } K \}.$$
	Similarly, we define $\Omega^*_{\cdot,c}(C\times C)$ to be 
	$$\{\alpha \in \Omega^*_{\cdot,c}(C\times C)| \supp(\alpha) \subset C \times K \text{ for some compact set } K \}.$$
\end{definition}
$\Omega^*_{c,\cdot}(C\times C)$ and $\Omega^*_{\cdot,c}(C\times C)$ are both cochain complexes using the usual exterior differential. Moreover, we have $H^*_{c,\cdot}(C\times C):=H^*(\Omega^*_{c,\cdot}(C\times C),\rd) = H^*_c(C)\otimes H^*(C)$ and  $H^*_{\cdot,c}(C\times C):=H^*(\Omega^*_{\cdot,c}(C\times C),\rd) = H^*(C)\otimes H^*_c(C)$, where $H^*_c(C)$ is the cohomology of compactly supported differential forms. The following proposition is an analogue of the Lefschetz duality with a similar proof to \cite[Theorem 12.15]{bott2013differential}.
\begin{proposition}\label{prop:nondeg}
	The bilinear form $\Omega^*_{c,\cdot}(C\times C)\times \Omega^*_{\cdot,c}(C\times C) \to \R$ defined by $(\a,\b) \mapsto \int_{C\times C} \alpha \wedge \beta$ descends to cohomology. The induced bilinear form on cohomology is non-degenerate. 
\end{proposition}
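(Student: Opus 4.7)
The plan is to reduce the statement to the classical Poincaré--Lefschetz duality pairing on each factor via the Künneth formula. First, for well-definedness on cohomology: if $\alpha \in \Omega^*_{c,\cdot}(C\times C)$ and $\beta \in \Omega^*_{\cdot,c}(C\times C)$, then $\supp(\alpha) \subset K_1 \times C$ and $\supp(\beta) \subset C \times K_2$ for compact sets $K_i\subset C$, so $\supp(\alpha\wedge\beta) \subset K_1\times K_2$ is compact in $C\times C$. Hence Stokes' theorem applies without boundary contributions, and I would verify $\int (\rd\alpha)\wedge\beta = \pm \int \rd(\alpha\wedge\beta) = 0$, and similarly for the second slot, so the pairing descends to a bilinear form on $H^*_{c,\cdot}(C\times C) \times H^*_{\cdot,c}(C\times C)$.

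Next I would invoke the Künneth theorem in the two forms
\[
H^*_{c,\cdot}(C\times C) \cong H^*_c(C)\otimes H^*(C), \qquad H^*_{\cdot,c}(C\times C) \cong H^*(C)\otimes H^*_c(C),
\]
both of which hold because $C$ has finite type, so $H^*(C)$ and $H^*_c(C)$ are finite dimensional. Representing classes by $\pi_1^*u \wedge \pi_2^*v$ and $\pi_1^*u'\wedge \pi_2^*v'$ with $u,v'$ compactly supported and $v,u'$ arbitrary, the integral $\int_{C\times C} (\pi_1^*u\wedge \pi_2^*v)\wedge (\pi_1^*u'\wedge \pi_2^*v')$ factors, up to a standard Koszul sign, as $(\int_C u\wedge u')(\int_C v\wedge v')$. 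Thus under the Künneth identification the pairing on $H^*_{c,\cdot}(C\times C) \times H^*_{\cdot,c}(C\times C)$ is, up to sign, the tensor product of the two Poincaré--Lefschetz pairings
\[
H^*_c(C)\times H^*(C) \longrightarrow \R, \qquad H^*(C)\times H^*_c(C) \longrightarrow \R.
\]

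Finally, for $C$ an oriented manifold of finite type, classical Poincaré--Lefschetz duality asserts that each of these pairings is non-degenerate (this is exactly \cite[Theorem 12.15]{bott2013differential}, whose hypotheses are met since $C = \mathrm{int}(M)$ for some compact manifold with boundary). Since the tensor product of non-degenerate bilinear pairings between finite dimensional vector spaces is non-degenerate, the induced pairing on $H^*_{c,\cdot}(C\times C)\times H^*_{\cdot,c}(C\times C)$ is non-degenerate, as required.

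The main obstacle I anticipate is bookkeeping: tracking the sign conventions built into the pairing $\langle\cdot,\cdot\rangle$ through the Künneth identifications and verifying that no orientation issues on $C\times C$ (product orientation) disrupt the factorization. The finite-type assumption enters crucially to guarantee both the validity of Künneth with mixed compact supports and the non-degeneracy of Poincaré--Lefschetz duality; without it one would only obtain a map to a dual space that need not be an isomorphism.
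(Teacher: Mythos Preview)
Your proposal is correct and aligns with the paper's own treatment: the paper does not write out a proof but simply points to \cite[Theorem~12.15]{bott2013differential} (Lefschetz duality) and notes that the Künneth identifications $H^*_{c,\cdot}(C\times C)\cong H^*_c(C)\otimes H^*(C)$ and $H^*_{\cdot,c}(C\times C)\cong H^*(C)\otimes H^*_c(C)$ hold just before stating the proposition. Your reduction via Künneth to the tensor product of two classical Lefschetz pairings is exactly the natural argument given this setup, and your remarks on compact support, Stokes' theorem, and the finite-type hypothesis are all on point.
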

 
To explain the perturbation data for proper flow categories, we need to interpret the diagonal $\Delta\subset C\times C$ as a cohomology class and represent the cohomology class two different ways, namely one as Thom classes which approximate the Dirac current of the diagonal, the other is a trace term. Let $\alpha \in \Omega^*_{\cdot,c}(C\times C)$, then $\supp(\alpha)\cap \Delta$ is compact, hence $\int_{\Delta} \alpha$ is well-defined. Moreover, for $\alpha \in \Omega^*_{\cdot,c}(C\times C)$, we have $\int_{\Delta} \rd \alpha = 0$ by Stokes' theorem. Therefore $\Delta$ determines a linear function $[\Delta]$ on $H^*_{\cdot,c}(C\times C)$. It particular, $[\Delta]$ can be represented by a cohomology class in $H^*_{c,\cdot}(C\times C)$ by Proposition \ref{prop:nondeg}. Since $C$ is of finite type, both $H^*(C)$ and $H^*_c(C)$ are finite dimensional. Let $\{\theta_a \in\Omega^*_c(C) \}_{1\le a \le \dim H^*_c(C)}$ be representatives of a basis of $H^*_c(C)$ in $\Omega^*_c(C)$ and $\{\theta^*_a \in\Omega^*(C) \}_{1\le a \le \dim H^*(C)}$ be representatives of a basis of $H^*(C)$ in $\Omega^*(C)$, such that $\la\theta^*_a, \theta_b \ra = \delta_{ab}$. The following proposition is proven by the same argument in Proposition \ref{prop:cohomologus}.

\begin{proposition}\label{prop:trace}
	$\sum_a \pi_1^*\theta_a \wedge \pi_2^*\theta^*_a \in \Omega^*_{c,\cdot}(C\times C)$ represents $[\Delta]$, i.e.\  for any closed form $\alpha \in \Omega^*_{\cdot,c}(C \times C)$ we have
	$$\int_{C\times C} \alpha \wedge \left(\sum_a \pi_1^*\theta_a \wedge \pi_2^*\theta^*_a\right) = \int_\Delta \alpha.$$
\end{proposition}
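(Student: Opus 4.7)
The plan is to mimic the argument of Proposition \ref{prop:cohomologus}, adapted to the non-compact setting by keeping careful track of support conditions. Both sides of the claimed identity are linear functionals on closed forms in $\Omega^*_{\cdot,c}(C\times C)$. The left-hand side descends to $H^*_{\cdot,c}(C\times C)$ because $\sum_a \pi_1^*\theta_a \wedge \pi_2^*\theta_a^*$ is closed and lies in $\Omega^*_{c,\cdot}(C\times C)$, so Stokes' theorem on $C\times C$ annihilates exact $\alpha$ (the support condition ensures the integrand is compactly supported). The right-hand side descends to $H^*_{\cdot,c}(C\times C)$ because for $\alpha\in \Omega^*_{\cdot,c}(C\times C)$ the intersection $\supp(\alpha)\cap \Delta$ is compact (its projection to the second factor lies in a fixed compact set), so $\int_\Delta \rd\alpha = 0$ by Stokes' theorem on $\Delta\cong C$.

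Thus it suffices to verify the identity on a basis of $H^*_{\cdot,c}(C\times C)$. Since each component of $C$ has finite type, the K\"unneth-type decomposition gives $H^*_{\cdot,c}(C\times C)\cong H^*(C)\otimes H^*_c(C)$, and a basis is represented by forms $\alpha = \pi_1^*\theta^*_c \wedge \pi_2^*\theta_d$. Note these representatives do lie in $\Omega^*_{\cdot,c}(C\times C)$, since $\theta_d$ has compact support in $C$ so $\supp(\pi_2^*\theta_d)\subset C\times \supp(\theta_d)$.

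On such a test form the right-hand side is $\int_\Delta \pi_1^*\theta^*_c\wedge \pi_2^*\theta_d = \int_C \theta^*_c\wedge\theta_d$, which is $0$ unless the degrees match and equals $\la\theta^*_c,\theta_d\ra = \delta_{cd}$ up to the sign $(-1)^{\dim C\cdot |\theta_d|}$ fixed by the pairing convention. The left-hand side reduces, by commuting wedge factors (picking up signs depending on $|\theta_{c}|,|\theta_d|,|\theta_a|,|\theta_a^*|$), and by Fubini-like separation into single-factor integrals on $C$, to a sum over $a$ of products $\left(\int_C\theta^*_c\wedge\theta_a\right)\left(\int_C\theta^*_a\wedge\theta_d\right)$. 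Each factor is a Kronecker $\delta$ (times the same conventional sign), so the whole sum collapses to a single surviving term when $c=d$ and vanishes otherwise. A careful repetition of the sign computation in Proposition \ref{prop:cohomologus} shows the two sides agree exactly, not merely up to $\pm 1$.

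The only step that is not a direct transcription of the compact case is the K\"unneth-type identification $H^*_{\cdot,c}(C\times C)\cong H^*(C)\otimes H^*_c(C)$ for finite-type $C$; this is the main obstacle to address explicitly, and it follows from the standard mixed-support K\"unneth theorem for manifolds admitting a finite good cover (which is guaranteed by Definition \ref{def:finite}). Once this is in place the sign bookkeeping is identical to the proof of Proposition \ref{prop:cohomologus}, and no new analytic input is required.
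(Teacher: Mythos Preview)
Your proposal is correct and follows essentially the same approach as the paper, which simply says the proposition ``is proven by the same argument in Proposition \ref{prop:cohomologus}.'' You have correctly identified the one extra ingredient needed in the noncompact setting, namely the K\"unneth identification $H^*_{\cdot,c}(C\times C)\cong H^*(C)\otimes H^*_c(C)$, which the paper records just before Proposition \ref{prop:nondeg}.
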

The Dirac current $\delta$ of the diagonal $\Delta$ and any of its approximations given in \eqref{eqn:delta} are not in $\Omega^*_{c,\cdot}(C\times C)$. To overcome the problem, we need to perturb $\Delta$ to $\Delta^n$, such that $\Delta^n \subset K\times C$ for a compact set $K$ and $\Delta^n$ approximates $\Delta$ in a suitable sense. In order to do this, we write $C$ as $M \cup (0,1)\times \partial M$ for a manifold $M$ with boundary $\partial M$. We fix a smooth non-decreasing function $f:\R \to \R_+$ such that $f(x) = 0$ for $x\le 0$ and $f(x) < x$ for $x>0$. Then we define $\Delta^n \subset C\times C$ to be
$$\Delta^n:= \Delta_M \cup \Delta_{(0,1-\frac{1}{n})\times \partial M} \cup \widetilde{\Delta}^n,$$
where 
$$[1-\frac{1}{n},1)\times \partial M \times [1-\frac{1}{n},1)\times \partial M \supset \widetilde{\Delta}^n:= \left\{(1-\frac{1}{n}+f(r),x,1-\frac{1}{n}+r,x)\left|r\in[0,\frac{1}{n}),x\in \partial M \right.\right\}.$$
\begin{figure}[H]
    \centering
    	\begin{tikzpicture}
    	\draw (-1,0) to (2,0) to (2,-3);
    	\draw[dashed] (0,-2) to (2,0);
    	\draw (0,-2) to (0.5,-1.5) to [out=45, in=250] (1.6,0);
    	\node at (1.2,-0.2) {$\Delta^n$};
    	\node at (1.8,-0.6) {$\Delta$};
    	\end{tikzpicture}
    \caption{Graph of $\Delta^n$ near the boundary}
\end{figure}

\begin{proposition}
	$\int_{\Delta^n}$ defines the same map on $H^*_{\cdot,c}(C\times C)$ for all $n\in \N$ and equals to $\int_{\Delta}$.
\end{proposition}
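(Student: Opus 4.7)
The plan is to reduce the statement to a single Stokes-type identity on a chain that interpolates between $\Delta$ and $\Delta_n$, then handle the two non-compact ends with care. The core observation is that the second projection $\pi_2\colon \Delta_n \to C$ is a diffeomorphism onto $C$ (it is the identity on each piece of the decomposition), so for any $\alpha\in\Omega^*_{\cdot,c}(C\times C)$ with $\supp\alpha\subset C\times K$, the intersection $\Delta_n\cap\supp\alpha$ is homeomorphic to a closed subset of $K$ and hence compact. This immediately shows $\int_{\Delta_n}\alpha$ is finite, and since $\Delta_n$ has no boundary as a smooth submanifold (the pieces glue smoothly at $r=0$ because $f(0)=0$), Stokes' theorem on compact pieces gives $\int_{\Delta_n}\rd\beta=0$ for $\beta\in\Omega^*_{\cdot,c}(C\times C)$, so $\int_{\Delta_n}$ descends to $H^*_{\cdot,c}(C\times C)$.

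The main content is showing $\int_{\Delta_n}\alpha=\int_\Delta \alpha$ for every closed $\alpha\in\Omega^*_{\cdot,c}(C\times C)$. Since $\Delta$ and $\Delta_n$ coincide on $M\cup(0,1-\tfrac1n)\times\partial M$ (in the second factor), only the collar piece contributes to the difference, and the identity reduces to
\begin{equation*}
\int_{\widetilde{\Delta}_n}\alpha - \int_{\Delta_{[1-\frac1n,1)\times\partial M}}\alpha = 0.
\end{equation*}
Parameterizing both sides by $(r,x)\in[0,\tfrac1n)\times\partial M$, I will build the obvious interpolating chain
\begin{equation*}
H\colon [0,1]\times[0,\tfrac1n)\times\partial M \longrightarrow C\times C, \qquad H(t,r,x) = \bigl(1-\tfrac1n+tr+(1-t)f(r),\,x,\,1-\tfrac1n+r,\,x\bigr),
\end{equation*}
so that $H|_{t=0}$ parameterizes $\widetilde{\Delta}_n$ and $H|_{t=1}$ parameterizes the diagonal piece. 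Applying the fiber-integration/Cartan identity, $G_1^*\alpha-G_0^*\alpha=\rd\eta+H^*\rd\alpha=\rd\eta$ on $[0,\tfrac1n)\times\partial M$ where $\eta$ is obtained by integrating $H^*\alpha$ over the $t$-direction.

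The final step is to apply Stokes' theorem on $[0,\tfrac1n)\times\partial M$ to $\rd\eta$, and show that the two boundary contributions vanish. At the face $r=0$, the map $H(t,0,x)=(1-\tfrac1n,x,1-\tfrac1n,x)$ is independent of $t$, so $H^*\alpha$ has no $\rd t$ component there and $\eta|_{r=0}=0$; at the face $r\to\tfrac1n$, the second factor approaches the ideal boundary at infinity in $C$, which lies outside the compact set $K$ containing $\supp\alpha$ in the second factor, so $\eta$ vanishes near $r=\tfrac1n$ as well. This gives $\int_{\Delta_n}\alpha=\int_\Delta\alpha$ for closed compactly-supported-in-second-factor forms, and the statement for all $n$ (including the equality between different $\Delta_n$'s) follows.

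The step I expect to need the most care is not the Stokes computation itself but the verification that $H$ and the associated chain behave well enough to apply fiber integration despite the degeneracy at $r=0$ (where $H(\cdot,0,x)$ collapses the $t$-interval) and the non-compactness at $r\to\tfrac1n$. The $r=0$ degeneracy is harmless because it produces a measure-zero face on which the fiber integral is automatically zero, but writing this cleanly (e.g. by treating $H_*$ as a current, or by a small $\varepsilon$-truncation and passage to the limit using the bound $\supp\alpha\subset C\times K$) is the main bookkeeping obstacle.
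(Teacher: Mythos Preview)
Your proof is correct, but it takes a considerably more laborious route than the paper's. The paper observes that since $H^*_{\cdot,c}(C\times C)=H^*(C)\otimes H^*_c(C)$ and every class in $H^*_c(C)$ has a representative supported in the compact core $M\subset C=M\cup(0,1)\times\partial M$, one may simply choose a closed representative $\alpha$ with $\supp\alpha\subset C\times M$. On this region $\Delta_n\cap(C\times M)=\Delta\cap(C\times M)$ for every $n$, so $\int_{\Delta_n}\alpha=\int_\Delta\alpha$ immediately, with no homotopy or Stokes argument needed.

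Your approach instead works with an \emph{arbitrary} closed $\alpha\in\Omega^*_{\cdot,c}(C\times C)$ and builds an explicit chain homotopy between the collar pieces of $\Delta$ and $\Delta_n$. This is a valid and self-contained argument---your handling of the two boundary faces ($r=0$ collapsing, $r\to\tfrac1n$ escaping the support) is correct---but it does more work than necessary precisely because you do not exploit the freedom to move $\alpha$ within its cohomology class. The paper's argument buys brevity by pushing the support of $\alpha$ away from the region where $\Delta_n$ and $\Delta$ differ; yours buys a pointwise identity valid for every representative, at the cost of the bookkeeping you flag at the end.
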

\begin{proof}
	The claim follows from the fact that any class in $H^*_c(C)$ has a representative supported in $M \subset C = M\cup (0,1)\times \partial M$ and $\Delta_n \cap (C\times M) = \Delta \cap (C\times M)$ for all $n$.
\end{proof}

The Thom class of $\Delta^n$ constructed from \eqref{eqn:delta} gives a form $\delta^n \in \Omega^*_{c,\cdot}(C\times C)$ (in a sufficiently small tubular neighborhood of $\Delta^n$) representing the map $\int_{\Delta^n}=\int_{\Delta}$ through the non-degenerate bilinear form in Proposition \ref{prop:nondeg}. As a consequence of Proposition \ref{prop:nondeg} and \ref{prop:trace}, we have $\delta^n$ and $\sum_a \pi_1^*\theta_a \wedge \pi_2^*\theta^*_a$ are cohomologous in $\Omega^*_{c,\cdot}(C\times C)$, i.e.\ we can find primitives $f^n$ such that 
$$\rd f^n = \delta^n - \sum_a \pi_1^*\theta_a \wedge \pi_2^*\theta^*_a.$$
The following proposition shows that we can choose $\delta^n$ and $f^n$ carefully, such that the relation  \eqref{eqn:diff} holds asymptotically. Such result is crucial for setting up the convergence results and follows directly from the construction. 
\begin{proposition}\label{prop:nice}
	Fix a tubular neighborhood of $\Delta\subset C\times C$, then there exist Thom classes $\delta^n$ of $\Delta^n$ and primitives $f^n$ such that 
	$f^n-f^m = (\rho_n-\rho_m)\psi$ on $C\times (M\cup (0,1-\frac{2}{\min(n,m)})\times \partial M)$.
\end{proposition}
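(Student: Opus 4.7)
The plan is to construct $\delta^n$ so that it coincides with the standard formula $\rd(\rho_n\psi)$ on the region where $\Delta^n$ agrees with $\Delta$, and to construct $f^n$ as a cutoff-truncation of $\rho_n\psi$ plus a coherent correction.

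First I would set $R_n := C\times(M\cup(0,1-\tfrac{1}{n})\times \partial M)$---the region over which $\Delta^n$ coincides with $\Delta$. On $R_n\cap T_\Delta$ I declare $\delta^n := \rd(\rho_n\psi)$, and near the perturbation $\widetilde{\Delta}^n$ I extend to a genuine Thom class of $\Delta^n$ using an angular form on a tubular neighborhood of $\widetilde{\Delta}^n$ glued smoothly to $T_\Delta$. Since $\Delta^n \subset K_n\times C$ for compact $K_n$, the result lies in $\Omega^*_{c,\cdot}(C\times C)$. The elementary inequality $\tfrac{2}{\min(n,m)}\geq \tfrac{1}{n},\,\tfrac{1}{m}$ yields the key inclusion
\[
G_{n,m} := C\times\bigl(M\cup(0,1-\tfrac{2}{\min(n,m)})\times\partial M\bigr)\;\subset\; R_n\cap R_m,
\]
so that $\delta^n - \delta^m = \rd\bigl((\rho_n-\rho_m)\psi\bigr)$ on $G_{n,m}$.

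For $f^n$, I would choose a cutoff $\chi_n:C\to [0,1]$ with $\chi_n\equiv 1$ on $M\cup(0,1-\tfrac{1}{n})\times\partial M$ and $\chi_n\equiv 0$ near $\{1\}\times\partial M$, and set $\Psi^n := \pi_2^*\chi_n\cdot \rho_n\psi \in \Omega^*_{c,\cdot}(C\times C)$. Support-compactness in the first factor follows since $\supp(\rho_n\psi)$ lies within tubular distance $\leq 1/n$ of $\Delta$ and $\supp\chi_n$ is compact. Writing $f^n = \Psi^n + g^n$ with $g^n\in\Omega^*_{c,\cdot}$, the defining equation $\rd f^n = \delta^n - \sum_a\pi_1^*\theta_a\wedge\pi_2^*\theta_a^*$ reduces on $R_n$ (where $\chi_n\equiv 1$ and $\rd\chi_n\equiv 0$) to $\rd g^n = -\sum_a\pi_1^*\theta_a\wedge\pi_2^*\theta_a^*$. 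On $G_{n,m}\subset R_n\cap R_m$, both cutoffs equal $1$ and are constant, so $\Psi^n - \Psi^m = (\rho_n - \rho_m)\psi$, and the desired identity reduces to arranging the coherence $g^n = g^m$ on $G_{n,m}$.

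The main obstacle is the construction of the coherent family $\{g^n\}$. Since $G_{n,m}$ can exceed any fixed $R_{N_0}$ as $\min(n,m)\to\infty$, a single-basepoint construction is insufficient. My resolution is to first choose any primitive $\widetilde{g}^n\in\Omega^*_{c,\cdot}$ satisfying the global equation $\rd \widetilde{g}^n = (\delta^n - \sum) - \rd\Psi^n$---these exist because the right-hand side is exact in $\Omega^*_{c,\cdot}$ by the cohomological identity $[\delta^n] = [\Delta]$ established in the discussion preceding the proposition---and then to inductively adjust by closed forms in $\Omega^*_{c,\cdot}$ supported outside the relevant overlap regions to enforce $g^n = g^m$ on each $G_{n,m}$. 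The coherence obstruction reduces to showing that $(\widetilde{g}^n - \widetilde{g}^m)|_{G_{n,m}}$ represents the trivial class in the appropriate relative cohomology, which follows from the same cohomological equality combined with a Mayer--Vietoris/partition-of-unity argument exploiting the nested structure of the cover $\{R_n\}$.
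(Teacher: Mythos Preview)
The paper offers no proof beyond the remark that the result ``follows directly from the construction,'' so there is no detailed argument to compare against; your overall strategy is reasonable but has two concrete gaps. First, $\Psi^n = \pi_2^*\chi_n\cdot\rho_n\psi$ is not a smooth form on $C\times C$: since $\rho_n(0)=-1$, the product $\rho_n\psi$ carries the same singularity as $-\psi$ along $\Delta$ (the angular form, pulled back from the sphere bundle to the punctured normal bundle, blows up at the zero section --- think of $\rd\theta$ on $\R^2\setminus\{0\}$). Hence the decomposition $f^n=\Psi^n+g^n$ with $g^n$ smooth is ill-posed. The remedy is to work only with the \emph{differences} $(\rho_n-\rho_m)\psi$, which are smooth because $\rho_n-\rho_m$ vanishes near $r=0$. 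Concretely, construct $f^n$ by induction on $n$: set $\widehat f^n := f^{n-1} + \pi_2^*\tau\cdot(\rho_n-\rho_{n-1})\psi$ for a cutoff $\tau$ equal to $1$ on a set slightly larger than $G_{n,n-1}$ and with compact support in the second factor, then correct by a primitive of the closed defect $\omega := (\delta^n-\sum_a\pi_1^*\theta_a\wedge\pi_2^*\theta_a^*)-\rd\widehat f^n$, which vanishes on that enlarged region. Telescoping (using $G_{n,m}\subset G_{k,k-1}$ for $m<k\le n$) then yields the relation on every $G_{n,m}$.

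Second, your coherence step is misphrased: adjusting by closed forms \emph{supported outside} $G_{n,m}$ cannot change $g^n$ on $G_{n,m}$ at all, so such adjustments can never enforce $g^n=g^m$ there. What you actually need (in your formulation) is a closed $\eta\in\Omega^*_{c,\cdot}(C\times C)$ whose restriction to $G_{n,n-1}$ equals the closed form $(\widetilde g^n - g^{n-1})|_{G_{n,n-1}}$; then $g^n:=\widetilde g^n-\eta$ agrees with $g^{n-1}$ there. Such an extension exists because the inclusion $G_{n,n-1}\hookrightarrow C\times C$ is a homotopy equivalence in the second factor and hence induces an isomorphism on $H^*_{c,\cdot}\cong H^*_c(C)\otimes H^*(-)$; the exact remainder is then handled by cutting off a primitive. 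In the inductive formulation above this is cleaner still: the defect $\omega$ is supported in the open collar $C\times(1-\tfrac{2}{n-1}-\epsilon,1)\times\partial M$, and since $\Delta^n$ and $\Delta^{n-1}$ are isotopic within that collar (both are graphs over the second factor there), $\omega$ is exact in $\Omega^*_{c,\cdot}$ of the collar, so one may choose the correcting primitive to vanish on $G_{n,n-1}$.
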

 Following the same idea in Definition \ref{def:def}, the bases $\{\theta_{i,a}\},\{\theta_{i,a}^*\}$ along with Thom classes $\delta_i^n$ and primitives $f^n_i$ in Proposition \ref{prop:nice} for each $C_i$ give defining data for a proper flow category. Next, we show the analogue of Lemma \ref{conv1} and \ref{conv2} hold for proper flow categories.
\begin{lemma}\label{lemma:conv5}
	Let $\cC$ be an oriented proper flow category. Given defining date defined above, then for every $\alpha \in \Omega^*(C_v),\gamma \in \Omega_c^*(C_{v+k})$, we have the following.
	\begin{enumerate}
		\item $\displaystyle \lim_{n\to \infty} \cM^{v,k}_{i_1,\ldots,i_r}[\alpha, f^{n}_{v+i_1},\ldots,\allowbreak f^{n}_{v+i_r}, \gamma]\in \R$ exists.
		\item $\displaystyle \lim_{n\to \infty}  \cM^{v,k}_{i_1,\ldots,i_r}[\alpha, f_{v+i_1}^{n},\ldots ,\delta^{n}_{v+i_p},\ldots, f_{v+i_r}^{n}, \gamma]
		= (-1)^* \lim_{n\to \infty}\cM^{v,k}_{i_1,\ldots,i_{p-1},\overline{i_p}, i_{p+1},\ldots, i_r}[\alpha, f_{v+i_1}^{n},\ldots, f_{v+i_r}^{n},\gamma],$
	where $*=(|\alpha|+m_{v,v+i_p})c_{v+i_p}$.
	\end{enumerate}
\end{lemma}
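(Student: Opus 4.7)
The plan is to reduce both statements to their compact-case counterparts, Lemmas \ref{conv1} and \ref{conv2}, by showing that the integrand is supported in a single compact subset $K\subset \cM^{s,k}_{i_1,\ldots,i_r}$ uniformly in $n$; once that is in hand, the arguments from Appendix \ref{conv} will transfer verbatim.

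The first step will be to establish this uniform compact support. Since $\gamma\in \Omega^*_c(C_{s+k})$ and $t_{s+i_r,s+k}:\cM_{s+i_r,s+k}\to C_{s+k}$ is proper, $t^*\gamma$ is supported in the compact set $t^{-1}_{s+i_r,s+k}(\supp\gamma)$. Using Proposition \ref{prop:nice}, I would arrange the defining data so that each $f^n_{s+i_j}\in \Omega^*_{c,\cdot}(C_{s+i_j}\times C_{s+i_j})$ satisfies $\supp(f^n_{s+i_j})\subset L_j\times C_{s+i_j}$ for a compact $L_j\subset C_{s+i_j}$ independent of $n$. The uniformity in $n$ is forced by two facts: $\sum_a \pi_1^*\theta_{s+i_j,a}\wedge \pi_2^*\theta^*_{s+i_j,a}$ already has compact support in the first factor (the $\theta_{s+i_j,a}$ lying in $\Omega^*_c$), and the differences $f^n-f^m=(\rho_n-\rho_m)\psi$ live in a fixed tubular neighborhood of the diagonal. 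Properness of the intermediate target maps then confines each $(t\times s)^*f^n_{s+i_j}$ to $t^{-1}(L_j)\times \cM_{s+i_j,s+i_{j+1}}$, and intersecting these conditions factor-by-factor along $\cM^{s,k}_{i_1,\ldots,i_r}$ cuts each $\cM_{s+i_{j-1},s+i_j}$ down to a compact set, yielding the required $K$.

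With uniform compact support in hand, the argument of Lemma \ref{conv1} will give (1): the differences $f^n-f^m=(\rho_n-\rho_m)\psi$ are supported in a shrinking tube about the diagonal, so the corresponding differences of successive terms in the sequence are controlled by a dominated-convergence estimate on $K$. For (2), on a tubular neighborhood of $\Delta_{s+i_p}$ I would apply the Thom-class identification from Lemma \ref{conv2}, which identifies the limit of $\cM^{s,k}_{i_1,\ldots,i_r}[\alpha,\ldots,\delta^n_{s+i_p},\ldots,\gamma]$ with the fiber-product integral up to the expected sign $(-1)^{(a+m_{s,s+i_p})c_{s+i_p}}$. The new wrinkle is that $\delta^n_{s+i_p}$ is a Thom class of the perturbed diagonal $\Delta^n_{s+i_p}$ rather than of $\Delta_{s+i_p}$, but these coincide on the preimage of any compact set in the first factor once $n$ is large enough, so the argument localizes onto a region where $\Delta^n = \Delta$ identically and the compact Thom-isomorphism computation applies.

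The main obstacle I anticipate is managing the two independent $n$-dependences simultaneously: $\delta^n_{s+i_p}$ concentrates onto a moving diagonal $\Delta^n_{s+i_p}$, while $f^n_{s+i_j}$ approximates the limiting current in a non-compact ambient space. Everything works because after localization to $K$ only the behavior of the forms on a fixed compact neighborhood of $\Delta_{s+i_p}$ is relevant, and on that set the defining-data relations reduce to the compact case. The careful bookkeeping of Proposition \ref{prop:nice}, together with the properness hypothesis on every target map, is precisely what makes this reduction rigorous; no new algebraic identities are needed beyond those already established in the compact setting.
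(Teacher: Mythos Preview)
Your overall strategy—localize the integrand to a fixed compact set and then invoke the arguments of Lemmas~\ref{conv1} and~\ref{conv2}—matches the paper's, but the step where you obtain uniform compact support has a gap. You claim the defining data can be arranged so that $\supp(f^n_{s+i_j})\subset L_j\times C_{s+i_j}$ with $L_j$ independent of $n$, justified by the difference formula $f^n-f^m=(\rho_n-\rho_m)\psi$. This fails for two reasons. First, by Proposition~\ref{prop:nice} that formula holds only on $C\times\bigl(M\cup(0,1-\tfrac{2}{\min(n,m)})\times\partial M\bigr)$, not globally, so it cannot be used to compare $f^n$ to a fixed $f^1$ on all of $C\times C$. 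Second, even where the formula does hold, the support of $(\rho_n-\rho_m)\psi$ is a tube about the diagonal $\Delta$, and the first-factor projection of such a tube is all of $C$, not a compact set. More fundamentally, the Thom classes $\delta^n$ are built on perturbed diagonals $\Delta^n$ whose first-factor projections exhaust $C$ as $n\to\infty$; hence $\rd f^n$, and therefore $f^n$, must have first-factor support that grows with $n$.

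The paper avoids this by working from the other side: instead of bounding the first-factor support of $f^n$, it bounds the second-factor region on which $f^n$ is actually seen by the integrand. For $r=1$ this is $K:=s_{s+i,s+k}\bigl(t_{s+i,s+k}^{-1}(\supp\gamma)\bigr)$, compact because $t$ is proper and $s$ is continuous; on $C\times K$, Proposition~\ref{prop:nice} supplies the difference formula for $n$ large, and the proof of Lemma~\ref{conv1} then transfers verbatim. The general $r$ case is handled by iterating this from the right. The direction matters: properness is assumed only for the target maps, so compactness has to propagate leftwards through $t^{-1}$ applied to a compact set in the \emph{second} factor, not via an a~priori first-factor bound on the $f^n$.
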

\begin{proof}
	Since the target map $t$ is proper, we have $t^*\gamma \in \Omega^*_c(\cM_{v+i_r,v+k})$ and $(t\times s)^*f^n_{v+i_j} \in \Omega_{c,\cdot}^*(\cM_{v+i_{j-1},v+i_j}\times \cM_{v+i_j,v+i_{j+1}})$. Therefore $s^*\alpha \wedge (t\times s)^* f^n_{v+i_1}\wedge \ldots \wedge (t\times s)^*f^n_{v+i_r}\wedge t^*\gamma \in \Omega^*_c(\cM^{v,k}_{i_1,\ldots,i_r})$. Hence $\cM^{v,k}_{i_1,\ldots,i_r}[\alpha, f^{n}_{v+i_1},\ldots,\allowbreak f^{n}_{v+i_r}, \gamma]$ makes sense. For the convergence, take $\cM^{v,k}_i[\alpha, f^n_{v+i},\gamma]$ as an example. Let $K$ denote the subset $s_{v+i,v+k}(t_{v+i,v+k}^{-1}(\supp(\gamma)))$ of $C_{v+i}$, then we only need $f^n_{v+i}$ for its value on $C_{v+i}\times K$ to determine $\cM^{v,k}_i[\alpha, f^n_{v+i},\gamma]$. By the properness, we have $K$ is compact. We write $C_{v+i} = M\cup (0,1)\times \partial M$. Therefore for $n$ big enough, we have $K\subset M\cup (0,1-\frac{2}{n})\times \partial M$. Hence for $n,m$ big enough, the difference $f^n_{v+i}-f^m_{v+i}$ on $C_{v+i}\times K$ is prescribed in Proposition \ref{prop:nice}. Hence the argument in the proof of Lemma \ref{conv1} can be applied to prove the convergence. Similarly,  we have $ \lim_{n\to \infty} \cM^{v,k}_{i_1,\ldots,i_r}[\alpha, f^{n}_{v+i_1},\ldots,\allowbreak f^{n}_{v+i_r}, \gamma]$ exists. The second claim follows from a similar argument and the proof of Lemma \ref{conv2}.   
\end{proof}

Similarly to Definition \ref{def:prop}, we have proper flow morphisms, proper flow premorphisms and proper flow homotopies if we require the target maps to be proper. With Lemma \ref{lemma:conv5}, all results in \S \ref{morsebott} can be generalized to proper flow categories with the same proof.

\subsection{Flexible defining data}\label{ss:reduction}
The following discussion works for proper flow categories with orientation structures. However, for the simplicity of notation, we will only work with oriented flow categories. Let $\cC$ be an oriented flow category. From the discussion in \S \ref{morsebott}, the essential property we need for the construction is the following relation
\begin{equation}\label{eqn:key}
\delta_i^n=\rd f_i^n+\sum_a \pi_1^*\theta_{i,a}\wedge \pi_2^*\theta_{i,a}^*.
\end{equation}
In fact, it is not necessary to construct our cochain complex from the cohomology of the critical manifolds. We only need to find differential forms $\{\theta_{i,a}\},\{\theta^*_{i,a}\}$ such that \eqref{eqn:key} holds and they are dual bases in the sense that $\la \theta^*_{i,a}, \theta_{i,b} \ra_i=\delta_{ab}$. Such generalization provides some flexibility in applications, for example one can use the generalized construction to prove Gysin exact sequences for sphere bundles over flow categories. 

\begin{definition}\label{def:reduction}
	For an oriented closed manifold $C$, a reduction of $\Omega^*(C)$ is a pair $(A,A^*)$, such that the following holds.
	\begin{enumerate}
		\item $A,A^*$ are finite dimensional subspaces of $\Omega^*(C)$ with $\dim A=\dim A^*$.
		\item  There exists a basis $\{\theta_{a}\}$ of $A$ and a basis  $\{\theta_a^*\} $ of $A^*$, such that $\langle\theta_a^*, \theta_b \rangle=\delta_{ab}$.
		 \item $\sum_a\pi_1^* \theta_a\wedge \pi_2^* \theta_a^*$ is cohomologous to the Thom class $\delta^n$.
	\end{enumerate}
\end{definition}
\begin{example}
	In the construction of the minimal Morse-Bott cochain complex on an oriented flow category, we use the reduction $A=A^*$ equals to the image of chosen quasi-embedding $H^*(C)\to \Omega^*(C)$. 
\end{example}
Using Definition \ref{def:reduction}, we can generalize defining data to the following data: (1) a reduction $(A_i,A_i^*)$ for $C_i$, (2) a family of Thom classes $\delta^n_i$ converging to the Dirac current of the diagonal $\Delta_i$ and (3) primitives $f^i_n$ such that \eqref{eqn:key} and \eqref{eqn:diff} holds. We will call it defining data with reductions. 

Let $\cC$ be an oriented flow category. Given a defining data with reductions $A$ we define

\begin{equation}\label{redcochain}
\BC(\cC,A):=\varinjlim_{j\rightarrow -\infty}\prod_{i=j}^\infty A^*_i.\end{equation}
The differential is defined as $d_A=\prod_{i=0} d_{A,i}$, where
\begin{equation} \label{redd1}
d_{A,0} \alpha :=(-1)^{|\alpha|(c_v+1)+c_v}\sum_a(\int_{C_v}\rd\alpha \wedge\theta_{v,a}) \theta_{v,a}^* = (-1)^{c_v+|\alpha|}\sum_a \la \rd \alpha, \theta_{v,a} \ra \theta_{v,a}^*
\end{equation}
with $\rd$ the normal exterior differential and $\alpha\in A^*_v$. For $k\ge 1$ and $\gamma \in A_{v+k}$
\begin{equation}\label{redd2} \left\langle d_{A,k}\alpha,\gamma\right\rangle_{v+k} =  \lim_{n\rightarrow \infty}\sum_{\substack{r\ge 0 \\0=i_0<i_1<\ldots<i_r<k}} (-1)^{\star}\mathcal{M}^{v,k}_{i_1,\ldots,i_r} [\alpha, f_{v+i_1}^n,\ldots, f_{v+i_r}^n ,\gamma] 
\end{equation}
where $\star=\sum_{j=0}^r\ddagger(\mathcal{C},\alpha,i_j)$. 
\begin{remark}
\eqref{redd1} can be viewed as the composition of $\rd$ with the projection \eqref{eqn:pi} twisted by a sign. The extra sign could be eliminated if we use a suitable change of coordinates on $A^*_v$ (i.e.\ conjugate by an automorphism of $A^*_v$). Then the sign in \eqref{redd2} would be modified accordingly. The upshot is that there is no canonical orientation and sign convention, but different conventions typically differ by a ``change of coordinates".  
\end{remark}

One important feature of our construction is that the choices we make on the critical manifolds $C_i$, i.e.\ reductions, Thom classes and primitives $f^n_i$, are independent of the structures of the flow categories, flow morphisms or flow homotopies. 

\begin{example}\label{cascades}
    Now we can (heuristically) rephrase the perturbation data for the cascades construction as a reduction. Let $\cC =\{C_i,\cM_{i,j}\}$ be an oriented flow category. We neglect the difference between differential forms and currents as well as orientations and signs for now. For a Morse-Smale pair $(f_i,g_i)$ on a critical manifold $C_i$, let $A_i:=\{[S_x]\}_{x\in \Crit(f_i)}$ and $A^*_i:=\{[U_x]\}_{x\in \Crit(f_i)}$. Then by \cite{harvey2001finite}, we have
	$$[\Delta_i]-\sum_{x\in Cr(f_i)} [S_x][U_x]=d\lim_{t\to \infty}[\bigcup_{t'<t} \graph \phi^i_{t'}].$$
	And $[U_x]$ is the dual of $[S_x]$. Thus $\{A_i,A^*_i\}$ is a reduction\footnote{Note that the ``homotopy operator" $f_i^n$ in our construction might be different from the ``homotopy operator" $[\bigcup_{0<t<\infty} \graph \phi^i_{t}]$ in the cascade construction. However, the homotopy operator in our construction is irrelevant as long as the convergence results in \S \ref{morsebott} hold.}.
	
	One should be able to modify our construction to make the argument above rigorous. In particular, we need an extension of the space of differential forms to include $[S_x],[U_x]$ as well as the homotopy operator. However, such extension will depend on $\cM_{i,j}$, which explains various transversality requirements of the gradient flows of $f_i$ with $\cM_{i,j}$ in the cascades construction. 
\end{example}

In general, a reduction for manifolds of finite type with local systems is defined as follows.
\begin{definition}\label{def:properloc}
	For a manifold $C$ of finite type with a local system $o$, a reduction is a pair $(A,A^*)$, such that the following holds.
	\begin{enumerate}
		\item $A,A^*$ are finite dimensional subspaces of $\Omega_c^*(C,o\otimes \det C)$ and $\Omega^*(C,o^*)$ respectively, such that $\dim A=\dim A^*$.
		\item  There exists a basis $\{\theta_{a}\}$ of $A$ and a basis  $\{\theta_a^*\} $ of $A^*$, such that $\langle\theta_a^*, \theta_b \rangle=\delta_{ab}$.
		\item\label{eqn:thom} $\sum_a\pi_1^* \theta_a\wedge \pi_2^* \theta_a^*$ represents the same map as $\int_{\Delta}$ on $H^*(C,o^*)\otimes H^*_c(C,o\otimes \det C)$.
	\end{enumerate}
\end{definition}

Constructions in \S \ref{morsebott}  combined with results in \S \ref{sub:localsystem} and \S \ref{ss:prop} yield the following results by the identical proofs.
\begin{theorem}\label{thm:ult}
	The following  holds.
	\begin{enumerate}
		\item Let $\cC$ be a proper flow category with local systems and $A$ defining data with reductions. Then \eqref{redcochain}, \eqref{redd1} and \eqref{redd2} define a cochain complex $(\BC(\cC,A), d_A)$, and the homotopy type of $(\BC(\cC,A),d_A)$ is independent of the defining data.
		\item  Let $\cD$ be another proper flow category with local systems,  $B$ defining data with reductions for $\cD$, and $\fH$ a proper flow morphism from $\cC$ to $\cD$ with compatible local systems. Then \eqref{mordef} defines a cochain morphism  $\phi^H_{A,B}:(\BC(\cC,A),d_A)\to (\BC(\cC,B),d_B)$ and the homotopy type of $\phi^H_{A,B}$ is independent of the defining data.
		\item Let $\cE$ be another proper flow category with local systems, $C$ defining data with reductions for $\cE$ and $\fF$ a proper flow morphism from  $\mathcal{D}$ to $\mathcal{E}$ with compatible local systems. Assume $\fH$ and $\fF$ are composable, then $\phi^{F\circ H}_{A,C}$ and $\phi^F_{B,C}\circ \phi^H_{A,B}$ are homotopic.
		\item Let $\fH,\fF$ be two proper flow premorphisms from $\cC$ to $\cD$ with compatible local systems. Assume there exists a proper flow homotopy $\sY$ from $\fH$ to $\fF$ with compatible local systems, then $\phi^H_{A,B}$ is homotopic $\phi^F_{A,B}$.
	\end{enumerate}	
\end{theorem}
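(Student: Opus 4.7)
The plan is to observe that the three generalizations producing Theorem \ref{thm:ult} (flexible reductions, proper flow categories, local systems) are essentially orthogonal, each replacing one ingredient in the minimal construction of \S \ref{morsebott} without altering the algebraic skeleton of the proof. The proof I would give amounts to verifying that the ingredients of \S \ref{morsebott} remain well-defined and satisfy the same identities in the combined setting, and then invoking the arguments of Proposition \ref{dsquare}, Theorem \ref{Mor}, Theorem \ref{CompMor2}, and Theorem \ref{thm:homotopy} verbatim.

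The first step is to set up the combined defining data. For each $C_i$ of finite type with local system $o_i$, I would fix a reduction $(A_i,A_i^*)$ as in Definition \ref{def:properloc} together with a sequence of Thom-class representatives $\delta_i^n$ of $[\Delta_i]$ constructed from the perturbed diagonals $\Delta_i^n$ of \S \ref{ss:prop}, each lying in $\Omega^*_{c,\cdot}(C_i\times C_i, (o_i\otimes\det C_i)\boxtimes o_i^*)$, and primitives $f_i^n$ satisfying $\rd f_i^n=\delta_i^n-\sum_a \pi_1^*\theta_{i,a}\wedge\pi_2^*\theta_{i,a}^*$ with the asymptotic compatibility of Proposition \ref{prop:nice}. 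Using the isomorphism \eqref{eqn:identification} (which now involves local systems on both sides and uses the orientation structure from \S \ref{orientation}), the integrals $\cM^{s,k}_{i_1,\ldots,i_r}[\alpha,f^n_{s+i_1},\ldots,f^n_{s+i_r},\gamma]$ are well defined for $\alpha\in A_s^*$ and $\gamma\in A_{s+k}$, because $\gamma$ has compact support, the target map $t$ is proper, and each $(t\times s)^*f^n_{s+i_j}$ is compactly supported in the first factor.

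The second step is to establish the analogues of Lemma \ref{conv1} and Lemma \ref{conv2} in this combined setting. Here I would appeal directly to Lemma \ref{lemma:conv5}, whose proof (properness of target maps implies compact sets suffice to determine the integrals, combined with the asymptotic compatibility in Proposition \ref{prop:nice}) works verbatim whether or not local systems are present. With these two convergence statements in hand, the differential $d_A=\prod d_{A,k}$ defined by \eqref{redd1} and \eqref{redd2} makes sense, and the proof of Proposition \ref{dsquare} goes through unchanged: every step consists of either Stokes' theorem applied on a manifold $\cM^{s,k}_{i_1,\ldots,i_r}$ (whose boundary structure is unaffected by any of our three generalizations), the algebraic replacement $\delta^n_{s+i_p}=\rd f^n_{s+i_p}+\sum_a\pi_1^*\theta\wedge\pi_2^*\theta^*$ which is precisely the defining property of a reduction, or the limit identities in Lemma \ref{lemma:conv5}. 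The same remark applies to $\phi^H$, $\phi^{F\circ H}-\phi^F\circ\phi^H$, and $\phi^F-\phi^H$: their proofs in Appendix \ref{proof} are purely combinatorial rearrangements of integrals of these types.

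The main obstacle — and the only part that genuinely requires checking rather than invocation — is bookkeeping of signs and local-system identifications in the iterative ``replace a fiber product by a Cartesian product with a Thom class, then rewrite as $\rd f^n$ plus a trace term'' step described in Remark \ref{rmk:replace}. In the combined setting one must follow a form $s^*\alpha\wedge(t\times s)^*f^n_*\wedge\cdots\wedge t^*\gamma$ which is a section of a twisted determinant bundle, and verify that every identification $\psi_{*,*}$ of \eqref{eqn:identification}, every isomorphism $f$ from \eqref{eqn:localcon} appearing in Definition \ref{def:local}, and every Koszul sign from \eqref{eqn:switch} combine in a way compatible with the orientation conventions already fixed in Definition \ref{oridef}. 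I would handle this once for $d_A^2=0$ to calibrate the sign conventions, at which point the analogous identities for $\phi^H$, for compositions, and for homotopies follow by the same reorganization that worked in \S \ref{morsebott}, since the underlying moduli spaces $\cH_{i,j}$, $\cF_{j,k}$, and $\cK_{i,j}$ carry compatible orientation structures and local systems by the propositions of \S \ref{s5}. The independence of defining data is then proved, as in the unperturbed case, by applying the same machinery to the identity flow morphism $\fI$, exactly as in the proof of Theorem \ref{thm:main} following Lemma \ref{identitymor}.
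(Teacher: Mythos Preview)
Your proposal is correct and matches the paper's approach exactly: the paper does not give a separate proof of this theorem but simply states it as the combination of Theorem \ref{thm:general} with the results of \S \ref{sub:localsystem} and \S \ref{ss:prop}, which is precisely the orthogonality-of-generalizations argument you have spelled out in detail.
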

\begin{remark}
	When $\cC$ is a single manifold $C$, let $(A,A^*)$ be a reduction. Then the independence result in Theorem \ref{thm:ult} shows that the cohomology of $(A^*, d_{A,0})$ is $H^*(C,o^*)$. In particular, $\dim A = \dim A^* \ge \dim H^*(C,o^*)$. 
\end{remark}

We end this subsection by a general way of constructing a reduction (but not all reductions arise in such way).

\begin{proposition}\label{prop:reduction}
        Let $C$ be a manifold of finite type with a local system $o$, assume $A,A^*$ be two finite dimensional subspaces of $\Omega_c^*(C,o\otimes \det C)$ and $\Omega^*(C,o^*)$ respectively. If $\rd$ is closed on both $A,A^*$, the pairing $A^*\otimes A \to \R, (\alpha,\beta)\mapsto (-1)^{\dim C\cdot |\beta|}\int_{C}\alpha \wedge \beta$ is non-degenerate, and both $A\hookrightarrow \Omega^*(C,o\otimes \det C), A^*\hookrightarrow \Omega^*(C,o^*)$ induce surjection on cohomology, then $(A,A^*)$ is a reduction.
\end{proposition}
\begin{proof}
Let $\{\theta_a\}$ be a basis of $A$, and $\{\theta_a^*\}$ the dual basis under the pairing. It remains to verify \eqref{eqn:thom} of Definition \ref{def:properloc}. We first claim that $T:=\sum_{a}\pi_1^*\theta_a\wedge \pi_2^*\theta_a^*$ is closed. By our assumption that $A,A^*$ are closed under $\rd$, we have $\rd T\in \pi_1^*A \wedge \pi_2^*A^*\subset \Omega^*_{c,\cdot}(C\times C, (o\otimes \det C)\boxtimes o^*)$. Moreover, the pairing on $(\pi_1^*A\wedge \pi_2^*A^*)\otimes (\pi_1^*A^*\wedge \pi_2^*A)$ from integration is non-degenerate by the non-degeneracy of the paring on $A^*\otimes A$. Therefore to show $\rd T=0$, it is sufficient to prove that for any $\theta^*_p\in A^*,\theta_q \in A$, we have
$$\int_{C\times C} \rd T \wedge \pi_1^*\theta^*_p \wedge \pi_2^*\theta_q=0.$$
Hence we compute
\begin{eqnarray*}
\int_{C\times C} \rd T \wedge \pi_1^*\theta^*_p\wedge \pi_2^*\theta_q & = & \int_{C\times C} \left(\sum_{a} \pi_1^* \rd\theta_a \wedge \pi_2^*\theta^*_a + (-1)^{|\theta_a|}\pi_1^*\theta_a \wedge \pi_2^*\rd \theta^*_a\right) \wedge  \pi_1^*\theta^*_p\wedge \pi_2^*\theta_q \\
& = & (-1)^{|\theta_q^*|\cdot |\theta_p^*|+\dim C \cdot |\theta_q|}\int_{C} \rd \theta_q\wedge \theta_p^* + \int_{C} \rd \theta_p^*\wedge \theta_q 
\end{eqnarray*}
Since the only case where the above integration is non-zero is when $|\theta_q|+|\theta^*_p|=\dim C-1$, then the above integration is $\int_{C}\rd (\theta_p^*\wedge \theta_q)=0$. As a consequence $T$ is closed. Since $A\hookrightarrow \Omega^*(C,o\otimes \det C), A^*\hookrightarrow \Omega^*(C,o^*)$ induce surjections on cohomology, we have every class $H^*(C,o^*)\otimes H_c^*(C,o\otimes \det C)$ can be represented by an element in $\pi_1^*A\wedge\pi_2^* A$. Then by the same argument in Proposition \ref{prop:cohomologus}, we have $T$ represents the diagonal. Hence  $(A,A^*)$ is a reduction.
\end{proof}

\subsubsection{Gysin sequences}\label{ss:Gysin} Let $C$ be a manifold and $\pi:E\to C$ an oriented sphere bundle over $C$ with fiber $S^k$. Then we have an exact sequence \cite[\S 14]{bott2013differential}
$$\ldots \to H^*(C) \stackrel{\pi^*}{\to} H^*(E) \stackrel{\pi_*}{\to} H^{*-k}(C) \stackrel{\wedge e}{\to} H^{*+1}(C) \to \ldots,$$
where $e$ is the Euler class of $E$. In this part, we generalize it to the setting of flow categories. Such a construction plays an important role in proving the uniqueness of the cohomology ring of exact symplectic fillings of a flexibly fillable contact manifold in \cite{ring}.

\begin{definition}
	Let $\cC$ be an oriented flow category. A $k$-sphere bundle over $\cC$ is a functor $\pi: \cE\to\cC$, such that $\pi$ maps $E_i$ to $C_i$ and $\cM^E_{i,j}$ to $\cM^C_{i,j}$, both $\pi: E_i\to C_i$ and $\pi: \cM^E_{i,j}\to \cM^C_{i,j}$ are $k$-sphere bundles, $s^E_{i,j},t^E_{i,j}$ are bundle maps covering $s_{i,j},t_{i,j}$. A $k$-sphere bundle $\pi:\cE \to \cC$ is said to be oriented iff $\pi:E_i\to C_i$ are oriented sphere bundles, and there is an orientation on each bundle $\pi:\cM^E_{i,j}\to \cM^C_{i,j}$ with both bundle maps $s^E_{i,j},t^E_{i,j}$ preserving the orientation. 
\end{definition}

\begin{proposition}\label{prop:spherori}
	Let $\pi:\cE\to \cC$ be an oriented $k$-sphere bundle. Then $\cE$ is oriented using the following convention, 
	$$[E_i]=[C_i][S^k], \quad [\cM^E_{i,j}] = (-1)^{k}[\cM^C_{i,j}][S^k].$$
\end{proposition}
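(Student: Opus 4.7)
The plan is to verify each of the three defining conditions of Definition \ref{oridef} for the oriented flow category $\cE$, with the prescribed orientations $[E_i]=[C_i][S^k]$ and $[\cM^E_{i,j}]=(-1)^k[\cM^C_{i,j}][S^k]$. The bundle structure $\pi:\cE\to\cC$ is what lets us reduce each condition to the corresponding identity on $\cC$ combined with a product calculation on the fiber $S^k$; hence the strategy is essentially local. I will work in a local trivialization $U\times S^k$ of each sphere bundle in sight, in which every map $s^E,t^E,m^E$ becomes a product map covering the corresponding structure map on $\cC$.

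First I verify condition (1), the normal-bundle convention for the diagonal. The diagonal $\Delta^E_i\subset E_i\times E_i$ lies inside the fiber product $E_i\times_{C_i}E_i$, and the normal bundle $N^E_i$ splits as $(\pi\times\pi)^*N^C_i\oplus N^{S^k}$, where $N^{S^k}$ is the normal bundle of the diagonal in $S^k\times S^k$. Applying the orientation convention \eqref{oricri} on each factor and keeping track of the transpositions needed to move the $S^k$-factors into position gives
\[
[N^E_i][\Delta^E_i]=(-1)^{(c_i+k)^2}[E_i][E_i],
\]
after using $[E_i]=[C_i][S^k]$ and collecting the signs arising from commuting $[S^k]$ past $[C_i]$, $[N^C_i]$ and $[N^{S^k}]$. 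The sign factor $(-1)^k$ assigned to $[\cM^E_{i,j}]$ plays no role here.

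Next, for condition (2), I unpack the fiber product $\cM^E_{i,j}\times_{E_j}\cM^E_{j,k}$. Since $t^E$ and $s^E$ cover $t$ and $s$ and preserve the sphere-bundle orientation, this fiber product is itself an $S^k$-bundle over $\cM^C_{i,j}\times_{C_j}\cM^C_{j,k}$, and the pullback of the normal bundle $N^E_j$ under $t^E\times s^E$ factors through $(\pi\times\pi)^*N^C_j$. Using condition (2) for the oriented flow category $\cC$, applying the chosen orientation $[\cM^E_{\star,\star}]=(-1)^k[\cM^C_{\star,\star}][S^k]$ to both sides, and commuting $[S^k]$ past $[\cM^C_{j,k}]$ yields the desired sign $(-1)^{(c_j+k)m^E_{i,j}}$; the two factors of $(-1)^k$ on the left combine with the sign from moving one $[S^k]$ past $[\cM^C_{j,k}]$ to give the correct parity, which one verifies by writing $m^E_{i,j}=m^C_{i,j}+k$ and $c_j+k=\dim E_j$.

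Finally, for condition (3), observe that the composition map $m^E:\cM^E_{i,j}\times_{E_j}\cM^E_{j,k}\to\cM^E_{i,k}$ is a bundle map over $m^C$, so it identifies an open dense part of $\partial\cM^E_{i,k}$ with $\bigsqcup_j \cM^E_{i,j}\times_{E_j}\cM^E_{j,k}$ fiberwise. Taking the boundary of $[\cM^E_{i,k}]=(-1)^k[\cM^C_{i,k}][S^k]$, only the first factor carries boundary (since $S^k$ is closed), so
\[
\partial[\cM^E_{i,k}]=(-1)^k\,\partial[\cM^C_{i,k}]\,[S^k]=(-1)^k\sum_j(-1)^{m^C_{i,j}}m\bigl([\cM^C_{i,j}\times_{C_j}\cM^C_{j,k}]\bigr)[S^k],
\]
and one then rewrites the right-hand side in terms of $[\cM^E_{i,j}\times_{E_j}\cM^E_{j,k}]$ using the orientation obtained in step~2; the bookkeeping shows the overall sign is $(-1)^{m^E_{i,j}}$, as required.

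The main obstacle is strictly the sign accounting, since the orientation of an $S^k$-bundle commutes past an orientation of degree $d$ with a sign $(-1)^{kd}$, and the three conditions all involve several such commutations combined with \eqref{oricri}. The prefactor $(-1)^k$ on $[\cM^E_{i,j}]$ is chosen precisely to make these signs coherent: without it, the fiber product condition (2) would fail to reproduce $(-1)^{(c_j+k)m^E_{i,j}}$. Once the local trivialization reduction is in place, the verification is mechanical.
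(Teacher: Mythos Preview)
Your proposal is correct and takes essentially the same approach as the paper. The paper's proof is simply the one-line reference ``It is proven in Proposition~\ref{prod}'': since the orientation conditions of Definition~\ref{oridef} are local, and an oriented sphere bundle is locally a product with orientation-preserving transition maps, the verification reduces to the product case $\cC\times B$ with $\dim B=k$, which is exactly the computation you carry out (and which the paper does in the proof of Proposition~\ref{prod}).
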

\begin{proof}
	It is proven in Proposition \ref{prod}.
\end{proof}

\begin{theorem}\label{thm:gysin}
	Let $\pi:\cE \to \cC$ be an oriented $k$-sphere bundle. There exist flow morphisms $\Pi^*:\cC \Rightarrow \cE, \Pi_*:\cE \Rightarrow \cC$ and defining data $\Theta, \Xi$ for $\cC, \cE$ respectively (where $\Theta$ is minimal but $\Xi$ is defining data with reductions), such that we have a short exact sequence,
	$$0\to	\BC(\cC,\Theta) \stackrel{\phi^{\Pi^*}}{\longrightarrow}  \BC(\cE,\Xi) \stackrel{\phi^{\Pi_*}}{\longrightarrow} \BC(\cC,\Theta) \to 0.$$ 
	Assume $\cC$ has a grading structure (Definition \ref{def:grade}), then we have a long exact sequence
	\begin{equation}\label{Gysin} 
	\ldots \to H^*(\cC) \stackrel{\pi^*}{\to} H^*(\cE) \stackrel{\pi_*}{\to} H^{*-k}(\cC) \to H^{*+1}(\cC)\to \ldots. 
	\end{equation}
	Otherwise, we have an exact triangle (without grading). 
\end{theorem}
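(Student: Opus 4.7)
The plan is to realize the pullback and fiber-integration via explicit flow morphisms and then choose defining data on $\cE$ so that the resulting cochain morphisms are precisely the two maps of a short exact sequence. For $\Pi^*:\cC\Rightarrow \cE$ I would take $\cH^{\Pi^*}_{i,j}$ to be (a collared variant of) $\cM^\cE_{i,j}$ with source $s=\pi\circ s^\cE:\cH^{\Pi^*}_{i,j}\to C_i$ and target $t=t^\cE:\cH^{\Pi^*}_{i,j}\to E_j$, setting $\cH^{\Pi^*}_{i,i}=E_i$ with $s=\pi$, $t=\id$; for $\Pi_*:\cE\Rightarrow \cC$ reverse the roles, with source $s^\cE$ and target $\pi\circ t^\cE$. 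Orientation condition \eqref{m3} of Definition \ref{morphism} is then determined by Proposition \ref{prop:spherori} together with the fact that $\pi:\cM^\cE_{i,j}\to\cM^\cC_{i,j}$ is an oriented $S^k$-bundle. The boundary matching uses the composition in $\cE$ for $m_R$, while for $m_L$ one exploits that the bundle map $t^\cE$ identifies $\cM^\cE_{i,j}$ with the pullback $(t^\cC)^*E_j$ along fibers, so that $\cM^\cC_{i,k}\times_{C_k}\cH^{\Pi^*}_{k,j}$ glues onto the appropriate boundary piece after an interval collar.

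Next I would set up the defining data $\varXi$ on $\cE$ using a Leray--Hirsch type splitting at the chain level. Take $\Theta$ to be a minimal defining data on $\cC$ with representatives $\{\alpha_{i,a}\}$ of $h(\cC,i)$, and fix a global angular form $\Phi_i\in\Omega^k(E_i)$ satisfying $\rd\Phi_i=-\pi^*e_i$ and $\int_{\mathrm{fiber}}\Phi_i=1$. Define a flexible reduction for $\cE$ by
\begin{equation*}
B_i^*:=\pi^*h(\cC,i)\oplus\bigl(\pi^*h(\cC,i)\wedge\Phi_i\bigr)\subset\Omega^*(E_i),
\end{equation*}
of dimension $2\dim H^*(C_i)$. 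The nondegeneracy of $\langle\cdot,\cdot\rangle_i$ on $B_i^*$ follows from $\int_{S^k}\Phi_i=1$; inverting the pairing matrix (which is upper triangular modulo an Euler-class correction when $k$ is even) produces the dual basis spanning $B_i$, and the relation $\sum_a\pi_1^*\theta_{i,a}\wedge\pi_2^*\theta_{i,a}^*\sim\delta_i$ follows from $[\Delta_{E_i}]=(\pi\times\pi)^*[\Delta_{C_i}]\wedge(\pi_1^*\Phi_i+\pi_2^*\Phi_i)+(\text{Euler-class corrections})$. Hence $(B_i,B_i^*)$ is a valid reduction in the sense of Definition \ref{def:properloc}, and I take $\varXi$ to extend it with the Thom classes and primitives furnished by the construction in \S \ref{pertdata}.

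In these bases the cochain morphisms split cleanly. The leading term of $\phi^{\Pi^*}(\alpha)$ for $\alpha\in h(\cC,i)$ is $\pi^*\alpha\in B_j^*$, and all higher-order corrections also land in the $\pi^*h(\cC,j)$ summand because the pairings $\langle\phi^{\Pi^*}(\alpha),\pi^*\alpha_b^*\rangle_j$ vanish by degree: the integrand is pulled back from $\cM^\cC_{\cdots}$ along an $S^k$-bundle without any $\Phi$-factor, so the fiber integral is zero. Conversely, $\phi^{\Pi_*}(\pi^*\alpha)$ has vanishing leading term (integrating $\pi^*\alpha\wedge\pi^*\gamma$ along an $S^k$-fiber yields $0$), and the same fiber-integration argument kills all higher-order terms; while $\phi^{\Pi_*}(\pi^*\alpha\wedge\Phi_i)$ has leading term $\alpha\cdot\int_{S^k}\Phi_i=\alpha$, with corrections living in $h(\cC,j)$. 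Thus $\phi^{\Pi^*}$ is an injection with image exactly the subcomplex $\prod_i\pi^*h(\cC,i)\subset BC^\cE_\varXi$, and $\phi^{\Pi_*}$ induces the quotient isomorphism onto $BC^\cC_\Theta$, yielding the claimed short exact sequence of cochain complexes.

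The main obstacle is the compatibility analysis for the flexible reduction: verifying that $\sum\pi_1^*\theta\wedge\pi_2^*\theta^*$ is cohomologous to a Thom class of $\Delta_{E_i}$ requires handling the case $k$ even, where $\Phi_i\wedge\Phi_i$ is a nontrivial primitive for $-2\pi^*e_i\wedge\Phi_i$, forcing a Euler-class correction in the dual basis to diagonalize the pairing. A secondary technical point is the careful verification that the boundary matching for $\Pi^*$ and $\Pi_*$ is a genuine diffeomorphism up to measure zero in the sense of Definition \ref{morphism}; the collar construction can be modeled on the identity flow morphism of Lemma \ref{identitymor}. Once the short exact sequence is established, the long exact sequence \eqref{Gysin} follows from the zigzag lemma: the connecting morphism $\partial$ acts on a class represented by $\alpha$ in the right $BC^\cC_\Theta$ by lifting to $\pi^*\alpha\wedge\Phi_i\in BC^\cE_\varXi$ and applying $d_{BC,\varXi}$, whose leading term $\rd(\pi^*\alpha\wedge\Phi_i)=\pm\pi^*(\alpha\wedge e_i)$ (higher-order flow contributions vanish modulo exact terms in $\pi^*BC^\cC$) identifies $\partial$ with cup product by the Euler class. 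Under the grading hypothesis, the degrees work out as in \eqref{Gysin}.
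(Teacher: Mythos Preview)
Your overall strategy matches the paper's: build $\Pi^*,\Pi_*$ from the identity flow morphism of $\cE$ with one structure map replaced by $\pi$, introduce a Leray--Hirsch reduction $B_i^*=\pi^*h(\cC,i)\oplus\pi^*h(\cC,i)\wedge\Phi_i$ on each $E_i$, and argue exactness via a fiber-degree count. The construction of the reduction and its verification are essentially what the paper does.

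The gap is in your exactness argument for the higher-order terms. You claim that the pairings $\langle\phi^{\Pi^*}(\alpha),\pi^*\alpha_b^*\rangle_j$ vanish because ``the integrand is pulled back from $\cM^\cC_{\cdots}$ along an $S^k$-bundle without any $\Phi$-factor.'' This is correct for the leading term, but false for the correction terms: those involve the primitives $f^{E,n}_{s+j_w}\in\Omega^*(E_{s+j_w}\times E_{s+j_w})$, which are \emph{not} pulled back from $C_{s+j_w}\times C_{s+j_w}$ and a priori carry arbitrary fiber degree (up to $2k$, since the fiber of $E\times E$ over $C\times C$ is $S^k\times S^k$). With the generic primitives from \S\ref{pertdata} that you invoke, nothing prevents $f^{E,n}$ from contributing enough fiber degree to make the integral nonzero even when $\gamma=\pi^*\alpha_b^*$. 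The same issue affects your argument that $\phi^{\Pi_*}$ kills $\pi^*h(\cC,i)$.

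The paper closes exactly this gap via Proposition~\ref{prop:split}: it constructs special primitives $f^n_E$ (built from $f^n_C$, a fiberwise primitive $f^n_{S^k}$, and a correction term $g$) whose local fiber degree is at most $k$. With this control, the count becomes: over ${\Pi^*}^{s,k}_{i_1,\ldots,i_p|j_1,\ldots,j_q}$ the total fiber dimension is $(q{+}1)k$, the $f^C$'s contribute fiber degree $0$, each $f^E$ contributes at most $k$, and $\alpha$ contributes $0$; hence the integral can be nonzero only if $\gamma$ contributes fiber degree at least $k$, i.e.\ $\gamma\in B$. You identified the wrong ``main obstacle'' --- the cohomologous-to-Thom-class verification is routine (Proposition~\ref{prop:cohomologus} carries over), whereas the construction of primitives with bounded fiber degree is the genuine technical point you are missing.
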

Before giving the proof, we first explain the defining data $\Theta,\Xi$.  The defining data for $\cC$ is any minimal defining data $\Theta$. For the defining data of $\cE$, we fix an angular form $\psi_i\in \Omega^k(E_i)$ such that $\rd \psi_i=-\pi^*e_i$, where $e_i$ is the Euler class (viewed in $\Omega^{k+1}(C_i)$) of the sphere bundle $E_i\to C_i$. Since when $k$ is even the cohomology class $[e_i]$ is zero. Hence when $k$ is even, we can choose $\psi$ such that $e_i=0\in \Omega^{k+1}(C_i)$. Assume $\{\theta_{i,a}\}$ is the chosen basis in the minimal defining data $\Theta$, with $\{\theta^*_{i,a}\}$ the dual basis. Then for each $\theta_{i,a}$, there exists a unique $\eta\in \la \theta_{i,a}\ra=\la \theta^*_{i,a} \ra$, such that $[(-1)^{|\theta^*_{i,a}|+1}\theta^*_{i,a}\wedge e_i] =[\eta]$ in cohomology. In other words, we can find $\eta_{i,a}$, such that $(-1)^{|\theta^*_{i,a}|+1}\theta^*_{i,a}\wedge e_i-\rd \eta_{i,a}\in \la \theta_{i,a}\ra$. If we write $m=\dim H^*(C_i)$, then we define
$$A_i=A_i^*:=\la\pi^*\theta_{i,1},\ldots,\pi^*\theta_{i,m}, \pi^*\theta^*_{i,1}\wedge \psi_i-\pi^*\eta_{i,1},\ldots,  \pi^*\theta^*_{i,m}\wedge \psi_i-\pi^*\eta_{i,m}\ra $$
The above construction ensures that $\rd$ is closed on $A_i=A_i^*$. Since $\int_{E_i}\pi^*\theta_{i,a}\wedge (\pi^*\theta^*_{i,b}\wedge \psi_i-\pi^*\eta_{i,b})=\int_{C_i}\theta_{i,a}\wedge \theta^*_{i,b}$, for any non-zero vector $v$ in $A=A^*$, there is a vector $u\in A=A^*$ with $\la v,u\ra\ne 0$. In particular, the pairing is non-degenerate on $A\otimes A^*$. That $A\to \Omega^*(E_i)$ induces a surjection on cohomology follows from the Gysin sequence associated to the sphere bundle $E_i\to C_i$. Therefore by Proposition \ref{prop:reduction}, $(A_i,A_i^*)$ is a reduction for $E_i$. Moreover, we can choose $\eta_{i,a}$ such that the following holds.
\begin{lemma}\label{lemma:reduction}
We write $\pi^*\theta^*_{i,a}\wedge \psi_i-\pi^*\eta_{i,a}$ as  $\xi_{i,a}$. Then there exist $\{\eta_{i,a}\}$ from the construction above such that 
$\la\pi^*\theta_{i,a},\xi_{i,b} \ra_i\ne 0$ iff $a=b$ and $\la \xi_{i,a},\xi_{i,b}\ra_i =0$ for any $a,b$.
\end{lemma}
\begin{proof}
    We have some freedom in the choice of $\eta_{i,a}$, since we can modify it by an element in $\la \theta_{i,a}\ra$. The first claim is obvious by integrating the $S^k$ fiber first. The only non-trivial part is proving  $\la \xi_{i,a},\xi_{i,b}\ra_i =0$ for any $a,b$. We will proceed by induction, assume for $a,b\le N<\dim H^*(C_i)$, we have  $\la \xi_{i,a},\xi_{i,b}\ra_i =0$. Then we can find $\xi_{i,N+1}$ such that $\la \xi_{i,a},\xi_{i,N+1}\ra_i =0$ for any $a\le N+1$. We first take any $\overline{\xi}_{i,N+1}$ in the form of $\pi^*\theta^*_{i,N+1}\wedge \psi_i-\pi^*\overline{\eta}_{i,N+1}\in A$ from the construction above. Then we define
	$$\xi_{i,N+1}:=\overline{\xi}_{i,N+1}-\sum_{a=1}^N \la\xi_{i,a}, \overline{\xi}_{i,N+1}\ra_i \pi^*\theta_{i,a}.$$
	It is straightforward to check that $\la \xi_{i,a},\xi_{i,N+1}\ra_i =0$ for any $a\le N$. Now note that by degree reasons if $\la\xi_{i,N+1},\xi_{i,N+1} \ra_i\ne 0$, we must have $|\xi_{i,N+1}|=\frac{\dim E_i}{2}$. In this case, we have 
	$$\la \xi_{i,N+1},\xi_{i,N+1} \ra_i = ((-1)^{\frac{\dim E_i}{2}+1}-1)\int_{C_i}\eta_{i,N+1}\wedge \theta^*_{i,N+1}.$$
	However, no matter what the parity of $\frac{\dim E_i}{2}$ is, we can add a multiple of $\pi^*\theta_{i,N+1}$ to $\xi_{i,N+1}$ to make sure that $\la\xi_{i,N+1},\xi_{i,N+1} \ra_i= 0$. Note that this modification does not affect the property that $\la \xi_{i,a},\xi_{i,N+1}\ra_i =0$ for any $a\le N$ as $\la \xi_{i,a},\pi^*\theta_{i,N+1}\ra_i=0$ for $a\le N$. Note the above argument also proves the induction foundation when $N=1$. Hence the claim follows.
\end{proof}

In order to obtain the proof of Theorem \ref{thm:gysin}, we need to use the following approximations of Dirac currents of diagonals and primitives $f^n$ on the sphere bundle $E_i\to C_i$.
\begin{proposition}\label{prop:split}
	Let $\pi:E \to C$ be an oriented $k$-sphere bundle over an oriented closed manifold. Let $A=A^*$ be the reduction on $\Omega^*(E)$ built from the previous discussion (in particular, we choose $\psi_i$ such that $\rd \psi_i=0$ if $k$ is even). And $T$ is the closed form in $\pi_1^*A\wedge \pi_2^*A$ representing the diagonal in the definition of reduction. Then there exists approximations $\delta^{E,n}$ of the Dirac current of the diagonal $\Delta_E$ such that the following holds.
	\begin{enumerate}
		\item There exist forms $f^{E,n}$ on $E\times E$, such that $$\rd f^{E,n} = \delta^{E,n} - T$$
		\item Lemma \ref{conv1} and \ref{conv2} hold for $f^{E,n}$, in particular, the construction in \S \ref{ss:reduction} works for $f^{E,n}$.
		\item\label{split} Let $\pi_{1,2}$ denote the projection $E\times E \to C\times C$, then $f^{E,n}$ can be written as sums of differential forms in the form of $(\pi_{1,2}^*\alpha) \wedge \beta$ with $\alpha \in \Omega^*(C\times C)$ and $\deg(\beta) \le k$, i.e.\ the fiber degree of $f^{E,n}$ is at most $k$. In other words, if $v_1,\ldots,v_{k+1}$ are $k+1$ vertical vectors in $T_p(E\times E)$ for $p\in C\times C$, then $f^{E,n}(v_1\wedge \ldots \wedge v_{k+1}\wedge \ldots )=0$.
	\end{enumerate}
\end{proposition}
\begin{proof}
	See Appendix \ref{app:kunneth}.
\end{proof}
\begin{proof}[Proof of Theorem \ref{thm:gysin}]
The defining data $\Theta,\Xi$ are explained above. Next, we explain the flow morphisms $\Pi_*,\Pi^*$. On the space level, $\Pi_*$ is the same as the identity flow morphism $\fI^E$ for $\cE$. The only difference is that the source map on $\Pi^*$ is the projection to $C_i$. Similarly, $\Pi_*$ from $\cE$ to $\cC$ on the space level is the same as the identity flow morphism $\fI^E$,  but the target map for $\Pi_*$ is the projection to $C_i$. We point out here that if the flow category $\cC$ is an actual space, i.e.\ concentrated in one level, then $\Pi^*$ and $\Pi_*$ induce $\pi^*,\pi_*$ on cohomology by definition. 
    
    With the defining data $\Theta,\Xi$, we get maps
    \begin{equation}\label{Gexact}
	0\to \BC(\cC,\Theta) \stackrel{\phi^{\Pi^*}}{\longrightarrow}  \BC(\cE,\Xi) \stackrel{\phi^{\Pi_*}}{\longrightarrow} \BC(\cC,\Theta) \to 0.
	\end{equation}
	We will show \eqref{Gexact} is a short exact sequence. Using the reduction from Lemma \ref{lemma:reduction}, the dual basis of $\{\pi^*\theta_{i,a}\}\cup \{\xi_{i,a}\}$ is  $ \{\xi_{i,a}\} \cup \{\pi^*\theta_{i,a}\}$ up to sign.  Then $\BC(\cE,\Xi)$ can be decomposed into $V_0\oplus V_1$ as a vector space, where $V_0$ is generated by $\langle \pi^*\theta_{i,a} \rangle$ and $V_1$ is generated by $\la \xi_{i,a}\ra$. Next we use approximations of the Dirac currents of the diagonal and primitives $f^n_E$ from Proposition \ref{prop:split}. By \eqref{split} of Proposition \ref{prop:split}, if $\gamma\in \la \pi^*\theta_{i,v+k}\ra $, then ${\Pi^*}^{v,k}_{i_1,\ldots,i_p|j_1,\ldots, j_q} [\alpha, f^{C,n}_{v+i_1},\ldots, f^{C,n}_{v+i_p},f^{E,n}_{v+j_1},\ldots, f^{E,n}_{v+j_q},\gamma]$ in the definition of $\Phi^{\Pi^*}$ is zero, for otherwise, we can not cover the fiber directions to get a nonzero integration, as the total fiber degree contributed by  $f^{E,n}_{v+j_1},\ldots, f^{E,n}_{v+j_q}$ is at most $kq$, but the total fiber dimension in ${\Pi^*}^{v,k}_{i_1,\ldots,i_p|j_1,\ldots, j_q}$ is $k(q+1)$. As a consequence we have $\mathrm{im} \phi^{\Pi^*}\subset V_0$.  Moreover, $\phi^{\Pi^*}$ is an isomorphism onto $V_0$, as it is identity plus a strictly upper triangle matrix similar to the proof of Theorem \ref{thm:main} using the identity flow morphism. Similarly, we have $V_0\subset \ker \phi^{\Pi_*}$ and  $\phi^{\Pi_*}|_{V_1}:V_1 \to \BC(\cC,\Theta)$ is an isomorphism. Therefore \eqref{Gexact} is a short exact sequence and the induced long exact sequence is the Gysin exact sequence \eqref{Gysin}. 
\end{proof}
\begin{remark}
   There are two cases of the Gysin exact sequence that we do not need to appeal to Proposition \ref{prop:split}.
   \begin{enumerate}
       \item When $\cC$ is a single space $C$, then the reduction of the sphere bundle $E$ can be viewed as decomposed into two copies of $H^*(C)$, which corresponds to the classical Gysin exact sequence. This is explained in Proposition \ref{prop:classGysin} below.
       \item When $\dim C_i \le 1$ for all $i$. Then  $\deg f^{E,n_i} = \dim C_i + k -1\le k$, and \eqref{split} of Proposition \ref{prop:split} holds tautologically for any defining data.
   \end{enumerate}
 These two cases are enough for the argument in \cite{ring}.
\end{remark}
By Corollary \ref{cor:dim} and \ref{cor:mordim}, we have the following.
\begin{corollary}
	If $\cC$ is a Morse flow category and $\cE$  an oriented $k$-sphere bundle over $\cC$, then the Gysin exact sequence only depends on $\cM^{E}_{i,j}$ of dimension no greater than $2k$.
\end{corollary}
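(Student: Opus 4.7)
The plan is to unwind the construction in Theorem \ref{thm:gysin} and apply the two dimension-restriction results Corollary \ref{cor:dim} and Corollary \ref{cor:mordim}. Since $\cC$ is Morse we have $\dim C_i = 0$, and since $\cE$ is an oriented $k$-sphere bundle over $\cC$ we have $\dim E_i = k$. Both bounds are $\le k$, so both corollaries apply with this common bound $k$.

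First I would apply Corollary \ref{cor:dim} to each cochain complex: $BC^{\cC}$ only depends on $\cM^{\cC}_{i,j}$ of dimension $\le 0$, while $BC^{\cE}$ only depends on $\cM^{\cE}_{i,j}$ of dimension $\le 2k$. Since $\pi:\cM^{\cE}_{i,j}\to \cM^{\cC}_{i,j}$ is a $k$-sphere bundle, the relevant $\cM^{\cC}_{i,j}$ (of dimension $0$) are determined by the corresponding $\cM^{\cE}_{i,j}$ (of dimension $k$), which lie well within the bound $\le 2k$.

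Next I would apply Corollary \ref{cor:mordim} to the flow morphisms $\Pi^*:\cC\Rightarrow \cE$ and $\Pi_*:\cE\Rightarrow \cC$ constructed in the proof of Theorem \ref{thm:gysin}. This shows that $\phi^{\Pi^*}$ and $\phi^{\Pi_*}$ only depend on the respective spaces $\cM^{\cC}_{i,j},\cH^{\Pi^*}_{i,j},\cH^{\Pi_*}_{i,j},\cM^{\cE}_{i,j}$ of dimension $\le 2k$. From the construction, on the space level we have $\cH^{\Pi^*}_{i,j}=\cH^{\Pi_*}_{i,j}=\cM^{\cE}_{i,j}\times[0,j-i]$, of dimension $\dim\cM^{\cE}_{i,j}+1$; hence the bound $\dim\cH\le 2k$ translates into $\dim\cM^{\cE}_{i,j}\le 2k-1$, still within the claimed bound. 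The remaining ingredients entering the construction are the defining data $(\Theta,\Xi)$ used in Theorem \ref{thm:gysin} to split the sequence (cohomology representatives, angular forms $\psi_i$, Thom classes and primitives), but these depend only on the bundles $E_i\to C_i$ themselves and not on any morphism spaces, so they impose no further constraint.

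Combining these observations, the short exact sequence of cochain complexes from Theorem \ref{thm:gysin}, and hence the long exact Gysin sequence \eqref{Gysin} derived from it, depend only on the spaces $\cM^{\cE}_{i,j}$ of dimension at most $2k$. There is no real obstacle: the only point worth verifying is that the dimension inflation by $+1$ coming from the identity-flow-morphism shape of $\Pi^*$ and $\Pi_*$ is still absorbed by the bound $2k$, which it is.
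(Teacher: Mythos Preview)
Your proposal is correct and follows essentially the same approach as the paper, which simply cites Corollary~\ref{cor:dim} and Corollary~\ref{cor:mordim} as justification. You have correctly unwound why those two results apply here (using $\dim C_i=0$ and $\dim E_i=k$), and your additional observation that the identity-morphism-shaped $\cH_{i,j}=\cM^{\cE}_{i,j}\times[0,j-i]$ only costs one extra dimension is a nice bookkeeping check that the paper leaves implicit.
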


The following proposition follows from direct computation.
\begin{proposition}\label{prop:classGysin}
	If $\cC$ is a single space $C$, then an oriented $k$-sphere bundle $\cE$ over $\cC$ is an oriented $k$-sphere bundle $\pi:E\to C$. Then the Gysin exact sequence in Theorem \ref{thm:gysin} is the classical Gysin exact sequence
	$$\xymatrix@C=3em{\ldots \ar[r] & H^i(C)\ar[r]^{\pi^*} & H^i(E) \ar[r]^{\pi_*} &  H^{i-k}(C) \ar[rr]^{\wedge (-1)^{\dim C+1} e} & &  H^{i+1}(C) \ar[r] &\ldots,}$$
	where $e\in H^*(C)$ is the Euler class of $\pi:E\to C$ and $\pi_*$ is the integration along the fiber following the convention in \cite[\S 6]{bott2013differential}. 
\end{proposition}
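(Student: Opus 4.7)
The plan is to exploit the degeneracy of the setting: with $\cC$ a single space $C$ and $\cE$ a single sphere bundle $\pi:E\to C$, both minimal cochain complexes sit at one level. Therefore $BC^{\cC}_\Theta = H^*(C)$ has zero differential, represented via a basis $\{\theta^*_a\}\subset\Omega^*(C)$ and dual basis $\{\theta_a\}$; the reduction $(A,A^*)$ from the proof of Theorem \ref{thm:gysin} has $A$ spanned by $\{\pi^*\theta_a,\pi^*\theta_a\wedge\psi\}$ with dual basis $\{\pi^*\theta^*_a\wedge\psi,(-1)^{k(c+|\theta_a|+1)}\pi^*\theta^*_a\}$ in $A^*$, where $c=\dim C$. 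The flow morphisms $\Pi^*$ and $\Pi_*$ collapse to the single space $E$ with $(s,t)=(\pi,\id)$ and $(s,t)=(\id,\pi)$ respectively. Because every category and morphism involved is concentrated at one level, no correction terms containing $f^n_i$ appear in $\phi^{\Pi^*}$, $\phi^{\Pi_*}$ or the differential $d_A=d_{A,0}$, reducing every computation to a single leading integral over $E$.

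I would first compute $\phi^{\Pi^*}$ directly from \eqref{mordef}: the pairing $\langle \phi^{\Pi^*}\theta^*_a,\gamma\rangle = (-1)^{|\theta^*_a|c+c+k}\int_E\pi^*\theta^*_a\wedge\gamma$ vanishes on $\gamma=\pi^*\theta_b$ by fiber degree and equals $(-1)^{|\theta^*_a|c+c+k+c|\theta_b|}\delta_{ab}$ on $\gamma=\pi^*\theta_b\wedge\psi$ after invoking $\pi_*\psi=1$ and the projection formula. Combining this with the dualization sign $(-1)^{k(c+|\theta_a|+1)}$ and using $|\theta_a|+|\theta^*_a|=c$ together with $c(c+1)\equiv 0\pmod 2$, the total sign collapses and gives $\phi^{\Pi^*}\theta^*_a=(-1)^{k|\theta^*_a|}\pi^*\theta^*_a$. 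Symmetrically, computing $\phi^{\Pi_*}$ on the generators of $A^*$ yields $\phi^{\Pi_*}(\pi^*\theta^*_b)=0$ and $\phi^{\Pi_*}(\pi^*\theta^*_a\wedge\psi)=\theta^*_a$, with all intervening signs cancelling via $(c+k)^2+c+k\equiv 0$. These explicit formulas prove that $0\to BC^{\cC}\stackrel{\phi^{\Pi^*}}{\to}BC^{\cE}\stackrel{\phi^{\Pi_*}}{\to}BC^{\cC}\to 0$ is a short exact sequence of cochain complexes (the image of $\phi^{\Pi^*}$ is the span of $\{\pi^*\theta^*_a\}$, which is exactly $\ker\phi^{\Pi_*}$), and identify the first two maps of the Gysin sequence with $(-1)^{ki}\pi^*$ and $\pi_*$.

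For the connecting homomorphism, given $[\theta^*_a]\in H^{i-k}(C)$ I would lift to $\pi^*\theta^*_a\wedge\psi\in BC^{\cE}$, apply $d_{A,0}$, and identify the preimage under $\phi^{\Pi^*}$. Using $\rd\psi=-\pi^*e$ gives $\rd(\pi^*\theta^*_a\wedge\psi)=(-1)^{|\theta^*_a|+1}\pi^*(\theta^*_a\wedge e)$. The projection onto $A^*$ via \eqref{redd1} kills the $\pi^*\theta^*_b\wedge\psi$ components by fiber degree, yielding coefficients controlled by $\int_C\theta^*_a\wedge e\wedge\theta_b=(-1)^{c|\theta_b|}\mu_b$ where $[\theta^*_a\wedge e]=\sum_b\mu_b[\theta^*_b]$. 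Dividing by the already-computed sign of $\phi^{\Pi^*}$ and collecting everything, the total exponent after invoking the dimension constraint $|\theta^*_a|+k+1+|\theta_b|=c$ collapses modulo $2$ to $k(i-k)+c+1$, producing $\partial[\theta^*_a]=(-1)^{k(i-k)+\dim C+1}[\theta^*_a\wedge e]$ as required.

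The mathematical content of the proof is a direct verification that the minimal Morse-Bott construction specializes to the classical Gysin machinery, and the only real difficulty is the sign bookkeeping: one must consistently combine the orientation convention $[E]=[C][S^k]$ from Proposition \ref{prop:spherori}, the dualization sign inside $A^*$, the global prefactors in \eqref{mordef} and \eqref{redd1}, and the standard fiber-integration sign $\pi_*(\pi^*\alpha\wedge\psi)=\alpha$. Each of the three sign claims in the proposition requires a different parity-modulo-$2$ cancellation driven by the identities $c(c+1)\equiv 0$, $k(k+1)\equiv 0$ and the dimensional constraints $|\theta_a|+|\theta^*_a|=c$.
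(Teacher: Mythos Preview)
Your proposal is correct and follows essentially the same approach as the paper's proof: both compute $\phi^{\Pi^*}$ and $\phi^{\Pi_*}$ directly from the leading term of \eqref{mordef}, identify the differential $d_{A,0}$ on $BC^{\cE}$ via $\rd\psi=-\pi^*e$, and then read off the connecting homomorphism from the standard snake construction. The paper's proof is much terser, simply asserting the values $\phi^{\Pi^*}(\theta^*_i)=(-1)^{k|\theta^*_i|}\pi^*\theta^*_i$, $\phi^{\Pi_*}(\pi^*\theta^*_i\wedge\psi)=\theta^*_i$, and $d^{\cE}(\pi^*\theta^*_i\wedge\psi)=(-1)^{\dim C+1}\pi^*(\sum_a\langle\theta^*_i\wedge e,\theta_a\rangle\theta^*_a)$ without displaying the intermediate sign cancellations you spell out; your version is a more detailed execution of the same computation.
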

\begin{proof}
	Let $\{\theta_1,\ldots, \theta_k\}$ and $\{\theta^*_1,\ldots, \theta_k^*\}$ be representatives of a basis and the dual basis of $H^*(C)$. Assume $\psi$ is the Thom class of $E$ such that $d\psi = -\pi^*e$, where $e$ is a closed differential form representing the Euler class. $\BC(\cC)$ is $\la \theta^*_1,\ldots, \theta_k^*\ra = \la \theta_1,\ldots, \theta_k\ra$ with zero differential. On the other hand, by the proof of Theorem \ref{thm:gysin}, $\BC(\cE)$ is the reduction $A^*=A$ in the form of 
	$$\la \pi^*\theta_1,\ldots, \pi^*\theta_k, \xi_1:=\pi^*\theta^*_1\wedge \psi-\pi^*\eta_{1},\ldots, \xi_{k}:=\pi^*\theta^*_k\wedge \psi-\pi^*\eta_{k}\ra.$$
	The differential $d_A$ on $\pi^*\theta_i$ is zero. Since \eqref{redd1}, in this case, can be equivalently expressed as for $\gamma \in A$, we have
	$$
	\la d_{A,0}\xi_i,\gamma \ra  =  (-1)^{|\xi_i|(\dim E+1)+\dim E}\int_E \pi^*((-1)^{|\theta_i^*|+1}\theta^*_i\wedge e-\rd \eta_i)\wedge \gamma
	$$
	Since $\int \rd \xi_i \wedge \pi^*\theta_j=0$, it is sufficient to compute the case when $\gamma=\xi_j$ as follows. 
	\begin{eqnarray*}
		\la d_{A,0}\xi_i,\xi_j \ra & = & (-1)^{|\xi_i|(\dim E+1)+\dim E}\int_E \pi^*((-1)^{|\theta_i^*|+1}\theta^*_i\wedge e-\rd \eta_i)\wedge (\pi^*\theta_j^*\wedge -\pi^*\eta_j)\\
		& = &  (-1)^{|\xi_i|(\dim E+1)+\dim E}\int_E \pi^*((-1)^{|\theta_i^*|+1}\theta^*_i\wedge e-\rd \eta_i)\wedge \pi^*\theta_j^*\wedge \psi
	\end{eqnarray*}
	Note that
	$$\int_E \pi^*(\rd \eta_i\wedge \theta^*_j) \wedge \psi = \int_C \rd \eta_i\wedge \theta^*_j =\int \rd (\eta_i\wedge \theta^*_j)=0. $$
	We have
	\begin{eqnarray*}
		\la d_{A,0}\xi_i,\xi_j \ra  & = & (-1)^{|\xi_i|(\dim E+1)+\dim E}\int_E \pi^*((-1)^{|\theta_i^*|+1}\theta^*_i\wedge e\wedge \theta_j^*)\wedge \psi \\
		& = &  (-1)^{|\xi_i|\dim E+\dim C+1}\int_C \theta_i^*\wedge e \wedge \theta^*_j
	\end{eqnarray*}
	On the other hand, we have
	$$\la\pi^*\theta_j,\xi_j \ra = (-1)^{|\theta_j|+|\xi_j|\dim E}.$$
	As a consequence, we have
	$$
		 d_{A,0}\xi_i = \sum_j  (-1)^{|\xi_i|\dim E+\dim C +1+|\theta_j|+|\xi_j|\dim E}(\int_C \theta_i^*\wedge e \wedge \theta^*_j)\pi^*\theta_j.
	$$
	Note that to have a non-zero integration it is necessary to have $|\xi_i|+|\xi_j|+1=\dim E$, hence
	$$|\xi_i|\dim E+\dim C +1+|\theta_j|+|\xi_j|\dim E = \dim C+1+|\theta_j|=\dim C+|\xi_i| \mod 2$$
	and 
	$$ d_{A,0}\xi_i = (-1)^{\dim C+|\xi_i|}\pi^*(\sum _j (\int_C \theta^*_i\wedge e \wedge \theta_j^*) \theta_j).$$
	Since 
	$$\la \theta_j^*,  (\int_C \theta^*_i\wedge e \wedge \theta_j^*) \theta_j\ra =  (-1)^{|\theta_j^*||\theta_j|}\int_C \theta_j^*\wedge \theta^*_i\wedge e = (-1)^{|\theta_j|^2}\la \theta^*_j, \theta_i^*\wedge e \ra, $$
	we know that 
	\begin{equation}\label{eqn:connecting}
	    [(-1)^{\dim C+|\xi_i|}\sum _j (\int_C \theta^*_i\wedge e \wedge \theta_j^*) \theta_j] = [(-1)^{\dim C+1} \theta_i^*\wedge e] \in H^*(C).
	\end{equation}
	
	Next, by Theorem \ref{Mor} and similar computation as above, we have $\phi^{\Pi^*}(\theta_i) =  \pi^*\theta_i$ and $\phi^{\Pi_*}(\xi_i) = \theta^*_i$. Then the connecting map $\delta:H^{*-k}(C) \to H^{*+1}(C)$ is given by $\delta(\theta_i^*) = (-1)^{\dim C+1} \theta^*_i \wedge e$ by \eqref{eqn:connecting}.
\end{proof}
\begin{remark}
To explain the sign twist compared to \cite[\S 14]{bott2013differential}, recall from \eqref{redd1}, $d_A\xi_i$ is, roughly speaking,  $(-1)^{\dim E+|\xi_i|}\rd \xi_i$ (then project to $A$). Note that  $(-1)^{\dim E+|\xi_i|}\rd \xi_i = (-1)^{\dim C+1} \pi^*\theta_i^*\wedge e$. In other words, if we consider the Gysin exact sequence following \cite[\S 14]{bott2013differential} but with the cochain complex $(\Omega^*(E),(-1)^{\dim E+*}\rd)$, then we get the long exact sequence with sign twist in Proposition \ref{prop:classGysin}.
\end{remark}

Next, we consider the functoriality of Gysin exact sequences.
\begin{definition}\label{def:flowbundle}
	Let $\cC, \cD$ be two oriented flow categories and $\pi_E:\cE \to \cC, \pi_F:\cF \to \cD$ be two oriented $k$-sphere bundles. Assume $\fH:\cC \Rightarrow \cD$ is an oriented flow morphism. A compatible $k$-sphere bundle $\fT$ over $\fH$ is a flow morphism (not oriented a priori) from $\cE$ to $\cF$, such that $\cT_{i,j}$ is a $S^k$ bundle over $\cH_{i,j}$ and $s^T,t^T$ are bundle maps covering $s^H,t^H$. It is oriented if the sphere bundles $\cT_{i,j}\to \cH_{i,j}$ are oriented and $s^T,t^T$ preserve the orientation.
\end{definition}

Similar to Proposition \ref{prop:spherori}, we have that if the $k$-sphere bundle $\fT$ over $\fH$ is oriented, then $\cT$ is an oriented flow morphism from $\cE$ to $\cF$ with orientation $[\cT_{i,j}]=[\cH_{i,j}][S^k]$.

\begin{proposition}
		Let $\cC, \cD$ be two oriented flow categories and $\pi_E:\cE \to \cC, \pi_F:\cF \to \cD$ be two oriented $k$-sphere bundles. Assume $\fH:\cC \Rightarrow \cD$ is an oriented flow morphism and $\fT$ is a compatible oriented $k$-sphere bundle over $\fH$. Then we have a morphism between the Gysin exact sequences below assuming they have grading structures, otherwise it is a commutative diagram of exact triangles,
		$$
		\xymatrix{
			\ldots \ar[r] &  H^*(\cC) \ar[r]^{\pi^*}\ar[d]^{\phi^H} & H^*(\cE) \ar[r]^{\pi_*}\ar[d]^{\phi^T} &  H^{*-k}(\cC) \ar[r]\ar[d]^{\phi^H} & H^{*+1}(\cC) \ar[r] \ar[d]^{\phi^H} &\ldots \\
			\ldots \ar[r] &  H^*(\cD) \ar[r]^{\pi^*} & H^*(\cF) \ar[r]^{\pi_*} &  H^{*-k}(\cD) \ar[r] & H^{*+1}(\cD) \ar[r] &\ldots. 
			}
		$$
\end{proposition} 
\begin{proof}
We define $\fP$ to be flow morphism from $\cC$ to $\cF$, which on the space level is same as $\fT$, but the source map is $\pi\circ t^T_{i,j}$, where $\pi$ is the projection $E_i \to C_i$. We claim that  $\phi^{P} = \phi^{T\circ \Pi^*_E} = \phi^{\Pi^*_F\circ H}$. By the argument in Theorem \ref{canonicalthm1}, the contribution from $\fT\circ \Pi^*_E$ containing $(\Pi^*_E)_{i,j}$ for $i<j$ is zero due to the extra interval direction in $(\Pi^*_E)_{i,j}$. Then it is easy to identify $\phi^{P} = \phi^{T\circ \Pi^*_E}$ on the nose. On the other hand, the contribution from $\Pi^*_F\circ \fH$ containing $(\Pi^*_F)_{i,j}$ for $i<j$ is zero and $(\Pi^*_F)_{j,j}\times_{D_j}  \cH_{i,j}\simeq 
\cT_{i,j}$ by Definition \ref{def:flowbundle}, hence  $\phi^{P}$ can also be identified with $\phi^{\Pi^*_F\circ H}$ on the nose. Then by Theorem \ref{CompMor2}, we have $\phi^T \circ \pi^*$ is homotopic to $\pi^*\circ \phi^H$. Similarly, we have $\phi^H\circ \pi_*$ is homotopic to $\pi_*\circ \phi^T$.  By the same argument in Theorem \ref{thm:gysin} using the special defining data in Proposition \ref{prop:split}, the homotopies above and $\phi^T$ satisfy the conditions of Lemma \ref{lemma:commutative}, hence the claim follows.
\end{proof}

\section{Equivariant Theory}\label{equi}
The aim of this section is to construct an equivariant theory for a flow category with a smooth group action. Our method is based on the approximation of the homotopy quotient. In the context of Floer theory, a construction in this spirit can be found in \cite{bourgeois2013gysin}.  All the results in this section, namely Theorem \ref{s6-1} and \ref{s6-2}, can be generalized to proper flow categories with local systems. However, for simplicity, we only consider oriented flow categories in the following.
\subsection{parameterized cohomology}
Similar to the construction of parameterized symplectic homology in \cite{bourgeois2013gysin}, we need the parameterized cohomology of an oriented flow category, i.e.\ we need to  take the product of a flow category $\cC$ with a closed oriented manifold $B$.   Since taking a product with $B$ automatically falls into the Morse-Bott case, using the theory developed in previous sections, we have a direct, also geometric construction. We will see that all we have to do are some orientation checks.
 
Let $\cC=\{C_i,\cM_{i,j}\}$ be an oriented flow category and $B$ an oriented compact manifold throughout this section. We construct the product flow category $\cC\times B$ first. The parameterized cohomology is defined to be the cohomology of $\cC\times B$.  Each map $f: B_1\to B_2$ induces an oriented flow morphism $\fH(f): \cC\times B_2 \Rightarrow \cC\times B_1$. Similarly, a homotopy induces a flow homotopy. The main result of this subsection is that, after taking the minimal Morse-Bott cochain complex,  we have a contravariant functor by this product construction.
\begin{theorem}\label{s6-1}
	Let $\cC$ be an oriented flow category, then we have  a contravariant functor $\cC\times$
	$$\cC\times:\cK(\cM an)\to \cK(\cC h),$$
	where $\cK(\cM an)$ is the category whose objects are closed oriented manifolds and morphisms are homotopy classes of smooth maps.
\end{theorem}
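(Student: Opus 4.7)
The plan is to construct the functor directly on the geometric side (flow categories, flow morphisms, flow homotopies) and then invoke the machinery of \S \ref{morsebott} and \S \ref{flowcomp}--\S \ref{flowhomotopy} to descend to $\cK(\cC h)$. First I will define $\cC\times B$ on objects: take $(C\times B)_i := C_i\times B$ and $\cM^{C\times B}_{i,j} := \cM_{i,j}\times B$, with source and target maps $s\times \id_B$ and $t\times \id_B$, and composition $m\times \id_B$. Transversality of fiber products follows from the transversality of $\cC$ since $B$ is a factor in every space, and the boundary relation $\partial(\cM_{i,k}\times B) = (\partial \cM_{i,k})\times B$ gives the flow category boundary axiom. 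Orienting by the product orientation, the coherence relations in Definition \ref{oridef} reduce to those for $\cC$ after pulling the $B$ factor out, which is a straightforward sign computation using $[M\times N]$ versus $[N\times M]$.

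Next, given a smooth map $f:B_1\to B_2$, I will define $\fH^f:\cC\times B_2\Rightarrow \cC\times B_1$ by modeling it on the identity flow morphism of Definition/Lemma \ref{identitymor}: set
\[
\cH^f_{i,j}:=\cM_{i,j}\times B_1\times [0,j-i]
\]
for $i\le j$, with source $s^f(m,b,\tau):=(s(m),f(b))$ and target $t^f(m,b,\tau):=(t(m),b)$. Compositions $m_L,m_R$ are the obvious ones, parallel to Definition/Lemma \ref{identitymor}. When $f=\id_B$, this is exactly the identity flow morphism of $\cC\times B$, so by Theorem \ref{canonicalthm1} the induced cochain morphism is homotopic to the identity. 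For the orientations, I orient $\cH^f_{i,j}$ by the product orientation and verify conditions \eqref{m3} of Definition \ref{morphism} by the same manipulation as in the proof of Definition/Lemma \ref{identitymor}, which only involves the factor $\cM_{i,j}\times [0,j-i]$ and commutes past the $B_1$ factor with a sign depending on $\dim B_1$. The finiteness condition \eqref{m00} is automatic since $\cH^f_{i,j}=\emptyset$ for $i>j$.

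For morphisms at the level of homotopies, given a smooth homotopy $F:B_1\times [0,1]\to B_2$ from $f_0$ to $f_1$, I define a flow homotopy $\sK^F$ between $\fH^{f_0}$ and $\fH^{f_1}$ by
\[
\cK^F_{i,j}:=\cM_{i,j}\times B_1\times [0,j-i]\times [0,1],
\]
with source $(m,b,\tau,\sigma)\mapsto(s(m),F(b,\sigma))$ and target $(m,b,\tau,\sigma)\mapsto(t(m),b)$. The two boundary components at $\sigma=0,1$ recover $\cH^{f_0}_{i,j}$ and $\cH^{f_1}_{i,j}$ respectively; the remaining boundary comes from the endpoints of $[0,\tau]$ and the fiber products of $\cM_{i,j}$, which match the composition structure as in Definition/Lemma \ref{identitymor}. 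The compatibility and orientation conditions in Definition \ref{homot} reduce, after splitting off the $B_1\times[0,1]$ factor, to those already verified for the identity flow morphism. By Theorem \ref{thm:homotopy}, the induced cochain maps $\phi^{\fH^{f_0}}$ and $\phi^{\fH^{f_1}}$ are therefore chain homotopic, so the assignment $f\mapsto[\phi^{\fH^f}]$ descends to homotopy classes of maps.

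Finally, for contravariant functoriality, given $f:B_1\to B_2$ and $g:B_2\to B_3$, I need $\phi^{\fH^{g\circ f}}\simeq \phi^{\fH^f}\circ\phi^{\fH^g}$ in $\cK(\cC h)$. The composable flow morphisms $\fH^g:\cC\times B_3\Rightarrow \cC\times B_2$ and $\fH^f:\cC\times B_2\Rightarrow \cC\times B_1$ satisfy the transversality of Definition \ref{CompMor1} since the fiber products are again just products with $B_1$. Their composition $\fH^f\circ \fH^g$ (as a flow premorphism) and the flow morphism $\fH^{g\circ f}$ are easily connected by a one-parameter family of flow premorphisms: interpolate the two-interval parameter $[0,k-i]\times [k-i,j-i]$ inside $\cM_{i,k}\times_k \cM_{k,j}\times B_1\times\cdots$ with the single interval $[0,j-i]\times B_1$, while simultaneously collapsing the intermediate $B_3$-coordinate through $g$. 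This yields a flow homotopy to which Theorem \ref{thm:homotopy} applies, and combined with Theorem \ref{CompMor2} we obtain the desired functoriality relation $\phi^{\fH^{g\circ f}}\simeq \phi^{\fH^f}\circ \phi^{\fH^g}$. The hardest part of the argument will be the sign bookkeeping for the orientation condition \eqref{m3} in the definition of $\fH^f$ and for the collapse homotopy in the functoriality step; these are entirely parallel to the computations in the proof of Definition/Lemma \ref{identitymor} and Theorem \ref{CompMor2}, so the real work is organizing the extra $B_1$ and $[0,1]$ factors through Lemma \ref{linearori}.
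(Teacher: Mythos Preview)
Your proposal is essentially the same approach as the paper's: the paper also builds $\cC\times B$, then constructs $\fH^f$ via an identity-flow-morphism-with-$B_1$-factor, a flow homotopy $\sK^D$ for a smooth homotopy $D$, and a flow homotopy $\sK^c$ from $\fH^f\circ\fH^g$ to $\fH^{f\circ g}\circ\fI$ for functoriality (Propositions~\ref{prod}, \ref{prodmor}, \ref{prodhomo1}, \ref{prodhomo2}).

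Two small points worth flagging. First, the paper does \emph{not} use the naive product orientation on morphism spaces: it takes $[\cM_{i,j}\times B]=(-1)^{\dim B}[\cM_{i,j}][B]$, and this twist is exactly what makes the coherence relations of Definition~\ref{oridef} go through (see the proof of Proposition~\ref{prod}); your ``straightforward sign computation'' will fail without it. Second, the composition homotopy in the paper lands on $\fH^{f\circ g}\circ\fI$ rather than $\fH^{f\circ g}$ itself, so the last step uses Theorem~\ref{canonicalthm1} (that $\fI$ induces the identity up to homotopy) together with Theorem~\ref{CompMor2}; your sketch implicitly does the same, but it is worth making that explicit rather than describing the interpolation as going directly to $\fH^{g\circ f}$.
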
 

\subsubsection{Product flow categories}\label{ss:product}
The first step towards the construction of the functor $\cC\times$ is to construct the functor on objects, i.e.\ the product flow categories.
\begin{definition/proposition}\label{prod}
	If we orient $C_i\times B,\cM_{i,j}\times B$ by $[C_i\times B]=[C_i][B]$ and $[\cM_{i,j}\times B]=(-1)^{\dim B}[\cM_{i,j}][B]$. Then
	$\cC\times B=\{C_i\times B, \cM_{i,j}\times B\}$ is an oriented flow category.
\end{definition/proposition}
\begin{remark}
	The reason we oriented $\cM_{i,j}\times B$ by $(-1)^{\dim B}[\cM_{i,j}][B]$ is that in Definition \ref{os} and Definition \ref{orientation1} we modulo out the $\R$ translation from the right in the construction of coherent orientations in applications which motivate those definitions.
\end{remark}

\begin{definition}
	Let $E_1\to M_1, E_2\to M_2 $ be two vector bundles, $E_1\boxplus E_2$ is defined to $\pi_1^*E_1\oplus \pi_2^*E_2$ over $M_1\times M_2$, where $\pi_1,\pi_2: M_1\times M_2\to M_1, M_2$ are the projections.
\end{definition}
\begin{proof}[Proof of Proposition \ref{prod}]
It is clear that we only need to verify that $\cC\times B$ satisfies the orientation property in Definition \ref{oridef}. Note that
	$$\partial [\cM_{i,k}\times B]=\sum_j (-1)^{\dim B+m_{i,j}}[\cM_{i,j}\times_j\cM_{j,k}][B].$$	
Let $N_B$ be the normal bundle of $\Delta_B$ in $B\times B$, and we orient it by $[\Delta_B][N_B]=[B][B]$. Then the normal bundle of $\Delta_{C_j\times B}$ is $N_j\boxplus N_B$. If we orient $N_j\boxplus N_B$ by the product orientation, then we have $[\Delta_{C_j\times B}][N_j\boxplus N_B]=[C_j\times B][C_j\times B]$, i.e.\ $[N_j\boxplus N_B]$ satisfies our orientation convention \eqref{oricri} for $C_j\times B$.

Then we have $$\begin{array}{c>\displaystyle l} & [N_i\boxplus N_B]\partial [\cM_{i,k}\times B|_{\cM_{i,j}\times_j\cM_{j,k}\times B}]\\
= &(-1)^{\dim B+m_{i,j}}[N_i\boxplus N_B][\cM_{i,j}\times_j \cM_{j,k}][B]\\
= & (-1)^{\dim B+m_{i,j}+\dim B(m_{i,k}-1)+\dim B^2}[N_i][\cM_{i,j}\times_j\cM_{j,k}][\Delta_B][N_B]\\
=& (-1)^{\dim B+m_{i,j}+\dim B(m_{i,k}-1)+c_jm_{i,j}+\dim B^2}[\cM_{i,j}][\cM_{j,k}][B][B]\\
= & (-1)^{\dim B+m_{i,j}+\dim B (m_{i,k}-1)+c_jm_{i,j}+\dim B^2+\dim B m_{j,k}}[\cM_{i,j}\times B][\cM_{j,k}\times B].\end{array}$$
Because $\dim B+m_{i,j}+\dim B (m_{i,k}-1)+c_jm_{i,j}+\dim B^2+\dim B m_{j,k}=\dim B+m_{i,j}+(m_{i,j}+\dim B)(c_j+\dim B) \mod 2$, by Definition \ref{oridef},  $\cC\times B$ is an oriented flow category.
\end{proof}

\begin{remark}\label{natural}
It is very natural to expect a K\"unneth formula for $\cC\times B$. Indeed, we have	$H(\cC\times B)\simeq H(\cC)\otimes H(B)$. Since we will not use it, we omit the proof.
\end{remark}

\subsubsection{Flow morphisms between product flow categories} The second step is to construct the functor on morphisms, i.e. for every smooth map $f:B_1\rightarrow B_2$, we want to associate it with a cochain map $\BC(\cC\times B_2)\to \BC(\cC\times B_1)$. To that end,  we  first construct a flow morphism  $\fH(f)$ from $\cC\times B_2$ to $\cC\times B_1$, which is defined similarly to the identity flow morphism of $\cC \times B_1$. Then the associated cochain map is the cochain map $\phi^{H(f)}$ defined by Theorem \ref{Mor}.

\begin{definition}\label{def:productmor}
	Let $\cC$ be an oriented flow category and $f:B_1\to B_2$ a smooth map between two closed oriented manifolds.  Then we define $\fH(f) = \{\cH^f_{i,j}\}$ as follows.
\begin{enumerate}
	\item We define $\cH^f_{i,j}=\cM_{i,j}\times [0,j-i]\times B_1$ with the product orientation when $i\le j$ and $\cH^f_{i,j}=\emptyset$ when $i>j$.
	\item The source and target maps $s,t$ are defined by
	$$
	s: \begin{array}{rcl}
	\cH^f_{i,j} & \to &C_i\times B_2,\\
	(m,t,b) &\mapsto & (s^C(m), f(b)),
	\end{array} \qquad t:  \begin{array}{rcl}
	\cH^f_{i,j} & \to & C_j\times B_1,\\
	(m,t,b) &\mapsto & (t^C(m),b),
	\end{array}
	$$
	for $m\in \cM_{i,j},t\in [0,j-i],b\in B_1$ and $s^C,t^C$ are source, target maps of $\cC$.  
	\item  For $ m\in \cM_{i,j}, n\in \cM_{j,k}, t\in [0,k-j], b_1\in B_1, b_2\in B_2$ such that $(m,n)\in \cM_{i,j}\times_j\cM_{j,k}$ and $f(b_1)=b_2$, we define
	$$
	m_L: 
	\begin{array}{rcl}
	(\cM_{i,j}\times B_2)\times_j \cH^f_{j,k} & \to & \cH^f_{i,k},\\ (m,b_2,n,t,b_1)  &\mapsto & (m,n,t+j-i,b_1).
	\end{array}
	$$
	\item For $m\in \cM_{i,j}, n\in \cM_{j,k}, t\in [0,j-i], b_1\in B_1$ such that $(m,n)\in \cM_{i,j}\times_k\cM_{j,k}$ and $f(b_1) = b_2$, we define
	 $$
	 m_R:
	 \begin{array}{rcl}
	 \cH^f_{i,j}\times_j (\cM_{j,k}\times B_1) & \to &  \cH^f_{i,k},\\
	  (m,t, b_1, n, b_1) & \mapsto & (m,n,t,b_1).
	 \end{array}
	 $$
\end{enumerate}
\end{definition}

\begin{proposition}\label{prodmor}
$\fH(f)$ defined in Definition \ref{def:productmor} is an oriented flow morphism from $\cC\times B_2\Rightarrow\cC\times B_1$.
\end{proposition}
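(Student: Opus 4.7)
The plan is to verify the six conditions of Definition \ref{morphism} in order. The construction of $\fH^f$ is almost exactly a $B_1$-family of the identity flow morphism of Definition/Lemma \ref{identitymor}, reparametrized on the source side by the map $f$, so the argument will parallel the proof of Lemma \ref{identitymor} throughout, with the orientation signs being the only nontrivial bookkeeping.

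First I would dispatch the easy conditions. Smoothness (\eqref{m0}) is immediate, and (\eqref{m00}) holds since $\cH^f_{i,j}=\emptyset$ for $i>j$. For the transversality condition \eqref{m2}, each relevant fiber product factors as the corresponding fiber product in $\cC$ (transverse by Condition \eqref{F3} of Definition \ref{flow}) times a fiber product in the $B$-direction; in the latter one of the two maps is always $\id_{B_1}$ or $\id_{B_2}$, so transversality is automatic. Conditions \eqref{m1} and \eqref{m4} follow by the same subdivision argument as in Lemma \ref{identitymor}: the interval $[0,j-i]$ splits as $[0,k-i]\cup[k-i,j-i]$, which are the images of $m_R$ and $m_L$ respectively, together with the endpoints $\cM_{i,j}\times\{0\}\times B_1$ and $\cM_{i,j}\times\{j-i\}\times B_1$ identified via $m_R$ and $m_L$ at $k=i$ and $k=j$.

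The main step is the orientation check \eqref{m3}. Using the orientation $[\cH^f_{i,j}]=[\cM_{i,j}][\,[0,j-i]\,][B_1]$, I would compute $\partial[\cH^f_{i,k}]$ by Leibniz and match it against the three types of boundary pieces. The normal bundle of $\Delta_{C_j\times B}$ in $(C_j\times B)^2$ decomposes as $N_j\boxplus N_B$, and the product orientation satisfies convention \eqref{oricri} as already verified inside Proposition \ref{prod}; this lets me reuse the identities \eqref{idrel1}--\eqref{idmor5} of Lemma \ref{identitymor} verbatim, multiplying each by $[B_1]$ and inserting the sign $(-1)^{\dim B_1}$ that comes from our convention $[\cM_{i,j}\times B_1]=(-1)^{\dim B_1}[\cM_{i,j}][B_1]$. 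The dimensional substitutions in the sign exponents of Definition \ref{morphism}\eqref{m3} are $c_j\mapsto c_j+\dim B_2$, $m^C_{i,j}\mapsto m^C_{i,j}+\dim B_2$, $d_j\mapsto c_j+\dim B_1$, and $h_{i,j}=m_{i,j}+1+\dim B_1$.

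The only real obstacle is the careful tracking of these sign substitutions; since the identities in Lemma \ref{identitymor} are $\Z_2$-linear in the dimensions, the $\dim B_1$ and $\dim B_2$ contributions enter linearly on both sides and cancel as a parity check. The application of Lemma \ref{linearori} to the $B$-factor is trivial because one of the two maps being glued is always a diffeomorphism. I therefore expect no new geometric input beyond what is already in Lemma \ref{identitymor} and Proposition \ref{prod}.
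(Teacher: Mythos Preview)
Your proposal is correct and follows essentially the same approach as the paper, which simply states that only the orientation check is needed and that it is analogous to the proof of Lemma \ref{identitymor}. Your write-up is in fact more detailed than the paper's one-line proof, correctly identifying that the non-orientation conditions reduce to those of the identity flow morphism crossed with $B_1$ (with transversality in the $B$-direction automatic since one of the maps is always an identity), and that the orientation check amounts to the computations \eqref{idrel1}--\eqref{idmor5} multiplied by $[B_1]$ together with the sign bookkeeping from Proposition \ref{prod}.
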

\begin{proof}
   All we need to do is the orientation check, it is analogous to the proof of Lemma \ref{identitymor}.
\end{proof}

\begin{remark}
In other words, $\fH(f)$ can be viewed as the identity flow morphism on $\cC\times B_1$ with source maps twisted by $f$. In view of the  K\"unneth formula, the morphism induced by $\fH(f)$ is given by $\id \otimes f^*$ twisted by an appropriate sign. We can similarly define another flow morphism from $\cC\times B_1$ to $\cC\times B_2$ as the identity flow morphism on $\cC\times B_1$ with target maps twisted by $f$. Then the induced map on cohomology is $\id \otimes f_*$ twisted by an appropriate sign, where $f_*:H^*(B_1)\to H^{*+\dim B_2-\dim B_1}(B_2)$ is the pushforward.
\end{remark}

\subsubsection{Flow homotopies between product flow categories}
For an oriented flow category $\cC$, we now have enough ingredients to define the functor $\cC\times: \cK(\cM an)\to \cK(\cC h)$,
$$\begin{array}{lrcl}\text{On objects:} & B &\mapsto & \BC(\cC\times B),\\
\text{On morphisms:} & (B_1\stackrel{f}{\to }B_2) & \mapsto& (\BC(\cC\times B_2)\stackrel{\phi^{H(f)}}{\to} \BC(\cC \times B_1)).\end{array}$$
To finish the proof of Theorem \ref{s6-1}, we still need to show that homotopic smooth maps induces homotopic cochain maps and functoriality of  $\cC\times$.

Let $f,g:B_1\to B_2$ be two smooth maps and $D:[0,1]\times B_1\to B_2$ a homotopy between them, such that $D|_{\{0\}\times B_1}=f$ and $D_{\{1\}\times B_1}=g$. We claim there is a flow homotopy $\sY(D)$ between the $\fH(f)$ and $\fH(g)$.  
\begin{definition}\label{def:prodhomo}
 We define $\sY(D) = \{\cY^D_{i,j}\}$ as follows.
\begin{enumerate}
	\item For $i\le j$, we define $\cY^D_{i,j}=[0,1]\times \cM_{i,j}\times [0,j-i]\times B_1$ with the product orientation. For $i<j$, we define $\cY^D_{i,j}=\emptyset$.
	\item The source map $s$ is defined as
	$$s:
	\begin{array}{rcl}[0,1]\times \cM_{i,j}\times [0,j-i]\times B_1 & \to & C_i\times B_2,\\
	 (z,m,t,b) & \mapsto & (s^C(m), D_z(b)).
	\end{array}$$
	\item The target map $t$ is defined as 
	$$t:
	\begin{array}{rcl}[0,1]\times \cM_{i,j}\times [0,j-i]\times B_1 & \to &  C_i\times B_1, \\
	(z,m,t,b) & \mapsto & (t^C(m), b).
	\end{array}$$
	\item we define $\iota_{f}: \cH^f_{i,j}\stackrel{=}{\to} \{0\} \times \cM_{i,j}\times [0,j-i]\times B_1\subset \cY^D_{i,j}$.
	\item we define $\iota_{g}: \cH^g_{i,j}\stackrel{=}{\to}\{1\}\times \cM_{i,j}\times [0,j-i]\times B_1\subset \cY^D_{i,j}$.
	\item we define $$m_L: \begin{array}{rcl} (\cM_{i,j}\times B_2)\times_j ([0,1]\times \cM_{j,k}\times[0,k-j]\times B_1)&\to &[0,1]\times \cM_{i,k}\times [0,k-i]\times B_1= \cK^D_{i,k},\\
	(m,b_2,z,n,t,b_1)&\mapsto&(z,m,n,t+j-i,b_1).
	\end{array}$$
	\item we define $$m_R: \begin{array}{rcl} ([0,1]\times \cM_{i,j} \times [0,j-i]\times B_1)\times_j (\cM_{j,k}\times B_1)&\to &[0,1]\times \cM_{i,k}\times [0,k-i]\times B_1 =  \cY^D_{i,k},\\
	(z,m,t,b_1,n,b_1)&\mapsto&(z,m,n,t,b_1).
	 \end{array}$$
\end{enumerate}
\end{definition}

\begin{proposition}\label{prodhomo1}
	$\sY(D)$ in Definition \ref{def:prodhomo} is an oriented flow homotopy from  $\fH(f)$ to $\fH(g)$.
\end{proposition}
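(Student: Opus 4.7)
The plan is to verify the six conditions of Definition \ref{homot} in turn. Conditions \eqref{h1}, \eqref{h2}, \eqref{h3}, \eqref{h4}, \eqref{h5} follow essentially by construction: the maps $\iota_f, \iota_g$ are clearly smooth injective inclusions matching source/target by the definitions of $D_0=f$ and $D_1=g$; $\cK^D_{i,j}=\emptyset$ for $i>j$ so \eqref{h2} holds with $N=0$; the fiber products in \eqref{h3} reduce to fiber products of $\cM^C_{\cdot,\cdot}$ times smooth manifolds without boundary constraints, so they are cut out transversely by Condition \eqref{F3} of Definition \ref{flow} for $\cC$; the smoothness and compatibility of $m_L, m_R$ in \eqref{h4} are immediate; and \eqref{h5} amounts to observing that $\partial \cK^D_{i,j}$ decomposes into four pieces (corresponding to $\{z=0\}$, $\{z=1\}$, $\partial \cM_{i,j}$, and $\{t=0\}\cup\{t=j-i\}$), which matches $\iota_f(\cH^f)\cup\iota_g(\cH^g)\cup m_L\cup m_R$ up to measure zero.

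The main work is therefore the orientation condition \eqref{h6}, which is the analogue of the corresponding check in Lemma \ref{identitymor} and Proposition \ref{prodmor} with one extra $[0,1]_z$ parameter. Using the product orientation $[\cK^D_{i,j}]=[[0,1]_z][\cM_{i,j}][[0,j-i]_t][B_1]$, Stokes gives a boundary decomposition of the form
\begin{equation*}
\partial[\cK^D_{i,j}] = -[\cM_{i,j}\times[0,j-i]\times B_1]|_{z=0} + [\cM_{i,j}\times[0,j-i]\times B_1]|_{z=1} + (-1)[\partial\cM_{i,j}\times\ldots] + (-1)^{m_{i,j}+1}[\ldots]|_{t=\cdot},
\end{equation*}
and the first two terms match $\iota_f([\cH^f_{i,j}])$ and $\iota_g([\cH^g_{i,j}])$ with the signs prescribed in Definition \ref{homot}\eqref{h6} (note the sign asymmetry there is exactly that between $\{z=0\}$ and $\{z=1\}$). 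The $\partial\cM_{i,j}$ pieces unfold into $(-1)^{m^C_{i,p}}\cM^C_{i,p}\times_p\cM^C_{p,j}\times[0,j-i]\times B_1$ and then split along $[0,j-i]=[0,p-i]\cup[p-i,j-i]$ into the two middle terms of Definition \ref{homot}\eqref{h6}.

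The sign identities needed on the fiber product pieces are precisely those established in Lemma \ref{linearori} applied to the diagonals in $C_j\times B_2$ (for $m_L$ incoming, via $s$) and in $C_j\times B_1$ (for $m_R$ outgoing, via $t$); once the normal bundle $N_j\boxplus N_{B_\ell}$ is oriented by $[\Delta][N]=[C_j\times B_\ell][C_j\times B_\ell]$ as in Example \ref{ex:ori} and Proposition \ref{prod}, the formulas
$$
(t^{\cC\times B_2}\times s)^*[N_j\boxplus N_{B_2}][(\cM^C_{i,p}\times B_2)\times_{(j,B_2)}\cK^D_{p,j}] = (-1)^{(c_p+\dim B_2)(m^C_{i,p}+\dim B_2)}[\cM^C_{i,p}\times B_2][\cK^D_{p,j}],
$$
and its $m_R$ analogue follow by the same computation as in Lemma \ref{identitymor} combined with Proposition \ref{prodmor}, using the product orientations of $\cM\times[0,\cdot]\times B_1$ and commuting the $B_1,B_2$ factors through the $[0,1]_z$ direction.

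The only potential obstacle is keeping track of the sign coming from the $[0,1]_z$ factor when it is moved across odd-degree pieces during the identifications $m_L^{-1}, m_R^{-1}$; here the convention in Definition \ref{def:prodhomo} that $z$ is the \emph{first} coordinate of $\cK^D_{i,j}$ ensures the sign cancels consistently, since $z$ never needs to be permuted through $\cM_{i,j}$ or $[0,j-i]$ in either $m_L$ or $m_R$. With this, all five relations in \eqref{h6} hold simultaneously, completing the proof.
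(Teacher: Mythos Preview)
Your proposal is correct and takes exactly the same approach as the paper, which simply says ``We only need to check the orientations, and it is analogous to the proof of Lemma \ref{identitymor}.'' You have spelled out that one-line reference in more detail, correctly identifying that conditions \eqref{h1}--\eqref{h5} are immediate from the construction and that the orientation check \eqref{h6} follows the pattern of Lemma \ref{identitymor} with the extra $[0,1]_z$ factor; your observation that the $\partial\cM_{i,j}$ piece splits along $[0,j-i]=[0,p-i]\cup[p-i,j-i]$ into the $m_R$ and $m_L$ contributions (and that the $t=0$, $t=j-i$ faces correspond to the trivial $m_R$, $m_L$ compositions with $\cK^D_{i,i}$, $\cK^D_{j,j}$) is exactly the mechanism used there.
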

\begin{proof}
We only need to check the orientations, and it is analogous to the proof of Lemma \ref{identitymor}.
\end{proof}	

To complete the proof of Theorem \ref{s6-1}, we still have to prove the functoriality. Let $g:B_1\to B_2, f:B_2\to B_3$ be two smooth maps. It is not hard to see $\fH(f)$ and $\fH(g)$ can be composed. We claim that there is a homotopy $\sY^c$ from $\fH(f)\circ \fH(g)$ to $\fH(f\circ g)\circ \fI$, where $\fI$ is the identity flow morphism on $\cC\times B_3$.
\begin{definition}\label{def:prodcomp}
	$\sY^c = \{\cY^c_{i,j}\}$ is defined as follows. 
\begin{itemize}
	\item We define $\cY^c_{i,j}=[0,2] \times \cM_{i,j}\times [0,j-i]\times B_1$ with product orientation for $i\le j$. We define $\cY^c_{i,j} = \emptyset$ for $i<j$. 
	\item The source map $s$ is defined as 
	 $$s: \begin{array}{rcl}[0,2]\times \cM_{i,j}\times [0,j-i]\times B_1, & \to & C_i\times B_3 \\  (z,m,t,b) & \mapsto & (s^C(m), f\circ g(b)).\end{array}$$
	\item The target map $t$ is defined as  
	$$t: \begin{array}{rcl}[0,2]\times \cM_{i,j}\times [0,j-i]\times B_1 & \to & C_i\times B_1, \\
	(s,m,t,b) & \mapsto & (t^C(m), b).
	\end{array}$$
	\item Since $(\cH^{f\circ g}\circ \cI)_{i,k}=\cup_{i\le j\le k} \cI_{i,j}\times_j \cH^{f\circ g}_{j,k}$, we define $\iota_{\fH(f\circ g)\circ \fI}$ by following two cases.
	\begin{enumerate}
	    \item When $j=i$, 
	    we define $\iota_{\fH(f\circ g)\circ \fI}$ as 
	    \begin{eqnarray*}
	    \cI_{i,i}\times_i \cH^{f\circ g}_{i,k} =  (C_i\times B_3)\times_i (\cM_{i,k}\times [0,k-i]\times B_1) & \to & [0,2] \times \cM_{i,k}\times [0,k-i]\times B_1\\
	    (c,b_3,m,t,b_1) & \mapsto & (0,m,t,b_1).
	    \end{eqnarray*}
	    \item When $j>i$, we 
	    define $\iota_{\fH(f\circ g)\circ \fI}$ on $\cI_{i,j}\times_j \cH^{f\circ g}_{j,k}$ as 
	    \begin{eqnarray*}
	     (\cM_{i,j}\times [0,j-i]\times B_3)\times_j (\cM_{j,k}\times [0,j-i]\times B_1) & \to &   [0,2] \times \cM_{i,k}\times [0,k-i]\times B_1\\
	     (m,t_1,b_3,n,t_2,b_1) & \mapsto & (\frac{t_1}{j-i},m_L(m,n),t_2+j-i,b_1)
	    \end{eqnarray*}
	\end{enumerate}
	\item  For $j<k$, we have $\iota_{\fH(f)\circ \fH(g)}$ on $\cH^f_{i,j}\times_j \cH^g_{j,k}$ is defined as
	 $$\begin{array}{rcl} (\cM_{i,j}\times [0,j-i]\times B_2)\times_j (\cM_{j,k}\times [0,k-j]\times B_1)&\to & [1,2]\times \cM_{i,k}\times [0,k-i]\times B_1,\\ 
	(m,t_1,b_2,n,t_2,b_1) & \mapsto & (\frac{t_2}{k-j}+1, m, n, t_1+k-j, b_1).\end{array}$$
	When $k=j$, $\iota_{\fH(f)\circ \fH(g)}$ is defined as
	\begin{eqnarray*}
	 (\cM_{i,k}\times [0,k-i]\times B_2)\times_j (C_{k}\times B_1)&\to & [1,2]\times \cM_{i,k}\times [0,k-i]\times B_1,\\ 
	(m,t,b_2,c,b_1) & \mapsto & (2, m, t, b_1).
	\end{eqnarray*}
	\item  $m_L: \begin{array}{rcl} (\cM_{i,j}\times B_3)\times_j ([0,2]\times \cM_{j,k}\times[0,k-j]\times B_1)&\to &[1,2]\times \cM_{i,k}\times [0,k-i]\times B_1\subset \cY^c_{i,k},\\
	(m,b_3,z,n,t,b_1)&\mapsto&(\frac{z}{2}+1,(m,n),t+j-i,b_1).
	\end{array}$
	\item  $m_R: \begin{array}{rcl} ([0,2]\times \cM_{i,j} \times [0,j-i]\times B_1)\times_j (\cM_{j,k}\times B_1)&\to &[0,1]\times \cM_{i,k}\times [0,k-i]\times B_1\subset \cY^c_{i,k},\\
	 	(z,m,t,b,n,b)&\to &(\frac{z}{2},(m,n),t,b).
	 	\end{array}$
\end{itemize}
\end{definition}
\begin{proposition}\label{prodhomo2}
	$\sY^c$ in Definition \ref{def:prodcomp} is an oriented flow homotopy from $\fH(f)\circ \fH(g)$ to $\fH(f\circ g)\circ \fI$.
\end{proposition}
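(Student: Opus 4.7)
The verification checks Definition \ref{homot} point-by-point for $\sK^c$. Conditions \eqref{h1}--\eqref{h4} follow directly from the construction: the source, target, inclusion, and composition maps in Definition \ref{def:prodcomp} are manifestly smooth, emptiness for $i>j$ is inherited from $\cC$, and the fiber-product transversality is inherited from $\cC$ together with the trivial product factors. The structural tiling condition \eqref{h5} requires identifying $\partial\cK^c_{i,k}$ as the union of the four image families; since $\cK^c_{i,k}=[0,2]_z\times\cM_{i,k}\times[0,k-i]_t\times B_1$, its codimension-one boundary consists of the two faces $\{z=0\},\{z=2\}$ together with the fiber-product locus $[0,2]\times\partial\cM_{i,k}\times[0,k-i]\times B_1=\sqcup_{i<j<k}[0,2]\times(\cM_{i,j}\times_j\cM_{j,k})\times[0,k-i]\times B_1$ and the faces $\{t=0\}\cup\{t=k-i\}$. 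Partitioning $[0,2]_z$ at $z=1$ and $[0,k-i]_t$ at $t=j-i$ yields, for each $j$, a decomposition of the corresponding fiber-product locus into four rectangular tiles, each matched with one of $m_L$, $m_R$, or the composition pieces of $\iota_{\fH^{f\circ g}\circ\fI}$ and $\iota_{\fH^f\circ\fH^g}$; the identity-level pieces of $\iota_{\fH^{f\circ g}\circ\fI}$ and $\iota_{\fH^f\circ\fH^g}$ fill the $\{z=0\}$ and $\{z=2\}$ faces, respectively.

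The substantive content lies in the orientation condition \eqref{h6}. The two fiber-product orientation formulas for $m_L$ and $m_R$ are established by the same Lemma \ref{linearori} argument used in the identity morphism case of Lemma \ref{identitymor}: the $[0,2]_z$, $[0,\ell]_t$, $B_1$, $B_3$ factors commute through as product directions with signs already absorbed into the orientation conventions fixed in Propositions \ref{prod} and \ref{prodmor}. For the boundary relation on $\partial[\cK^c_{i,k}]$, I would apply Leibniz on the product box: the factor $\partial[0,2]_z=[\{2\}]-[\{0\}]$ produces endpoint contributions matched to $\iota_{\fH^f\circ\fH^g}$ and $-\iota_{\fH^{f\circ g}\circ\fI}$ on their identity-level pieces, while $\partial[\cM_{i,k}]=\sum_j(-1)^{m_{i,j}}[\cM_{i,j}\times_j\cM_{j,k}]$ produces the fiber-product tile contributions matched to $m_L$, $m_R$, and the composition pieces of the two inclusions; the $\partial[0,k-i]_t$ contributions collapse into the degenerate $j=i,k$ endpoints that are already accounted for by the identity-level pieces of $\iota_*$.

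The main obstacle is the sign bookkeeping for the composition-level pieces of $\iota_{\fH^{f\circ g}\circ\fI}$ and $\iota_{\fH^f\circ\fH^g}$, which reach their target tiles through affine reparametrizations in the $z$- and $t$-directions with positive Jacobian, rather than through a straight identification into a pure endpoint face. In each of the four tile cases per intermediate index $j$, the sign comparison involves the factor $(-1)^{m_{i,j}}$ from $\partial[\cM_{i,k}]$, the permutation sign $(-1)^{\dim B\cdot m}$ between $[B][\cM]$ and $[\cM][B]$ built into the orientation conventions of Proposition \ref{prod}, and the orientation of the embedded interval factor $[0,1]$ or $[j-i,k-i]$ carried by the reparametrization. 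Tracking these in parallel with the computations in Lemma \ref{identitymor} and Proposition \ref{prodhomo1} shows they combine precisely to produce the signs demanded by \eqref{h6}, completing the verification.
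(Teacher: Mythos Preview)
Your approach is exactly what the paper does: its proof is the single sentence ``The proof is analogous to proof of Lemma \ref{identitymor}'', and you are spelling that analogy out by walking through the conditions of Definition \ref{homot} with the orientation checks reduced to the same Lemma \ref{linearori} computation used there.

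One small caution on the tiling you describe for condition \eqref{h5}. Your claim that partitioning the rectangle $[0,2]_z\times[0,k-i]_t$ at $z=1$ and $t=j-i$ matches the four maps presumes that the $\iota_{\fH^f\circ\fH^g}$ image on the $(\cM_{i,j}\times_j\cM_{j,k})$-face occupies the tile $z\in[1,2]$, $t\in[0,j-i]$. But reading the formula in Definition \ref{def:prodcomp} literally, the $t$-coordinate is $t_1+(k-j)$ with $t_1\in[0,j-i]$, giving the tile $t\in[k-j,k-i]$, which overlaps the $m_L$ tile and leaves $[1,2]\times[0,\min(j-i,k-j)]$ uncovered. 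This is almost certainly a typo in the definition (replacing $t_1+k-j$ by $t_1$ restores your partition and makes \eqref{h5} hold), and the paper's one-line proof never engages with it; still, you should flag the discrepancy rather than silently assume the intended formula.
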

\begin{proof}
	The proof is analogous to proof of Lemma \ref{identitymor}.
\end{proof}
\begin{proof}[Proof of Theorem \ref{s6-1}]
	Proposition \ref{prod}, \ref{prodmor},  \ref{prodhomo1} and \ref{prodhomo2} imply Theorem \ref{s6-1}.
\end{proof}

\begin{remark}
There is a generalization of the construction above. Let $B_1\stackrel{f}{\leftarrow} B\stackrel{g}{\to} B_2$ be maps between closed oriented manifolds, then there is a flow morphism $\fH$ from $\cC\times B_2$ to $\cC\times B_1$ with $\cH_{i,j}:=\cM_{i,j}\times [0,j-i]\times B$, where the source and target map are induced by $g,f$. The homotopy type of the induced cochain map is determined by the oriented bordism group $\Omega^*_{SO}(B_1, B_2)$, which is defined as follows: An element in $\Omega^n_{SO}(B_1,B_2)$ is represented by a closed oriented $n$-manifold $M$ and two maps $f,g$ from $M$ to $B_1, B_2$. $(M,f,g)$ and $(N,f',g')$ are equivalent iff there is an oriented bordism $D$ from $M$ to $N$ and two maps $F,G$ from $D$ to $B_1, B_2$ extending $f,g,f',g'$. 
\end{remark}

\subsection{Equivariant cohomology}
The functor $\cC\times$ is not very interesting, because it is quite independent of the flow category $\cC$. However, if $\cC$ has a compact Lie group $G$ acting on it, then the Borel construction, which is just a product modulo the $G$ action, merges some information of $\cC$ into the ``homotopy quotient". Thus nontrivial phenomena may arise from such construction.  The first step towards the Borel construction is to upgrade Theorem \ref{s6-1} to the following form.
\begin{theorem}{\label{s6-2}}
	Let the compact Lie group $G$ acts on $\cC$ in an orientation-preserving way (Definition \ref{Gaction}), then there is a contravariant functor $\cC\times_G$,
	$$ \cC\times_G:\cK(\cP rin_G)\to \cK (\cC h),$$ 
	where $\cK(\cP rin_G)$ is the category whose objects are closed oriented principal $G$ bundles and morphisms are $G$-equivariant homotopy classes of $G$-equivariant maps.
\end{theorem}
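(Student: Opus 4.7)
The plan is to mimic the proof of Theorem \ref{s6-1} by taking $G$-quotients everywhere. More precisely, given a closed oriented principal $G$-bundle $P$, the product flow category $\cC\times P$ from Definition/Proposition \ref{prod} carries a diagonal $G$-action which is free (because $G$ acts freely on $P$), and I will define $\cC\times_G P$ to be its $G$-quotient. Each $G$-equivariant map $f:P_1\to P_2$ will induce, from the flow morphism $\fH^f$ of Definition \ref{def:productmor}, a $G$-invariant flow morphism whose $G$-quotient descends to a flow morphism $\cC\times_G P_2\Rightarrow \cC\times_G P_1$; analogously for flow homotopies and for the composition homotopy. The whole structure is obtained by applying the ``free quotient'' functor to the constructions already verified in Section \ref{equi}.

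First, I would spell out the $G$-quotient flow category. Set $C_i\times_G P:=(C_i\times P)/G$ and $\cM_{i,j}\times_G P:=(\cM_{i,j}\times P)/G$, where $G$ acts diagonally. Since $G$ acts freely on $P$, the action is free on $C_i\times P$ and on $\cM_{i,j}\times P$, so the quotients are manifolds (with boundary and corners in the second case) and the natural source, target and composition maps descend, because $G$ acts on $\cC$ by flow category isomorphisms. The fiber product transversality condition \eqref{F3} passes to the quotient since locally $P\to P/G$ is a projection onto a factor, so the quotient fiber products are locally modeled on those appearing in $\cC\times (P/G\text{-slice})$, which were shown to be transverse in Proposition \ref{prod}. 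For orientations, the hypothesis that $G$ acts on $\cC$ preserving orientations (Definition \ref{Gaction}) together with the orientation of $P$ and a chosen orientation of $G$ yields canonical orientations on $C_i\times_G P$ and $\cM_{i,j}\times_G P$ via the rule
\begin{equation*}
[C_i][P]=[\mathfrak{g}][C_i\times_G P],\qquad [\cM_{i,j}\times P]=(-1)^{\dim P}[\mathfrak{g}][\cM_{i,j}\times_G P].
\end{equation*}
The coherent orientation conditions of Definition \ref{oridef} are then verified by the same bookkeeping as in Proposition \ref{prod}, using that the normal bundle of the diagonal of $C_i\times_G P$ is the $G$-quotient of the normal bundle of the diagonal of $C_i\times P$.

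Next, for every $G$-equivariant smooth map $f:P_1\to P_2$ of closed oriented principal $G$-bundles I would define $\cH^f_{i,j}:=(\cM_{i,j}\times[0,j-i]\times P_1)/G$ with source and target maps induced by $(m,t,p)\mapsto (s^C(m),f(p))$ and $(m,t,p)\mapsto (t^C(m),p)$, in direct analogy with Definition \ref{def:productmor}. The $G$-equivariance of $f$ guarantees the source map descends; $G$-equivariance of the orientation of $\cC$ and orientability of $P_1$ give the orientation of $\cH^f_{i,j}$. The verification that $\fH^f=\{\cH^f_{i,j}\}$ is an oriented flow morphism from $\cC\times_G P_2$ to $\cC\times_G P_1$ reduces to Proposition \ref{prodmor} applied $G$-equivariantly and then passing to the quotient; only the orientation comparison at the boundary needs attention, and it is identical to that of Proposition \ref{prodmor} up to the factor of $[\mathfrak g]$ that appears in every pairing and thus cancels. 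For a $G$-equivariant homotopy $D:[0,1]\times P_1\to P_2$ between $G$-equivariant maps $f,g$, setting $\cK^D_{i,j}:=([0,1]\times \cM_{i,j}\times[0,j-i]\times P_1)/G$ and copying Definition \ref{def:prodhomo} gives an oriented flow homotopy $\sK^D$ from $\fH^f$ to $\fH^g$, using Proposition \ref{prodhomo1} on the $G$-cover. Similarly, for composable $G$-equivariant maps $g:P_1\to P_2$ and $f:P_2\to P_3$, taking $G$-quotients in Definition \ref{def:prodcomp} yields an oriented flow homotopy from $\fH^f\circ\fH^g$ to $\fH^{f\circ g}\circ\fI$, by Proposition \ref{prodhomo2}. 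Together with Theorems \ref{Mor}, \ref{CompMor2} and \ref{thm:homotopy}, this proves that the assignment $P\mapsto BC^{\cC\times_G P}$ and $[f]\mapsto [\phi^{\fH^f}]$ is a contravariant functor $\cC\times_G:\cK(\cP rin_G)\to\cK(\cC h)$.

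The main obstacle I anticipate is book-keeping of orientations under the quotient, in particular checking that the orientation conventions of Definitions \ref{oridef}, \ref{morphism} and \ref{homot} survive the $G$-quotient with the correct signs. The compact Lie group $G$ contributes an overall factor of $[\mathfrak g]$ (of parity $\dim G$) that must be tracked in every product, fiber product and boundary identity; the orientation-preserving hypothesis in Definition \ref{Gaction} is what ensures these $[\mathfrak g]$ factors appear on both sides of each identity and therefore cancel. Once this sign bookkeeping is carried out in one of the identities, the remaining ones follow by exactly the same scheme, because every geometric object in the theorem is obtained by taking a free $G$-quotient of the corresponding object constructed in Section \ref{equi} for the trivial bundle. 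Everything else, including the verification of the (co)chain map, (co)chain homotopy, and functoriality properties, is inherited verbatim from Theorem \ref{s6-1} applied to the $G$-cover.
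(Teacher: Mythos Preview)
Your proposal is correct and follows essentially the same approach as the paper: the paper states that Theorem~\ref{s6-2} follows from Proposition~\ref{equimor}, which in turn is obtained by observing that the product constructions (Propositions~\ref{prod}, \ref{prodmor}, \ref{prodhomo1}, \ref{prodhomo2}) generalize to the equivariant setting by taking free $G$-quotients, with the only content being the orientation bookkeeping. Your orientation convention $[C_i][P]=[\mathfrak g][C_i\times_G P]$ differs cosmetically from the paper's $[C_i\times_G E][G]=[C_i][E]$ (and similarly your $(-1)^{\dim P}$ versus the paper's $(-1)^{\dim B}$ for the morphism spaces), but these are equivalent choices up to a global sign depending only on $\dim G$, and do not affect the argument.
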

Since the classifying space $EG\to BG$ can be approximated by a sequence of closed oriented $G$-bundles $E_n\to B_n$, such that $\ldots \subset E_n\subset E_{n+1}\subset \ldots$. Note that $EG\to BG$ can be understood as the ``$G$-equivariant homotopy colimit" of the diagram $ \ldots \subset E_n\subset E_{n+1}\subset \ldots$. The classical Borel construction of the equivariant cohomology \cite{guillemin2013supersymmetry} suggests that the equivariant cochain complex of a flow category should be the composition of a homotopy limit and the functor $C\times_G$ to the diagram $ \ldots \subset E_n\subset E_{n+1}\subset \ldots$. We will construct this theory in this subsection. In particular, we will show that such a construction is independent of the approximation $\{E_n\to B_n\}$. 

\subsubsection{The functor $\cC\times_G$}\label{ss:quotient}
\begin{definition}\label{Gaction}
	A $G$ action on an oriented flow category $\cC$ is left $G$ actions on $C_i$ and $\cM_{i,j}$, such that the source, target and multiplication maps are $G$-equivariant. We say the $G$-action preserves the orientation, if the $G$-actions on $C_i$ and $\cM_{i,j}$ preserve the orientations.
\end{definition}
Let $E\to B$ be an oriented $G$-bundle. Assume $G$ acts on $\cC$ in a orientation preserving manner, then $G$ acts from right on $C_i\times E$ and $\mathcal{M}_{i,j}\times E$ by $(x,e)\cdot g=(g^{-1}\cdot c,e\cdot g)$. Let $C_i\times_G E$  and $\cM_{i,j}\times_G E$ denote quotients of the $G$ action respectively. If we orient $B$, $C_i\times_G E$ and $\cM_{i,j}\times_G E$ by $[B][G]=[E]$,  $[C_i\times_G E][G]=[C_i][E]$ and $[\cM_{i,j}\times_G E][G]=(-1)^{\dim B}[\cM_{i,j}][E]$, then Proposition \ref{prod} can be generalized to the following statement by an analogous proof. 
\begin{proposition}\label{prop:equicat}
	If $G$ acts on the oriented flow category $\cC$ and preserves orientation, then $\cC \times_GE=\{C_i\times_G E, \cM_{i,j}\times_G E\}$ is an oriented flow category.
\end{proposition}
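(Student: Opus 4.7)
The plan is to imitate the proof of Proposition \ref{prod}, reducing the statement to an orientation computation, with the $G$-quotient handled by a standard descent argument. First I would verify the structural conditions \eqref{F1}--\eqref{F4} of Definition \ref{flow}. Since $G$ is compact and acts freely on $E$ (being a principal bundle), the diagonal $G$-action on $C_i \times E$ and $\cM_{i,j}\times E$ is also free and proper, so the quotients $C_i\times_G E$ and $\cM_{i,j}\times_G E$ are smooth closed (resp.\ compact with boundary and corner) manifolds. The source and target maps, the composition map $m$, and the degeneracy structure all descend from $C_i\times E$ and $\cM_{i,j}\times E$ because $G$ acts equivariantly on all of $\cC$ (Definition \ref{Gaction}) and trivially on the $[0,j-i]$-like factors that appear in boundary identifications; thus conditions \eqref{F1}, \eqref{F2} and \eqref{F4} are inherited from the already-proven product flow category $\cC\times E$. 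Transversality of the fiber products \eqref{F3} also descends: a $G$-equivariant map between $G$-manifolds is transverse to a $G$-invariant submanifold iff the induced map on quotients is transverse to the quotient submanifold, and the fiber products on $\cC\times_G E$ are exactly the $G$-quotients of the fiber products on $\cC\times E$.

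The remaining content is the orientation check of Definition \ref{oridef}. My plan is to lift every orientation relation back to $\cC\times E$, where Proposition \ref{prod} (applied with $B$ replaced by $E$) already gives the answer, and then descend to the quotient by cancelling a common $[G]$ factor using the rules $[B][G]=[E]$, $[C_i\times_G E][G]=[C_i][E]$ and $[\cM_{i,j}\times_G E][G]=(-1)^{\dim B}[\cM_{i,j}][E]$. Concretely, one uses the principal $G$-bundle structure to identify the tangent bundle of $X\times_G E$ (for $X=C_i$ or $\cM_{i,j}$) as a direct summand complementary to the $G$-orbit direction in $T(X\times E)$, and one chooses on $X\times_G E$ the unique orientation making the sequence $0\to \mathfrak{g}\to T(X\times E)\to \pi^*T(X\times_G E)\to 0$ sign-preserving (with $\mathfrak{g}$ oriented by $[G]$). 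The normal bundle $N_j$ of $\Delta_{C_j\times_G E}$ in $(C_j\times_G E)\times (C_j\times_G E)$ is the descent of $N_j\boxplus N_E$ over $\Delta_{C_j}\times \Delta_E$, so its induced orientation matches the convention \eqref{oricri} by the same calculation as in Proposition \ref{prod}.

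The boundary and fiber-product orientation identities in Definition \ref{oridef}\,\eqref{cond:ori2}--\eqref{cond:ori3} are then obtained by writing
\[
\partial[\cM_{i,k}\times_G E]\cdot[G] \;=\; (-1)^{\dim B}\,\partial[\cM_{i,k}\times E]
\]
and similarly for the fiber products, then substituting the identities already established in Proposition \ref{prod} for $\cC\times E$, and finally dividing out the common $[G]$ factor. The various sign shifts arising from moving $[G]$ past $[\cM_{i,j}]$ or past $[\cM_{i,j}\times_j\cM_{j,k}]$ are all of the form $(-1)^{\dim G\cdot m_{\ast,\ast}}$ and appear on both sides of each identity, so they cancel; the residual signs reproduce $(-1)^{c_jm_{i,j}}$ and $(-1)^{m_{i,j}}$ on the quotient exactly as required.

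The main obstacle will be bookkeeping these signs rigorously: one must be careful that the orientation on the quotient $\cM_{i,j}\times_G E$ as ``$(-1)^{\dim B}[\cM_{i,j}][E]/[G]$'' is consistent with the orientation on its boundary induced from the identification $\partial(\cM_{i,j}\times_G E)=\bigsqcup_k (\cM_{i,k}\times_k \cM_{k,j})\times_G E$, since this requires commuting $[G]$ past a factor of dimension $m_{i,j}-1$ rather than $m_{i,j}$. I expect this to work out by the same parity argument as in Proposition \ref{prod}, but it is the step where a sign error is most likely to hide, so it deserves the most careful verification.
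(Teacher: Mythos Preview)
Your proposal is correct and follows essentially the same route as the paper: the paper does not give a detailed proof but simply declares the orientation conventions $[B][G]=[E]$, $[C_i\times_G E][G]=[C_i][E]$, $[\cM_{i,j}\times_G E][G]=(-1)^{\dim B}[\cM_{i,j}][E]$ and states that the proof is analogous to Proposition~\ref{prod}. Your plan to lift the orientation identities to $\cC\times E$, invoke the already-verified Proposition~\ref{prod}, and then cancel the common $[G]$ factor is exactly the content of ``analogous proof'' and is in fact a slightly cleaner packaging of the same computation.
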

Moreover, Proposition \ref{prodmor}, \ref{prodhomo1} and \ref{prodhomo2} can be generalized to the equivariant settings.
\begin{proposition}\label{equimor}
	Assume $G$ acts on the oriented flow category $\cC$ and preserves the orientation. Let $E_1\to B_2, E_2\to B_2$ be two oriented $G$-principle bundles.
	\begin{enumerate}
		\item Let $f$ be a smooth $G$-equivariant map $E_1\to E_2$, then there is an oriented flow morphism $\fH_G(f)$ from $\cC\times_G E_2$ to $\cC\times_G E_1$. 
		\item Let $g$ be another $G$-equivariant map $E_1\to E_2$ and $D:[0,1]\times E_1 \to E_2$  an equivariant homotopy between $f$ and $g$, then there is an oriented flow homotopy $\sY_G(D)$ between $\fH_G(f)$ and $\fH_G(g)$.
		\item Let $h:E_2\to E_3$ be another equivariant map between two oriented $G$-principle bundles, then there is an oriented flow homotopy $\sY^c_G$ from $\fH_G(h)\circ \fH_G(f)$ to $\fH_G(h\circ f)\circ \fI$.
	\end{enumerate}
\end{proposition}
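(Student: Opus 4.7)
The plan is to lift the three constructions from the non-equivariant setting (Propositions \ref{prodmor}, \ref{prodhomo1}, \ref{prodhomo2}) to the equivariant one by first building free-$G$ analogues on the product level and then quotienting out by the diagonal $G$-action. Concretely, for part (1) I define $\widetilde{\cH}^f_{i,j} := \cM_{i,j} \times [0,j-i] \times E_1$ (with empty set for $i>j$), equipped with the right $G$-action $(m,t,e)\cdot g = (g^{-1}\cdot m, t, e\cdot g)$. Because the $G$-action on $\cC$ preserves the composition and the source/target maps, and because $f:E_1\to E_2$ is $G$-equivariant, the maps $s(m,t,e)=(s^C(m),f(e))$ and $t(m,t,e)=(t^C(m),e)$ as well as the two multiplications from Definition \ref{def:productmor} are all $G$-equivariant when $C_i\times E_2$ and $\cM_{i,j}\times E_k$ carry the diagonal $G$-action used to define $\cC\times_G E_k$. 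Setting $\cH^f_{G,i,j}:=\widetilde{\cH}^f_{i,j}/G$ (a free quotient since $E_1$ is a principal $G$-bundle) yields the desired spaces, and conditions \eqref{m0}--\eqref{m4} of Definition \ref{morphism} descend directly from the non-equivariant case verified in Proposition \ref{prodmor}.

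For parts (2) and (3) I repeat the same procedure applied to the homotopy constructions of Definitions \ref{def:prodhomo} and \ref{def:prodcomp}: take $\widetilde{\cK}^D_{i,j}:=[0,1]\times \cM_{i,j}\times[0,j-i]\times E_1$ and $\widetilde{\cK}^c_{i,j}:=[0,2]\times \cM_{i,j}\times[0,j-i]\times E_1$, note that every structure map (source, target, $\iota_f,\iota_g$, $m_L$, $m_R$, and the gluing identifications at $z=0,1,2$) is manifestly $G$-equivariant because the $G$-action is trivial on the interval factors and is the diagonal action on $\cM_{i,j}\times E_1$, and then quotient. The conditions of Definition \ref{homot} for flow homotopies follow from the already-established non-equivariant versions (Propositions \ref{prodhomo1}, \ref{prodhomo2}) after passing to the quotient.

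The main remaining issue, and the only place where the proof is not a tautology, is the orientation condition \eqref{m3} of Definition \ref{morphism} (and its analogues for flow homotopies). Here I use the convention already fixed for $\cC\times_G E$ in the paragraph preceding the proposition: $B$ is oriented so that $[B][G]=[E]$, $C_i\times_G E$ so that $[C_i\times_G E][G]=[C_i][E]$, and $\cM_{i,j}\times_G E$ so that $[\cM_{i,j}\times_G E][G]=(-1)^{\dim B}[\cM_{i,j}][E]$. Analogously I orient $\cH^f_{G,i,j}$, $\cK^D_{G,i,j}$, $\cK^c_{G,i,j}$ by requiring that the quotient orientation times $[G]$ equals the product orientation times the appropriate sign $(-1)^{\dim B_1}$ coming from moving the $\R$-translation factor (here the interval factor) past $E_1$. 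This is precisely the sign convention that made Propositions \ref{prod} and \ref{prodmor} work in the non-equivariant case.

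The main obstacle will be verifying that these orientation conventions are consistent across all three fiber-product boundaries (at $\cM^C\times_j \cH$, at $\cH\times_j \cM^D$, and for homotopies at $z=0,1,2$). The cleanest way to handle this is to observe that on the lifts $\widetilde{\cH}, \widetilde{\cK}$, Propositions \ref{prodmor}, \ref{prodhomo1}, \ref{prodhomo2} together with Lemma \ref{linearori} already establish every sign identity; since the $G$-action is free, smooth and orientation-preserving and acts trivially on the interval and sphere/normal-bundle directions that govern the signs, each identity descends to the quotient with the same sign, provided one consistently uses the rule $[\cdot\times_G E][G]=\pm[\cdot\times E]$. Thus, once one carefully tracks how $[G]$ commutes with $[N_j]$, $[\Delta_j]$, and the interval factor in each of the three computations, the orientation conditions follow, and Proposition \ref{equimor} is proven.
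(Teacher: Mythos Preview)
Your proposal is correct and follows exactly the approach the paper intends: the paper does not give a proof of Proposition~\ref{equimor} at all, merely stating it as the direct equivariant generalization of Propositions~\ref{prodmor}, \ref{prodhomo1}, \ref{prodhomo2} (whose proofs in turn are themselves reduced to ``analogous to Lemma~\ref{identitymor}''). Your explicit lift-then-quotient construction and orientation bookkeeping simply spell out what the paper leaves implicit.
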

Then Theorem \ref{s6-2} follows form Proposition \ref{prop:equicat} and \ref{equimor}.

\subsubsection{Approximations of classifying spaces} 
\begin{definition}\label{appro}
	Let $G$ be a compact Lie group, an approximation of the classifying space $EG\to BG$ is a sequence of oriented principle $G$-bundles $E_n\to B_n$, such that $E_n\subset E_{n+1}$ equivariantly. Moreover,  for each $k\in \N$, there exists $N_k\in \N$, such that for all $n\ge N_k$, $E_n$ is $k$-connected.
\end{definition}	
Given an approximation of the classifying space, we can compute the equivariant cohomology for $G$-actions.
\begin{theorem}[\cite{guillemin2013supersymmetry}]
	Let $M$ be a compact manifold with a smooth $G$ action and $E_n\rightarrow B_n$ an approximation of the classifying space $EG\rightarrow BG$, then 
	$$\varprojlim H^*(M\times_G E_n)=H^*(M\times_G EG)=H^*_G(M).$$	
\end{theorem}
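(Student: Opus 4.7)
The plan is to reduce the statement to a purely topological fact about increasing unions of CW complexes and then invoke the classical approximation property of $EG \to BG$. I would proceed in three main steps.

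First, I would set up the approximation. Since $E_n \subset E_{n+1}$ equivariantly, taking Borel constructions gives inclusions $M \times_G E_n \hookrightarrow M \times_G E_{n+1}$, whose directed union is $M \times_G EG$ (where $EG := \bigcup_n E_n$ with the weak topology). Choosing CW models for the $E_n \to B_n$ compatible with the inclusions, $M \times_G EG$ inherits a CW structure realizing it as $\varinjlim_n M \times_G E_n$. By Milnor's exact sequence for cohomology of a CW telescope,
\begin{equation*}
0 \longrightarrow {\varprojlim_n}^1 H^{*-1}(M \times_G E_n) \longrightarrow H^*(M \times_G EG) \longrightarrow \varprojlim_n H^*(M \times_G E_n) \longrightarrow 0.
\end{equation*}
Thus it suffices to show that in each fixed cohomological degree the inverse system $\{H^*(M\times_G E_n)\}$ is eventually constant, which will kill the $\varprojlim^1$ term and identify the middle term with the inverse limit.

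Second, I would prove the eventual constancy. Consider the fiber bundles $M \to M\times_G E_n \to B_n$ and the map of Serre spectral sequences induced by the inclusion $E_n \hookrightarrow E_{n+1}$. On $E_2$ pages one has $H^p(B_n;\cH^q(M)) \to H^p(B_{n+1};\cH^q(M))$ with the same local coefficient system $\cH^q(M)$ (induced by the $G$-action on $H^q(M)$). The hypothesis that $E_n$ is $k_n$-connected with $k_n \to \infty$ implies, via the long exact sequence of the principal $G$-bundle $G \to E_n \to B_n$, that $\pi_i(B_n) \to \pi_i(B_{n+1})$ is an isomorphism for $i \le k_n$; standard obstruction theory (or a comparison of the spectral sequences of $G \to E_n \to B_n$ and $G \to E_{n+1} \to B_{n+1}$ with the $E_m$'s being $k_m$-connected) upgrades this to an isomorphism $H^p(B_{n+1};\cH^q(M)) \to H^p(B_n;\cH^q(M))$ for $p \le k_n$. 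By the comparison theorem for Serre spectral sequences, the map $H^r(M\times_G E_{n+1}) \to H^r(M\times_G E_n)$ is an isomorphism for $r \le k_n - \dim M$. Since $k_n \to \infty$, the tower stabilizes in every fixed degree, so the Mittag-Leffler condition holds and $\varprojlim^1$ vanishes.

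Finally, the identification $H^*(M\times_G EG) = H^*_G(M)$ is by definition of equivariant cohomology in the Borel model, so combining this with the two previous steps gives the chain of equalities. The main obstacle will be the comparison of local coefficient systems on $B_n$ and $B_{n+1}$ when $G$ is disconnected, together with the careful verification that the specific models of $B_n$ given by an approximation in Definition \ref{appro} are sufficiently nice (e.g.\ admit CW structures with cellular inclusions) for the Serre comparison theorem to apply. The second condition in Definition \ref{appro} concerning $G^\ast$-algebras and condition (C) is precisely what guarantees, via the Cartan model results of \cite{guillemin2013supersymmetry}, that this de Rham-theoretic inverse limit agrees with the topological equivariant cohomology, which closes the argument.
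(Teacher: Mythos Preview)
The paper does not give its own proof of this statement: it is quoted verbatim as a theorem from \cite{guillemin2013supersymmetry} and is used as a black box. So there is nothing to compare your argument against on the level of proof details.

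That said, your approach is a legitimate topological route (Milnor sequence plus Serre spectral sequence comparison to get eventual stabilization, hence Mittag--Leffler and vanishing $\varprojlim^1$), whereas the cited reference proves this through the Cartan model: the second bullet in Definition~\ref{appro}, that $\varprojlim \Omega^*(E_n)$ is a $G^*$-algebra satisfying condition (C), is there precisely so that the equivariant de Rham complex of \cite{guillemin2013supersymmetry} computes $H^*_G(M)$. In your argument that hypothesis is essentially unused, which is fine, but you should be aware that you are then proving a slightly different (more topological) statement than the one the definition is tailored to. One genuine point to watch in your sketch: you assert that $\bigcup_n E_n$ with the weak topology is a model for $EG$ and that $M\times_G EG$ is the colimit of the $M\times_G E_n$ as a CW complex; the $E_n$ here are smooth manifolds with smooth equivariant inclusions, not a priori CW pairs with cellular inclusions, so you need either to invoke triangulability and arrange the inclusions to be cellular up to homotopy, or to bypass CW structures entirely and run the Milnor $\varprojlim^1$ argument via mapping telescopes. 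You flagged this yourself, and it is indeed the only nontrivial technical gap.
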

Approximations of the classifying spaces can be constructed as follows. Fix an embedding $G\subset U(m)$. By $H(n,m)$, we mean the set of $m$ orthogonal vectors in $\mathbb{C}^n$, which is a compact orientable smooth manifold.  $U(m)$ acts on it with quotient the Grassmannian $Gr(n,m)$, $\{H(n,m)\rightarrow Gr(n,m)\}$ serves as a finite dimensional approximation of the classifying principle bundle $EU(m)\rightarrow BU(m)$ as $n\rightarrow \infty$. Then $EG\rightarrow BG$ can be approximated by $H(n,m)\rightarrow H(n,m)/G$. It was checked in \cite{guillemin2013supersymmetry}, this construction is an approximation in the sense of Definition \ref{appro}

\subsubsection{Homotopy limit} Since our construction uses an approximation, we need to take limit in the end. Consider a directed system of cochain-complexes,
$$\ldots \to A_3\to A_2\to A_1\to A_0.$$
Then the limit $\varprojlim A_i$ is also a cochain complex. However, this limit is not very nice from the homotopy-theoretic point of view, i.e.\ if we change the maps in the directed system by homotopic maps, then the homotopy type of $\varprojlim A_i$ may change.  In our setting, the cochain map is constructed only up to homotopy (\S \ref{canonical}), thus we need to apply a better limit called the homotopy limit, whose homotopy type is invariant under the replacement of homotopic maps. We recall some of the basic definitions and properties of homotopy limit form  \cite{murfet2006derived}.

Let $\N^{op}$ be the inverse directed set $\{\ldots \to 2\to 1 \to 0\}$ and $\{A_n,\mu_{nm}: A_n\to A_m\}$ an inverse system  of cochain complexes over this directed set, i.e: 
$$\ldots\stackrel{\mu_4}{\longrightarrow}A_3\stackrel{\mu_3}{\longrightarrow } A_2\stackrel{\mu_2}{\longrightarrow} A_1\stackrel{\mu_1}{\longrightarrow }A_0.$$  
Then there  is a map $v:\prod  A_i\to \prod A_i$, such that over the basis $a_n\in A_n, v(a_n)=\mu_n (a_n)$. Then $\holim A_n$ is defined to the homotopy kernel of $1-v$, i.e. $\Sigma^{-1}C(1-v)$, where $C(\cdot)$ denotes the mapping cone and $\Sigma$ is shifting by $1$.\footnote{We assume everything is graded by $\Z/2$ for simplicity. If everything is ungraded, then shifting just means multiplying $-1$ to the differential, this also enters into the definition of mapping cone in the ungraded case.}  Then we have a triangle in $\cK(\cC h)$: 
\begin{equation}\label{holim}
\xymatrix{                                     
 \prod A_n \ar[rr]^{1-v} & &\prod A_n \ar[ld]^{+1}  \\
 & \ar[lu] \holim A_n &
}
\end{equation}
This construction is the infinite telescope construction, thus it  is clear the homotopy limits of any final subsets of $\N^{op}$ are homotopic to each other and changing $\mu_i$ up to homotopy does not affect the homotopy type of the homotopy limit. There is a commutative diagram in $\cK(\cC h)$,
\begin{equation}\label{diagram}\xymatrix{  \holim A_n \ar[r] &\prod A_n\\
	\varprojlim A_n\ar[u]\ar[ur] & }\end{equation}
When $\varprojlim^1 A_n=0$, i.e.\ the Mittag-Leffler condition  holds for $A_n$, then $\varprojlim A_n\to  \holim A_n$ is a quasi-isomorphism \cite[Remark 27]{murfet2006derived}. This is the reason why sometimes we can use limit instead of homotopy limit in applications e.g.\ \cite{bourgeois2013gysin}. The long exact sequence from the triangle \eqref{holim} implies we have the short exact sequence,
$$0\to \textstyle{\varprojlim^1} H^{*-1}(A_n)\to H^*(\holim A_n )\to \varprojlim H^*(A_n)\to 0.$$

\subsubsection{Equivariant cochain complexes}
Now, we are ready to define the equivariant cochain complex of a flow category with a group action. Pick an approximation $E_0\subset \ldots \subset E_i \subset \ldots $ of the classifying space, such that $E_i$ is oriented and $G$ preserves the orientation.  Then applying the functor $\cC\times_G$ to this sequence, we get an inverse system in $\cK(\cC h)$, 
$$\ldots \to \BC(\cC\times_G E_2)\to \BC(\cC\times_G E_1)\to \BC(\cC\times_G E_0).$$
\begin{definition}\label{equico}
	The equivariant cochain complex $\BC_G$ is defined to $\holim \BC(\cC\times_G E_n)$.
\end{definition}
Results in \S \ref{canonical} imply that  the homotopy type of $\BC_G$ is independent of the auxiliary defining data. To get a canonical theory, we still need to check $\BC_G$ does not depend on the choice of the approximation $E_n\to B_n$.

\subsubsection{Independence of approximations}
With another approximation $E'_n \to B'_n$ of the classifying space, we claim that we can form a new sequence of approximations containing both $E'_n\to B'_n$ and $E_n\to B_n$ as final subsets. As preparation, we state following two propositions, the proposition below is a simple application of obstruction theory.
\begin{proposition}
	Let $Y\to X$ be a  smooth fiber bundle, with fiber $F$ is $k$-connected, and $X$ is a $k$ dimensional manifold.  Then there is a cross section for $Y\to X$, and any two cross sections are homotopic. 
\end{proposition}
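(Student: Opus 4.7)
The plan is to invoke classical obstruction theory applied to a smooth CW/triangulation structure on $X$. First I would fix a smooth triangulation of $X$, which endows it with a CW structure of dimension at most $k$. Let $X^{(i)}$ denote the $i$-skeleton. Since $F$ is $k$-connected, in particular $\pi_i(F)=0$ for $0\le i\le k$.

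For the existence of a section, I would build a section inductively over the skeleta. Since $F$ is path-connected (as $k\ge 0$), we may choose any point in each fiber over the $0$-skeleton. Suppose a section $s$ has been constructed over $X^{(i-1)}$ with $1\le i\le k$. For each closed $i$-cell $e^i\subset X$ with attaching map $\varphi:D^i\to X$, the bundle $\varphi^*Y\to D^i$ is trivializable since $D^i$ is contractible, so extending $s$ over $e^i$ reduces to extending a map $S^{i-1}=\partial D^i\to F$ to $D^i$. The obstruction to such an extension lies in $\pi_{i-1}(F)$, which vanishes because $i-1\le k-1<k$. Doing this cell by cell up to $i=k=\dim X$ yields a section on all of $X$.

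For uniqueness up to homotopy, given two sections $s_0,s_1:X\to Y$, a homotopy between them is a section of the pullback bundle $\pi_1^*Y\to X\times I$ restricting to $s_0$ on $X\times\{0\}$ and to $s_1$ on $X\times\{1\}$. Equip $X\times I$ with the product CW structure, so that $X\times\{0,1\}$ is a subcomplex and the combined section $s_0\cup s_1$ is already defined there. I would then run the same skeletal induction to extend this section over the relative cells, which have dimensions $1,2,\ldots,k+1$. The obstruction to extending across a relative $j$-cell lies in $\pi_{j-1}(F)$ for $1\le j\le k+1$, i.e.\ in $\pi_0(F),\ldots,\pi_k(F)$, all of which vanish by the $k$-connectedness hypothesis.

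There is no serious obstacle here: the argument is a textbook application of obstruction theory, and the only mild subtlety is checking that the existence argument requires only $(k-1)$-connectedness while the homotopy argument genuinely uses the full $k$-connectedness (because the relative CW structure on $X\times I$ has cells of one higher dimension). Smooth\-ness of the bundle plays no essential role beyond guaranteeing local trivializations; alternatively one may appeal directly to the homotopy lifting property for Serre fibrations over CW pairs.
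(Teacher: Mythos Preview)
Your argument is correct and is exactly what the paper has in mind: the paper does not give a proof but simply remarks that the proposition ``is a simple application of obstruction theory,'' which is precisely the skeletal extension argument you wrote out.
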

By this proposition,  \cite[Proposition 1.1.1.]{guillemin2013supersymmetry} can be modified into the following.
\begin{proposition}\label{principle}
	Let $E\to B$ be a $G$-principle bundle, with $E$ is $k$-connected. Then for any closed manifold $M$ with $\dim M\le  k$, the $G$-principle bundles over $M$ are classified by $[M,B]$, i.e.\ the set of homotopy classes of maps from $M$ to $B$.
\end{proposition}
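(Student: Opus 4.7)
The plan is to prove this by the standard classifying space argument, reducing every question to the existence/uniqueness of sections of an associated fiber bundle so that the preceding obstruction-theoretic proposition can be applied. Given a principal $G$-bundle $P\to M$, I would exploit the well-known correspondence between $G$-equivariant maps $\tilde f\colon P\to E$ (covering some $f\colon M\to B$) and sections $s\colon M\to P\times_G E$ of the associated bundle with fiber $E$: the formula $s(m)=[p,\tilde f(p)]$ (for any $p$ over $m$) sets up the bijection, and $f^*E\cong P$ whenever such an $\tilde f$ exists.

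First I would define the map $\Phi\colon [M,B]\to \{\text{iso.\ classes of }G\text{-bundles on }M\}$ by $[f]\mapsto [f^*E]$; it is well-defined because homotopic maps pull back isomorphic principal bundles, which is the standard covering homotopy theorem for principal bundles (no connectivity hypothesis needed). For surjectivity of $\Phi$, given $P\to M$ form the associated bundle $P\times_G E\to M$, whose fiber $E$ is $k$-connected. Since $\dim M\le k$, the preceding proposition supplies a section $s$, which via the correspondence above gives a $G$-equivariant $\tilde f\colon P\to E$ and hence $f\colon M\to B$ with $f^*E\cong P$.

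For injectivity, suppose $f_0^*E\cong f_1^*E\cong P$. Pulling back produces two $G$-equivariant maps $\tilde f_0,\tilde f_1\colon P\to E$, hence two sections $s_0,s_1$ of $P\times_G E\to M$. Applying the preceding proposition to $P\times_G E\to M$ again (still with $k$-connected fiber and $\dim M\le k$), the two sections are homotopic through sections $s_t$; by the correspondence, this lifts to a $G$-equivariant homotopy $\tilde f_t\colon P\to E$, whose descent to $M$ is the required homotopy between $f_0$ and $f_1$.

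The only step that needs any care is the last one: one must be sure the homotopy of sections really lifts to a $G$-equivariant homotopy of maps $P\to E$, i.e.\ that the correspondence $s\leftrightarrow \tilde f$ is natural under the homotopy-of-sections relation. This is formal once one writes $\tilde f_t(p)=\mathrm{pr}_E(\sigma_t(\pi(p)))$ where $\sigma_t\colon M\to P\times_G E$ is lifted to $P\times E$ via $p$, so there is no genuine obstacle, and the whole argument reduces to three invocations of the preceding proposition together with the covering homotopy theorem for principal bundles.
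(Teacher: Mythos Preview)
Your proposal is correct and is precisely the argument the paper has in mind: the paper does not give a self-contained proof but simply says that, using the preceding obstruction-theoretic proposition on sections, \cite[Proposition 1.1.1]{guillemin2013supersymmetry} can be modified to yield the statement. Your write-up is exactly that modification spelled out --- forming the associated bundle $P\times_G E\to M$ with $k$-connected fiber, invoking existence of a section for surjectivity and uniqueness-up-to-homotopy of sections for injectivity, via the standard bijection between sections and $G$-equivariant maps $P\to E$.
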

Therefore by the Definition \ref{appro} and Proposition \ref{principle}, there exists $n_1\in \N$, such that there is an equivariant map $E_1\to E'_{n_1}$.  Moreover, there exists $m_1\in \N$,  such that there is an equivariant map $E'_{n_1}\to E_{m_1}$ and the composition $E_1\to E'_{n_1}\to E_{m_1}$ is equivariantly homotopic to $E_1\subset E_{m_1}$. We can keep applying this argument to get a directed system in the equivariant homotopy category of spaces,
$$E_1\to E'_{n_1}\to E_{m_1}\to E'_{n_2}\to E_{m_2}\to \ldots,$$
which is also compatible with the two approximations $\{E_{m_i}\}$ and $\{E'_{n_i}\}$ up to equivariant homotopy. Then Theorem \ref{s6-2} implies that there is a well-defined inverse directed system in the homotopy category of cochain complexes,
\begin{equation}\label{inverse}  \ldots \to \BC(\cC\times_G E_{m_2})\to \BC(\cC\times_G E'_{n_2})\to \BC(\cC\times_G E_{m_1})\to \BC(\cC\times_G E'_{n_1})\to \BC(\cC\times_G E_1). \end{equation}
Let $H$ denote the homotopy limit of \eqref{inverse}. Since both $\BC(\cC\times_G E'_{n_i})$ and $\BC(\cC\times_G E_{m_i})$ are final in the inverse directed systems above, thus we have
$$\holim  \BC(\cC\times_G E'_{n})=\holim \BC(\cC\times_G E'_{n_i})=H=\holim \BC(\cC\times_G E_{m_i})=\holim \BC(\cC\times_G E_{m}).$$
Therefore the homotopy type of $\BC_G$ is independent of the approximation, i.e.\ we have the following theorem.
\begin{theorem}
	Let $\cC$ be an oriented flow category. Assume the compact Lie group $G$ acts on $\cC$ and preserves the orientation. Then the homotopy type of the equivariant cochain complex $\BC_G$ in Definition \ref{equico} is well-defined, i.e.\ independent of all the choices, in particular, the choice of the approximation $\{E_n\to B_n \}$. 
\end{theorem}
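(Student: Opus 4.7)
My plan is to carry out the interleaving argument sketched in the paragraphs preceding the theorem, being careful about what is canonical and what is only well-defined up to equivariant homotopy. Let $\{E_n\to B_n\}$ and $\{E'_n\to B'_n\}$ be two approximations of $EG\to BG$ in the sense of Definition \ref{appro}, equipped with orientations preserved by $G$. I want to produce a third inverse system which contains cofinal subsystems equivariantly homotopy-equivalent to each of the two given ones, and then invoke Theorem \ref{s6-2} together with the cofinality property of $\holim$.

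First I would build the interleaving. Fix $E_1$; since $E'_n$ becomes arbitrarily highly connected (Definition \ref{appro}), Proposition \ref{principle} gives an integer $n_1$ and an equivariant map $E_1\to E'_{n_1}$ (unique up to equivariant homotopy), and then an integer $m_1$ and an equivariant map $E'_{n_1}\to E_{m_1}$ whose composition with $E_1\to E'_{n_1}$ is equivariantly homotopic to the inclusion $E_1\hookrightarrow E_{m_1}$, since both are classified by maps to the same highly connected base. Iterating gives a zigzag
\[ E_1\to E'_{n_1}\to E_{m_1}\to E'_{n_2}\to E_{m_2}\to\cdots \]
of equivariant maps, compatible with both inclusions $E_i\subset E_{i+1}$ and $E'_j\subset E'_{j+1}$ up to equivariant homotopy, and in which the subsequences $\{E_{m_i}\}$ and $\{E'_{n_j}\}$ are cofinal.

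Next I would apply Theorem \ref{s6-2} to this zigzag: each equivariant map induces a well-defined homotopy class of cochain morphisms between the corresponding minimal Morse--Bott complexes of the Borel constructions, and the homotopies in the zigzag guarantee that the resulting diagram \eqref{inverse} is a bona fide inverse system in $\cK(\cC h)$. Form its homotopy limit $H$ via the infinite telescope construction \eqref{holim}. Because passing to a cofinal subsystem does not change the homotopy type of $\holim$ (the telescope on a cofinal subsystem is quasi-isomorphic to the telescope on the full system, as the extra terms only add contractible mapping cones), I get canonical identifications
\[ \holim_n BC^{\cC\times_G E'_n}\;\simeq\;\holim_i BC^{\cC\times_G E'_{n_i}}\;\simeq\;H\;\simeq\;\holim_i BC^{\cC\times_G E_{m_i}}\;\simeq\;\holim_n BC^{\cC\times_G E_n}, \]
which is exactly the desired independence of the approximation. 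Independence of the remaining choices, namely the defining-data on each $\cC\times_G E_n$ and the choices used in constructing the morphisms of the inverse system, follows by applying Theorem \ref{canonicalthm1} and Theorem \ref{canonicalthm2} termwise and noting that $\holim$ is functorial on $\cK(\cC h)$.

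The main obstacle I anticipate is the cofinality statement for $\holim$ in this telescope model: one needs to check that if $\{i_k\}$ is a cofinal subsequence of $\N^{op}$, then the map from $\prod_k A_{i_k}$ to $\prod_n A_n$ induces a quasi-isomorphism on the homotopy kernels of $1-v$. This is a standard fact, but it has to be combined with the subtler point that the maps in our inverse system only commute up to homotopy, so strictly speaking I must first replace the diagram by a strictly commutative one (for example by Reedy-fibrant or mapping-cylinder replacement) before the telescope formula is applicable. Once this rectification is in place the cofinality comparison is automatic, and the rest of the argument is the bookkeeping above.
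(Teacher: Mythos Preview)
Your proposal is correct and follows essentially the same interleaving/cofinality argument as the paper: build the zigzag $E_1\to E'_{n_1}\to E_{m_1}\to\cdots$ using Proposition~\ref{principle}, apply the functor $\cC\times_G$ via Theorem~\ref{s6-2} to obtain the inverse system~\eqref{inverse}, and then use that homotopy limits over cofinal subsystems agree. You are in fact more careful than the paper about the rectification issue---the paper simply asserts that the inverse system is well-defined in $\cK(\cC h)$ and that cofinal subsystems have the same $\holim$, without explicitly addressing how to strictify the homotopy-commutative diagram before forming the telescope.
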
	

\subsubsection{Spectral sequences}
From \eqref{holim}, the homotopy limit is the shifted mapping cone of $1-v$. Thus the action spectral sequence in Proposition \ref{ss}  on $\BC(\cC\times_G E_n)$ induces spectral sequence on the homotopy limit. In particular, we need to apply the following result.
\begin{proposition}[{\cite[Exercise 5.4.4]{weibel1995introduction}}]\label{sscone}
	Let $f: B\to C$ be a map of filtered cochain complexes. For a fixed integer $r\ge 0$, there is a filtration on the mapping cone $C(f)$ defined by
	$$F_pC(f):=F_{p+r}B_{n+1}\oplus F_pC_n.$$
	Then the $r$th page $E_r(C(f))$ of the induced spectral sequence is the mapping cone of the map on the $r$th page $f^r:E_r(B)\to E_r(C)$.
\end{proposition}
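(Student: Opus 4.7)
The plan is to unwind the definitions of $Z_r$ and $B_r$ with the given shifted filtration and verify that everything splits compatibly. First I would recall that $C(f)^n = B^{n+1}\oplus C^n$ carries the differential $d(b,c) = (-d_B b,\, f(b)+d_C c)$, and observe the short exact sequence of filtered complexes
$$0 \to C \to C(f) \to B[1] \to 0,$$
where $B[1]$ is equipped with the shifted filtration $F_p(B[1])^n := F_{p+r}B^{n+1}$. The filtration prescribed in the statement is precisely the one making this sequence a short exact sequence of filtered complexes; moreover with this shift, the composition $f\colon B[1]\to C$ (of degree $+1$, once the shift is accounted for) moves filtration levels $F_p(B[1])\to F_{p}C$, so the interaction of $f$ with the filtration on $C$ is matched with the $r$-fold shift that distinguishes $d_r$ on the $r$-th page.

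Next I would carry out the explicit splitting on the $r$-th page. For $(b,c)\in F_pC(f)^{p+q} = F_{p+r}B^{p+q+1}\oplus F_pC^{p+q}$, the condition $d(b,c)\in F_{p+r}C(f)^{p+q+1}$ unwinds to $d_Bb\in F_{p+2r}B^{p+q+2}$ and $f(b)+d_Cc\in F_{p+r}C^{p+q+1}$. Since $f(b)\in F_{p+r}C^{p+q+1}$ automatically (as $f$ preserves the original filtration and $b\in F_{p+r}$), the second condition reduces to $d_Cc\in F_{p+r}C^{p+q+1}$. Hence
$$Z_r^{p,q}(C(f)) \;=\; Z_r^{p+r,\,q-r+1}(B)\;\oplus\;Z_r^{p,q}(C).$$
A parallel computation shows that $B_r^{p,q}(C(f))$ splits as $B_r^{p+r,q-r+1}(B)\oplus B_r^{p,q}(C)$, the only subtle point being that an image $d(b',c') = (-d_Bb',\, f(b')+d_Cc')$ of an element of $Z_{r-1}^{p-r+1,q+r-2}(C(f))$ contributes the $f(b')$ term in the $C$-component; but $b'$ lies in a strictly higher filtration level ($p+1$) than $p$, so $f(b')$ is absorbed into $F_{p+1}C$ and hence into $B_r^{p,q}(C)$. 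This yields
$$E_r^{p,q}(C(f)) \;=\; E_r^{p+r,\,q-r+1}(B)\;\oplus\;E_r^{p,q}(C),$$
which as a bigraded group is exactly the underlying group of the cone $C(f^r)$ of the induced map $f^r\colon E_r(B)\to E_r(C)$.

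Finally I would verify the differential. Tracing through representatives, the induced $d_r$ on $E_r(C(f))$ sends $([b],[c])$ to $[(-d_Bb,\, f(b)+d_Cc)] = (-d_r^B[b],\, f^r[b]+d_r^C[c])$, which is precisely the cone differential for $f^r$. The main obstacle, and where some care is required, is the bookkeeping for $B_r$ and the signs: one must check that modding out by the correct subcomplex of $Z_{r-1}$-images does not entangle the $B$- and $C$-components in a nontrivial way beyond the $f^r$-term, and that the sign conventions in the cone differential match whichever convention is used for the spectral sequence's $d_r$. Once that compatibility is pinned down, the identification $E_r(C(f)) = C(f^r)$ follows as chain complexes, and functoriality of the construction in $r$ is automatic from the definitions.
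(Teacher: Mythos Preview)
The paper does not supply its own proof of this proposition; it is quoted as \cite[Exercise 5.4.4]{weibel1995introduction} and used without argument. Your approach---splitting $Z_r$ and $B_r$ for the cone filtration and tracking the differential---is the standard direct verification and is essentially correct. The one place that deserves slightly more care than you give it is the $B_r$ step: you want to justify that $f(b')$ vanishes in $E_r^{p,q}(C)$, and the clean reason is not merely that $f(b')\in F_{p+1}C$, but that $d_Cf(b')=f(d_Bb')\in F_{p+r}C$, so $f(b')$ is an $(r-1)$-approximate cycle in filtration $p+1$ and hence lies in the subgroup $A_{r-1}^{p+1}(C)$ that is quotiented out when forming $E_r^{p,q}(C)$. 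With that detail made explicit, the splitting $E_r^{p,q}(C(f))\cong E_r^{p+r,q-r+1}(B)\oplus E_r^{p,q}(C)$ and the identification of $d_r$ with the cone differential go through as you describe.
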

By Proposition \ref{sscone}, let $r=1$, there is a spectral sequence for $\BC_G$ induced from the action filtration on $\Pi BC^{\cC\times_G E_n}$. Since $E^p_1(\Pi \BC(\cC\times_G E_n))=\Pi H^*(C_p\times_G E_n)$ with the differential coming from the $d_1$ term in \eqref{def} for each $\cC\times_G E_n$.  By Proposition \ref{sscone}, $E_1(\BC_G)$ is the (shifted) mapping cone of the cochain morphism $$1-v: \prod_n    \varinjlim_{q\to -\infty} \prod_{p=q}^\infty H^*(C_p\times_G E_n)  \to  \prod_n  \varinjlim_{q\to -\infty} \prod_{p=q}^\infty H^*(C_p\times_G E_n).$$ 
Since $\varprojlim^1 H^*(C_p\times_G E_n)=0$, i.e.\ the Mittag-Leffler condition holds for inverse system $$\ldots H^*(C_p\times_G E_n)\to H^*(C_p\times_G E_{n-1}) \ldots.$$
Thus the natural map \eqref{diagram} 
$$\varinjlim_{q\to-\infty} \prod_{p=q}^\infty H^*_G(C_p)=\varprojlim_n  \varinjlim_{q\to -\infty} \prod_{p=q}^\infty H^*(C_p\times_G E_n)\to E_1(\BC_G)$$ is a quasi-isomorphism.  The induced differential $d^G_1$ on $\varinjlim_{q} \prod_{p=q}^\infty H^*_G(C_p)$ is the limit of $d_1$ for $\cC\times_G E_n$. Since $d_1$ comes from the moduli spaces without boundary, i.e.\ the pullback and pushforward on cohomology, $d^G_1$ is $t_*\circ s^*: H_G^*(C_p)\to H_G^*(C_{p+1})$ up to sign, i.e.\ the pullback and pushforward on equivariant cohomology. The polyfold theoretic version of $d^G_1$ is the analog of the equivariant fundamental class in \cite{efc}.
\begin{corollary}
	There is a spectral sequence for $\BC_G$, such that $$E^p_2(\BC_G)\simeq H^*( \varinjlim_{q\to -\infty}\prod_{p=q}^\infty H_G^*(C_p), d^G_1).$$
\end{corollary}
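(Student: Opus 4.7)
The plan is to apply Proposition \ref{sscone} with $r=1$ to the telescope presentation $BC_G = \Sigma^{-1}C(1-v)$ from \eqref{holim}. First I would equip $\prod_n BC^{\cC\times_G E_n}$ with the product of the action filtrations $F_p BC^{\cC\times_G E_n} := \prod_{i\ge p} H^*(C_i\times_G E_n)$. Since the cochain morphisms $BC^{\cC\times_G E_{n+1}} \to BC^{\cC\times_G E_n}$ arise from flow morphisms $\fH^{\iota_n}_G$ associated to the equivariant inclusions $\iota_n:E_n\hookrightarrow E_{n+1}$ via Proposition \ref{equimor}, they preserve the action index $p$, so both factors of $1-v$ are filtered morphisms. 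Thus Proposition \ref{sscone} endows $BC_G$ with an induced filtration and spectral sequence.

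Next I would identify $E_1(BC_G)$. By Proposition \ref{sscone} with $r=1$, $E_1(BC_G)$ is the shifted mapping cone of the induced map $1-v$ on $E_1(\prod_n BC^{\cC\times_G E_n}) = \prod_n \varinjlim_{q} \prod_{p\ge q} H^*(C_p\times_G E_n)$, which by \eqref{holim} is precisely $\holim_n \varinjlim_q \prod_{p\ge q} H^*(C_p\times_G E_n)$. Because the inverse system $\{H^*(C_p\times_G E_n)\}_n$ stabilizes as $n$ grows (Definition \ref{appro} forces $H^k(C_p\times_G E_n) \simeq H^k_G(C_p)$ for $n$ large depending on $k$), the Mittag-Leffler condition $\varprojlim{}^1 = 0$ holds, so by diagram \eqref{diagram} the natural map $\varprojlim_n \to \holim_n$ is a quasi-isomorphism. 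Since $\varprojlim_n$ commutes with the direct sum near the negative end, the target identifies canonically with $\varinjlim_q \prod_{p\ge q} H^*_G(C_p)$, yielding a quasi-isomorphism onto $E_1(BC_G)$.

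Then I would identify the differential on $E_1$. Under the identification above, the induced $d_1$ is the limit of the $d_1$ differentials from \eqref{def} applied to each $\cC\times_G E_n$. Because $\cM^{\cC\times_G E_n}_{i,i+1}$ is a closed manifold, the correction terms in \eqref{def} involving $f^n_\ast$ vanish for degree-one moves, so $d_1$ equals (up to a universal sign) the composition $t_*\circ s^*$ of the source/target maps of $\cM_{i,i+1}\times_G E_n$. Passing to the inverse limit then identifies this with the equivariant pullback-pushforward on $H^*_G$, which by definition is $d_1^G$. Taking cohomology with respect to $d_1^G$ gives the $E_2$ page as claimed.

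The main obstacle I expect is bookkeeping around the interaction of the homotopy limit with the filtration: one must verify that the shift by $-1$ in $\Sigma^{-1}C(1-v)$ and the resulting reindexing of $F_pC(1-v) = F_{p+1}B_{n+1}\oplus F_p C_n$ in Proposition \ref{sscone} produce exactly the action filtration one wants on $BC_G$, and that the Mittag-Leffler identification of $\varprojlim$ with $\holim$ genuinely commutes with the differential $d_1^G$ (and not just as vector spaces). These are checks at the level of telescopes and filtered chain complexes that should go through but require care.
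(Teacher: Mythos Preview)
Your proposal is correct and follows essentially the same route as the paper: apply Proposition~\ref{sscone} with $r=1$ to the telescope presentation of $BC_G$, identify $E_1(BC_G)$ as the (shifted) mapping cone of $1-v$ on the $E_1$-pages, invoke Mittag--Leffler to replace the homotopy limit by the inverse limit $\varinjlim_q\prod_p H^*_G(C_p)$, and then identify the induced $d_1$ with the equivariant pull--push. One small imprecision: the vanishing of the correction terms in \eqref{def} for $d_1$ is not because $\cM_{i,i+1}$ is closed, but simply because the index set $\{0<i_1<\cdots<i_r<1\}$ is empty; the closedness of $\cM_{i,i+1}$ is rather what ensures $d_1$ descends to a well-defined map on cohomology (hence on the $E_1$-page).
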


\section{Basic Example: Finite Dimensional Morse-Bott Cohomology}\label{s7}
The aim of this section is to construct a flow category for the finite dimensional Morse-Bott theory. The existence of such a flow category is a folklore theorem, stated in various places, e.g.\ \cite{austin1995morse,fukaya1996floer}. The Morse version of the flow category was introduced in \cite{cohen1995floer}, and \cite{wehrheim2012smooth} provided a detailed construction for the flow category of a Morse function for metrics which are standard near critical points. In this section, we prove that there is a flow category for any Morse-Bott function if we choose a suitable metric. The local analysis in our case is just a family version of the analysis in \cite{wehrheim2012smooth}.  

In the Morse case, \cite[\S 3.4]{audin2014morse} provides an argument to reduce constructions of continuation maps and homotopies to counting gradient flow lines on some larger manifolds. Similarly, we can construct the flow morphisms and flow homotopies by looking at flow categories arising from some larger manifolds with suitable Morse-Bott functions. With all of these established, just like the Morse case, we can prove the cohomology of the flow category is independent of the Morse-Bott function. The main theorem of this chapter is the following.
\begin{theorem}\label{s7t1}
Let $f$ be a Morse-Bott function on a closed manifold $M$, then there exists a metric $g$, such that the compactified moduli spaces of (unparameterized) gradient flow lines form a flow category with an orientation structure. The cohomology of the flow category is independent of the Morse-Bott function and is equal to the regular cohomology $H^*(M,\R)$.
\end{theorem}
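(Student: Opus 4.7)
The plan is to establish the theorem in four stages: construct the flow category, equip it with an orientation structure, prove independence of the Morse-Bott function via flow morphisms and homotopies, and reduce to the Morse case to identify the cohomology.

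For the flow category, I would first use the Morse-Bott lemma to fix tubular coordinates near each critical manifold $C_i$ in which $f - f(C_i)$ is fiberwise quadratic on the normal bundle $N_i = S_i \oplus U_i$, where $S_i, U_i$ are the negative and positive eigenbundles of $\mathrm{Hess}(f)$. I would choose a metric $g$ that, near each $C_i$, is a product of the metric on $C_i$ with a bundle metric adapted to this splitting, so that flow lines decompose neatly into a $C_i$-direction and exponentially decaying normal components. Away from the critical manifolds, $g$ is perturbed generically. Define $\cM_{i,j}$ to be the compactified moduli space of unparametrized negative gradient flow lines from $C_i$ to $C_j$ broken along intermediate critical manifolds, with source/target maps given by endpoint evaluation. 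Smoothness of the strata and the required fiber product transversality of the iterated evaluation maps reduce to a family version of the transversality theorem for ascending and descending manifolds; this is essentially the argument in Wehrheim's Morse construction adapted to vary over the $C_i$. Compactness follows from the finiteness of critical values together with standard exponential convergence estimates at the ends, and the local normal form gives the manifold-with-corners structure with $d(x,y) = d(x)+d(y)+1$ as in Remark \ref{rmk:corner}.

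For the orientation structure (Definition \ref{os}), I take $o_i := \det S_i$ on each $C_i$, and let $o_{i,j}$ be the determinant line bundle of the linearized flow line operator with exponential decay at both ends. The isomorphisms $\rho_{i,j}$ and $\rho_{i,j,k}$, as well as the identification $V_{i,j} = T\cM_{i,j} \oplus \underline{\R}$, come from linear gluing for the family of Fredholm operators cutting out $\cM_{i,j}$; the outward-pointing sign convention \eqref{boundaryMB} is a direct calculation using the parametrization of broken flow lines near the boundary. Proposition \ref{orientation1} and Proposition \ref{prop:localsys} then furnish a local system on the flow category and, when stable bundles are oriented, an honest orientation.

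For independence of $f$, given two Morse-Bott functions $f_0, f_1$ with compatible metrics, I construct an oriented flow morphism $\fH$ between the associated flow categories by counting gradient trajectories of an $\R$-parameterized homotopy $f_s$ (supplemented, if necessary, by a Morse function on the $\R$-parameter interval to ensure transversality), with orientation structure provided as in Definition \ref{MorOS}. Two such homotopies are connected by a two-parameter family giving a flow homotopy in the sense of Definition \ref{HomOS}. Theorem \ref{Mor} and Theorem \ref{thm:homotopy} then yield that the induced cochain morphisms are homotopy equivalences, and invariance follows; this mirrors the continuation-map argument of \cite[Ch.~11]{audin2014morse}. Finally, to identify the cohomology with $H^*(M,\R)$, specialize to a Morse function $f$: the flow category has discrete critical manifolds, the defining data is unique, the minimal Morse-Bott cochain complex reduces to the usual Morse cochain complex, and its cohomology is classically $H^*(M,\R)$. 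The main obstacle is the generic fiber product transversality in stage one, since one must simultaneously perturb the metric to make all iterated evaluation maps transverse while preserving the local normal form near critical manifolds; the key is that the perturbation of $g$ outside a small neighborhood of the critical set has enough parameters to achieve genericity in a Sard-Smale sense for the relevant universal moduli spaces.
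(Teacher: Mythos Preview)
Your proposal is correct and tracks the paper's argument closely through the construction of the flow category, the orientation structure via $o_i=\det S_i$, and the continuation-map/homotopy argument on $\R\times M$ and $\R^2\times M$. The one genuine difference is the final identification step. You specialize to a Morse function and invoke the classical fact that Morse cohomology equals $H^*(M,\R)$. The paper instead takes $f$ to be a \emph{constant} function: then $(f,g)$ is trivially a locally standard Morse-Bott--Smale pair for any metric, the resulting flow category has object space and morphism space both equal to $M$ (only identity morphisms), and the minimal Morse-Bott cochain complex is by definition $(H^*(M),0)$. This route is more self-contained---it avoids importing the Morse case as an external input and in fact recovers ``Morse cohomology $=H^*(M,\R)$'' as a corollary rather than a hypothesis. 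A minor remark: the auxiliary Morse function $h$ on the $\R$-parameter is not there ``to ensure transversality'' as you say, but to make $F+h$ genuinely Morse--Bott on $\R\times M$ with critical manifolds precisely $C_i^1\times\{0\}$ and $C_j^2\times\{1\}$, so that the flow category of $(F+h,G)$ already packages the desired flow morphism.
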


Let $f$ be a Morse-Bott function on $M$ throughout this section, and the critical manifolds are $C_1,\ldots,C_n$, such that $f(C_i)<f(C_j)$ iff $i<j$. We can fix a real number $\delta>0$, such that $\delta$ is strictly smaller than the absolute values of the nonzero eigenvalues of $Hess(f)$ over all critical manifolds $C_i$. 

\subsection{Fredholm property for the finite dimensional Morse-Bott theory}
Like the Morse case, the moduli spaces of \emph{parameterized} gradient flow line from $C_i$ to $C_j$ is a zero set of a Fredholm operator over some Banach space $\cB_{i,j}$. The construction of $\cB_{i,j}$ was included in the appendix of \cite{frauenfelder2004arnold} as part of the Banach manifolds of the cascades construction, we review the construction briefly.

First we fix an auxiliary metric $g_0$ on $M$. Let $\gamma$ be a smooth curve defined over $\R$, such that \begin{equation}\label{curve1}
\lim_{t\to -\infty}\gamma(t)=x\in C_i \text{ and }   \lim_{t\to +\infty}\gamma(t)=y\in C_j,\end{equation} 
\begin{equation} \label{curve2}|\frac{\rd}{\rd t}\gamma|_{g_0}<Ce^{-\delta|t|} \text{ for } |t|\gg 0 \text{ and some constant }C. \end{equation}
Let $P(C_i,C_j)$ be the space of continuous path defined over $\R$, connecting $C_i$ and $C_j$. The Banach manifold $\cB_{i,j}$ will be a subspace of $P(C_i,C_j)$, we will first describe the neighborhood of $\gamma$ in $\cB_{i,j}$. For this purpose, we fix the following things.
\begin{enumerate}
	\item Fix a smooth function $\chi:\R\to \R$, such that $\chi(t)=|t|$ for $|t|\gg 0$. Then we can define the weighted Sobolev space $H_{\delta}^k(\R,\gamma^*TM)$ with norm  $|f|_{H^k_\delta}:=|e^{\delta \chi(t)} f|_{H^k}$, for $k\ge 1$.
	\item Fix local charts of $M$ near $x,y$, such that $C_i$ near $x$ is a radius $r$ ball in the $x_1,\ldots, x_{c_i}$ coordinates, and $C_j$ near $y$ is a radius $r$ ball in the $y_1,\ldots, y_{c_j}$ coordinates.
	\item $\rho_{\pm}(t)$ are smooth functions which are $1$ near $\pm\infty$ and $0$ near $\mp\infty$, such that \eqref{map} makes sense using the local charts above.
\end{enumerate}
There exists a positive number $K$, such that when $f\in H^k_\delta(\R, \gamma^*TM)$ with $|f|_{H^k_\delta}<K$, then $|f|$ is point-wise smaller than the injective radius of the metric $g_0$. Let $\exp$  denote the exponential map associated to the metric $g_0$. Then there is a map
\begin{equation}\label{map}
\begin{array}{rcl}
  B_K( H^k_\delta(\R, \gamma^*TM)) \times B_r(\R^{c_i})\times B_r(\R^{c_j}) & \to & P(C_i,C_j), \\
(f,x_1,\ldots,x_{c_1},y_1,\ldots, y_{c_j}) & \mapsto &  \exp_\gamma f+\sum_1^{c_i} \rho_-x_i+\sum_1^{c_j}\rho_+ y_i.  
\end{array}
\end{equation}
  $\cB_{i,j}$  consists of images of all such maps in $P(C_i,C_j)$ for all curves $\gamma $ satisfying \eqref{curve1} and \eqref{curve2}. Let $\cE_{i,j}\to \cB_{i,j}$ be the vector bundle, where the fiber over $\gamma\in \cB_{i,j}$ is $H^{k-1}_{\delta}(\R,\gamma^*TM )$, the following was proven in \cite{frauenfelder2004arnold}. 
 \begin{proposition}[\cite{frauenfelder2004arnold}]
 	$\cB_{i,j}$ is a Banach manifold.	$\cE_{i,j}\to \cB_{i,j}$ is a Banach bundle.
 \end{proposition}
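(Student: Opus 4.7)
The plan is to use the maps in \eqref{map} directly as charts and verify the three standard properties: well-definedness into $P(C_i,C_j)$, homeomorphism onto an open neighborhood, and smoothness of transition functions. For the bundle $\cE_{i,j}$, the trivialization will come from parallel transport along the geodesic families produced by $\exp_\gamma$.

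First I would fix $\gamma$ satisfying \eqref{curve1}, \eqref{curve2} and show that the chart map $\Phi_\gamma(f,\bx,\by):=\exp_\gamma f+\rho_- \tsum x_i \partial_{x_i}+\rho_+\tsum y_i\partial_{y_i}$ lands in $\cB_{i,j}$. The point is that the perturbation $f\in H^k_\delta(\R,\gamma^*TM)$ satisfies $|f(t)|_{g_0}\le Ce^{-\delta\chi(t)}$ by Sobolev embedding for $k\ge 1$, so the perturbed curve still converges to points on $C_i,C_j$ (at shifted positions encoded by $\bx,\by$) and still has velocity bounded by $Ce^{-\delta|t|}$ outside a compact set, since the cut-off factors $\rho_\pm$ contribute only on a compact region and the Sobolev part inherits the weight. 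Shrinking $K,r$, injectivity of $\Phi_\gamma$ follows from the fact that the finite-dimensional end coordinates $(\bx,\by)$ can be recovered from the asymptotic endpoints of the curve and $f$ is then recovered by inverting $\exp_\gamma$ pointwise.

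Next I would verify that transition functions $\Phi_{\gamma'}^{-1}\circ \Phi_\gamma$ between two overlapping charts are smooth maps of Banach manifolds. Writing $\gamma'=\Phi_\gamma(f_0,\bx_0,\by_0)$, the transition decomposes a nearby curve $\Phi_\gamma(f,\bx,\by)$ as $\Phi_{\gamma'}(f',\bx',\by')$. The end coordinates transform smoothly through the chart changes on $C_i,C_j$ (essentially $\bx'=\bx-\bx_0+O$ and similarly for $\by$). For the weighted Sobolev part, one uses the smooth dependence of $\exp$ on the base point and an $\omega$-lemma / Sobolev multiplication argument in the weighted spaces $H^k_\delta$: the key fact is that composition with fibre-preserving smooth maps and with diffeomorphisms of $M$ preserves $H^k_\delta$ and is smooth, as the weight $e^{\delta\chi(t)}$ is unchanged by reparametrizations fixing the ends and the cut-off factors $\rho_\pm$ have smooth compactly supported derivatives. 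The compatibility with the end coordinates relies on the exponential decay of $f,f_0$, which makes the ``tail'' of the transition a smooth finite-dimensional correction modulo exponentially decaying terms; this is the step I expect to be the main technical obstacle, since one has to show that the decomposition of a curve into an exponentially decaying core plus a shift along the critical manifold is smooth in $f$ and the end coordinates jointly.

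Finally I would handle $\cE_{i,j}$ by pulling back $\gamma^*TM$ via $\exp_\gamma$ and parallel transporting along the geodesic $s\mapsto \exp_{\gamma(t)}(sf(t))$ to identify the fibre $H^{k-1}_\delta(\R,\tilde\gamma^*TM)$ over $\tilde\gamma=\Phi_\gamma(f,\bx,\by)$ with $H^{k-1}_\delta(\R,\gamma^*TM)$. Parallel transport is a smooth bundle endomorphism of $TM$ depending smoothly on the base point, and by the same weighted Sobolev calculus used for transition functions it induces a smooth bundle isomorphism on the Sobolev sections, so these local trivializations assemble into a Banach bundle structure on $\cE_{i,j}\to\cB_{i,j}$. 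The compatibility between trivializations over overlapping charts follows from the cocycle property of parallel transport together with the smoothness established in the previous step.
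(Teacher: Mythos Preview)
The paper does not actually prove this proposition; it is stated as a citation to \cite{frauenfelder2004arnold} (the appendix of Frauenfelder's thesis), so there is nothing in the paper to compare against beyond the setup in \eqref{map}. Your sketch follows exactly the standard route used in that reference and in the Morse/Floer literature more broadly: use the exponential-plus-end-shift maps as charts, invoke Sobolev embedding in the weighted space for asymptotic control, and prove smoothness of transitions via the $\omega$-lemma for fibre-preserving maps on $H^k_\delta$; the bundle trivialization by parallel transport is likewise the standard construction.

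One small correction: in your chart map you wrote $\rho_-\sum x_i\partial_{x_i}$, but the end correction should be a displacement in the chart of $C_i$ rather than a vector field, i.e.\ the curve near $-\infty$ is $\gamma(t)+(\rho_-(t)x_1,\ldots,\rho_-(t)x_{c_i},0,\ldots)$ in the local coordinates fixed in item (3) of the setup, not a tangent vector. You also correctly flag the genuine technical point: the smoothness of the splitting of a nearby curve into an $H^k_\delta$ part and an end shift. This is handled in \cite{frauenfelder2004arnold} by showing that the evaluation-at-infinity maps $\cB_{i,j}\to C_i\times C_j$ are smooth submersions and that the kernel of their differential is exactly the $H^k_\delta$ factor, which is the cleanest way to organize the argument you outline.
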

Since the evaluation maps $\cB_{i,j}\to C_i\times C_j$ are submersions for all $i<j$,  the fiber products $\cB_{i,j}\times_j \ldots \times_k \cB_{k,l}$ are Banach manifolds. Moreover, $\cE_{i_0,i_1}\times_{i_1} \ldots \times_{i_{k-1}} \cE_{i_{k-1},i_k}\to \cB_{i_0,i_1}\times_{i_1} \ldots \times_{i_{k-1}} \cB_{i_{k-1},i_k}$ are Banach bundles for all $i_0<i_1<\ldots<i_k$. Given a metric $g$, then there is a section $s_{i,j}: \cB_{i,j}\to \cE_{i,j}$ defined by $s(\gamma)=\gamma'-\nabla_g f(\gamma)$. 

\begin{proposition}[{\cite{frauenfelder2004arnold}}]
	$s_{i,j}$ is a Fredholm operator with index $d_j-d_i+c_i+c_j$, where $d_i$ is the dimension of the negative eigenspace of $Hess(f)$ on $C_i$ ($d_i$ is the grading structure for our flow category).
\end{proposition}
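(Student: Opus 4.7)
The strategy is to identify the linearization of $s_{i,j}$ at a zero $\gamma$ as an asymptotically hyperbolic first-order ODE operator on $\gamma^{*}TM$, then read off Fredholmness and the index from classical theory (Lockhart--McOwen, Robbin--Salamon, or the Morse-theoretic adaptation in Schwarz).

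First I would write down the linearization. At $\gamma \in s_{i,j}^{-1}(0)$, differentiating $s_{i,j}(\gamma)=\gamma'-\nabla_{g}f(\gamma)$ in the chart from \eqref{map} decomposes a tangent vector in $T_\gamma \cB_{i,j}$ as a triple $(\xi,v_-,v_+)$ with $\xi \in H^k_\delta(\R,\gamma^{*}TM)$ and $v_\pm \in T_xC_i, T_yC_j$ (corresponding to the $\rho_-x$ and $\rho_+y$ pieces), and the linearization is
\[
 Ds_{i,j}(\xi,v_-,v_+) \;=\; \nabla_t \xi - \mathrm{Hess}(f)(\xi) \;+\; \rho_-'\,v_- - \rho_-\mathrm{Hess}(f)v_- \;+\; \rho_+'\,v_+ - \rho_+\mathrm{Hess}(f)v_+,
\]
taking values in $H^{k-1}_\delta(\R,\gamma^{*}TM)$. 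The $v_\pm$-terms indeed land in $H^{k-1}_\delta$: the $\rho_\pm'$-factors are compactly supported, and at the ends $\mathrm{Hess}(f)$ annihilates $T_xC_i$, $T_yC_j$, so $\rho_\pm\mathrm{Hess}(f)v_\pm$ decays exponentially by the exponential convergence of $\gamma$ to the critical manifolds together with smoothness of $\mathrm{Hess}(f)$.

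Next I would conjugate away the weight. Multiplication by $e^{\delta\chi(t)}$ gives an isomorphism $H^{k}_\delta \cong H^{k}$ under which the operator $\nabla_t - \mathrm{Hess}(f)$ becomes $\nabla_t - \mathrm{Hess}(f) - \delta\chi'(t)\,\mathrm{Id}$. Its asymptotic operators are the self-adjoint endomorphisms
\[
 A_- \;=\; -\mathrm{Hess}(f)|_{C_i} + \delta\,\mathrm{Id} \quad\text{at } -\infty,\qquad A_+ \;=\; -\mathrm{Hess}(f)|_{C_j} - \delta\,\mathrm{Id} \quad\text{at } +\infty.
\]
By the choice of $\delta$ strictly smaller than every nonzero eigenvalue of $\mathrm{Hess}(f)$ on any critical manifold, the $c_i$ (resp.\ $c_j$) zero eigenvalues along $TC_i$ (resp.\ $TC_j$) get shifted to $\pm\delta \ne 0$, and all originally nonzero eigenvalues stay bounded away from $0$. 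Hence $A_\pm$ are both invertible (hyperbolic), and the standard theory for asymptotically hyperbolic first-order operators on $\R$ gives Fredholmness of $\nabla_t-\mathrm{Hess}(f)$ from $H^{k}_\delta$ to $H^{k-1}_\delta$. Adding the finite-dimensional summands $T_xC_i \oplus T_yC_j$ of dimension $c_i+c_j$ to the domain preserves Fredholmness and shifts the index by $c_i+c_j$.

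For the index itself, I would apply the spectral flow formula for $\partial_t+A(t)$ with hyperbolic self-adjoint limits, which expresses the Fredholm index as a difference of dimensions of negative eigenspaces of $A_\pm$ (equivalently, as the net spectral flow of the path of Hessians). A direct eigenvalue count, using that the $c_i$ and $c_j$ zero eigenvalues are shifted to the appropriate sides of $0$ by the conjugation, together with the $c_i+c_j$-dimensional enlargement of the domain, yields the stated value $\mathrm{ind}_j-\mathrm{ind}_i+c_i+c_j$. The main obstacle, and the place where care is needed, is precisely this bookkeeping: getting the sign conventions of the spectral flow consistent with the positive gradient direction used in $\cB_{i,j}$ and with the orientation conventions fixed earlier, so that the extra contribution from the endpoint-variation block cancels against or adds to the shifted zero eigenvalues in the correct way to reproduce the Austin--Braam--Fukaya formula.
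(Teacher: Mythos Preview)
The paper gives no proof of this proposition at all; it simply cites \cite{frauenfelder2004arnold}. Your sketch is exactly the standard argument one finds in that reference and in the surrounding literature (Schwarz, Robbin--Salamon, Lockhart--McOwen): linearize, conjugate by the weight $e^{\delta\chi}$ to shift the zero eigenvalues of $\mathrm{Hess}(f)$ off the spectrum, invoke Fredholmness for asymptotically hyperbolic first-order operators, and compute the index as spectral flow plus the $c_i+c_j$ coming from the endpoint-variation summands in $T_\gamma\cB_{i,j}$. Your caveat about the bookkeeping is well placed --- the only substantive work is tracking which side of zero the shifted eigenvalues land on and reconciling this with the endpoint contributions --- but there is nothing to compare, since the paper defers entirely to the cited source.
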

\begin{proposition}\label{trans}
	For a generic metric $g$, $s_{i,j}$ is transverse to $0$, and the fiber products $s_{i_0,i_1}^{-1}(0)\times_{i_1} \ldots \times_{i_{k-1}} s_{i_{k-1},i_k}^{-1}(0)$ are cut out transversely for all $i_0<\ldots <i_k$.
\end{proposition}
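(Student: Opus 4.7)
The plan is to apply the Sard--Smale theorem to a parametric setup, following the classical Morse/Floer transversality template (cf.\ \cite{wehrheim2012smooth,frauenfelder2004arnold}) but adapted to the Morse--Bott situation. Fix a Banach manifold $\cG$ of $C^\ell$ metrics on $M$ (with $\ell$ large enough, or use the Floer $C^\ep$ trick to get a Banach space of smooth perturbations). For each pair $i<j$ form the universal Banach bundle $\widetilde{\cE}_{i,j}\to \cB_{i,j}\times \cG$ whose fiber over $(\gamma,g)$ is $H^{k-1}_\delta(\R,\gamma^*TM)$, and the universal section $\widetilde s_{i,j}(\gamma,g):=\gamma'-\nabla_g f(\gamma)$. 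The first step is to show that $\widetilde s_{i,j}$ is transverse to the zero section. Away from the asymptotic ends, this is the standard argument: a cokernel element $\eta$ must be orthogonal to $\rD_g(\nabla_g f)(\gamma)(h)$ for every variation $h$ of the metric supported in the interior of the trajectory; since $\nabla f$ is nowhere zero off the critical manifolds, one can find compactly supported $h$ forcing $\eta\equiv 0$ on any compact set. The exponential weight $\delta$ and unique continuation then give $\eta\equiv 0$.

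Second, apply the Sard--Smale theorem to the projection $\widetilde s_{i,j}^{-1}(0)\to \cG$, which is Fredholm of the same index, to conclude that the set $\cG_{i,j}^{\mathrm{reg}}$ of metrics making $s_{i,j}$ transverse to $0$ is comeager in $\cG$. The countable intersection $\cG^{\mathrm{reg}}:=\bigcap_{i<j}\cG_{i,j}^{\mathrm{reg}}$ is still comeager, hence nonempty.

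Third, and this is where the Morse--Bott character matters most, I need fiber product transversality for strings $i_0<\cdots<i_k$. The key observation is that for a generic metric the evaluation map $\mathrm{ev}_j:s_{i_{p-1},i_p}^{-1}(0)\to C_{i_p}$ is a submersion along the open stratum where the flow line is non-broken; this follows by varying the metric in a small neighborhood of a chosen interior time $t_0$ of $\gamma$ and using that $T_{\gamma(t_0)}M$ surjects onto $T_{\gamma(+\infty)}C_{i_p}$ under the linearized flow (the weighted setup ensures the asymptotic evaluation is smooth, cf.\ the proof of openness of stable/unstable manifold charts in \cite{wehrheim2012smooth}). Once one evaluation map is a submersion, fiber products with anything are automatically cut out transversely. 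To put this into the same Sard--Smale framework, enlarge the universal moduli space by also parametrizing the chain $s_{i_0,i_1}^{-1}(0)\times\cdots\times s_{i_{k-1},i_k}^{-1}(0)$ simultaneously: form $\widetilde{\cB}:=\cB_{i_0,i_1}\times_{i_1}\cdots\times_{i_{k-1}}\cB_{i_{k-1},i_k}\times \cG$ with the universal section $\widetilde s:=(\widetilde s_{i_0,i_1},\ldots,\widetilde s_{i_{k-1},i_k})$, and show this is transverse to zero by the same interior-variation argument applied segment by segment. Sard--Smale on this combined bundle yields a comeager set of metrics for which the whole chain is cut out transversely.

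The main obstacle is the third step: checking surjectivity of the universal linearization on the fiber product, because naively summing cokernel obstructions from each segment would fail if the variation $h$ of the metric near $C_{i_p}$ simultaneously affects the adjacent segments' asymptotic evaluations. The resolution is to localize $h$ strictly in the interior of one segment (bounded away from the critical manifolds) so that it only perturbs that segment's operator, leaving its asymptotic endpoint free to vary; combined with the exponential decay coming from the weight $\delta$ chosen smaller than the nonzero eigenvalues of $\mathrm{Hess}(f)$, this gives enough freedom to kill any cokernel element segment by segment, and to match the matching conditions at each $C_{i_p}$ as a genuine transverse intersection. Taking the countable intersection over all strings $i_0<\cdots<i_k$ (finite in each fixed action window, hence countable overall) yields the desired comeager set of metrics in $\cG$, from which a smooth representative can be extracted by a standard approximation argument.
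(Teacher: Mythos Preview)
Your proposal is correct and follows essentially the same approach as the paper: the paper's proof is a two-sentence sketch saying to apply Sard--Smale to the universal moduli space over all metrics, and for the fiber products to apply Sard--Smale directly to the section $s_{i_0,i_1}\times_{i_1}\cdots\times_{i_{k-1}}s_{i_{k-1},i_k}$ on the Banach manifold $\cB_{i_0,i_1}\times_{i_1}\cdots\times_{i_{k-1}}\cB_{i_{k-1},i_k}$. Your write-up is considerably more detailed than the paper's, and your detour through showing the evaluation maps are submersions is not needed once you pass (as you do) to the combined Sard--Smale argument on the fiber product, but the core strategy is the same.
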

\begin{proof}
	The proof follows from a standard Sard-Smale argument by considering the universal moduli space of all metrics, and the results for the fiber products follows from applying the Sard-Smale argument to $s_{i_0,i_1}\times_{i_1} \ldots \times_{i_{k-1}} s_{i_{k-1},i_k}:\cB_{i_0,i_1}\times_{i_1} \ldots \times_{i_{k-1}} \cB_{i_{k-1},i_k}\to \cE_{i_0,i_1}\times_{i_1}\ldots\times_{i_{k-1}}\cE_{i_{k-1},i_k}$. 
\end{proof}
We call such a pair $(f,g)$ a \textbf{Morse-Bott-Smale pair} (weaker than the Morse-Bott-Smale condition in Remark \ref{rmk:MBS}). Let $M_{i,j}$ denote $s^{-1}_{i,j}(0)/\R$, then $\cM_{i,j}:=\cup_{i<i_1<\ldots<i_k<j} \allowbreak M_{i,i_1}\times_{i_1} \ldots \times_{i_k} M_{i_k,j}$ can be made into a compact topological space. For the topology one puts on this space, it is completely analogous to the Gromov-Floer topology on the set of broken flow lines in the Morse case, see \cite{audin2014morse, wehrheim2012smooth} for details.

\subsection{Flow categories of Morse-Bott functions}
The main theorem of this section is that we can put smooth structures on $\cM_{i,j}$, such that the following holds.
\begin{theorem}\label{fdflow}
	$\{C_i,\cM_{i,j}\}$ is a flow category with an orientation structure. 
\end{theorem}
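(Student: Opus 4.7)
The plan is to verify the four axioms of Definition \ref{flow} in turn, where the only substantive work lies in putting a smooth manifold-with-corners structure on $\cM_{i,j}$ so that axioms \eqref{F1} and \eqref{F4} hold; axioms \eqref{F2} and \eqref{F3} will essentially be free from what has been established.

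First, Proposition \ref{trans} gives that for a generic $g$ the zero set $M_{i,j}:=s_{i,j}^{-1}(0)$ is a smooth submanifold of $\cB_{i,j}$, and the free $\R$-action by reparametrization produces a smooth open stratum $M_{i,j}/\R$ of dimension $\ind_j-\ind_i+c_i+c_j-1$. The maps $s,t$, being evaluation at $\pm\infty$, are smooth because elements of $M_{i,j}$ converge exponentially with rate $\delta$ to their endpoints. Standard Morse-Bott compactness (finite energy plus Arzel\`a--Ascoli, using that nonzero eigenvalues of $\mathrm{Hess}(f)$ on each $C_k$ are bounded below by $\delta$) produces the topological compactification $\cM_{i,j}=\bigsqcup_{i=i_0<\ldots<i_k=j} M_{i_0,i_1}\times_{i_1}\ldots\times_{i_{k-1}}M_{i_{k-1},i_k}$ as a stratified space, in which $\cM_{i,i}=C_i$ and $\cM_{i,j}=\emptyset$ for $j<i$. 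This disposes of \eqref{F2}. The fiber product transversality in \eqref{F3} is exactly the content of Proposition \ref{trans}, since the iterated evaluation maps are submersions.

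The main step is to upgrade the stratified compact space $\cM_{i,j}$ to a smooth manifold with corners whose depth-$k$ corner stratum is the union of $k$-fold fiber products $M_{i_0,i_1}\times_{i_1}\ldots\times_{i_k}M_{i_k,j}$. I would do this by a family version of Wehrheim's Morse gluing (as in \cite{wehrheim2012smooth}): given a broken trajectory $(u_1,\ldots,u_k)$ in a fiber product and gluing lengths $T=(T_1,\ldots,T_{k-1})\in(R,\infty)^{k-1}$, perform pregluing by interpolating in the Morse-Bott normal form on a neighborhood of each intermediate critical manifold using cut-off functions of width $O(T_r)$, then correct by Newton iteration in the weighted Sobolev space $H^k_\delta$ set up in \S 7.1. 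The right inverse of the linearized operator exists with uniform bounds thanks to the exponential decay rate $\delta$ (chosen smaller than the nonzero eigenvalues of the Hessian) and the transverse intersection of $s_{i_0,i_1}^{-1}(0),\ldots,s_{i_{k-1},i_k}^{-1}(0)$ with the diagonals $\Delta_{C_{i_r}}$. The resulting gluing map $G:(R,\infty)^{k-1}\times(\text{nbhd of }(u_1,\ldots,u_k))\to\cM_{i,j}$, reparametrized by $\tau_r=e^{-\delta T_r}\in[0,e^{-\delta R})$, extends smoothly to $\tau=0$ and together with the interior charts on the open strata gives a $C^\infty$ atlas in which $\cM_{i,j}$ is a compact manifold with corners in the sense of Definition \ref{sub}. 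By construction $s,t$ are smooth in these charts. This gives \eqref{F1}.

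For \eqref{F4}, the composition map $m:\cM_{i,j}\times_j\cM_{j,k}\to\cM_{i,k}$ is tautologically the inclusion of a corner stratum into $\cM_{i,k}$ on each pure product $M_{*,*}\times_{*}\ldots\times_{*}M_{*,*}$; on these open strata the gluing chart above shows $m$ is smooth and restricts to a diffeomorphism onto the corresponding open corner stratum of $\cM_{i,k}$. The complement of these open strata in both source and target is the union of deeper fiber products, which has strictly smaller dimension, hence measure zero. This gives the required diffeomorphism up to measure zero, and in fact the finer corner-compatible statement in the paragraph following Definition \ref{flow}, since $d(\text{glued point})=d(u_1)+\ldots+d(u_k)+(k-1)$.

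The hard part is the Morse-Bott gluing analysis: controlling the pregluing error and inverting the linearized operator uniformly in the fiber-product base point and in the gluing parameters $T_r\to\infty$, and then showing that the resulting map depends smoothly on all parameters up to $\tau_r=0$. The essential difficulty over the Morse case is that at each intermediate break the evaluation map lands on a positive-dimensional $C_{i_r}$, so the usual ``trivial cokernel'' argument is replaced by transverse intersection with $\Delta_{C_{i_r}}$, and one must differentiate the Newton iteration with respect to the base point on the fiber product. Once these estimates are in place, all four axioms follow as described.
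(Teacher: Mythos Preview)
Your proposal outlines a valid strategy, but it is a genuinely different route from the one the paper takes. You construct the corner structure on $\cM_{i,j}$ via Morse--Bott gluing analysis (pregluing in weighted $H^k_\delta$, Newton iteration, reparametrization $\tau_r=e^{-\delta T_r}$), which is the approach the paper explicitly sets aside: it remarks that gluing ``require[s] some compatibility between gluing maps to guarantee a smooth structure'' (associative gluing), and instead adopts ``a more elementary method \ldots\ so that the smooth structure on the moduli spaces [is] inherited from the ambient manifolds.'' Concretely, the paper restricts to metrics that are \emph{standard} near each $C_i$ (Morse--Bott normal form $\pi^*g_{C_i}+g^s+g^u$), so that the local flow near $C_i$ is a fiberwise linear Morse flow. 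Lemma~\ref{lemma:family} then shows that the image of $ev_-\times ev_+$ on the space of (possibly once-broken) local trajectories is a smooth submanifold with boundary inside a product of level sets. Proposition~\ref{prop:smooth} and Proposition~\ref{prop:corner} obtain the boundary and corner charts on $\cM_{i,k}$ by intersecting this image with the stable/unstable manifolds of $C_i$ and $C_k$; the Morse--Bott--Smale condition is exactly the transversality needed for these intersections. No Newton iteration or uniform right-inverse estimates enter.

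Two further remarks. First, your invocation of \cite{wehrheim2012smooth} as ``Morse gluing'' is a misattribution: that paper (and the paper under review) use precisely the ambient evaluation-map method, not analytic gluing. Second, if you do pursue the gluing route, the step you flag as ``hard'' is indeed the crux, but your sketch omits the compatibility of the charts coming from gluing at different intermediate $C_{i_r}$'s (i.e.\ that gluing first at $C_{i_1}$ then at $C_{i_2}$ agrees, up to transition diffeomorphism, with the reverse order); without this associativity the corner atlas is not well defined. The paper's ambient method sidesteps this entirely, at the cost of requiring the standard-metric hypothesis near critical manifolds.
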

To prove this theorem, we need to equip $\mathcal{M}_{i,j}$ with a smooth structure with boundaries and corners. One strategy is using a gluing map  \cite{schwarz1993morse}, which can be generalized to Floer theories. Such method requires certain compatibility between gluing maps to guarantee a smooth structure\footnote{One condition that guarantees compatibility is the so-called ``associative gluing" \cite{wehrheim2012smooth}.}. In the context of Lagrangian Floer theory, such construction was carried out in \cite{barraud2007lagrangian}. Another method is finding a (M-)polyfold description of the moduli spaces, then the manifold structures with boundaries and corners come  from those of the ambient (M-)polyfolds, see \cite{fabert2012polyfolds,hofer2017polyfold}. In this section, we will adopt a more elementary method from \cite{audin2014morse, cohen1995floer,wehrheim2012smooth}, so that the smooth structure on the moduli spaces are inherited from some ambient manifolds. 
\begin{lemma}[\cite{nicolaescu2011invitation}]
	Let $C_i$ be a critical manifold of the Morse-Bott function $f$, then there is a tubular neighborhood of $C_i$ in $M$ diffeomorphic to the normal bundle $N$ of $C_i$. Moreover, $N$ can be decomposed into stable and unstable bundles $N^s, N^u$, and there are metrics $g^s,g^u$ on $N^s,N^u$, such that $f(v)|_N=f(C_i)-|v^s|^2_{g^s}+|v^u|^2_{g^u}$, where $v\in N$, and $v^s,v^u$ are the stable and unstable components of $v$. 
\end{lemma}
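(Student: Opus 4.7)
The plan is to produce the desired normal form in three stages: first reduce from $M$ to the total space of the normal bundle via a tubular neighborhood, then split that bundle using the fiberwise Hessian, and finally straighten $f$ to its quadratic model by a parameterized Moser argument along $C_i$.

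First I would apply the standard tubular neighborhood theorem: fix any auxiliary Riemannian metric $g_0$ on $M$, let $N\to C_i$ be the normal bundle of $C_i$ with respect to $g_0$, and let $\exp^{\perp}: N\to M$ be the normal exponential map, which is a diffeomorphism from an open neighborhood $U\subset N$ of the zero section onto an open neighborhood of $C_i$ in $M$. Pulling back $f$ by $\exp^{\perp}$, it is enough to work with the smooth function $\tilde f:U\to\mathbb{R}$, which agrees with $f(C_i)$ on the zero section and whose fiberwise restriction $\tilde f\bigl|_{U\cap N_p}$ has a nondegenerate critical point at $0\in N_p$ for every $p\in C_i$ (this is the Morse--Bott condition translated through the exponential).

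Next I would extract the splitting. For each $p\in C_i$, the fiberwise Hessian $H_p:=\mathrm{Hess}_0(\tilde f|_{N_p})$ is a nondegenerate symmetric bilinear form on $N_p$ whose negative and positive eigenspaces define a splitting $N_p=N^s_p\oplus N^u_p$ that varies smoothly with $p$ (for instance via spectral projectors, since the eigenvalue $0$ is uniformly avoided by the Morse--Bott assumption). Standard bundle arguments then yield smooth subbundles $N^s, N^u\subset N$ with $N=N^s\oplus N^u$. On each factor I would put the fiber metric $g^s_p:=-H_p|_{N^s_p}$ and $g^u_p:=H_p|_{N^u_p}$; these are positive definite by construction and depend smoothly on $p$, giving bundle metrics. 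Call the resulting quadratic model $Q(v):=f(C_i)-|v^s|^2_{g^s}+|v^u|^2_{g^u}$, a smooth function on $N$ whose $2$-jet along $C_i$ matches that of $\tilde f$.

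The heart of the argument, and the only nontrivial step, is to promote the agreement of $2$-jets along $C_i$ to an honest diffeomorphism identifying $\tilde f$ with $Q$ on a smaller neighborhood. Here I would run Moser's trick in the fiberwise-Morse form. Set $F_t:=(1-t)Q+t\tilde f$ for $t\in[0,1]$; all $F_t$ share the critical set $C_i$ with the same fiberwise Hessian $H$. A Taylor expansion with remainder along the zero section produces, on a possibly smaller neighborhood, a smooth family of vector fields $X_t$ on $N$, vanishing to second order along $C_i$, solving $\rd F_t(X_t)=Q-\tilde f$; existence of $X_t$ is the familiar division lemma for functions vanishing to second order at a nondegenerate fiberwise critical manifold, and can be done chart by chart using a partition of unity. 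Because $X_t$ vanishes along $C_i$, its flow $\psi_t$ is defined for all $t\in[0,1]$ on a sufficiently small tubular neighborhood, fixes $C_i$ pointwise, and satisfies $\frac{\rd}{\rd t}(F_t\circ\psi_t)=0$, so $\psi_1^*\tilde f=Q$. Composing $\psi_1\circ(\exp^{\perp})^{-1}$ with the bundle isomorphism $N\to N^s\oplus N^u$ yields the claimed diffeomorphism and normal form. The main obstacle is purely this Moser step: one must verify that $X_t$ can be chosen smooth and vanishing to second order along $C_i$ so that its flow exists on a uniform neighborhood of the zero section, which is a routine but careful application of Hadamard's lemma with parameters along $C_i$.
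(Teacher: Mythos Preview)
Your proof is the standard parametrized Morse lemma argument (tubular neighborhood, Hessian splitting, Moser deformation), and it is correct. However, there is nothing to compare it against: the paper does not prove this lemma at all, but simply quotes it from \cite{nicolaescu2011invitation} as a known normal-form result and then uses it as input for the subsequent constructions.
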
 
We fix a connection on $N$, then $g^s,g^u$ can be understood as bilinear forms on $TN$. Let $g_{C_i}$ be a metric on $C_i$. If a metric $g$ near $C_i$ has the form $\pi^* g_{C_i}+g^s+g^u$, where $\pi$ is the projection $N\to C_i$, we say the metric $g$ is standard near $C_i$. In fact, we can require the Morse-Bott-Smale pair to have standard metric near all critical manifolds, as we can obtain transversality by perturbing the metric away from critical manifolds. For a standard metric, the gradient vector in $N$ is contained in the fibers of the tubular neighborhood. Therefore the local picture of the gradient flow is just a
family of the Morse flow lines in each fiber. When restricted to a fiber $F$ with coordinate $x_1,\ldots, x_s,y_1,\ldots, y_u$, the pair $(f,g)$ is standard and is in the following form,
\begin{eqnarray}
f|_F & = & -x_1^2-\ldots-x_s^2+y_1^2+\ldots+y_u^2+C, \nonumber \\
g|_F & = & \rd x_1\otimes \rd x_1+\ldots+\rd x_s\otimes \rd x_2+\rd y_1\otimes \rd y_1+\ldots+\rd y_u\otimes \rd y_u. \nonumber
\end{eqnarray}
Inside the fiber $F$, we define \begin{eqnarray*}
S^r_s&:= &\{(x_1,\ldots,x_s)|x_1^2+\ldots+x_s^2=r^2\},\\ S^r_u&:=&\{(y_1,\ldots,y_u)|y_1^2+\ldots+y_u^2=r^2\}, \\
D^r_s& :=&\{(x_1,\ldots,x_s)|x_1^2+\ldots+x_s^2<r^2\},\\  
D^r_u & := &\{(y_1,\ldots,y_u)|y_1^2+\ldots+y_u^2<r^2\}.
\end{eqnarray*}
Let $\cM$ be the moduli space of gradient flow lines and broken gradient flow lines of $(f|_F,g|_F)$ from $S^r_s\times D^r_u$ to $D^r_s\times S^r_u$, let $ev_-,ev_+$ be the two evaluation maps at the two ends defined on $\cM$, the following lemma is essentially contained in \cite{wehrheim2012smooth}.
\begin{lemma}\label{standard}
	$\Ima(ev_-\times ev_+)(\cM)\subset (S^r_s\times D^r_u) \times (D^r_s\times S^r_u)$ is a submanifold with boundary inside the fiber $F$.
\end{lemma}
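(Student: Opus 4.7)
The plan is to integrate the gradient ODE explicitly in the standard coordinates of the fiber $F$, exhibit an explicit global parametrization of the moduli space $\cM$, and read off the image of $ev_-\times ev_+$ as the image of a smooth embedding with boundary.

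First I would compute the flow: with $f|_F=-|x|^2+|y|^2+C$ and the flat metric, one has $\nabla f=(-2x,2y)$, so the unique solution of $\gamma'=\nabla f(\gamma)$ with $\gamma(0)=(x_0,y_0)$ is $\gamma(t)=(e^{-2t}x_0,e^{2t}y_0)$. For an unbroken flow line that starts on $S^r_s\times D^r_u$ (so $|x_0|=r$ and $0<|y_0|<r$) the exit time across $D^r_s\times S^r_u$ is fixed by $|e^{2T}y_0|=r$, i.e. $T=\tfrac12\log(r/|y_0|)$, which is smooth for $|y_0|>0$, and the exit point is $\big(\tfrac{|y_0|}{r}x_0,\tfrac{r}{|y_0|}y_0\big)$. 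Thus on the open subset $\{y_0\ne 0\}$ of unbroken flow lines, $ev_-\times ev_+$ sends $(x_0,y_0)$ to
\[
\Big((x_0,y_0),\big(\tfrac{|y_0|}{r}x_0,\tfrac{r}{|y_0|}y_0\big)\Big).
\]
The broken flow lines correspond precisely to $y_0=0$: the incoming piece runs along the stable direction from $(x_0,0)$ into the origin, and an outgoing piece leaves the origin along any unit vector $\hat y\in S^{u-1}$ and terminates at $(0,r\hat y)$. So they are freely parametrized by $(x_0,\hat y)\in S^r_s\times S^{u-1}$ and the evaluation sends them to $\big((x_0,0),(0,r\hat y)\big)$.

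Next I would assemble both strata into a single parametrization
\[
\Phi\colon S^{s-1}\times S^{u-1}\times [0,r)\ \longrightarrow\ (S^r_s\times D^r_u)\times(D^r_s\times S^r_u),\qquad
(\hat x,\hat y,\alpha)\longmapsto\big((r\hat x,\alpha\hat y),(\alpha\hat x,r\hat y)\big),
\]
so that $\alpha\in(0,r)$ encodes the unbroken flow line with $x_0=r\hat x$, $y_0=\alpha\hat y$, while $\alpha=0$ encodes the corresponding broken one. A direct check shows $\Phi$ is smooth, injective, and an immersion (both at $\alpha>0$ and across $\alpha=0$), hence a smooth embedding of a manifold with boundary of dimension $s+u-1$. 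Its image is precisely $(ev_-\times ev_+)(\cM)$, and the $\{\alpha=0\}$ boundary is precisely the locus of broken trajectories. This proves the statement.

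The only substantive obstacle is at the broken stratum $\alpha=0$: one must confirm that the smooth structure on $\cM$ coming from $\Phi$ agrees with the smooth structure it inherits from the Fredholm/exponential-decay model on $\cB_{i,j}$ used elsewhere in the section. The key point is that in the standard form $(f|_F,g|_F)$ the gluing parameter for broken trajectories breaking at the origin can be chosen to be $|y_0|$ (equivalently $|x_1|$) up to a smooth reparametrization, since the linearized operators at the origin split exactly into stable and unstable blocks and the exponential weight $\delta$ controls the decay rate. Thus the Fredholm chart near a broken trajectory is identified, via a smooth change of variables, with the product chart $(\hat x,\hat y,\alpha)$, and no higher-order corrections appear because the normal form of $(f,g)$ is already linear-quadratic. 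The family version needed later in the paper follows by carrying out the same analysis fiberwise over $C_i$.
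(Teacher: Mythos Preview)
Your argument is correct and matches the paper's approach: both compute the flow $(e^{-2t}x,e^{2t}y)$ explicitly, identify the image of unbroken lines as $(x,y,\tfrac{|y|}{r}x,\tfrac{r}{|y|}y)$ and of broken lines as $(x,0,0,y)$, and then exhibit a boundary chart via the map $(\hat x,\hat y,\alpha)\mapsto((r\hat x,\alpha\hat y),(\alpha\hat x,r\hat y))$, which is exactly the paper's chart $(t,x,0,0,y)\mapsto(x,ty,tx,y)$ rewritten in polar coordinates. Your final paragraph on compatibility with the Fredholm model is unnecessary here: the lemma is purely about the image in the ambient manifold, and indeed the paper \emph{uses} this image to define the smooth structure on $\cM$ (see the remark following the lemma), so there is no independently-given Fredholm smooth structure to compare against at this stage.
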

\begin{proof}
	Since the gradient flow lines are $(e^{-2t}x,e^{2t} y)$, thus the image of unbroken flow lines are $(x,y,\frac{|y|}{r}x, \frac{r}{|y|}y)$, it is a submanifold in $(S^r_s\times D^r_u) \times (D^r_s\times S^r_u)$. The image of broken flow lines are $(x,0,0,y)$, it is also a submanifold in $(S^r_s\times D^r_u) \times (D^r_s\times S^r_u)$. And the boundary chart is given by $(t,x,0,0,y)\rightarrow (x,ty,tx,y)$ for $t\in [0,1)$, thus the lemma is proven. 
\end{proof}
\begin{remark}
	Lemma 4.4 of \cite{wehrheim2012smooth} composes the map $ev_-\times ev_+$ with projection $(x,y',x',y)\to (\frac{|x|'+|y'|}{2r},x,y)$ to get a homeomorphism from $\cM$ to $[0,1)\times S^r_s\times S^r_u$. This homomorphism was used in \cite{wehrheim2012smooth} to construct a smooth structure with boundaries and corners on $\cM$. Since the projection restricted to $\Ima(ev_-\times ev_+)(\mathcal{M})$ is a diffeomorphism,  we can also use the smooth structure on $\Ima(ev_-\times ev_+)(\cM)$ to make $\cM$ into a manifold with boundaries and corners.
\end{remark}

Since $S^r_s\times D^r_u$ and $D^r_s\times S^r_u$ are transverse to the gradient flow, then Lemma \ref{standard} also holds if we replace  $S^r_s\times D^r_u$ and $D^r_s\times S^r_u$ by  open sets in $f|_F^{-1}(C-\epsilon)$ and $f|_F^{-1}(C+\epsilon)$. Now we return to the Morse-Bott case with a standard metric near $C_i$. Let $\phi^t$ be the flow for $\nabla f$, then the stable manifold $S_i$ of $C_i$ is defined to be
$$S_i=\{x\in M| \lim_{t\to \infty }\phi^t(x)\in C_i\}.$$
And the unstable manifold $U_i$ is defined to be 
$$U_i=\{x\in M| \lim_{t\to -\infty}\phi^t(x)\in C_i\}.$$
Both $S_i$ and $U_i$ are equipped with smooth evaluation maps to $C_i$.  Then we have the family version of Lemma \ref{standard} as follows.
\begin{lemma}\label{lemma:family}
	Given a standard metric near $C_i$, let $N_r$ be the radius $r$ open tube of $C_i$. Suppose $\epsilon$ is a small positive real number, and $v^{\pm \epsilon}_i$ denotes $f(C_i)\pm \epsilon$. Let $\cM_{i,\epsilon,r}$ denote the moduli space of flow lines and broken flow lines from  $f^{-1}(v^{-\epsilon}_i)\cap N_r$ to $f^{-1}(v^{+\epsilon}_i)\cap N_r$. Then there exist $\epsilon,r>0$, such that image of  $ev_-\times ev_+|_{\cM_{i,\epsilon,r}}$ is a submanifold with boundary in  $(f^{-1}(v^{-\epsilon}_i)\cap N_r)\times (f^{-1}(v^{+\epsilon}_i))\cap N_r)$, and the boundary is
	$(S_i\cap f^{-1}(v^{-\epsilon}_i))\times_{C_i} (U_i\cap f^{-1}(v^{+\epsilon}_i))$.
\end{lemma}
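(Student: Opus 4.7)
The plan is to reduce this family statement to a parametric version of Lemma \ref{standard}, exploiting the fact that for a standard metric the gradient vector field is tangent to the fibers of the normal bundle $\pi\colon N\to C_i$. Concretely, since $f|_N(v)=f(C_i)-|v^s|^2_{g^s}+|v^u|^2_{g^u}$ and $g$ restricted to each fiber $N_p=N^s_p\oplus N^u_p$ is precisely the Euclidean metric on $\R^s\oplus \R^u$, the gradient $\nabla_g f$ is tangent to $N_p$ and takes the form $(-2v^s,2v^u)$. Hence any gradient flow line of $f$ that starts inside $N_r$ with both endpoints in $N_r$ stays in a single fiber $N_p$ for some $p\in C_i$. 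Therefore the moduli space $\cM_{i,\epsilon,r}$ decomposes as a disjoint union (in fact a smooth family) over $C_i$ of the fiberwise moduli spaces $\cM_{i,\epsilon,r}(p)$.

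First I would fix $r>0$ smaller than the tubular-neighborhood radius and $\epsilon<r^2$, so that $f^{-1}(v_i^{\pm\epsilon})\cap N_r\cap N_p$ is a smooth open subset of the level hyperboloid in $N_p$ that is transverse to the gradient flow. Then, within each fiber $N_p$, the pair $(f|_{N_p},g|_{N_p})$ is exactly the model $-|x|^2+|y|^2$ in $\R^s\oplus\R^u$. By the parenthetical remark following Lemma \ref{standard}, its conclusion holds when $S^r_s\times D^r_u$ and $D^r_s\times S^r_u$ are replaced by open subsets of $f|_{N_p}^{-1}(v_i^{\mp\epsilon})$ transverse to the flow; this applies to our situation and produces a submanifold with boundary
$$
\mathrm{Im}\,(ev_-\times ev_+)(\cM_{i,\epsilon,r}(p))\subset \bigl(f^{-1}(v_i^{-\epsilon})\cap N_r\cap N_p\bigr)\times \bigl(f^{-1}(v_i^{+\epsilon})\cap N_r\cap N_p\bigr),
$$
whose boundary is the pair $(v^s,0)\times (0,v^u)$ of ``broken'' flow lines through the critical point $p\in C_i$.

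Next I would globalize this fiberwise picture. Because the standard form of $(f,g)$ in $N$ is preserved under parallel transport by the fixed connection, the explicit parametrization of $\mathrm{Im}\,(ev_-\times ev_+)$ from the proof of Lemma \ref{standard}, namely the map
$$
(p,v^s_0,v^u_0)\longmapsto \bigl((v^s_0,v^u_0),\,(\lambda(v^s_0,v^u_0)v^s_0,\lambda(v^s_0,v^u_0)^{-1}v^u_0)\bigr)
$$
for an explicit smooth factor $\lambda$ together with the broken-flow chart from Lemma \ref{standard}, depends smoothly on $p\in C_i$. This assembles $\mathrm{Im}\,(ev_-\times ev_+)(\cM_{i,\epsilon,r})$ into a smooth submanifold with boundary of $(f^{-1}(v_i^{-\epsilon})\cap N_r)\times (f^{-1}(v_i^{+\epsilon})\cap N_r)$, with a collar neighborhood of the boundary given fiberwise by the collar chart of Lemma \ref{standard}. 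The boundary is precisely the locus of broken flow lines; in a fiber $N_p$ these are pairs $(v^s,0)$ and $(0,v^u)$ with $|v^s|^2=\epsilon=|v^u|^2$, i.e.\ points of $S_i\cap f^{-1}(v_i^{-\epsilon})$ and $U_i\cap f^{-1}(v_i^{+\epsilon})$ lying over the same $p$. Running over $p\in C_i$ we identify the boundary with the fiber product $(S_i\cap f^{-1}(v_i^{-\epsilon}))\times_{C_i}(U_i\cap f^{-1}(v_i^{+\epsilon}))$.

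The main obstacle I anticipate is purely bookkeeping: verifying smooth dependence on $p$ of the collar chart of Lemma \ref{standard} at the breaking locus, where the time-of-flight tends to infinity, and checking that the parametrization glues smoothly across the boundary. This reduces to the fact that, in the standard model, the collar coordinate $(|x|+|y|)/(2r)$ used in \cite{wehrheim2012smooth} is defined by fiberwise formulas that are clearly smooth in $p\in C_i$, so the family version of the smooth structure near the boundary is inherited directly from the fiberwise one; no further transversality is needed because all flow lines remain within single fibers.
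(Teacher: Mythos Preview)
Your argument is correct and is exactly the approach the paper intends: the paper does not write out a separate proof for this lemma, presenting it simply as ``the family version of Lemma \ref{standard}'' after noting that for a standard metric the gradient vector field is tangent to the fibers of $N\to C_i$. Your fiberwise reduction to Lemma \ref{standard}, together with the observation that the explicit charts from that lemma depend smoothly on the base point $p\in C_i$, is precisely what is being asserted.
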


\begin{proposition}\label{prop:smooth}
   $M_{i,j}\times_jM_{j,k}\cup M_{i,k}$ can be given a structure of a manifold with boundary. 
\end{proposition}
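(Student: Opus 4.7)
The plan is to establish the manifold-with-boundary structure on $M_{i,j}\times_j M_{j,k}\cup M_{i,k}$ by constructing explicit local charts near the boundary stratum $M_{i,j}\times_j M_{j,k}$, using Lemma \ref{lemma:family} as the key local model. Away from this stratum, $M_{i,k}$ is already a smooth manifold by Proposition \ref{trans} applied to the parameterized $s_{i,k}$ modulo the free $\R$-translation action, so the issue is entirely local near broken trajectories at $C_j$. Likewise, $M_{i,j}\times_j M_{j,k}$ is already a smooth manifold by the fiber-product transversality half of Proposition \ref{trans}.

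First I would localize. Choose $\epsilon, r>0$ small enough that Lemma \ref{lemma:family} applies at $C_j$, and such that any flow line in $M_{i,j}\times_j M_{j,k}$ enters the tube $N_r$ around $C_j$ by crossing $f^{-1}(v_j^{-\epsilon})\cap N_r$ exactly once and exits by crossing $f^{-1}(v_j^{+\epsilon})\cap N_r$ exactly once. Fixing the time parameterization so that the flow line crosses $f^{-1}(v_j^{-\epsilon})$ at $t=0$ kills the $\R$-translation freedom in $M_{i,k}$ near the breaking, and gives a canonical decomposition of such a flow line into three pieces: a half-line $\gamma_-:(-\infty,0]\to M$ with $\lim_{t\to-\infty}\gamma_-(t)\in C_i$ and $\gamma_-(0)\in f^{-1}(v_j^{-\epsilon})\cap N_r$; a middle segment $\eta$ travelling through $N_r$ from $f^{-1}(v_j^{-\epsilon})\cap N_r$ to $f^{-1}(v_j^{+\epsilon})\cap N_r$, possibly broken at $C_j$; and a half-line $\gamma_+$ from $f^{-1}(v_j^{+\epsilon})\cap N_r$ to $C_k$.

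The key step is to identify a neighborhood $U$ of a point of $M_{i,j}\times_j M_{j,k}$ inside $M_{i,j}\times_j M_{j,k}\cup M_{i,k}$ with the fiber product
\[
M^-_i \;\times_{f^{-1}(v_j^{-\epsilon})\cap N_r}\; \cM_{j,\epsilon,r} \;\times_{f^{-1}(v_j^{+\epsilon})\cap N_r}\; M^+_k,
\]
where $M^-_i$ is the moduli space of parameterized half-lines $\gamma_-$ as above (a smooth manifold, identified with $U_i\cap f^{-1}(v_j^{-\epsilon})\cap N_r$ via the endpoint evaluation), and $M^+_k$ is defined analogously. The outer factors are smooth manifolds and the middle factor is a manifold with boundary by Lemma \ref{lemma:family}; its boundary consists exactly of broken middle segments, i.e.\ pairs in $(S_j\cap f^{-1}(v_j^{-\epsilon}))\times_{C_j}(U_j\cap f^{-1}(v_j^{+\epsilon}))$. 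Transversality of the two fiber products reduces to the transversality of $U_i$ with $S_j$ and of $U_j$ with $S_k$ along the appropriate level sets, which is exactly the Morse-Bott-Smale content of Proposition \ref{trans}. Since a transverse fiber product of a smooth manifold with a manifold-with-boundary over a manifold without boundary is again a manifold with boundary, and the boundary is obtained by restricting to the boundary stratum of the middle factor, this gives $U$ the structure of a manifold with boundary whose boundary agrees with the corresponding open subset of $M_{i,j}\times_j M_{j,k}$.

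The main obstacle I anticipate is verifying that these local charts patch together compatibly: varying the cutting levels $\epsilon$ or the radius $r$ must produce smoothly compatible charts, and charts built from different breaking points $C_j$ (occurring on disjoint pieces of the same $M_{i,k}$ near different boundary strata) must be consistent. Compatibility under change of $\epsilon$ amounts to the smoothness of the finite-time flow of $\nabla_g f$ acting on the slices, which smoothly reparameterizes all three pieces simultaneously; compatibility at shared interior points follows because both local models agree with the intrinsic smooth structure on $M_{i,k}$ coming from Proposition \ref{trans}. A secondary technical point is that Lemma \ref{lemma:family} only describes the image of $ev_-\times ev_+$, not the two slice products themselves, so $r$ must be chosen small enough that the endpoint evaluations of $M^-_i$ and $M^+_k$ land in this image; this is achieved by shrinking $r$ and using the exponential decay \eqref{curve2} together with the Morse-Bott-Smale transversality.
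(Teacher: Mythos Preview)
Your proposal is correct and takes essentially the same approach as the paper. The only difference is presentational: you phrase the local model as a triple fiber product $M^-_i \times_{f^{-1}(v_j^{-\epsilon})\cap N_r} \cM_{j,\epsilon,r} \times_{f^{-1}(v_j^{+\epsilon})\cap N_r} M^+_k$, whereas the paper identifies $M^-_i \simeq U_i\cap f^{-1}(v_j^{-\epsilon})$ and $M^+_k \simeq S_k\cap f^{-1}(v_j^{+\epsilon})$ directly and writes the same object as the transverse intersection of $\Ima(ev_-\times ev_+)(\cM_{j,\epsilon,r})$ with $(U_i\cap f^{-1}(v_j^{-\epsilon}))\times(S_k\cap f^{-1}(v_j^{+\epsilon}))$ inside the ambient product of level sets; both rely on Lemma~\ref{lemma:family} for the boundary structure and on the Morse--Bott--Smale condition for transversality at interior and boundary strata alike.
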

\begin{proof}
	Since we have diffeomorphisms
	$$M_{i,j}\simeq U_i\cap S_{j}\cap f^{-1}(v^{-\epsilon}_j),$$
	$$M_{j,k}\simeq U_{j}\cap S_{k}\cap f^{-1}(v^{+\epsilon}_j).$$
	The Morse-Bott-Smale condition implies that the intersections are transverse. On the other hand, let $M_{i,k}\cap \cM_{j,\epsilon,r}$ be the set of flow lines in $M_{i,k}$ which contains a flow line in $\cM_{j,\epsilon,r}$, then it is an open set of $M_{i,k}$, and we have embedding
	$$ev_-\times ev_+:M_{i,k}\cap \cM_{j,\epsilon,r}\to (f^{-1}(v^{-\epsilon}_j)\cap N_r)\times (f^{-1}(v^{+\epsilon}_j)\cap N_r).$$
	The image is 
	$$\Ima (ev_-\times ev_+)(M_{i,k}\cap \cM_{j,\epsilon,r})= \Ima (ev_-\times ev_+)(\partial_0\cM_{j,\epsilon,r}) \cap\left((U_i\cap  f^{-1}(v^{-\epsilon}_j))\times (S_{k}\cap f^{-1}(v^{+\epsilon}_j))\right),$$
	where $\partial_0\cM_{j,\epsilon,r}$ is the interior (depth-0 boundary, Definition \ref{def:degenercy}) of $\cM_{j,\epsilon,r}$. The Morse-Bott-Smale condition implies that the intersection is transverse. Moreover $\partial \Ima (ev_-\times ev_+)(\cM_{j,\epsilon,r})=(S_j\cap f^{-1}(v^{-\epsilon}_j))\times_{C_j} (U_j\cap f^{-1}(v^{+\epsilon}_j))$ is also transverse to $(U_i\cap  f^{-1}(v^{-\epsilon}_j))\times (S_{k}\cap f^{-1}(v^{+\epsilon}_j))$,  since fiber product $M_{i,j}\times_j M_{j,k}$ is transverse.  Thus $\Ima (ev_-\times ev_+)(M_{i,k}\cap \cM_{j,\epsilon,r})$ can be completed by the boundary structure of $\Ima (ev_-\times ev_+)(\cM_{j,\epsilon,r})$, that is we can add in $(U_i\cap S_{j}\cap f^{-1}(v^{-\epsilon}_j))\times_{C_j} ( S_{k}\cap U_j\cap f^{-1}(v^{+\epsilon}_j))\simeq M_{i,j}\times_jM_{j,k}$ as the boundary of $M_{i,k}\cap \cM_{j,\epsilon,r}$. The topology check is analogous to \cite{wehrheim2012smooth}. 
\end{proof}

Therefore we have a smooth boundary structure on $ M_{i,j}\times_j M_{j,k} \subset \cM_{i,k}$. We still need to construct corner structures near curves with multiple breaking and prove compatibility of smooth structures. The proof is very similar, and the corner structure will be inherited from (fiber) products of the manifolds with boundary in Lemma \ref{lemma:family}.

\begin{proposition}\label{prop:corner}
	$M_{i,j}\times_{j} M_{j,k}\times_{k} M_{k,l} \cup M_{i,k}\times_{k}M_{k,l}\cup M_{i,j}\times_{j}M_{j,l}\cup M_{i,l}$ can be given a structure of manifold with boundaries and corners, which is compatible with structure given in Proposition \ref{prop:smooth}. 
\end{proposition}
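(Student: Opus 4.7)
The plan is to mimic Proposition \ref{prop:smooth} but with the local model now given by the product of two cylindrical moduli spaces, one at each intermediate critical manifold. First I would fix small parameters $(\epsilon_j,r_j)$ and $(\epsilon_k,r_k)$ satisfying the hypothesis of Lemma \ref{lemma:family}, so that the images
$$Y_j := \Ima(ev_-\times ev_+)(\cM_{j,\epsilon_j,r_j}) \subset f^{-1}(v_j^{-\epsilon_j})\times f^{-1}(v_j^{+\epsilon_j}),$$
$$Y_k := \Ima(ev_-\times ev_+)(\cM_{k,\epsilon_k,r_k}) \subset f^{-1}(v_k^{-\epsilon_k})\times f^{-1}(v_k^{+\epsilon_k})$$
are manifolds with boundary, whose boundaries are respectively the fiber products $(S_j\cap f^{-1}(v_j^{-\epsilon_j}))\times_{C_j}(U_j\cap f^{-1}(v_j^{+\epsilon_j}))$ and similarly for $k$. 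Taking the Cartesian product equips $Y_j\times Y_k$ with a canonical manifold-with-corners structure of depth at most $2$, whose codimension-$2$ stratum is the product of the two single-breaking loci.

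Next I would identify the open subset $M_{i,l}\cap\cM_{j,\epsilon_j,r_j}\cap\cM_{k,\epsilon_k,r_k}\subset M_{i,l}$ consisting of flow lines that enter both cylindrical neighborhoods. Via the four evaluation maps at $f^{-1}(v_j^{\pm\epsilon_j})$ and $f^{-1}(v_k^{\pm\epsilon_k})$ it embeds into the intersection of $Y_j\times Y_k$ with two external constraints: the constraint at the left end cutting out $U_i\cap f^{-1}(v_j^{-\epsilon_j})$, the constraint at the right end cutting out $S_l\cap f^{-1}(v_k^{+\epsilon_k})$, and the middle constraint identifying the right endpoint of the $Y_j$-factor with the left endpoint of the $Y_k$-factor via the well-defined gradient flow between $f^{-1}(v_j^{+\epsilon_j})$ and $f^{-1}(v_k^{-\epsilon_k})$ (outside both cylindrical neighborhoods there is no breaking, so this flow map is a diffeomorphism onto its image). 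By Proposition \ref{trans}, all four relevant fiber products $M_{i,l}$, $M_{i,j}\times_jM_{j,l}$, $M_{i,k}\times_kM_{k,l}$, and $M_{i,j}\times_jM_{j,k}\times_kM_{k,l}$ are cut out transversely; translated into the present setup, this says that the triple of external constraints meets $Y_j\times Y_k$ transversely, and continues to meet each of its three lower strata (two codimension-$1$ faces and the codimension-$2$ corner) transversely. Hence the preimage inherits the structure of a manifold with corners in which the depth-$1$ faces are $M_{i,k}\times_kM_{k,l}$ and $M_{i,j}\times_jM_{j,l}$, and the depth-$2$ corner is $M_{i,j}\times_jM_{j,k}\times_kM_{k,l}$.

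Finally, compatibility with Proposition \ref{prop:smooth} is a local check: restricting the present construction to only one of the two cylindrical neighborhoods (equivalently, letting $r$ or $\epsilon$ at the other level tend to $0$, so that that factor of $Y$ collapses to its interior) recovers verbatim the model $Y$ of Proposition \ref{prop:smooth} at the remaining critical manifold. Therefore the transition functions between the singly-broken charts and the doubly-broken chart are the identity on the overlaps, and the smooth structures agree there.

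I expect the main obstacle to be the transversality check at the depth-$2$ corner, rather than at the interior or codimension-$1$ faces. At the corner the tangent space of $Y_j\times Y_k$ drops by $\dim C_j+\dim C_k$ along the two normal directions given by the explicit chart $(t,x,0,0,y)\mapsto (x,ty,tx,y)$ from Lemma \ref{standard} in each factor, and one must verify that the external constraint cutting out $M_{i,j}\times_jM_{j,k}\times_kM_{k,l}$ remains transverse to this lower-dimensional corner in the ambient product of level sets. This reduces precisely to transversality of the triple fiber product, which is guaranteed by Proposition \ref{trans} for a generic metric. Once this is in place, the iteration to quadruple and higher breakings proceeds identically, using products of $n$ copies of the $Y$-model with corners of depth up to $n$, thereby delivering the full flow category structure claimed in Theorem \ref{fdflow}.
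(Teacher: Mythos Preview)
Your proposal is correct and follows essentially the same route as the paper. The only cosmetic difference is the order of operations: the paper first takes the fiber product $Y_j\times_\phi Y_k$ over the flow diffeomorphism $\phi$ between the intermediate level sets (obtaining a manifold with corners that it calls $\cM_{j,k,\epsilon,r}$), checks that this fiber product is transverse as manifolds with boundary using $U_j\pitchfork S_k$, and only then intersects with the outer constraints $U_i$ and $S_l$; you instead take the Cartesian product $Y_j\times Y_k$ and impose the middle flow constraint together with the two outer constraints simultaneously. These are equivalent formulations, and your identification of the depth-$2$ transversality as the crux, reducing to the triple fiber product transversality from Proposition~\ref{trans}, matches the paper's argument.
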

\begin{proof}
	Let $N_{*,r}$ denote the radius $r$ open tube around $C_*$. We use $\cM_{j,k,\epsilon,r}$ to denote the moduli space of gradient flow lines from $f^{-1}(v_j^{-\epsilon})\cap N_{j,r}$ to $f^{-1}(v_k^{+\epsilon})\cap N_{k,r}$, passing through $f^{-1}(v_j^{+\epsilon})\cap N_{j,r}$ and $f^{-1}(v_k^{-\epsilon})\cap N_{k,r}$, such that the only breaking allowed is at $C_j$ or $C_k$, or both. Then $ev_{-,+,-,+}:=ev_-\times ev_+ \times ev_-\times ev_+$ defines an embedding
	$$\cM_{j,k,\epsilon,r}\to (f^{-1}(v_j^{-\epsilon})\cap N_{j,r})\times (f^{-1}(v_j^{+\epsilon})\cap N_{j,r})\times (f^{-1}(v_k^{-\epsilon})\cap N_{k,r}) \times (f^{-1}(v_k^{+\epsilon})\cap N_{k,r}).$$
	We define $V\subset f^{-1}(v_j^{+\epsilon})\cap N_{j,r}, U \subset f^{-1}(v_k^{-\epsilon})\cap N_{k,r}$ be the sets such that the flow lines from $V$ will end in $U$ without breaking, then $V,U$ are both open subset and there is a diffeomorphism $\phi:V\to U$ defined using the gradient flow. Then $\Ima(ev_{-,+,-,+})$ is contained inside the fiber product $(f^{-1}(v_j^{-\epsilon})\cap N_{j,r})\times V\times_{\phi} U \times (f^{-1}(v_k^{+\epsilon})\cap N_{k,r})$. By a little abuse of notation, we use $V\cap \cM_{j,\epsilon,r}$ to denote $ev_+^{-1}(V)\subset \cM_{j,\epsilon,r}$ and $U\cap \cM_{k,\epsilon,r}$ to denote $ev_-^{-1}(U)\subset \cM_{k,\epsilon,r}$, which are both open subsets and inherit the structure of a manifold with boundary from Lemma \ref{lemma:family}. Then $\Ima(ev_{-,+,-,+}) = ev_{-,+}(V\cap \cM_{j,\epsilon,r})\times_{\phi} ev_{-,+}(U\cap \cM_{k,\epsilon,r})$. The Morse-Bott-Smale condition implies that the fiber product $ev_{-,+}(V\cap \cM_{j,\epsilon,r})\times_{\phi} ev_{-,+}(U\cap \cM_{k,\epsilon,r})$ is cut out transversely as a manifold with boundaries and corners. Therefore $\cM_{j,k,\epsilon,r}$ inherits the structure of a manifold with corners from its image under $\Ima ev_{-,+,-,+}$, whose depth-1 boundary is  $\left(ev_{-,+}(V\cap \partial_1\cM_{j,\epsilon,r})\times_{\phi} ev_{-,+}(U\cap \partial_0\cM_{k,\epsilon,r})\right) \cup\left( ev_{-,+}(V\cap \partial_0\cM_{j,\epsilon,r})\times_{\phi} ev_{-,+}(U\cap \partial_1\cM_{k,\epsilon,r})\right)$, and the depth-2 boundary (corner) is $ev_{-,+}(V\cap \partial_1\cM_{j,\epsilon,r})\times_{\phi} ev_{-,+}(U\cap \partial_1\cM_{k,\epsilon,r})$.
	
	We define $M_{i,l}\cap \cM_{j,k,\epsilon,r}$ to be the open subset of $M_{i,l}$ consisting of flow lines with a portion in $\cM_{j,k,\epsilon,r}$. Similar to the proof of Proposition \ref{prop:smooth}, we can use the boundary and corner structures on $\cM_{j,k,\epsilon,r}$ to give a corner structure near  $M_{i,l}\cap \cM_{j,k,\epsilon,r}$, by intersecting with the unstable and stable manifolds of $C_i,C_{l}$ with $\Ima(ev_{-,+,-+})$ inside $(f^{-1}(v_j^{-\epsilon})\cap N_{j,r})\times (f^{-1}(v_j^{+\epsilon})\cap N_{j,r})\times (f^{-1}(v_k^{-\epsilon})\cap N_{k,r}) \times (f^{-1}(v_k^{+\epsilon})\cap N_{k,r})$. More explicitly, we get a corner structure near $M_{i,j}\times_jM_{j,k}\times_k M_{k,l}$, which also gives a boundary structure near $M_{i,j}\times_{j} (M_{j,l}\cap (U\cap \partial_0\cM_{k,\epsilon,r}))$ and $(M_{i,k}\cap (V\cap \partial_0\cM_{j,\epsilon,r})) \times_{k} M_{k,l}$. Moreover, the boundary structure is exactly the one constructed in Proposition \ref{prop:smooth}. This finishes the proof.
\end{proof}

\begin{proof}[Proof of Theorem \ref{fdflow}]
	Following the same proof of Proposition \ref{prop:corner}, we can prove that  $\cM_{i,j}$ is endowed with a structure of compact manifold with boundaries and corners. Let $o_i$ be the determinant line bundle of the stable bundle $N^s$ over $C_i$, then $\{C_i,\cM_{i,j}\}$ defines a flow category $\cC_{f,g}$ with an orientation structure following the construction in \S \ref{ss:ori_MB}. 
\end{proof}

\subsection{Morphisms and homotopies}\label{sub:mor}
To derive the flow morphisms between different Morse-Bott functions and flow homotopies between them, we will use the argument from \cite{audin2014morse} to reduce the construction of flow morphisms and flow homotopies back to flow categories.
\subsubsection{Flow morphisms \cite[Theorem 3.4.2. first step]{audin2014morse}}
Let $(f_1,g_1)$ and $(f_2,g_2)$ be two locally standard Morse-Bott-Smale pairs, let $\cC^1=\{C^1_i,\cM^1_{i,j}\}$ and $\cC^2=\{C^2_i,\cM_{i,j}^2\}$ denote the associated flow categories. We can find a smooth function $F: \R\times M\to \R$, such that:
$$F(t,x)=\left\{\begin{array}{cr}
	 f_1(x) & t<\frac{1}{3},\\ & \\
	 f_2(x) & t>\frac{2}{3}.
\end{array}	\right. $$
We consider a Morse function $h$ on $\R$, such that it only has two critical points, one local minima at $0$, and one local maxima at $1$, and 
$$\forall x\in M, t\in (0,1), \frac{\partial F}{\partial t}+\frac{dh}{dt}>0$$
Then $F+h$ defines a Morse-Bott function on $\R\times M$, with critical manifolds $\{C_i^1\times \{0\}\}$ and $\{C_i^2\times \{1\}\}$, we can find a locally standard metric $G$ such that 
$$G(t,x)=\left\{\begin{array}{cr}
g_1+\rd t\otimes \rd t & t<\frac{1}{3},\\ & \\
g_2+\rd t\otimes \rd t & t>\frac{2}{3}.
\end{array}	\right. $$
We can assume $(F,G)$ is a locally standard Morse-Bott-Smale pair. Then by Theorem \ref{fdflow}, we can associate $(F+h, G)$ a flow category with an orientation structure. Let $\cF_{i,j}$ denote the compactified moduli space of flow lines from $C_i^1\times \{0\}$ to $C_j^2\times \{1\}$,  then $\cF_{i,j}$ form a flow morphism $\fF$ from $\cC^1$ to $\cC^2$.  When $F(t,x)=f(x)$, and we can choose metric $g+dt^2$, then $F_{i,i}=C_i$ and $F_{i,j}\simeq\cM_{i,j}\times [0,j-i]\simeq I_{i,j}$, for $i<j$, i.e.\ the construction gives the identity flow morphism \cite[Theorem 3.4.2. second step]{audin2014morse}.

\subsubsection{Flow homotopies \cite[Theorem 3.4.2. third step]{audin2014morse}}
Assume we have continuations $F, G, H$ from $f_1$ to $f_2$, $f_2$ to $f_3$ and $f_1$ to $f_3$ respectively, then we can find $K:\R_s\times \R_t\times M \to \R$, such that: 
$$K(s, t,x)=\left\{\begin{array}{cr}
H(t,x) & s<\frac{1}{3},\\ & \\
F(s,x) & t<\frac{1}{3}, \\ & \\
G(t,x) & s>\frac{2}{3}, \\ & \\
f_3(x) & t>\frac{2}{3}.
\end{array}	\right. $$
We can find $h$ with one local minima at $0$ and local maxima at $1$, such that 
$$\forall (s,t,x)\in (0,1)\times \mathbb{R}\times M, \frac{\partial K}{\partial s}+h'(s)>0,$$
$$\forall (s,t,x)\in \mathbb{R}\times (0,1)\times M, \frac{\partial K}{\partial t}+h'(t)>0.$$
Then $K+h(s)+h(t)$ defines a Morse-Bott function, with critical manifolds $\{C_i^1\times \{(0,0)\}\}$, $\{C_i^2\times \{(1,0)\}\}$, $\{C_i^3\times \{(0,1)\}\}$ and $\{C_i^3\times \{(1,1)\}\}$, and we can find a locally standard Morse-Bott-Smale metric extending the locally standard metrics used in $F,G,H$ and $f_3$. Then the flow lines from $C_i^1\times \{(0,0)\} $ to  $C_j^3\times \{(1,1)\}$ give rise to a flow homotopy between $\fG\circ \fF$ and $\fI\circ \fH$.

\begin{proof}[Proof of theorem \ref{s7t1}]
	By Theorem \ref{fdflow}, we have a flow category $\cC_{f,g}$ with an orientation structure for any locally standard Morse-Bott-Smale pair $(f,g)$. Using flow morphisms and flow homotopies above,  we can see that the cohomology of $\cC_{f,g}$ does not depend on $(f,g)$. Thus we can choose $f\equiv C$, and $g$ be any metric, then $(f,g)$ is a locally standard Morse-Bott-Smale pair. The corresponding flow category has object space and morphism space are both $M$, thus the cohomology of the flow category equals to the cohomology $H^*(M,\R)$. 
\end{proof}
Since a Morse-Smale pair is a special case of Morse-Bott-Smale pair, and our definition of the minimal Morse-Bott cochain complex recovers the Morse cochain complex when the function is Morse. As a corollary, the $\R$ coefficient Morse cohomology equals to the de Rham cohomology of $M$.

\subsection{Noncompact case}
Let $M$ be a noncompact manifold of finite type as introduced in Definition \ref{def:finite} throughout this subsection, i.e.\ $M$ is the set of interior points of a compact manifold with nonempty boundary. Let $\partial_r$ be a nonzero pointing out vector field on the collar neighborhood of the end of $M$. In the following, we will only consider the following two types of Morse-Bott functions.
\begin{enumerate}
	\item\label{c1} A Morse-Bott function $f$, such that $\partial_r f>0$ on the collar.
	\item\label{c2} Constant functions.
\end{enumerate}
In case of type \eqref{c1}, we have a flow category $\cC_f$ by Theorem \ref{fdflow}. In the case of type \eqref{c2}, the flow category is a single space $M$, which is a proper flow category. Next we will show how to associate flow morphism betweens flow categories from different Morse-Bott functions and flow homotopy between them. Once they are set up like the compact case, we have the cohomology of the flow category is independent of the Morse-Bott function. In particular, one can choose a constant, hence the cohomology is the regular cohomology.
\subsubsection{Flow morphisms and homotopies}
Given two admissible Morse-Bott functions $f_1, f_2$ on $M$, the homotopy between them  is a smooth function $F: \R \times M\to \R$, such that
$$F(t,x)=\left\{\begin{array}{cr}
f_1(x) & t<\frac{1}{3},\\ & \\
f_2(x) & t>\frac{2}{3},
\end{array}\right.  $$
and when $t\in (\frac{1}{3},\frac{2}{3})$ we have $\partial_rF(t,x) > 0$ on the collar.  Then $h+F$ defines a Morse-Bott function on $\R \times M$, and we claim the associated flow category defines a proper flow morphism from $\cC_{f_1}$ to $\cC_{f_2}$. We may assume the metric on $\R \times M$ has the property that the gradient for the collar coordinate $r\in (-1,0)$ is $\partial_r$ on the collar. Then $\partial_r F(t,x)\ge 0$ for all $t$ implies that $\partial_r F(t,x) = \partial_r (h+F(t,x))=\la \nabla r, \nabla (h+F(t,x))\ra \ge 0$. Therefore any gradient flow line from a critical point of $f_1$ to a critical point $f_2$ has the property that if it touches the collar then it stays in the collar after the touching point. In addition to the argument in \S \ref{sub:mor}, we need to show the properness of the target maps in order to prove the claim. We divide it into the following cases.
\begin{enumerate}
	\item\label{case1} Both $f_1,f_2$ are of type \eqref{c1}.  Since any gradient flow line touches the collar neighborhood can not return to the interior side. Hence construction in \S \ref{sub:mor} gives compact moduli spaces and a flow morphism from $\cC_{f_1}$ to $\cC_{f_2}$. 
	\item\label{case2} $f_1$ is of type \eqref{c2} and $f_2$ is of type \eqref{c1}. Then the same argument in case \eqref{case1} holds.  
	\item\label{case3} $f_1$ is of type \eqref{c1} and $f_2$ is of type \eqref{c2}. Let $K\subset M = \Crit(f_2)$ be a compact subset. For points outside the collar, we define $r = -1$. Let $R := \max\{r(x)|x\in K\}$. Then $R < 0$ and all gradient flow lines from critical points of $f_1$ to a point in $K$ stays inside the domain $[0,1] \times \{ r\le R\}$, hence the space of such flow lines is compact. This shows that the target maps are proper.
	\item\label{case4} Both $f_1,f_2$ are of type \eqref{c2}. Then the same argument in case \eqref{case3} holds. 
\end{enumerate} 
\begin{remark}
	If we replace the condition on the collar by $\partial_rF(t,x) < 0$, this would force $f_1,f_2$ to have the property that $\partial_r f_1, \partial_rf_2 < 0$ if they are not constant. In this case, the the gradient flow lines in $\R\times M$ will shrink on the collar neighborhood instead of expanding, hence the source map is proper and the target map is not. We can similarly define a cochain complex using the compactly supported cohomology in this case.  The cohomology of the cochain complex is the compactly supported cohomology, which is isomorphic to the homology. 
\end{remark}
The asymmetry of the flow morphism prevents us from constructing a flow morphism from $\cC_f$ to $\cC_f$. Assume $f > 0$ without loss of generality, there exists a flow morphism from $\cC_f$ to $\cC_{2f}$ constructed from $F(t,x) = \phi(t)f(x)$, where $\phi(t)$ is an increasing function with $\phi(t) = 1, t \le 0$ and $\phi(t) = 2, t \ge 1$. The flow morphism is diffeomorphic to the identity flow morphism when we use the metric $g+dt^2$. The flow homotopy follows from the same argument if we require the increasing property on the collar when constructing the homotopy of homotopy. Therefore we have the invariance of the cohomology with respect to the Morse-Bott function, i.e.\ we have the following.
\begin{theorem}
	If $M$ is a noncompact manifold of finite type and $f$ is a Morse-Bott function of type \eqref{c1} or \eqref{c2}. Then the flow category
	 $\cC_f$ is proper and has a local system, such that the cohomology is $H^*(M;\R)$.
\end{theorem}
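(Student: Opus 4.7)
The plan is to verify the three structural claims in the order they are asserted — properness of the flow category, existence of a local system, and identification of the cohomology — by reducing each to machinery already developed in \S\ref{ss:prop}, \S\ref{sub:localsystem}, \S\ref{s7}, and \S\ref{sub:mor}.

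First I would verify that $\cC_f$ is a proper flow category in the sense of Definition \ref{def:prop}. For type \eqref{c1}, the condition $\partial_r f > 0$ on the collar forces the critical set to lie in the compact region $\{r\le -1\}$, so each $C_i$ is a closed manifold (in particular of finite type). For the morphism spaces, the same analytic setup as in Theorem \ref{fdflow} applies; the point is that any gradient trajectory from $C_i$ to $C_j$ is confined to the compact set $\{f(C_i) \le f \le f(C_j)\}$, so every $\cM_{i,j}$ is a compact manifold with corners and the target map is trivially proper. For type \eqref{c2} the critical set is all of $M$, which is of finite type by hypothesis; the only morphism space is $\cM_{0,0} = M$ with source and target the identity, which is proper. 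The orientation structure of Definition \ref{os}, built from stable bundles of critical submanifolds as in \S\ref{orientation}, then promotes to a local system by Proposition \ref{prop:localsys}.

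For the cohomology computation, I would combine invariance under the Morse-Bott function with evaluation at a constant. Given two admissible functions $f_1, f_2$, the constructions of flow morphisms and flow homotopies in \S\ref{sub:mor} extend verbatim to the noncompact setting once one checks properness of the target maps on the auxiliary moduli spaces. This is done by the case analysis \eqref{case1}--\eqref{case4} already sketched in the paragraph preceding the theorem: in cases where the outgoing function is of type \eqref{c1} properness follows because trajectories cannot re-enter the interior after entering the collar, and in cases where the outgoing function is of type \eqref{c2} the preimage of a compact $K\subset M$ is trapped in a compact sublevel set of the homotopy $F$. To extract a cochain isomorphism from these (since the monotonicity condition forbids direct construction of a flow morphism $\cC_f \Rightarrow \cC_f$), I would use the scaling trick: $F(t,x) = \phi(t) f(x)$ yields a proper flow morphism $\fH_f: \cC_f \Rightarrow \cC_{2f}$ whose underlying moduli, with the metric $g+dt^2$, are diffeomorphic to those of the identity flow morphism. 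Thus $\phi^{\fH_f}$ is the identity plus a strictly upper triangular operator and is a cochain isomorphism, exactly as in the proof of Theorem \ref{thm:main}. Composing such scalings with the homotopies between $f_1$, $2f_1$, $f_2$, $2f_2$ and invoking Theorem \ref{thm:ult} yields $H^*(\cC_{f_1}) \cong H^*(\cC_{f_2})$. Finally, for $f$ constant the minimal cochain complex is $H^*(M, o_0)$ with $o_0$ trivial (the stable bundle is the zero bundle) and with zero differential (no nonconstant flow lines), giving $H^*(M;\R)$.

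The main obstacle will be the mixed-type properness check for the interpolation data, specifically case \eqref{case3} (type \eqref{c1} to type \eqref{c2}) and the analogous properness for the flow homotopies of homotopies used in establishing the cochain homotopies and the functoriality of the scaling morphism up to homotopy. One must arrange the interpolating function $F$ (and the second-order interpolation $K$ for homotopies) so that $\partial_r F \ge 0$ on the collar at every intermediate time, with strict positivity outside arbitrarily large compact subsets chosen in response to the given compact set $K \subset M$ whose preimage we wish to control. With this monotonicity in place, trajectories terminating in $K$ are confined to a compact subset of $\R \times M$ (or $\R^2 \times M$), and then the analytic and gluing arguments of Propositions \ref{prop:smooth} and \ref{prop:corner} go through without change, yielding the required proper flow morphisms, proper flow premorphisms, and proper flow homotopies with compatible orientation structures to feed into Theorem \ref{thm:ult}.
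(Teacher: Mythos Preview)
Your proposal is correct and follows essentially the same approach as the paper: the theorem is stated there as an immediate consequence of the preceding discussion, and you have organized that discussion---properness via the collar monotonicity and cases \eqref{case1}--\eqref{case4}, the local system from the orientation structure, the scaling trick $F(t,x)=\phi(t)f(x)$ to circumvent the asymmetry obstruction, and evaluation at a constant function---into a structured argument. Your identification of the main obstacle (properness for the interpolation and second-order interpolation data) is exactly the point the paper addresses in the paragraph before the theorem.
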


\subsubsection{The Gysin exact sequence}
Let $M$ be $n$-dimensional manifold of finite type. Assume $f$ is a Morse-Bott function on $M$ and when $M$ is noncompact, $f$ is one of the two admissible types \eqref{c1} or \eqref{c2}. Let $g$ be a metric, such that $(f,g)$ be a locally standard Morse-Bott-Smale pair. Therefore we have a (proper) flow category $\cC_f = \{C_i,\cM_{i,j}\}$. Let $\pi:E\to M$ be a oriented $k$-sphere bundle. Then  $\pi^*f$ is a Morse-Bott function on $E$ with critical manifolds $\{\pi^{-1}(C_i)\}$. We pick a metric $g_F$ on the fibers of $E$, i.e.\ a metric only defined on the subbundle of fiber directions $T^vE$ of $TE$. Fixing a connection of $TE = T^vE\oplus T^hE$, then $g_F$ can be understood as a semi-positive bilinear form on $TE$ vanishing on $T^hE$ and $g_F + \pi^*g$ is a metric on $E$. It can be verified directly a gradient flow line $\widetilde{\gamma}$ of $(\pi^*f, g_F+\pi_*g)$ is a parallel lift of a gradient flow line $\gamma$ of $(f,g)$.  Hence $(\pi^*f, g_F+\pi_*g)$  is again a Morse-Bott-Smale pair and the induced flow category $\cC_{\pi^*f}$ is given by 
$$\Obj(\cC_{\pi^*f}) = \{E_i:=\pi^{-1}(C_i)\}, \quad \Mor(\cC_{\pi^*f}) = \{\cM^E_{i,j} = s_{i,j}^*E_i\}.$$
The source map is the natural map and the target map is given by the parallel transportation along flow lines in $\cM_{i,j}$. As a consequence, we have an oriented $k$-sphere bundle $\cC_{\pi^*f} \to \cC_{f}$. The flow morphisms and flow homotopies defined in the previous discussions can be lifted to the sphere bundle level by the same parallel transportation construction. Therefore the induced Gysin exact sequence is independent of the function $f$. In particular, one may choose $f$ to be constant, hence the Gysin exact sequence will become the usual Gysin exact sequence by Proposition \ref{prop:classGysin}. Therefore we have the following isomorphism of long exact sequences.
\begin{theorem}
	Let $M$ be $n$-dimensional manifold of finite type and $\pi:E \to M$ a $k$-sphere bundle. Suppose $f$ is an admissible Morse-Bott function on $M$, then we have the following isomorphic long exact sequences.
	$$
	\xymatrix@C=3em{
		\ldots \ar[r] & H^i(\cC_f) \ar[r]\ar[d]& H^i(\cC_{\pi^*f}) \ar[r]\ar[d] & H^{i-k}(\cC_f) \ar[rr]\ar[d] & & H^{i+1}(\cC_f) \ar[r]\ar[d] & \ldots\\
		\ldots \ar[r] & H^i(M) \ar[r]^{\pi^*} & H^i(E) \ar[r]^{\pi_*} & H^{i-k}{M} \ar[rr]^{\wedge(-1)^{\dim C+1}e} & & H^{i+1}(M) \ar[r]  & \ldots}
	$$
\end{theorem}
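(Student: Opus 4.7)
The plan is a short three-step reduction. First, I would apply Theorem \ref{thm:gysin} directly to the oriented $k$-sphere bundle $\pi:\cC_{\pi^*f}\to\cC_f$ constructed in the paragraph preceding the statement. This produces the top long exact sequence of the diagram. The vertical isomorphisms on the left are then furnished by the preceding theorem on the invariance of the cohomology of $\cC_f$ (applied once to $f$ on $M$ and once to $\pi^*f$ on $E$, noting that $\pi^*f$ is again admissible in the sense of \S\ref{s7}: if $\partial_r f>0$ on the collar of $M$ then the lifted vector field on the collar of $E$ has the same property, and constants pull back to constants).

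Second, and this is the heart of the proof, I would establish that the whole Gysin square is natural under admissible continuations, so that changing $f$ produces a morphism of long exact sequences. Given an admissible continuation $F:\R\times M\to\R$ between two admissible functions $f_1,f_2$, the pullback $\pi^*F$ is an admissible continuation on $E$, and by the same parallel-transport argument used to construct $\cC_{\pi^*f}\to\cC_f$, the Morse-Bott flow category over $\R\times E$ is naturally an oriented $k$-sphere bundle over the one over $\R\times M$. Hence the resulting flow morphisms $\fH^F:\cC_{f_1}\Rightarrow\cC_{f_2}$ and $\fH^{\pi^*F}:\cC_{\pi^*f_1}\Rightarrow\cC_{\pi^*f_2}$ assemble into a compatible oriented $k$-sphere bundle of flow morphisms in the sense of \S\ref{ss:Gysin}. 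By the naturality statement for the flow-category Gysin sequence proved there, they induce a morphism of long exact sequences. The same remark applies to the homotopies of continuations in \S\ref{sub:mor}, so the induced morphism of Gysin sequences is independent (up to canonical isomorphism) of the chosen continuation.

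Third, I would reduce to the case where $f$ is a constant function. By step two the top row of the diagram is, up to canonical isomorphism, independent of the choice of admissible $f$. When $f\equiv c$ the flow category $\cC_f$ degenerates to the single object $M$ with morphism space $M$, and $\cC_{\pi^*f}$ degenerates to the single object $E$, so the sphere bundle $\cC_{\pi^*f}\to\cC_f$ literally becomes the original $\pi:E\to M$. In this case the minimal Morse-Bott cochain complex is just a quasi-isomorphic embedding $H^*(M)\hookrightarrow\Omega^*(M)$ (respectively for $E$), and Proposition \ref{prop:classGysin} identifies the Gysin sequence from Theorem \ref{thm:gysin} with the classical Gysin sequence, including the signs $(-1)^{ki}\pi^*$, $\pi_*$, and $\wedge\,(-1)^{k(i-k)+n+1}e$ appearing in the bottom row of the diagram.

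The main obstacle I expect is bookkeeping in step two: one has to check that the parallel-transport construction genuinely produces a compatible oriented $k$-sphere bundle of flow morphisms (and of flow homotopies between them), i.e.\ that the source/target maps of the morphism and homotopy flow categories built from $\pi^*F$ are $G$-equivariant sphere-bundle maps covering those built from $F$, with orientations matching those of Proposition \ref{prop:spherori}. This is essentially a family version of the Morse-Bott-Smale analysis of \S\ref{s7} applied to the product-type pair $(\pi^*F,g_F+\pi^*G)$ on $\R\times E$; because the fiber metric $g_F$ contributes no component to gradient flow lines of $\pi^*F$, every such argument reduces to the argument already carried out for $(F,G)$ on $\R\times M$, and no genuinely new analytic input is required.
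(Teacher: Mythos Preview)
Your proposal is correct and follows essentially the same route as the paper: lift the sphere-bundle structure to the flow morphisms and flow homotopies by parallel transport so that the Gysin sequence is natural under admissible continuations, then specialize to a constant $f$ and invoke Proposition~\ref{prop:classGysin}. The paper's own argument is the paragraph immediately preceding the theorem and is in fact terser than what you wrote; your explicit identification of the bookkeeping in step two (that $(\pi^*F,\,g_F+\pi^*G)$ yields a compatible oriented $k$-sphere bundle of flow morphisms and homotopies) is exactly the content the paper summarizes with ``by the same parallel transportation construction.''
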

\section{Transversality by Polyfold Theory}\label{poly}
With the theory on flow categories developed in the previous sections. The remaining problem is to get flow categories in applications, i.e.\ we need to solve the transversality problems. For this purpose, we will adopt the polyfold theory developed by Hofer-Wysocki-Zehnder \cite{HWZ1,HWZ3,hofer2010integration, HWZ2,hofer2017polyfold}. This section outlines some ideas on combining our construction with the polyfold theory, details will appear in a future work. 
\subsection{Polyflow categories}
The main result of \S \ref{morsebott} is that for any oriented flow category, we can construct a well-defined cochain complex up to homotopy. If we want to write down a representative cochain complex of the homotopy class, we need to fix defining data $\Theta$. In applications, take Hamiltonian Floer cohomology as an example, the flow category is the zero sets of some sc-Fredholm sections over a family of polyfolds \cite{wehrheim2012fredholm}. A natural idea is that we replace every manifold $\cM_{i,j}$ in the flow category by strong polyfold bundle $W_{i,j}\to Z_{i,j}$ with an sc-Fredholm section $\kappa_{i,j}$, such that all $W_{i,j}\to Z_{i,j}, \kappa_{i,j}$ are organized just like a flow category.  When all $\kappa_{i,j}$ are transverse to $0$, then $\kappa_{i,j}^{-1}(0)$ defines a flow category. In this case, we expect to assign a well-defined cochain complex to such system of polyfolds up to homotopy. When we need to write down an explicit representative cochain complex for the homotopy class, we need to fix a family of perturbations that are compatible with category structure and defining data (on $C_i$), which does not depend on the perturbation. We first give a preliminary definition of such system.
\begin{definition}\label{def:polyflow}
	A \textbf{polyflow category} is a small category $\cZ$ with following properties.
	\begin{enumerate}
	    \item\label{P1}The object space $\Obj(\cZ)= C:=\sqcup_{i\in \Z} C_i$ is the disjoint union of manifolds $C_i$, such that each connected component of $C_i$ is a manifold of finite type (Definition \ref{def:finite}).
	    \item\label{P2} The morphism space $\Mor(\cZ)=Z$ is a polyfold. The source and target maps $s,t:Z\to C$ are sc-smooth. Let $Z_{i,j}$ denote $(s\times t)^{-1}(C_i\times C_j)$. 
	    \item\label{P3} $Z_{i,i}\simeq C_i$ (i.e.\ the identity morphisms), $Z_{i,j}=\emptyset$ for $j<i$, and $Z_{i,j}$ is a polyfold for $j>i$.
		\item\label{P4} The fiber product $Z_{i_0,i_1}\times_{i_1} Z_{i_1,i_2}\times_{i_2}\ldots \times_{i_{k-1}} Z_{i_{k-1},i_k}$ is cut transversely, for all increasing sequence of $i_0<i_1<\ldots<i_k$.
		\item\label{P5} The composition $m:Z_{i,j}\times_j Z_{j,k}\to Z_{i,k}$ is an $sc$-smooth injective map into the boundary of $Z_{i,k}$. Moreover, $\partial Z_{j,k} = \cup_{i<j<k} m(Z_{i,j}\times_j Z_{j,k})$ and $d(x) + d(y) + 1 = d(m(x,y))$ for $(x,y)\in Z_{i,j}\times_j Z_{j,k}$, where $d$ is the degeneracy index \cite[Definition 2.4.1]{hofer2017polyfold}. When restricted to any stratum of fixed degeneracy index, $m$ is a local sc-diffeomorphism to a stratum with a fixed degeneracy index.
		\item\label{P6} There are strong polyfold bundles $W_{i,j}\to Z_{i,j}$ and sc-Fredholm sections $\kappa_{i,j}$, such that both bundles and sections are compatible with $m$, i.e.\ $m^*W_{i,k}|_{Z_{i,j}\times_j Z_{j,k}}=W_{i,j}\times W_{j,k}$ and $\kappa_{i,k}|_{m(Z_{i,j}\times_j Z_{j,k})}=m(s_{i,j},s_{j,k})$.
		\item\label{P7} For every compact set $K\cap C_j$, $\kappa_{i,j}^{-1}(0)\cap t_{i,j}^{-1}(K)$ is compact. 
	\end{enumerate}
\end{definition}
\begin{remark}
	A few remarks on Definition \ref{def:polyflow} are in order.
	\begin{enumerate}
		\item Condition \eqref{P4} can be replaced by a more convenient condition that $(s\times t)|_{Z_{i,j}}$ are submersions. Then condition \eqref{P4} follows from \cite{filippenko2018polyfold}.
		\item The index $\ind s_{i,j}$ plays the role of $m_{i,j}$. Orientation structures defined in \S \ref{s5} can be generalized to polyflow categories, such that orientation structures are enough to give coherent orientations or local systems on flow categories from perturbations in Claim \ref{claim:trans}.
		\item Condition \eqref{P5}  is stronger than Condition \eqref{F4} of Definition \ref{def:flow}. When we define operators from a flow category, we use integration and Stokes' theorem. Hence an almost identification on the boundary is enough. However, in the polyflow category, we need to perturb $Z_{i,j}$ inductively in a coherent way, which requires a finer identification of all the boundary and corner structures. 
	\end{enumerate}
\end{remark}
When all sections $\kappa_{i,j}$ are transverse to $0$, the zero sets form a proper flow category. Hence our goal is to find a family of $\sc^+$ perturbations $\tau_{i,j}$, such that $s_{i,j}+\tau_{i,j}$ is transverse in general position and consistent with the composition $m$. The consistency depends on the combinatorics of the problem in general. In the case of polyflow category, the combinatorics are relatively simple and we expect to have a perturbation scheme.
\begin{claim}\label{claim:trans}
There exist coherent perturbations $\tau_{i,j}$, such that $\kappa_{i,j}+\tau_{i,j}$ is transverse to $0$ and in general position \cite[Definition 5.3.9]{hofer2017polyfold}.
\end{claim}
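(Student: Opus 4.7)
The plan is to construct the perturbations $\tau_{i,j}$ by induction on the ``gap'' $j-i$, building each $\tau_{i,j}$ as an extension of the coherent data already determined on $\partial Z_{i,j}$ by Condition \eqref{P5} of Definition \ref{def:polyflow}. The base case $j = i+1$ is trivial: since $Z_{i,j}$ admits no nontrivial factorizations, condition \eqref{P5} forces $\partial Z_{i,i+1}=\emptyset$, so the abstract polyfold perturbation theorem (\cite[Theorem 15.4]{hofer2017polyfold}) together with the local compactness coming from \eqref{P7} produces an $\sc^+$ multisection $\tau_{i,i+1}$ such that $\kappa_{i,i+1}+\tau_{i,i+1}$ is transverse and in general position, with support in a prescribed auxiliary neighborhood of $\kappa_{i,i+1}^{-1}(0)\cap t_{i,i+1}^{-1}(K)$ for any compact $K\subset C_{i+1}$.

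For the inductive step, assume $\tau_{k,\ell}$ has been constructed for all $\ell-k<j-i$. On each boundary face $m(Z_{i,k}\times_k Z_{k,j})\subset \partial Z_{i,j}$ we define a candidate perturbation by the product rule
\[
\widehat\tau_{i,j}|_{Z_{i,k}\times_k Z_{k,j}}:=\tau_{i,k}\boxplus \tau_{k,j},
\]
using the bundle identification $m^*W_{i,j}=W_{i,k}\times W_{k,j}$ from \eqref{P6}. The finer boundary-corner correspondence in \eqref{P5} (preserving the degeneracy index) plus associativity of the composition $m$ ensures that these boundary pieces agree on overlaps, i.e. they glue to a well-defined $\sc^+$ section $\widehat\tau_{i,j}$ on $\partial Z_{i,j}$ for which $\kappa_{i,j}+\widehat\tau_{i,j}$ is transverse in general position along the boundary (this transversality is inherited from the transversality of $\tau_{i,k}$ and $\tau_{k,j}$ combined with the fiber-product transversality \eqref{P4}). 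The key analytic step is then to extend $\widehat\tau_{i,j}$ to an $\sc^+$ multisection $\tau_{i,j}$ over all of $Z_{i,j}$ without losing transversality or general position. This is exactly the content of the relative version of the HWZ perturbation theorem (\cite[Theorem 15.5]{hofer2017polyfold}, see also the boundary-recognition principle in \cite{hofer2017polyfold}); the local compactness \eqref{P7} of $(\kappa_{i,j}+\widehat\tau_{i,j})^{-1}(0)$ over compact subsets of $C_j$ is what allows the relative perturbation scheme to be applied.

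The hard part will be controlling the noncompactness of the $Z_{i,j}$. Because the target maps $t_{i,j}$ are only proper on zero sets (condition \eqref{P7}), one cannot perturb ``all at once''; instead, for each compact exhaustion $K_1\subset K_2\subset\cdots\subset C_j$ one has to choose $\tau_{i,j}$ whose support is contained in a shrinking neighborhood of $t_{i,j}^{-1}(K_N)\cap (\kappa_{i,j}+\widehat\tau_{i,j})^{-1}(0)$, and simultaneously ensure that the induced compactness of the perturbed zero set is preserved under taking further fiber products at the next inductive stage. This is a bookkeeping problem analogous to the ``coherent perturbation'' schemes of \cite{coherent} and \cite{hofer2017polyfold}, but is significantly simpler here because the combinatorics of a polyflow category is just linear (indexed by strictly increasing chains in $\Z$), rather than the more complicated tree-like combinatorics that appears for $A_\infty$ structures. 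A secondary subtlety is that when the index set in \eqref{P1} is unbounded below one must perform the induction in a nested, cofinal fashion so that each finite truncation stabilizes; this can be arranged because the perturbation on $Z_{i,j}$ only depends on perturbations over the finitely many intermediate levels $i<k<j$.

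Once $\{\tau_{i,j}\}$ is constructed, the zero sets $(\kappa_{i,j}+\tau_{i,j})^{-1}(0)$ will automatically inherit the structure of a proper flow category (in the sense of Definition \ref{def:prop}) from the polyflow structure, and by pushing forward the orientation structure of \S \ref{s5} along the perturbation, the minimal Morse-Bott cochain complex of \S \ref{morsebott} becomes applicable. The invariance results of \S \ref{flowmor}--\ref{flowhomotopy} then guarantee that the resulting cohomology is independent of the choice of coherent perturbation, provided we can build a polyflow homotopy between any two choices, which is done by the same inductive argument applied to an interpolating family.
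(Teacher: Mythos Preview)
The paper does not actually prove Claim \ref{claim:trans}. Section \ref{poly} is explicitly an outline (``This section outlines some ideas on combining our construction with polyfold theory, details will appear in future work \cite{polyflow}''), and the statement is labeled a \emph{Claim} precisely because its proof is deferred to \cite{polyflow}. So there is no proof in the paper to compare your proposal against.

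That said, your outline is the expected strategy and matches what the paper hints at: induction on $j-i$, using condition \eqref{P5} to assemble the already-constructed $\tau_{i,k}\boxplus\tau_{k,j}$ into a coherent boundary section, then applying the relative HWZ perturbation theorem to extend into the interior. Your remark that the combinatorics here are linear (chains $i<i_1<\dots<j$) rather than tree-like is exactly why the paper says ``the combinatorics are relatively simple and we expect to have a perturbation scheme.''

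One point you should be aware of: the paper immediately follows the claim with a Remark explaining that the inductive scheme you describe can \emph{fail} in the presence of ``inner symmetries'' --- e.g.\ when $C_i=C_j=C_k$, $W_{i,j}=W_{j,k}$, $\kappa_{i,j}=\kappa_{j,k}$ and one insists on $\tau_{i,j}=\tau_{j,k}$. In that case there may be no transverse perturbation of the required diagonal form on the fiber product, so the relative extension step breaks down. The paper's proposed fix (perturbing the source/target maps to destroy the symmetry, under a collar-neighborhood hypothesis) is again deferred to \cite{polyflow}. Your proposal implicitly assumes no such symmetry constraint, which is fine for the claim as literally stated but worth flagging.
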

\begin{remark}
	The claim does not hold when there are inner symmetries that we want to preserve. To be more precise,  assume we have a strong polyfold bundle $W \to Z$ with two submersive evaluation maps $s,t:Z\to C$. Let $\kappa:Z \to W$ be a Fredholm section. When $\dim C > 0$, given any transverse perturbation $\tau:Z\to W$, it is not necessarily true that $(\tau,\tau)$ is a transverse perturbation to $(\kappa,\kappa)$ on the fiber product $Z_t\times_sZ$. In fact, it is possible that there is no transverse perturbation to $(\kappa,\kappa)$ on $Z_t\times_sZ$ in the form of $(\tau,\tau)$ for a perturbation $\tau:Z\to W$. Such phenomena can appear in a polyflow category, e.g.\ we may have $C_i=C_j=C_k$, $W_{i,j}=W_{j,k}$ and $\kappa_{i,j}=\kappa_{j,k}$. If we require $\tau_{i,j}=\tau_{j,k}$, then we run into this problem. In applications, e.g,\ Hamiltonian Floer cohomology,  we see such phenomenon when the Novikov coefficient has to be used. The requirement of symmetry in perturbations guarantees the cochain complex is a module over the Novikov field. In the $S^1$-Morse theory case, such phenomenon also causes problems (a.k.a.\ self-gluing) in the homotopy argument. The homotopy argument can be viewed as a Morse-Bott problem with critical manifolds copies of $\R$. In these two explicit examples, special methods can be adopted to overcome the challenge. 
	In the most general case, under certain assumptions\footnote{Basically, we require a collar neighborhood near the boundaries and corners of polyfolds, such assumptions are satisfied in all known examples.} of the polyflow category, we can actually perturb the source and target maps consistently to destroy all the inner symmetries. We will discuss this in detail in our future work.
\end{remark}

Although the polyfold perturbation only produces weighted branched suborbifolds as the transverse zero sets,  it causes no problem,  since the convergence results, i.e.\ Lemma \ref{conv1} and  \ref{conv2}, are local in nature. The only thing we need about $\cM_{i,j}$ is Stokes' theorem, which was proven in \cite{hofer2010integration}. Thus all the proofs in \S \ref{morsebott} apply to the weighted branched suborbifold case. Similar to Definition \ref{def:polyflow}, we can define polyflow morphisms and polyflow homotopies by replacing the manifolds by polyfolds with sc-Fredholm sections. Once the perturbation scheme is given for those structures, we can generate flow morphisms and flow homotopies.

\begin{remark}
	To generalize the identity flow category (Definition \ref{identitymor}) to the polyfold case, the naive construction of multiplying by an interval does not work, because product with an interval does not have the right boundary and corner structures to apply an inductive perturbation scheme. However, there is a more natural construction of the identity (poly)flow category, which has the right boundary and corner structures. The construction is closely related to the geometric realization of the category, which will be discussed in a future work.  
\end{remark}

The enrichment to polyflow categories causes more choices, i.e.\ the choice of perturbation. We would like to have the cohomology independent of the perturbation. Such invariance can be proven using the identity polyflow category or a homotopy argument. 
\begin{claim}\label{claim:ind}
Let $\cZ$ be a polyflow category with orientation structures. If there is no inner symmetry\footnote{Or collar neighborhood assumptions on the polyfolds hold, if there are inner symmetries.}, then we can associate it with a Morse-Bott cochain complex $(\BC(\cZ),d_{\BC})$, such that the homotopy type of the cochain complex is independent of defining data and $\sc^+$ perturbations.
\end{claim}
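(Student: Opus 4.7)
The plan is to combine Claim \ref{claim:trans} with the general theory developed in Sections \ref{morsebott}, \ref{s5} and \ref{s6}. Given a coherent $\sc^+$ perturbation $\tau=\{\tau_{i,j}\}$ provided by Claim \ref{claim:trans}, the strict compatibility of $\tau$ with the composition $m$ (condition \eqref{P5} of Definition \ref{def:polyflow}) ensures that the transverse zero sets $\cM_{i,j}^{\tau}:=(\kappa_{i,j}+\tau_{i,j})^{-1}(0)$ assemble into a proper flow category in the weighted branched suborbifold sense; target-map properness follows from condition \eqref{P7}. Integration and Stokes' theorem for weighted branched suborbifolds are available by \cite{hofer2010integration}, so the convergence lemmas of Appendix \ref{conv} apply verbatim, and the orientation structure on $\cZ$ descends to a local system via the polyfold analogue of Proposition \ref{prop:localsys}. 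Theorem \ref{thm:ult} then assigns a cochain complex $(BC^{\cZ,\tau,\Theta}, d_{BC})$ for any defining data $\Theta$ and gives independence of $\Theta$ up to homotopy.

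To prove independence of $\tau$, I would use a relative version of Claim \ref{claim:trans} on an appropriate identity polyflow morphism $\fI:\cZ\Rightarrow \cZ$. The naive model $Z_{i,j}\times[0,j-i]$ of the identity does not carry the strict boundary and corner identifications that the inductive perturbation scheme requires, so $\fI$ must be built using a geometric-realization-type construction (as alluded to in the remark following Definition \ref{identitymor}). Given two perturbations $\tau_0,\tau_1$, I would extend a coherent $\sc^+$ perturbation of $\fI$ from the prescribed boundary data $(\tau_0,\tau_1)$ to the interior, applying the polyfold transversality theorem from \cite{hofer2017polyfold} inductively over the degeneracy index. The perturbed $\fI$ is then an oriented flow morphism in the weighted branched sense, so Theorem \ref{Mor} produces a cochain map $\phi^{\fI}:BC^{\cZ,\tau_0,\Theta}\to BC^{\cZ,\tau_1,\Theta}$. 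By the argument of Theorem \ref{canonicalthm1}, $\phi^{\fI}$ has the form $\id+N$ with $N$ strictly upper-triangular with respect to the action filtration, and is therefore a cochain isomorphism. Functoriality in $\tau$ up to homotopy follows by applying Theorems \ref{CompMor2} and \ref{thm:homotopy} to a further relative perturbation of $\fI\circ\fI$, exactly as in Theorems \ref{canonicalthm1} and \ref{canonicalthm2}.

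The hard part is two-fold. First, constructing the identity polyflow morphism with the correct strict boundary and corner identifications of \eqref{P5} in Definition \ref{def:polyflow} is substantially more subtle than in the manifold setting; it requires a geometric realization of polyflow categories, which I would develop separately. Second, extending Claim \ref{claim:trans} to a relative coherent perturbation scheme with prescribed boundary data requires an induction on the degeneracy index that is simultaneously compatible with the composition structure and with the abstract polyfold perturbation theorem. The no-inner-symmetry hypothesis is precisely what allows the inductive step to proceed without obstruction; when inner symmetries are present, one must in addition perturb the source and target maps in a collar neighborhood to break the symmetry, and the details of this strengthened construction will appear in forthcoming work.
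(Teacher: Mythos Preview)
Your proposal is essentially correct and matches the paper's approach: the paper does not prove Claim \ref{claim:ind} but only sketches it, noting that invariance of the perturbation ``can be proven using the identity polyflow category or a homotopy argument'' and deferring details to \cite{polyflow}. Your outline reconstructs precisely this plan (identity polyflow morphism via a geometric-realization construction, then the arguments of Theorems \ref{canonicalthm1} and \ref{canonicalthm2}), and your identification of the hard parts---the strict corner structure on $\fI$ and the relative perturbation scheme---coincides with the obstacles the paper flags in the remarks preceding the claim.
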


\subsection{Equivariant theory}
In \S \ref{equi}, we discuss the equivariant theory when the flow category is equipped with a group action. However, requiring $G$ symmetry on the flow category is equivalent to requiring $G$-equivariant transversality on the background polyflow category. Since $G$-equivariant transversality is often obstructed, the construction in \S \ref{equi} can not be applied directly. However, the construction in \S \ref{equi} can be generalized to polyflow categories. Hence we can apply the Borel construction on the level of polyfolds.
\begin{definition}
Let $\cZ$ be a polyflow category. A compact Lie group $G$ acts on $\cZ$ iff $G$ acts on $C_i$ and $W_{i,j}\to Z_{i,j}$ in the sense of \cite[Definition 3.66]{quotient}, such that all sc-Fredholm sections $\kappa_{i,j}$  and the structure maps $s,t,m$ are $G$-equivariant. 
\end{definition}
Assume $G$ acts a polyflow category $\cZ$. If we fix an approximation ${E_n}$ of $EG$, then we can form a sequence of polyflow categories $\cZ\times_G E_n$ by the quotient construction in \cite{quotient}. Using the identity polyflow morphism and the construction in \S \ref{equi},  we have a sequence of polyflow morphisms connecting different $\cZ\times_G E_n$. Then we have a directed system in the ``category" of polyflow categories. We can get an inverse system of cochain complexes by applying Claim \ref{claim:ind}, then the equivariant cochain complex will be the homotopy limit of such inverse system.  Details of the construction will appear in a future work.

\appendix
\section{Convergence}\label{conv}
This section proves the convergence results used in \S \ref{morsebott}. We will see that transversality of fiber products is not only natural from the polyfold point of view as explained in \S \ref{poly} but also essential in proving the convergence results, especially Lemma \ref{conv2}. 

\subsection{Thom class}\label{thom} We review the construction of Thom classes in \cite[\S 6]{bott2013differential}. Let $\pi: E\to M$ be an oriented vector bundle with a metric over an oriented manifold. The fiber $F$, the base manifold $M$ and the total space $E$ are oriented in the manner of $[M][F]=[E]$. If $S(E)$ denotes the sphere bundle of $E$, then we can find  a  form $\psi$ (angular form) on $S(E)$, such that the integration over each fiber is $1$, and $\rd \psi=-\pi^* e$, where $e$ is the Euler class of the sphere bundle. Then we pick smooth functions $\rho_n:\R^+\to \R$, such that $\rho_n$ is increasing, supported in $[0,\frac{1}{n}]$ and is $-1$ near $0$.
\begin{figure}[H]
	\begin{center}
		\begin{tikzpicture}
		\draw[->] (-1,0) -- (7,0) node[right] {$r$};
		\draw[->] (0,-2.5) -- (0,1) node[right] {$\rho_n(r)$};
		\draw (0,-2) .. controls (2,-2) and (1,0) .. (3,0);
		\node[left] at (0,-2) {$-1$};
		\node[above] at (3,0) {$\frac{1}{n}$};
		\node[above] at (6,0) {$1$};
		\end{tikzpicture}
		\caption{Graph of $\rho_n$}
	\end{center}
\end{figure}
Then $\rd(\rho_n\psi)$ defines a form on $\R^+\times S(E)$, and it is $\pi^*e$ on an open neighborhood of $\{0\}\times S(E)$, thus $\rd(\rho_n \psi)$ is a lift of some form on $E$, i.e.\ $\rd(\rho_n \psi)=p^* \delta^n$ for $\delta^n \in \Omega^*(E)$, where $p$ is the natural map $\R^+\times S(E)\to E$.  Such $\delta^n$ is a Thom class of $\pi:E\to M$. The next lemma asserts $\delta^n$ actually represent the zero section not only in the cohomological sense, but also in a stronger sense of currents. Let $\delta_M$ denote the Dirac current of the zero section, i.e.\ $\delta_M(\alpha)=\int_{M}i^*\alpha$, for $\alpha \in \Omega^*(E)$, where $i:M\to E$ is the zero section.

\begin{lemma}[Lemma \ref{conv4}]\label{lemma:appcon}
	$\delta^n\to \delta_M$ in the sense of currents, i.e.\ $\forall \alpha \in \Omega^*(E)$, we have
	$$\lim_{n\to \infty} \int_E \alpha \wedge \delta^n \to \delta_M(\alpha).$$
\end{lemma}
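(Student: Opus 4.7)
The plan is to exploit the explicit Leibniz decomposition
\[ p^*\delta^n \;=\; \rd(\rho_n\psi) \;=\; \rho_n'(r)\,\rd r\wedge\psi\;+\;\rho_n(r)\,\rd\psi \]
on $\R_+\times S(E)$. Because $p$ is a diffeomorphism off the measure-zero set $\{0\}\times S(E)$, I would rewrite
\[ \int_E \alpha\wedge\delta^n \;=\; \int_{\R_+\times S(E)} p^*\alpha\wedge\rho_n'(r)\,\rd r\wedge\psi \;+\; \int_{\R_+\times S(E)} p^*\alpha\wedge\rho_n(r)\,\rd\psi \]
and handle the two summands separately.

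For the second summand I would invoke dominated convergence: $|\rho_n|\le 1$ and $\rho_n(r)\to 0$ for every $r>0$, while $\rd\psi=-\pi^*e$ is a fixed smooth form; the integrand therefore converges pointwise to $0$ off $\{r=0\}$ and is uniformly dominated on the compact support of $p^*\alpha$, so this summand vanishes in the limit.

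For the first summand I would split $p^*\alpha=A+B$, where $A:=(\pi\circ p)^*i^*\alpha$ is pulled back from $M$ via the composition $\R_+\times S(E)\to S(E)\to M$ (so $A$ is independent of $r$) and $B=p^*\alpha-A$ vanishes on $\{0\}\times S(E)$. The $A$-contribution evaluates by Fubini to
\[ \Bigl(\int_0^\infty\!\rho_n'(r)\,\rd r\Bigr)\cdot\int_{S(E)} A|_{r=0}\wedge\psi \;=\; 1\cdot\int_M i^*\alpha \;=\; \delta_M(\alpha), \]
using $\rho_n(\infty)-\rho_n(0)=1$ together with the defining property that the angular form $\psi$ integrates to $1$ over each fiber of $S(E)\to M$.

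The main obstacle is the bound on the $B$-contribution. Since $B$ is smooth and vanishes on the submanifold $\{0\}\times S(E)$, I would locally write $B=r\,B_1+\rd r\wedge B_2$ with $B_1,B_2$ smooth; the $\rd r\wedge B_2$ summand contributes $0$ to $B\wedge\rho_n'(r)\,\rd r\wedge\psi$ by antisymmetry of the wedge with $\rd r$, and the remaining $r\,B_1$ part is bounded in absolute value by $C\sup_{r\le 1/n}r\cdot\int_0^\infty |\rho_n'(r)|\,\rd r = O(1/n)\to 0$ (the last integral equals $1$ by monotonicity of $\rho_n$). Combining these estimates delivers $\lim_n\int_E\alpha\wedge\delta^n=\delta_M(\alpha)$.
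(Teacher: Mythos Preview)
Your argument is correct and is essentially the same proof as the paper's: both split off $\pi^*i^*\alpha$ to isolate the main contribution $\int_M i^*\alpha$, decompose $\delta^n$ as $\rd\rho_n\wedge\psi-\rho_n\,\pi^*e$, and kill the remainder by exploiting vanishing at $r=0$ together with $\int|\rho_n'|=1$ and $|\rho_n|\le 1$. Your organization is a bit more streamlined---working globally in the polar model $\R_+\times S(E)$ and using the Hadamard-type splitting $B=rB_1+\rd r\wedge B_2$ in place of the paper's partition-of-unity localization and its $J=\emptyset$ versus $|J|>0$ case analysis---but the content is the same. One small slip: the domination for the $\rho_n\,\rd\psi$ term should reference the compact support of $\rho_n$ (contained in $[0,1]\times S(E)$, compact since $M$ is), not of $p^*\alpha$, which need not be compactly supported.
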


\begin{proof}
	Let $F\simeq \R^n$ be a fiber of the bundle, since $\delta^n$ is compactly supported, then the integration over a fiber is
	$$\int_F \delta^n= \int_{F-\{0\}} \delta^n = \int_{(0,\infty)\times S^{n-1}} p^* \delta^n=\int_{[0,\infty)\times S^{n-1}} p^*\delta^n =\int_{[0,\infty)\times S^{n-1}} \rd(\rho_n\psi)=-\int_{\{0\}\times S^{n-1}}\psi=1.$$
	Let $\alpha\in \Omega^*(E)$, since $\int_F \delta^n=1$ for any fiber $F$, then 
	$$\int_E  \pi^*i^*\alpha \wedge \delta^n=\int_M \int_{F} \pi^*i^* \alpha\wedge \delta^n=\int_M i^*\alpha.$$ Therefore, it is enough to show 
	$$\lim_{n\to \infty} \int_E (\alpha-\pi^*i^*\alpha)\wedge \delta^n=0.$$
	We will prove this by partition of unity. Let $\{U_i\}$ be an open cover of $M$ and $\{p_i\}$  a partition of unity subordinated to this open cover. We fix trivializations over each $U_i$. Then over $\pi^{-1}(U_i)$ we have
	$$(\pi^*p_i)\cdot (\alpha-\pi^*i^* \alpha)=\sum f^{I,J}\rd x^I \wedge \rd y^J,$$
	where $x$ are the coordinates in $U_i$ and $y$ are the coordinates in the fiber direction. $I,J$ are sets of indices. Since $\alpha$ and $\pi^*i^*\alpha$ are the same when restricted to the zero section, therefore $\lim_{r\to 0}f^{I,\emptyset}=0$, where $r$ is the radius coordinate in the fiber direction. Hence we have
	\begin{eqnarray}\lim_{n\to \infty}\int_{\pi^{-1}(U_i)}f^{I,\emptyset}\rd x^I \wedge \delta^n
	& = & \lim_{n\to \infty}\int_{\R^+\times S^{n-1} \times U_i}f^{I,\emptyset} \rd x^I \wedge  \rd \rho_n\wedge \psi -f^{I,\emptyset} \rd x^I \wedge \rho_n \pi^* e \nonumber\\
	&=  & \lim_{n\to \infty}\int_{0}^{\frac{1}{n}}\int_{S(E)|_{U_i}} \pm f^{I,\emptyset}\rd \rho_n  \wedge \psi\wedge \rd x^I\pm\rho_n f^{I,\emptyset} \pi^* e\wedge \rd x^I. \nonumber
    \end{eqnarray}
    Since $|\rho_n|$ is supported in $[0,\frac{1}{n}]$ and bounded by $1$, $\int_{0}^{\frac{1}{n}} |\rd \rho_n|=1$, $\lim_{r\to 0}f^{I,\emptyset}=0$, and $\psi$ is bounded on $S(E)$,  we have 
    $$\lim_{n\to\infty}\int_{\pi^{-1}(U)}f^{I,\emptyset}\rd x^I \wedge \delta^n=0.$$
    
	When the cardinality $|J|$ of $J$ is  greater than $0$,  using the spherical coordinate in the fiber direction, we have $\rd y^I=Cr^{|J|} \rd\theta^J+Dr^{|J|-1} \rd r\wedge \rd \theta^{J-1}$, where $\rd \theta^J,\rd \theta^{J-1}$ are forms on the sphere of degree $|J|$ and $|J|-1$ and $C,D$ are bounded functions. Because $\rd \rho_n$ is purely in $\rd r$ direction,  then we have 
	\begin{eqnarray}
	& &\lim_{n\to \infty}\int_{\pi^{-1}(U_i)}f^{I,J}\rd x^I\wedge \rd y^J \wedge \delta^n \nonumber\\
	& = & \lim_{n\to \infty} \int_{0}^{\frac{1}{n}}\int_{S(E)|_{U_i}} f^{I,J}Cr^{|J|} \rd x^I \wedge \rd\theta^J \wedge \rd\rho_n\wedge\psi \nonumber \\
	& & -\lim_{n\to \infty} \int_{0}^{\frac{1}{n}}\int_{S(E)|_{U_i}} f^{I,J}Cr^{|J|}\psi \wedge \rd x^I \wedge \rd\theta^J \wedge \rho_n\pi^* e  \label{eqn:2}\\
	& & -\lim_{n\to \infty} \int_{0}^{\frac{1}{n}}\int_{S(E)|_{U_i}} f^{I,J}Dr^{|J|-1}\wedge\psi \wedge \rd x^I \wedge \rd r \wedge \rd\theta^{J-1}\wedge \rho_n\pi^* e. \label{eqn:3}
	\end{eqnarray}
	Because $f^{I,J}, C$ are bounded, $\rd \theta^J$ is bounded on $S(E)$, $\int_{0}^{\frac{1}{n}} |\rd \rho_n|=1$ and $\lim_{r\to 0} r^{|J|}=0$, thus the first term limits to zero. Since everything in \eqref{eqn:2} and \eqref{eqn:3} are uniformly bounded and $\rho_n$ is supported in $[0,\frac{1}{n}]$, thus \eqref{eqn:2} and \eqref{eqn:3} have limit zero.
    Hence we have
	$$\lim_{n\to \infty} \int_{\pi^{-1}(U_i)} \pi^*p_i (\alpha_i-\pi^*i^*\alpha) \wedge \delta^n=0.$$
	Therefore we have
	$$\lim_{n\to \infty} \int_{E} (\alpha_i-\pi^*i^*\alpha)\wedge \delta^n= \lim_{n\to \infty} \sum_i \int_{\pi^{-1}(U_i)} (\pi^*p_i) \cdot (\alpha_i-\pi^*i^*\alpha)\wedge \delta^n=0.$$
\end{proof}

Next we will show that Lemma \ref{lemma:appcon} is preserved under pullback, when transversality conditions are met.

\begin{lemma}\label{conv3}
	Let $M$ be a compact manifold with boundaries and corners and $E\to B$ a vector bundle over a \emph{closed} manifold $B$. If $f:M \to E$ is transverse to $B$ and we orient $f^{-1}(B)$ by $[f^{-1}(B)]f^*[E]=[TM|_{f^{-1}(B)}]$, then for $\alpha \in \Omega^*(C)$, we have
	$$\lim_{n\to \infty}\int _{M} \alpha \wedge f^*\delta^n=\int_{f^{-1}(B)}\alpha|_{f^{-1}(B)}.$$
\end{lemma}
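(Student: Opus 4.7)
The plan is to reduce Lemma \ref{conv3} to Lemma \ref{lemma:appcon} via a tubular neighborhood argument, exploiting the fact that the support of $\delta^n$ shrinks to the zero section as $n\to\infty$.

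First, I would show that only a small neighborhood of $f^{-1}(B)$ contributes to the integral in the limit. By construction in \S\ref{thom}, $\delta^n$ is supported in the disk subbundle of $E$ of radius $1/n$ (in a fixed metric). Hence $f^*\delta^n$ is supported in $f^{-1}(D_{1/n}(E))$. For any fixed open neighborhood $U$ of $f^{-1}(B)$ in $M$, compactness of $M\setminus U$ gives a uniform lower bound on $|f|$ away from $U$, so the support of $f^*\delta^n$ is eventually contained in $U$. Thus it suffices to compute the limit of $\int_U \alpha\wedge f^*\delta^n$ for $U$ an arbitrarily small tubular neighborhood.

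Next, I would exploit transversality to set up coordinates. Since $f:M\to E$ is transverse to $B$ (in the sense of Definition \ref{def:trans}, on every stratum $\partial^i M$), $N:=f^{-1}(B)$ is a submanifold of $M$ with boundary and corners (Definition \ref{sub}), and its normal bundle $\nu_N\subset TM|_N$ is mapped isomorphically to $f^*E|_N$ by $df$. Choose a tubular neighborhood $U\cong \nu_N$ of $N$ in $M$, together with a bundle isomorphism $\nu_N\cong f^*E|_N$. Via these identifications, $f|_U$ factors, up to a vertically-fiberwise-isomorphism that is identity on the zero section, as the bundle map $\nu_N\to E$ covering $f|_N:N\to B$. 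In particular, on $U$ the form $f^*\delta^n$ pulls back, up to a diffeomorphism, the Thom-type form of the normal bundle $\nu_N\to N$.

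Then I would argue by partition of unity on $N$ trivializing $\nu_N$, exactly as in the proof of Lemma \ref{lemma:appcon}. On each trivializing chart with base coordinates $x$ on $N$ and fiber coordinates $y$ on $\nu_N$, write $\alpha|_U$ as $\pi^*(\alpha|_N) + \eta$ where $\pi:\nu_N\to N$ is the projection and $\eta$ vanishes on the zero section. The integral $\int_U \pi^*(\alpha|_N)\wedge f^*\delta^n$ reduces to $\int_N \alpha|_N$ because the fiber integral of the Thom form is $1$, and the orientation convention $[N]\,f^*[E]=[TM|_N]$ ensures this identification is sign-consistent with $[M]=[U]$. The remainder term $\int_U \eta\wedge f^*\delta^n$ vanishes in the limit by the same estimates as in Lemma \ref{lemma:appcon}: expanding $\eta$ in $\rd y^J$, the vertical factor $r^{|J|}$ or the vanishing of the coefficient at $r=0$ dominates the uniformly bounded integral of $\rd\rho_n\wedge\psi$ and $\rho_n\,\pi^*e$ over the sphere bundle.

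The main obstacle is the verification of the orientation/sign; in particular one must check that the isomorphism $df:\nu_N\to f^*E$ induced orientation on $\nu_N$ is compatible with the chosen orientation of $N$ and the Thom orientation of $E$, so that the fiber integral of $f^*\delta^n$ over each fiber of $\nu_N$ equals $+1$. This is precisely dictated by the hypothesis $[N]\,f^*[E]=[TM|_N]$. A minor additional point is handling the boundary and corner strata of $N$: since $\partial^i N = N\cap \partial^i M$ and $f^*\delta^n$ is a smooth form on $M$, no boundary terms appear, and the tubular neighborhood theorem extends to this setting because transversality holds on each stratum.
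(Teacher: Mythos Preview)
Your proposal is correct and follows essentially the same strategy as the paper: localize near $f^{-1}(B)$ using the shrinking support of $\delta^n$, then use transversality to set up coordinates in which $f^*\delta^n$ behaves like a Thom class of the normal bundle of $f^{-1}(B)$, and finally invoke the estimates from Lemma~\ref{lemma:appcon}. The paper carries out the coordinate step via the implicit function theorem, writing $f$ locally so that the normal coordinates map identically to fiber coordinates of $E$; this makes explicit that $f^*\delta^n = \rd(\rho_n\tilde\psi)$ with $\tilde\psi$ uniformly bounded on the relevant blow-up, which is precisely what is needed to rerun the proof of Lemma~\ref{lemma:appcon}. Your more abstract tubular-neighborhood description is equivalent, though your phrase ``$f|_U$ factors, up to a vertically-fiberwise isomorphism, as a bundle map'' is slightly loose---$f$ is not literally a bundle map on $U$, and the content of the implicit function theorem step is exactly to straighten this out.
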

\begin{proof}
	We fix a tubular neighborhood $\pi: N\to f^{-1}(B)$. For $n$ big enough, $f^*\delta^n$ is the Thom class of $f^{-1}(B)$, i.e.\ $f^*\delta^n$ has integration $1$ along each fiber. This is because the fiber $F$ of $f^{-1}(B)$ is diffeomorphic to a submanifold homotopic to a fiber of $E\to B$ though the map $f$. Since $\delta^n$ is closed and has a small enough support, Stokes' theorem implies  $\int_F f^*\delta_n=\int_{f(F)}\delta^n=\int_{\text{fiber of } E}\delta^n=1$.  Then by the same argument in the proof of Lemma \ref{lemma:appcon}, we only need to prove $$ \lim_{n\to \infty} \int_N (\alpha-\pi^*i^*\alpha)\wedge f^*\delta^n=0.$$
	\begin{figure}[H]
		\centering\includegraphics[width=6in]{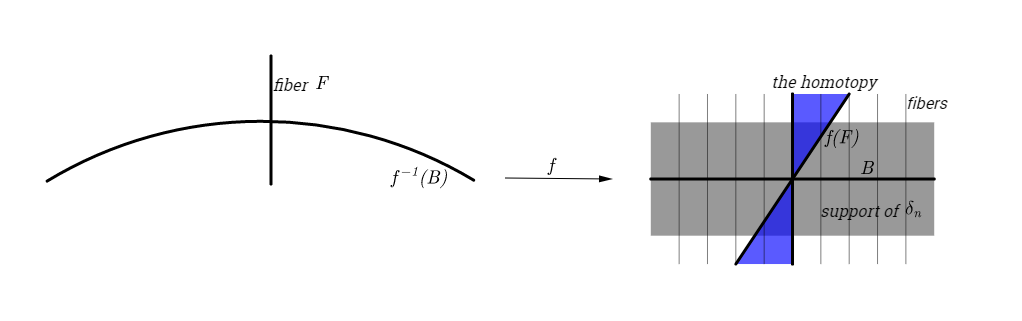}
		\caption{Pullback of Thom classes}
	\end{figure}
	Picking a point $x\in f^{-1}(B)$, then by the implicit function theorem, we can find a local chart of $x$ in $M$, 
	$$\phi: \R_+^k\times \R^n\to M,\quad \phi(0)=x$$
	and local trivialization of $E\to B$ over $f(x)$,
	$$\psi: \R^i\times \R^j\to E, \quad \psi(0,0)=(f(x),0),$$ 
	such that $$\psi^{-1}\circ f\circ \phi(x_1,\ldots, x_k, y_1,\ldots, y_{n-j}, z_{n-j+1},\ldots, z_{n})= (f_1,\ldots, f_i, z_{n-j+1},\ldots, z_{n}),$$
	where $f_1,\ldots, f_i$ are functions of $x_*,y_*,z_*$. We replace the $z$ coordinates by spherical coordinates.  With such coordinates, the pullback of $\rd (\rho_n \psi)$ through $f$ is $\rd(\rho_n \tilde{\psi})$, where $\tilde{\psi}$ is defined on $\R_+^k\times \R^{n-j}\times S^{j-1}\times \R_+$ and uniformly bounded. Then the proof of Lemma \ref{lemma:appcon} can be applied to prove the claim.
\end{proof}

\subsection{Proof of Lemma \ref{conv1} and Lemma \ref{conv2}}
Following the discussion in \S \ref{pertdata}, we pick representatives $\{\theta_{i,a}\}$ of a basis of  $H^*(C_i)$ in  $\Omega^*(C_i)$ to get a quasi-isomorphic embedding, 
$$H^*(C_i)\to \Omega^*(C_i)$$
and the dual basis is denoted by $\{\theta_{i,a}^*\}$, such that $\{\theta_{i,a}^*\}$ are in the image of the chosen embedding $H^*(C_i)\to \Omega^*(C_i)$ and $(-1)^{\dim C_i |\theta_i^b|}\int_{C_i} \theta_{i,a}^*\wedge \theta_{i,b}=\delta_{ab}$. Then by Proposition \ref{prop:cohomologus}, the Thom class $\delta^n_i=\rd (\rho_n \psi_i)$ of $\Delta_i\subset C_i\times C_i$ and $\sum_a \pi_1^*\theta_{i,a}\wedge \pi_2^*\theta_{i,a}^*$ both represent the Poincar\'e dual of the diagonal $\Delta_i$, thus they are cohomologous in $\Omega^*(C_i\times C_i)$. Therefore we can find $f^n_i$, such that $\rd f^n_i=\delta^n_i-\sum_a \pi_1^*\theta_{i,a}\wedge \pi_2^*\theta_{i,a}^*$ and 
\begin{equation}\label{prim}
f^n_i-f^m_i=(\rho_n-\rho_m)\psi_i.
\end{equation}
Thus the support of  $f^n_i-f^m_i$ converges to a measure zero set. To show the convergence results, i.e.\ Lemma \ref{conv1} and \ref{conv2}, we need to show $f^n_i$ is uniformly bounded. The uniform boundedness is not necessarily true in $C_i\times C_i$, but it holds if we use spherical coordinates near the diagonal $\Delta_i$. To apply spherical coordinates in an intrinsic way, we  recall blow-ups of real submanifolds from  \cite[Chapter 5]{analysis}.
\begin{definition}[{\cite[Chapter 5]{analysis}}]
	Let $p: E\to M$ be vector bundle over a  manifold, then the blow-up of $E$ along $M$ is the following manifold $Bl_ME$,
	$$Bl_ME=\{(v,e)\in E\times S(E)| p(v)=p(e), \exists a\ge 0, \text{ such that } ae=v\},$$
	where $S(E)$ is the sphere bundle $(E\backslash \{0_M\})/\mathbb{R}^+$, and $0_M$ is the zero section of $E\rightarrow M$.
\end{definition}

Then one can define a blow-up of a submanifold $N\subset M$ in the sense of Definition \ref{def:submanifold} by blowing up $N$ in the tubular neighborhood, which is identified with the normal bundle. Moreover, the blow-up of the submanifold $N$ can be described intrinsically as follows,
$$Bl_NM:=(M\backslash N) \cup S (TM/TN|_N),$$
where $S (TM/TN|_N)$ is the sphere bundle of the quotient bundle (normal bundle) $TM/TN|_N$ over $N$. The smooth structure on $Bl_NM$ can be given using an auxiliary tubular neighborhood and it is independent of the tubular neighborhood  \cite[Chapter 5]{analysis}. The natural map $Bl_NM\rightarrow M$ is smooth and is a diffeomorphism up to measure zero sets. Thom classes $\delta_i^n=\rd (\rho_n \psi_i)$ can be pulled back to $Bl_{\Delta_i}C_i\times C_i$, and the primitives $\rho_n\psi_i$ is uniformly bounded on $Bl_{\Delta_i} C_i\times C_i$. 

\begin{figure}[H]
	\centering\includegraphics[width=2in]{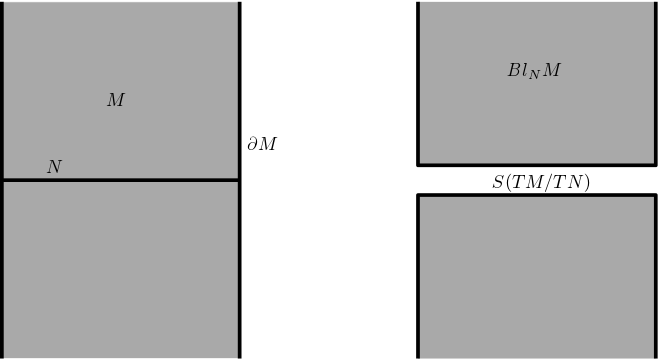}
	\caption{Blow up one submanifold}
\end{figure}
Using this intrinsic description, when a smooth map $f:M\times N\to C\times C$ is transverse to the diagonal $\Delta$, there is a natural map $Bl_\Delta f:Bl_{M\times_C N} M\times N\to Bl_{\Delta_C} C\times C$ induced by $f:M\times N\to C\times C$. Moreover, we have the following commutative diagram of smooth maps,
$$\xymatrix{Bl_{M\times_C N} M\times N \ar[d] \ar[r]^{\quad Bl_\Delta f} & Bl_{\Delta_C} C\times C \ar[d] 	
	\\ M\times N \ar[r]^f & C\times C.}$$   

If we have two submanifolds $N_1,N_2$ of $M$, such that $N_1$ is transverse to $N_2$ in the sense of Definition \ref{def:trans}, then we can blow up $N_1,N_2$. It was shown in \cite[Chapter 5]{analysis} that the order of blowing up does not influence the diffeomorphism type. The resulted blow-up is denoted by $Bl_{N_1,N_2}M$. Similarly, if we have a sequence of submanifolds $N_1,N_2,\ldots, N_k$, such that $(\cap_{\alpha\in A} N_\alpha)$ is transverse to $N_{\beta}$ for $\beta \notin A$, then we can blow up all $N_1,\ldots, N_k$. The diffeomorphism type does not depend on the order and let $Bl_{N_1,\ldots,N_k}M$ denote the blow-up.
\begin{figure}[H]
	\centering\includegraphics[width=4in]{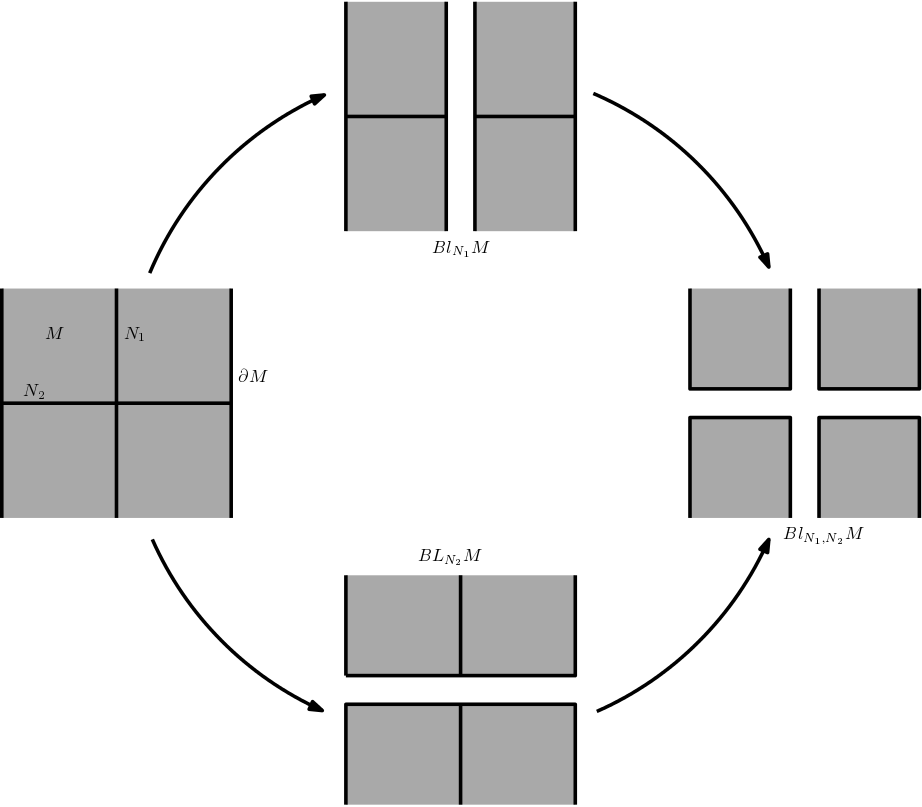}
	\caption{Blow up two submanifolds}
\end{figure}
In the setting of a flow category (Definition \ref{def:flow}), any fiber product
$\cM_{i_0,i_1}\times_{i_1}\cM_{i_1,i_2}\times_{i_2}\ldots \times_{i_{n}}\cM_{i_n,i_{n+1}}$ is cut out transversely in $\cM_{i_0,i_1}\times\cM_{i_1,\i_2}\times\ldots \times\cM_{i_n,i_{n+1}}$. Therefore 
$N_j:=\cM_{i_0,i_1}\times\cM_{i_1,i_2}\times\ldots \times \cM_{i_{j-1},i_j}\times_{i_j}\cM_{i_j,i_{j+1}}\times \ldots \times\cM_{i_n,i_{n+1}}$ are submanifolds in the product $\cM_{i_0,i_1}\times\cM_{i_1,i_2}\times\ldots \times\cM_{i_n,i_{n+1}}$, such that $(\cap_{\alpha\in A} N_\alpha)$ is transverse to $N_{\beta}$ for $\beta \notin A$. Then we have a blow-up $Bl_n:=Bl_{N_1,\ldots,N_n}\cM_{i_0,i_1}\times\cM_{i_1,i_2}\times\ldots \times\cM_{i_n,i_{n+1}}$ and we have similar commutative diagrams of smooth maps,
\begin{equation}\label{cd}\xymatrix{Bl_n \ar[d] \ar[r]^{Bl_{\Delta_i}(t\times s)} & Bl_{\Delta_{i_j}} C_{i_j}\times C_{i_j} \ar[d] 	
	\\ \cM_{i_0,i_1}\times\cM_{i_1,i_2}\times\ldots \times\cM_{i_n,i_{n+1}} \ar[r]^{\qquad \qquad \quad t\times s} & C_{i_j}\times C_{i_j}.}
\end{equation}

Now, we start to prove Lemma \ref{conv1} and \ref{conv2}. The definition of $\cM^{s,k}_{i_1,\ldots,i_r}[\alpha, f^{n_1}_{s+i_1},\ldots,f^{n_r}_{s+i_r}, \gamma]$ is \eqref{pair1}.

\begin{lemma}[Lemma \ref{conv1}]\label{lemma:appconv1}
	For every $\alpha\in \Omega^*(C_v),\gamma \in \Omega^*(C_{v+k})$ and any defining data $\Theta$, we have $\displaystyle \lim_{n\to \infty} \cM^{v,k}_{i_1,\ldots,i_r}[\alpha, f^{n}_{v+i_1},\ldots,f^{n}_{v+i_r}, \gamma]$ exists.
\end{lemma}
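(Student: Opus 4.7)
The plan is to show that $\bigl\{\mathcal{M}^{s,k}_{i_1,\ldots,i_r}[\alpha, f^{n}_{s+i_1},\ldots, f^{n}_{s+i_r},\gamma]\bigr\}_{n\in\mathbb{N}}$ is a Cauchy sequence in $\mathbb{R}$. Using the prescribed relation $f^n_i - f^m_i = (\rho_n-\rho_m)\psi_i$, the difference of the integrals at levels $n$ and $m$ expands as a finite telescoping sum in which each summand has exactly one factor of the form $(\rho_n-\rho_m)\psi_{s+i_j}$ and the remaining factors are either $f^n_{s+i_\ell}$ or $f^m_{s+i_\ell}$ and the fixed forms $s^*\alpha$, $t^*\gamma$. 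It therefore suffices to prove that each such summand tends to $0$, and for this I would establish two complementary facts on a suitable blow-up: uniform boundedness of the integrand, and shrinking support of the perturbation factor.

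First I would pull the entire integrand back along the iterated blow-up map $Bl_r \to \mathcal{M}_{s,s+i_1}\times \cdots \times \mathcal{M}_{s+i_r,s+k}$ constructed from the fiber-product submanifolds $N_j$ in the appendix. The transverse-fiber-product condition \eqref{F3} of Definition \ref{flow} is exactly what allows one to iterate these blow-ups in any order and to obtain the commutative diagram analogous to \eqref{cd} with $Bl_{\Delta_{s+i_j}}(C_{s+i_j}\times C_{s+i_j})$ in the target. Since $Bl_r \to \mathcal{M}_{s,s+i_1}\times\cdots\times \mathcal{M}_{s+i_r,s+k}$ is a diffeomorphism up to a measure-zero set, integration may be performed upstairs.

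Second, on the blow-up of a diagonal the angular form $\psi_i$ extends to a smooth bounded form, so each pullback $(t\times s)^*(\rho_n\psi_{s+i_j})$, being $\rho_n$ times such a bounded form, is uniformly bounded in $n$ independently of where on $Bl_r$ we evaluate; and the remaining factors $s^*\alpha$, $t^*\gamma$ are smooth on a compact space, hence bounded as well. This gives a uniform $L^\infty$ bound for the entire integrand on $Bl_r$, independent of $n$ and $m$. On the other hand, the support of $(\rho_n-\rho_m)\psi_{s+i_j}$ on $Bl_r$ is contained in a shrinking tubular neighborhood of the exceptional divisor over $N_j$, whose Lebesgue measure tends to $0$ as $\min(n,m)\to\infty$ (because $\rho_n-\rho_m$ is supported in $[0,\tfrac{1}{\min(n,m)}]$). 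Uniform boundedness together with shrinking support gives that each telescoping summand tends to $0$, proving the Cauchy property and hence existence of the limit.

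The main technical obstacle is handling the simultaneous presence of several singular factors $\psi_{s+i_j}$ along distinct diagonals: na\"ively, after pulling back only one of them has been resolved. This is precisely where the transversality hypothesis on multi-fold fiber products is indispensable, for it guarantees that the submanifolds $N_j$ meet each other transversely, so that the iterated blow-up $Bl_{N_1,\ldots,N_r}$ makes sense with smooth proper divisors and the pullbacks of all the $\psi_{s+i_j}$ simultaneously become bounded smooth forms there. Once this geometric setup is secured, the analytic estimate reduces to dominated convergence on a compact manifold with corners, which is routine; Lemma \ref{conv2} can then be established by the same blow-up strategy, comparing the pullback of $\delta^n_{s+i_p}$ to the pullback of the Dirac current of $\Delta_{s+i_p}$ via Lemma \ref{conv3}, with the sign $(-1)^{(a+m_{s,s+i_p})c_{s+i_p}}$ coming from the orientation convention $(t\times s)^*[N_j][\mathcal{M}\times_j\mathcal{M}] = (-1)^{c_j m_{i,j}}[\mathcal{M}][\mathcal{M}]$ in Definition \ref{oridef}.
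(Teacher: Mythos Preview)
Your proposal is correct and follows essentially the same approach as the paper: pull back to the iterated blow-up $Bl_r$ along the fiber-product submanifolds (using the transversality condition \eqref{F3} to make this well defined), note that the blow-down is a diffeomorphism off a measure-zero set so the integral is unchanged, then write the difference at levels $n$ and $m$ as a telescoping sum and bound each summand using uniform $L^\infty$ control together with shrinking support. One small imprecision: you speak of the pullback of $\rho_n\psi_{s+i_j}$ being uniformly bounded, but the factors in the integrand are $f^n_{s+i_\ell}$, not $\rho_n\psi_{s+i_\ell}$; the correct statement (which the paper makes) is that $f^n_{s+i_\ell}=f^1_{s+i_\ell}+(\rho_n-\rho_1)\psi_{s+i_\ell}$ is uniformly bounded on the blow-up because $f^1$ is smooth and $\psi$ extends smoothly there.
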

\begin{proof}
	Since $\cM^{v,k}_{i_1,\ldots,i_r}[\alpha, f^{n_1}_{v+i_1},\ldots,f^{n_r}_{v+i_r}, \gamma]$ is an integration over $\cM^{v,k}_{i_1,\ldots,i_r}$, and $\bigcup_{j} \cM^{v,k}_{i_1,\ldots, \overline{i_j},\ldots, i_r}$ is a measure zero set in $\cM^{v,k}_{i_1,\ldots,i_r}$, thus we can restrict the integral to $\cM^{v,k}_{i_1,\ldots,i_r}-\bigcup_{j} \cM^{v,k}_{i_1,\ldots, \overline{i_j},\ldots, i_r}$ to get the same value. 
	
	We have a blow-up $Bl_r\cM^{v,k}_{i_1,\ldots,i_r}$, by blowing up all $\cM^{v,k}_{i_1,\ldots, \overline{i_j},\ldots, i_r}, 1\le j \le r$.   The primitives $f_i^n$ can be lifted to $Bl_{\Delta_i}C_i\times C_i$ and $t\times s$ can be lifted to the blow-ups to $Bl_{\Delta_i}(t\times s)$. We define $Bl_r\cM^{v,k}_{i_1,\ldots,i_r}[\alpha, f^{n_1}_{v+i_1},\ldots, f^{n_r}_{v+i_r},\gamma]$ to be the integration of the wedge product of pullbacks of $\alpha, f^{n_1}_{v+i_1},\ldots, f^{n_r}_{v+i_r},\gamma$ to $Bl_r\cM^{v,k}_{i_1,\ldots,i_r}$. Because $Bl_r\cM^{v,k}_{i_1,\ldots,i_r}$ and $\cM^{v,k}_{i_1,\ldots,i_r}-\bigcup_{j} \cM^{v,k}_{i_1,\ldots, \overline{i_j},\ldots, i_r}$ also differ by a measure zero set, by commutative diagram \eqref{cd}, we have
	$$Bl_r\cM^{v,k}_{i_1,\ldots,i_r}[\alpha, f^{n}_{v+i_1},\ldots, f^{n}_{v+i_r},\gamma]=\cM^{v,k}_{i_1,\ldots,i_r}[\alpha, f^{n}_{v+i_1},\ldots, f^{n}_{v+i_r},\gamma].$$ 
	Since
	\begin{equation}\label{differ}
	\begin{aligned}Bl_r\cM^{v,k}_{i_1,\ldots,i_r}[\alpha, f^{n}_{v+i_1},\ldots, f^{n}_{v+i_r},\gamma]-Bl_r\cM^{v,k}_{i_1,\ldots,i_r}[\alpha, f^{m}_{v+i_1},\ldots, f^{m}_{v+i_r},\gamma] \\ =\sum_{p=1}^r Bl_r\cM^{v,k}_{i_1,\ldots,i_r}[\alpha, f^{m}_{v+i_1},\ldots,f^{m}_{v+i_{p-1}}, f^{n}_{v+i_p}-f^{m}_{v+i_p}, f^{n}_{v+i_{p+1}},\ldots f^{n}_{v+i_r},\gamma].
	\end{aligned}
	\end{equation}
	Note that $f_{v+i_j}^{n}$ are uniformly bounded over $Bl_{\Delta_{v+i_j}}C_{v+i_j}\times C_{v+i_j}$ for every $n\in \N$ and the support of $f_{v+i_j}^{n}-f_{v+i_j}^{m}$ converges to a measure zero set in $Bl_{\Delta_{v+i_j}}C_{v+i_j}\times C_{v+i_j}$ when $n,m\to \infty$. By commutative diagram \eqref{cd}, the pullbacks of $f_{v+i_j}^{n}$ to $Bl_r \cM^{v,k}_{i_1,\ldots,i_r}$ have the same properties. Thus \eqref{differ} implies the  convergence.
\end{proof}

\begin{lemma}[Lemma \ref{conv2}] 
	For an oriented flow category $\cC$ and any defining data, we have 
	\begin{equation*}
	\lim_{n\rightarrow \infty}  \mathcal{M}^{v,k}_{i_1,\ldots,i_r}[\alpha, f_{v+i_1}^{n},\ldots ,\delta^{n}_{v+i_p},\ldots, f_{v+i_r}^{n}, \gamma]
	= (-1)^* \lim_{n\rightarrow \infty}\mathcal{M}^{v,k}_{i_1,\ldots,i_{p-1},\overline{i_p}, i_{p+1},\ldots, i_r}[\alpha, f_{v+i_1}^{n},\ldots, f_{v+i_r}^{n},\gamma],\end{equation*}
	where $*=(|\alpha|+m_{v,v+i_p})c_{v+i_p}$. 
\end{lemma}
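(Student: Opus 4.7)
The strategy will be to combine the blow-up technique from Lemma \ref{conv1} with a family/parametrized version of the transverse-pullback convergence Lemma \ref{conv3}. The rough picture is that $(t\times s)^*\delta^n_{s+i_p}$ is a Thom class supported in a shrinking tubular neighborhood of the preimage $(t\times s)^{-1}(\Delta_{s+i_p}) \subset \cM^{s,k}_{i_1,\ldots,i_r}$, and the flow-category transversality axiom Condition F3 guarantees this preimage is the fiber product $\cM^{s,k}_{i_1,\ldots,\overline{i_p},\ldots,i_r}$ and that $(t\times s)$ is transverse to $\Delta_{s+i_p}$; so Lemma \ref{conv3} applied along normal fibers (with the remaining moduli spaces as parameters) localizes the integral onto the fiber product.

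First, I will pass to the iterated blow-up $Bl_{r,\widehat{p}}$ of $\cM^{s,k}_{i_1,\ldots,i_r}$ along all $\cM^{s,k}_{\ldots,\overline{i_j},\ldots,i_r}$ with $j \neq p$. By Condition F3 these are pairwise cleanly intersecting submanifolds, so the iterated blow-up is well defined, the original integral is unchanged, and each $(t\times s)^*f^n_{s+i_j}$ for $j\ne p$ lifts to a uniformly bounded form on $Bl_{r,\widehat{p}}$. Differences $f^n_{s+i_j}-f^m_{s+i_j} = (\rho_n-\rho_m)\psi_{s+i_j}$ have support shrinking to a measure-zero set, so a dominated-convergence argument identical to that of Lemma \ref{lemma:appconv1} reduces the problem to showing the identity on a representative integrand whose $f^n$'s (for $j\neq p$) are replaced by their (pointwise a.e.) limits.

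Next, I will apply a family version of Lemma \ref{conv3}. Using a tubular neighborhood of $(t\times s)^{-1}(\Delta_{s+i_p})$ in $\cM^{s,k}_{i_1,\ldots,i_r}$ identified with a neighborhood of the zero section of the normal bundle $(t\times s)^*N_{s+i_p}$, and noting that the relevant transversality of $(t\times s)$ to $\Delta_{s+i_p}$ persists after taking Cartesian product with the remaining moduli-space factors (which just act as parameters), the pointwise fiberwise argument used to prove Lemma \ref{conv3} shows that integration against $(t\times s)^*\delta^n_{s+i_p}$ converges to restriction-integration over the preimage. The resulting limit is the integral over $\cM^{s,k}_{i_1,\ldots,\overline{i_p},\ldots,i_r}$ of the wedge product of all remaining factors, with the preimage oriented by the Lemma \ref{conv3} convention $[P'](t\times s)^*[N_{s+i_p}] = [\cM^{s,k}_{i_1,\ldots,i_r}]$.

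Finally, I will compute the sign. Two bookkeeping contributions combine: (i) the discrepancy between $[P']$ and the fiber-product orientation $[P]$ prescribed by Definition \ref{oridef} condition \eqref{cond:ori2}, which involves commuting $(t\times s)^*[N_{s+i_p}]$ of degree $c_{s+i_p}$ past the fiber-product orientation of codimensional $c_{s+i_p}$; and (ii) the sign from commuting the Thom class $(t\times s)^*\delta^n_{s+i_p}$, of degree $c_{s+i_p}$, into the position needed by Lemma \ref{conv3}, namely past $(t\times s)^*f^n_{s+i_{p+1}}\wedge \cdots\wedge t^*\gamma$. Using the dimension relation \eqref{dim} inductively to express $\dim \cM_{s,s+i_p}$ in terms of the $m_{s+i_{j-1},s+i_j}$ and $c_{s+i_j}$, together with the identity $c_{s+i_p}^2\equiv c_{s+i_p}\pmod{2}$, the two contributions will collapse to the claimed exponent $*=(a+m_{s,s+i_p})c_{s+i_p}$. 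The main obstacle is precisely this final step: each ingredient is routine in isolation, but assembling them through three distinct orientation conventions (product, fiber-product, and Lemma \ref{conv3} preimage) without an accounting error requires care, and is the step I expect to take the most work to nail down.
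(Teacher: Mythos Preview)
Your overall architecture matches the paper's: blow up all the fiber-product loci except the $i_p$-th to uniformly bound the $f^n_{s+i_j}$ ($j\neq p$), then use a transverse-pullback version of Lemma~\ref{conv3} to localize $(t\times s)^*\delta^n_{s+i_p}$ onto the fiber product, and finally sort out the sign from Definition~\ref{oridef}\eqref{cond:ori2}.

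There is, however, a genuine gap in your second step. You claim a ``dominated-convergence argument identical to that of Lemma~\ref{lemma:appconv1}'' lets you replace the $f^n_{s+i_j}$ (for $j\neq p$) by their a.e.\ limits. But that argument in Lemma~\ref{conv1} works because \emph{every} factor in the integrand is uniformly bounded on the blow-up; here one factor is $(t\times s)^*\delta^n_{s+i_p}$, whose $L^\infty$ norm blows up. So bounding the difference integrand by ``shrinking support times uniformly bounded'' fails as stated. What saves the day is that $\delta^n_{s+i_p}$ has fiberwise $L^1$ norm equal to $1$ in the normal direction to the fiber product, so after integrating the radial/normal coordinate first you recover a uniform bound on the base. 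The paper makes this explicit by splitting the (blown-up) fiber product $Bl_{r-1}\cM^{s,k}_{i_1,\ldots,\overline{i_p},\ldots,i_r}$ into a small open $V_1$ containing the loci where the other blow-ups live and its complement $V_2$: over $p^{-1}(V_2)$ the $f^n_{s+i_j}$ are \emph{eventually constant} (not merely convergent), so Lemma~\ref{conv3} applies directly; over $p^{-1}(V_1)$ one integrates the radial direction first to get $\bigl|\int_{p^{-1}(V_1)}\cdots\bigr|\le K\,\vol(V_1)$, and then lets $\vol(V_1)\to 0$. Your sketch would go through once you insert this $V_1/V_2$ decomposition (or equivalently the ``integrate the normal fiber first'' observation) in place of the appeal to Lemma~\ref{conv1}; the sign analysis you outline is then exactly what the paper does.
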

\begin{proof}
    The limit $\displaystyle\lim_{n\to\infty}\cM^{v,k}_{i_1,\ldots,i_{p-1},\overline{i_p}, i_{p+1},\ldots, i_r}[\alpha, f_{v+i_1}^{n},\ldots, f_{v+i_r}^{n},\gamma]$ exists by the same argument used in the proof of Lemma \ref{lemma:appconv1}. To prove the limit on the left-hand-side exists, we can blow up everything except for $\cM^{v,k}_{i_1,\ldots \overline{i_p},\ldots, i_r}$ to get $Bl_{r-1}$. Assume the pullback of $\delta^n_{v+i_p}$ is supported in tubular neighborhood $U$ of $\cM^{v,k}_{i_1,\ldots \overline{i_p},\ldots, i_r}$ in $\cM^{v,k}_{i_1,\ldots,i_r}$, then $U$ can be lifted to the blow-up $Bl_{r-1}$ to get $Bl_{r-1}U$ (c.f. figure 7). For simplicity, we suppress the wedge and pullback notation, then we have  
	$$ \lim_{n\to \infty}\int_{\cM^{v,k}_{i_1,\ldots,i_r}} \alpha f_{v+i_1}^{n}\ldots \delta^{n}_{v+i_p}\ldots f_{v+i_r}^{n} \gamma  
	=\lim_{ n\to \infty}\int_{Bl_{r-1}U} \alpha f_{v+i_1}^{n}\ldots \delta^{n}_{v+i_p}\ldots f_{v+i_r}^{n} \gamma.$$
	\begin{figure}[H]
		\centering\includegraphics[width=4in]{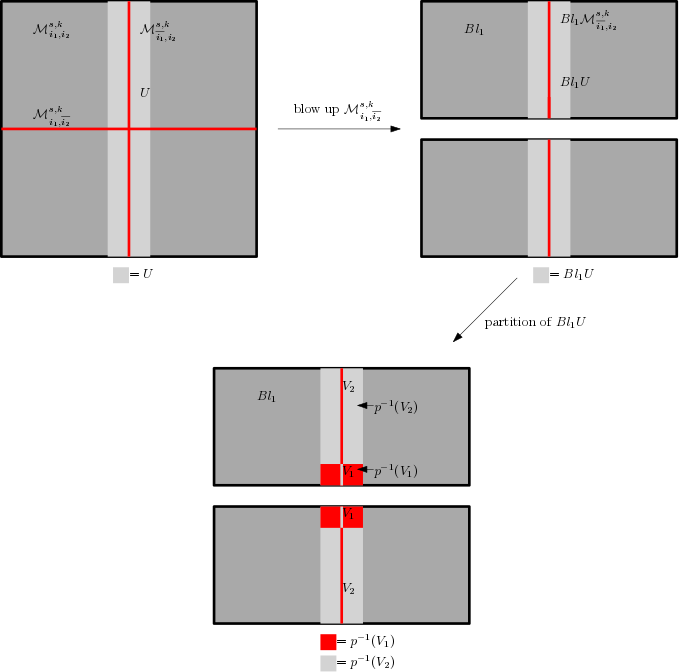}
		\caption{$r=2,p=1$ case}\label{fig:partition}
	\end{figure}
	Let $Bl_{r-1}\cM^{v,k}_{i_1,\ldots \overline{i_p},\ldots, i_r} $ denote the lift of $\cM^{v,k}_{i_1,\ldots \overline{i_p},\ldots, i_r}$ in $Bl_{r-1}$, then $Bl_{r-1}U$ is still a tubular neighborhood of  $Bl_{r-1}\cM^{v,k}_{i_1,\ldots \overline{i_p},\ldots, i_r}$. Let $p: Bl_{r-1}U\to Bl_{r-1}\cM^{v,k}_{i_1,\ldots \overline{i_p},\ldots, i_r}$ denote the projection of the tubular neighborhood. Then we can divide $Bl_{r-1}\cM^{v,k}_{i_1,\ldots \overline{i_p},\ldots, i_r}$ into two parts $V_1,V_2$, such that $V_1$ is a small open set containing the blow-up domain, and $V_2$ is the complement. Then $p^{-1}(V_1)$ and $p^{-1}(V_2)$ are partitions of $Bl_{r-1}U$ (see figure \ref{fig:partition}). Using the same local coordinate in Lemma \ref{conv3}, if we integrate the fiber direction of the tubular neighborhood, because $f^n_{v+i_1},\ldots, f^n_{v+i_{p-1}}, f^n_{v+i_{p+1}},\ldots, f^n_{v+i_r}$ are uniformly bounded over $Bl_{r-1}$, we have
	\begin{equation}\label{small} |\int_{p^{-1}(V_1)} \alpha f_{v+i_1}^{n}\ldots \delta^{n}_{v+i_p}\ldots f_{v+i_r}^{n} \gamma|\le K \vol(V_1),
	\end{equation}
	where $K$ is a constant.	
	Over $p^{-1}(V_2)$, the pullback of $f_{v+i_1}^{n},\ldots, f_{v+i_{p-1}}^{n},f_{v+i_{p+1}}^{n},\ldots, f_{v+i_r}^{n}$ do not change for $n$ large enough, because $p^{-1}(V_2)$ stays away from the blown-up area. Thus the only thing that varies over $p^{-1}(V_2)$ is $\delta^{n}_{v+i_p}$. Note that
	$$
		\lim_{n\to \infty} \int_{p^{-1}(V_2)} \alpha f^n_{v+i_1}\ldots \delta^n_{v+i_p}\ldots f^n_{v+i_r}\gamma  =  	(-1)^{(|\alpha|+\sum_{j<p}(c_{v+i_j}-1)) c_{v+i_p}}\lim_{n\to \infty} \int_{p^{-1}(V_2)} \delta^n_{v+i_p} \alpha f^n_{v+i_1}\ldots f^n_{v+i_r}
	$$
	By Definition \ref{oridef}, we have the orientation relation on $\cM^{v,k}_{i_1,\ldots \overline{i_p},\ldots, i_r}\supset V_2$ satisfies
	$$[N_{v+i_p}][\cM^{v,k}_{i_1,\ldots,\overline{i_p},\ldots i_r}]=(-1)^{(\sum_{j\le p} m_{v+i_{j-1},v+i_j})c_{v+i_p}}[\cM^{v,k}_{i_1,\ldots,i_r}]$$
	Combining with Lemma \ref{conv3} and  that
	$$(|\alpha|+\sum_{j<p}(c_{v+i_j}-1)) c_{v+i_p}+(\sum_{j\le p} m_{v+i_{j-1},v+i_j})c_{v+i_p}=(|\alpha|+m_{v,v+i_p})c_{v+i_p} \mod 2,$$
	we can conclude that 
	\begin{equation}\label{dirac} 
	\lim_{n\to \infty} \int_{p^{-1}(V_2)} \alpha f^n_{v+i_1}\ldots \delta^n_{v+i_p}\ldots f^n_{v+i_r}\gamma=\lim_{n\to\infty}(-1)^{(|\alpha|+m_{v,v+i_p})c_{v+i_p}}\int_{V_2} \alpha f^n_{v+i_1}\ldots f^n_{v+i_{p-1}}f^n_{v+i_{p+1}}\ldots f^n_{v+i_r}\gamma.
\end{equation}
	By \eqref{small} and \eqref{dirac}, because $V_1$ can be arbitrarily small, we have $\displaystyle\lim_{n\to \infty}  \cM^{s,k}_{i_1,\ldots,i_r}[\alpha, f_{v+i_1}^{n},\ldots ,\delta^{n}_{v+i_p},\ldots, f_{v+i_r}^{n}, \gamma]$ exists. Since $f^n_{v+i_1}, \ldots, f^n_{v+i_{p-1}}, f^n_{v+i_{p+1}},\ldots, f^n_{v+i_r}$ are uniformly bounded over $Bl_{r-1} \cM^{v,k}_{i_1,\ldots, \overline{i_p},\ldots, i_r}$, we have 
	\begin{equation}\label{small1}
	|\int_{V_1}\alpha f^n_{v+i_1}\ldots f^n_{v+i_{i_{p-1}}}f^n_{v+i_{p+1}}\ldots f^n_{v+i_r}\gamma|<K'\vol(V_1).
	\end{equation}
	Since $Bl_{r-1}\cM^{v,k}_{i_1,\ldots, \overline{i_p},\ldots, i_r}$ and $\cM^{v,k}_{i_1,\ldots,\overline{i_p}, \ldots, i_r}$ differ by a measure zero set, we have 
	\begin{eqnarray}
	\int_{\cM^{v,k}_{i_1,\ldots,\overline{i_p}, \ldots, i_r}} \alpha f^n_{v+i_1}\ldots f^n_{v+i_{p-1}} f^n_{v+i_{p+1}} \ldots f^n_{v+i_r}\gamma &= & 	\int_{Bl_{r-1}\cM^{v,k}_{i_1,\ldots, \overline{i_p},\ldots, i_r}} \alpha f^n_{v+i_1}\ldots f^n_{v+i_{p-1}} f^n_{v+i_{p+1}} \ldots f^n_{v+i_r}\gamma \nonumber \\
	&=& 	\int_{V_1\cup V_2} \alpha f^n_{v+i_1}\ldots f^n_{v+i_{p-1}} f^n_{v+i_{p+1}} \ldots f^n_{v+i_r}\gamma. \label{sum}
	\end{eqnarray}
	Therefore by \eqref{small}, \eqref{dirac}, \eqref{small1} and \eqref{sum}, we have 
	$$\begin{aligned}\left|\lim_{n\to\infty}\left(\int_{\cM^{v,k}_{i_1,\ldots,i_r}} \alpha f^n_{v+i_1}\ldots \delta^n_{v+i_p}\ldots f^n_{v+i_r}\gamma-(-1)^{(|\alpha|+m_{v,v+i_p})c_{s+i_p}}\int_{\cM^{v,k}_{i_1,\ldots,\overline{i_p},\ldots, i_r}} \alpha f^n_{v+i_1}\ldots f^n_{v+i_r}\gamma \right) \right| \\
	\le (K+K')\vol(V_1).\end{aligned}$$
	Since $V_1$ can be arbitrarily small, thus we have
	$$\lim_{n\to\infty}\int_{\cM^{v,k}_{i_1,\ldots,i_r}} \alpha f^n_{v+i_1}\ldots \delta^n_{v+i_p}\ldots f^n_{v+i_r}\gamma=\lim_{n\to\infty}(-1)^{(|\alpha|+m_{v,v+i_p})c_{v+i_p}}\int_{\cM^{v,k}_{i_1,\ldots,\overline{i_p},\ldots, i_r}} \alpha f^n_{v+i_1}\ldots f^n_{v+i_r}\gamma.$$
\end{proof}

\section{Proof of Proposition \ref{prop:split}}\label{app:kunneth}
\begin{proposition}[Proposition \ref{prop:split}]
	Let $\pi:E \to C$ be an oriented $k$-sphere bundle over an oriented closed manifold. Let $A=A^*$ be the reduction on $\Omega^*(E)$ built from the  discussion after the statement of Theorem \ref{thm:gysin} (in particular, we choose $\psi_i$ such that $\rd \psi_i=0$ if $k$ is even). And $T$ is the closed form in $\pi_1^*A\wedge \pi_2^*A$ representing the diagonal by the definition of reduction. Then there exists approximations $\delta^{E,n}$ of the Dirac current of the diagonal $\Delta_E$ such that the following holds.
	\begin{enumerate}
		\item There exist forms $f^{E,n}$ on $E\times E$, such that $$\rd f^{E,n} = \delta^{E,n} - T$$
		\item Lemma \ref{conv1} and \ref{conv2} hold for $f^{E,n}$, in particular, the construction in \S \ref{ss:reduction} works for $f^{E,n}$.
		\item\label{split} Let $\pi_{1,2}$ denote the projection $E\times E \to C\times C$, then $f^{E,n}$ can be written as sums of differential forms in the form of $(\pi_{1,2}^*\alpha) \wedge \beta$ with $\alpha \in \Omega^*(C\times C)$ and $\deg(\beta) \le k$, i.e.\ the fiber degree of $f^{E,n}$ is at most $k$. In other words, if $v_1,\ldots,v_{k+1}$ are $k+1$ vertical vectors in $T_p(E\times E)$ for $p\in C\times C$, then $f^{E,n}(v_1\wedge \ldots \wedge v_{k+1}\wedge \ldots )=0$.
	\end{enumerate}
\end{proposition}
\begin{proof}	
	Let $\delta^{C,n}$ be the Thom classes of $\Delta_C \subset C\times C$ constructed using \eqref{eqn:delta} with the angular form $\Psi_C$ of the normal bundle. Let $\delta^{S^k,n}$ be the Thom classes of $\Delta_{E} \subset E\times_C E$ constructed using \eqref{eqn:delta}. We define $p:U\to E\times_CE$ to be a projection in a tubular neighborhood $U$ of $E\times_CE$ in $E\times E$. Then $\pi_{1,2}(U)$ is a tubular neighborhood of $\Delta_{C}\subset C\times C$. By the same argument in Lemma \ref{conv4}, $\lim_{n\to \infty}\pi_{1,2}^*\delta^{C,n}\wedge p^*\delta^{S^k,n}$ is the Dirac current of the diagonal $\Delta_E \subset E\times E$. Since for $n\gg 0$, the support of $\pi_{1,2}^*\delta^{C,n}$ is contained in $U$, $\pi_{1,2}^*\delta^{C,n}\wedge p^*\delta^{S^k,n}$ are cohomologous to each other and represent Thom classes of $\Delta_E$ for $n\gg 0$.
	
	Next, we show that we can find the desired primitives $f^{E,n}$. Let $p_1,p_2: E\times_CE \to E$ be the projections to the first and second component respectively. Then $(-1)^k p_1^*\psi + p_2^*\psi$ is a closed form on $E\times_CE$ because $\rd((-1)^kp_1^*\psi + p_2^*\psi) = (-1)^{k+1}q^*e - q^*e = 0$ for any $k$ (when $k$ is even, $e$ is zero by assumption), where $q:E\times_CE \to C$ is the projection. We claim $(-1)^kp_1^*\psi + p_2^*\psi$ is cohomologous $\delta^{S^k,n}$, i.e.\ there are $f^{S^k,n}\in \Omega^{k-1}(E\times_CE)$, such that
	\begin{equation}\label{eqn:exact_rel}
	    \delta^{S^k,n} -(-1)^kp_1^*\psi - p_2^*\psi = \rd f^{S^k,n}.
	\end{equation}

    We first proceed assuming \eqref{eqn:exact_rel}. Let $\Pi_1,\Pi_2$ be the two projections $E\times E \to E$. Note that $(-1)^k \Pi_1^*\psi+\Pi_2^*\psi$ is not closed on $U$. We have $\rd ((-1)^k \Pi_1^*\psi+\Pi_2^*\Psi)=\pi_{1,2}^*((-1)^{k+1}e\otimes 1-1\otimes e)$, and the closed form $(-1)^{k+1}e\otimes 1-1\otimes e$ is zero on $\Delta_C$, hence   $(-1)^{k+1}e\otimes 1-1\otimes e$  is exact on $\pi_{1,2}(U)$. Therefore we can find $h\in \Omega^k(\pi_{1,2}(U))$ with $h|_{\Delta_C}=0$ and $(-1)^k \Pi_1^*\psi+\Pi_2^*\Psi+\pi_{1,2}^*h$ is closed on $U$. Since $((-1)^k \Pi_1^*\psi+\Pi_2^*\psi+\pi_{1,2}^*h)|_{E\times_CE}=(-1)^k p_1^*\psi + p_2^*\psi$, we know that there exists $g\in \Omega^{k-1}(U)$, such that 
	$$p^*\left((-1)^kp_1^*\psi+p_2^*\psi\right)- (-1)^k\Pi_1^*\psi - \Pi_2^*\psi = \rd g+\pi_{1,2}^*h,$$
	Now we make any extension of $h$ to $C\times C$, the extended form is still denoted by $h$. We have
	\begin{eqnarray*}
	\pi_{1,2}^*\delta^{C,n}\wedge p^*\delta^{S^k,n} & = & \pi_{1,2}^*\delta^{C,n}\wedge p^*((-1)^kp_1^*\psi + p_2^*\psi) + \pi_{1,2}^*\delta^{C,n}\wedge p^*\rd f^{S^k,n} \\
	& = &\pi_{1,2}^*\delta^{C,n}\wedge \left( \left((-1)^k\Pi_1^*\psi +\Pi_2^*\psi+\pi_{1,2}^*h\right)+ \pi_{1,2}^*\delta^{C,n}\wedge (\rd g+p^*\rd f^{S^k,n})\right)
	\end{eqnarray*}
	If we write $\rd f^{C,n} = \delta^{C,n} -\sum_a \pi_1^*\theta_a\wedge \pi_2^*\theta^*_a$, then we have the following
	\begin{eqnarray*}
     \pi_{1,2}^*\delta^{C,n}\wedge ((-1)^k\Pi_1^*\psi +\Pi_2^*\psi+\pi_{1,2}^*h) & = & 
     \pi_{1,2}^*(\rd f^{C,n} +\sum_a \pi_1^*\theta_a\wedge \pi_2\theta_a^*)\wedge \left((-1)^k\Pi_1^*\psi +\Pi_2^*\psi+\pi_{1,2}^*h\right) \\
     & = & \rd (\pi_{1,2}^*f^{C,n}\wedge \left((-1)^k\Pi_1^*\psi +\Pi_2^*\psi+\pi_{1,2}^*h)\right)\\
     &  & + \pi_{1,2}^*(\sum_a \pi_1^*\theta_a\wedge \pi_2\theta_a^*)\wedge \left((-1)^k\Pi_1^*\psi +\Pi_2^*\psi+\pi_{1,2}^*h\right)\\
     & & +(-1)^{\dim C} \pi_{1,2}^*f^{C,n}\wedge \rd\left((-1)^k\Pi_1^*\psi +\Pi_2^*\psi+\pi_{1,2}^*h)\right)
	\end{eqnarray*}
	Now let $S_n$ denote the last two terms. Then $S_n-S_m=0$ for $n,m\gg 0$ as $\supp (f^{C,n}-f^{C,m})\subset (\pi_{1,2})(U)$ and $\rd\left((-1)^k\Pi_1^*\psi +\Pi_2^*\psi+\pi_{1,2}^*h)\right)$ is zero on $U$. 
	
	Next, recall from Lemma \ref{lemma:reduction}, $A=A^*$ has basis in the form of 
	$$\la \pi^*\theta_1,\ldots, \pi^*\theta_k, \xi_{1}:=\pi^*\theta^*_{1}\wedge \psi-\pi^*\eta_1,\ldots,\xi_k:=\pi^*\theta^*_{k}\wedge \psi-\pi^*\eta_k \ra$$
	such that the dual basis is $\la \xi_1,\ldots, \xi_k,\pi^*\theta_1,\ldots,\pi^*\theta_k\ra$ up to sign. It is easy to check that $S_n-T$ is in the form of $\pi_{1,2}^*\alpha$ for $\alpha \in \Omega^*(C\times C)$. Since $T$ and $\pi_{1,2}^*\delta^{C,n}\wedge p^*\delta^{S^k,n}$  both represent $\Delta_E$, we have $S_n-T$ is exact. Therefore $\alpha$ is closed class in $\Omega^*(C\times C)$, such that $[\pi_{1,2}^*\alpha]=0$. As a consequence $[\alpha]=\sum_i ([\alpha_i]\wedge [e])\otimes [\beta_i] + \sum_j [\alpha'_j]\otimes ([\beta'_j]\wedge [e])$ on cohomology. Therefore there exist $\alpha_0,\alpha_1,\alpha_2\in \Omega^*(C\times C)$, such that 
	$$S_n-T=\pi_{1,2}^*\alpha = \rd (\pi_{1,2}^*\alpha_0\wedge \Pi_1^*\Psi+\pi_{1,2}^*\alpha_1\wedge \Pi_2^*\Psi+\pi_{1,2}^*\alpha_2)=\rd w.$$
	As a consequence we can take $\delta^{E,n}:=\pi_{1,2}^*\delta^{C,n}\wedge p^*\delta^{S^k,n}$ and 
	\begin{equation}\label{eqn:fe}
	f^{E,n}:=w + f^{C,n}\wedge ((-1)^k\Pi_1^*\psi +\Pi_2^*\psi+\pi_{1,2}^*h)+ (-1)^{\dim C}(\pi_1\times \pi_2)^*\delta^{C,n}\wedge (g+p^*f^{S^k,n}),
	\end{equation}
	Since $f^n_C$ and $f^n_{S^k}$ can be chosen such that \eqref{eqn:diff} holds, the Lemma \ref{conv1} and \ref{conv2} hold for $f^{E,n}$ using the same argument in Appendix \ref{conv}. By \eqref{eqn:fe}, the third property of the proposition holds, since each component has the property.
\end{proof}

\begin{proof}[Proof of \eqref{eqn:exact_rel}]
Note that $p_1:E\times_CE\to E$ is also a sphere bundle (it is the pullback of the bundle $\pi:E\to C$ through $\pi$ itself). Then $p_2^*\psi$ is the angular form of $p_1$. After fixing representatives  $\{\alpha_1,\ldots,\alpha_m\}$ of a basis of $H^*(E)$, we get a reduction of $\Omega^*(E\times_CE)$ by the same argument after the statement of Theorem \ref{thm:gysin} as follows,
$$B=B^*=\la p_1^*\alpha_1,\ldots,p_1^*\alpha_m,  \chi_1:=p_1^*\alpha_1\wedge p_2^*\psi-p_1^*f_1,\ldots,\chi_m:=p_1^*\alpha_m\wedge p_2^*\psi-p_1^*f_m\ra.$$
Since $\rd$ is closed on $B$, and the cohomology is the cohomology of $E\times_C E$ (since it is a reduction). It suffices to prove that  for any $\beta \in B$
$$\int_{E\times_CE} \beta \wedge ((-1)^kp_1^*\psi+p_2^*\psi) = \int_{\Delta_E} \beta.$$
If $\beta = p_1^*\alpha_i$, then 
$$\int_{E\times_CE} p_1^*\alpha_i \wedge ((-1)^kp_1^*\psi+p_2^*\psi) = \int_{E\times_CE} (-1)^{k}p_1^*(\alpha_i\wedge \psi) + \int_{E\times_CE} p_1^*\alpha_i\wedge p_2^*\psi.$$
The first term is clearly zero, the second term is $\int_E \alpha_i=\int_{\Delta_E}(p_1^*\alpha_i)|_{\Delta_E}$ by integration along the fiber of $p_1$.
If $\beta = \chi_i=p_1^*\alpha_i\wedge p_2^*\psi-p_1^*f_i$. Then by the same argument as above, we have
$$\int_{E\times_CE} \chi_i\wedge ((-1)^kp_1^*\psi+p_2^*\psi) = \int_{E\times_C E} (p_1^*\alpha_i\wedge p_2^*\psi)\wedge ((-1)^kp_1^*\psi+p_2^*\psi)+\int_{\Delta E} (p_1^*f_i)|_{\Delta_E}.$$
The first term is 
$\int_{E\times_C E} p_1^*\alpha_i\wedge p_1^*\psi\wedge p_2^*\psi = \int_{E}\alpha_i\wedge \psi=\int_{\Delta_E}(p_1^*\alpha_i \wedge p_2^*\psi)|_{\Delta_E}$. Hence the claim follows.
\end{proof}

\bibliographystyle{plain} 
\bibliography{ref}

\end{document}